\numberwithin{equation}{section}
\newtheorem{Thm}{Theorem}[section]
\newtheorem{Lem}[Thm]{Lemma}
\newtheorem{Cor}[Thm]{Corollary}
\newtheorem{Prop}[Thm]{Proposition}
\newtheorem{Def}[Thm]{Definition}
\newtheorem{Rem}[Thm]{Remark}
\begin{document}
\sloppy
\allowdisplaybreaks[4]
\title[Wellposedness of the degenerate Master Equation]{Wellposedness of the Master Equation for Mean Field Games with Grushin Type Diffusion}

\author{Yiming Jiang, Yawei Wei, Yiyun Yang}
\address{School of Mathematical Sciences and LPMC\\ Nankai University\\ Tianjin 300071 China}
\email{ymjiangnk@nankai.edu.cn}
\address{School of Mathematical Sciences and LPMC\\ Nankai University\\ Tianjin 300071 China}
\email{weiyawei@nankai.edu.cn}
\address{School of Mathematical Sciences\\ Nankai University\\ Tianjin 300071 China}
\email{1120210035@mail.nankai.edu.cn}
\thanks{Acknowledgements: This work is supported by the NSFC under the grands 12271269 and supported by the Fundamental Research Funds for the Central Universities.}
\keywords{Mean field games system; Master equation; Grushin type; Degenerate operator; H\"{o}rmander condition}

\begin{abstract}
We study the wellposedness of the master equation for a second-order mean field games with the Grushin type diffusion. In order to do this, we obtain the properties of its solution by investigating a degenerate mean field games system for which there exists an equivalent characterization with the master equation. The crucial points of this paper are to explore some regularities of solutions to two types of linear degenerate partial differential equations and a kind of degenerate linear coupled system so as to derive the existence of solutions to the master equation.
\end{abstract}

\maketitle
\section{Introduction}
\noindent
In this paper, we study a kind of non-linear first-order partial differential equation (PDE in short) stated on the space of probability measures defined on the two-dimensional torus, called master equation as follows
\begin{equation}\label{ME}
\begin{cases}
-\partial_t U(t, x, m)-\Delta_{\mathcal{X}} U(t, x, m)+H\left(x, D_{\mathcal{X}} U(t, x, m)\right)\\
\quad-\int_{\mathbb{T}^2} \Delta_{\mathcal{X}}^y\frac{\delta U}{\delta m}(t, x, m, y) d m(y)\\
\quad+\int_{\mathbb{T}^2} D_{\mathcal{X}}^y \frac{\delta U}{\delta m} (t, x, m, y) \cdot D_p H\left(y, D_{\mathcal{X}} U(t, y, m)\right) d m(y)\\
=F(x, m),\quad \text{in } [0, T] \times \mathbb{T}^2 \times \mathcal{P}\left(\mathbb{T}^2\right), \\
U(T, x, m)=G(x, m),\quad \text{in } \mathbb{T}^2 \times \mathcal{P}\left(\mathbb{T}^2\right),
\end{cases}
\end{equation}
where $H: \mathbb{T}^2 \times \mathbb{R}^2 \rightarrow \mathbb{R}$ is given by $H(x,p):=\frac{1}{2}|p|^2$, and $\frac{\delta U}{\delta m}$ is the first order derivative of $U$ with respect to the measure $m$ (see Definition \ref{derivative w.r.t. m}). In addition, we call $\mathcal{X}=\{X_1,X_2\}:=\{\partial_{x_1},x_1\partial_{x_2}\}$ as Grushin vector fields, which is a typical class of vector fields with an anisotropic structure satisfying the H\"{o}rmander condition (cf. \cite{70Gr,13Br}). For any function $f:\mathbb{T}^2 \rightarrow \mathbb{R}$, we define the subgradient and the hypoelliptic operator associated to $\mathcal{X}$ respectively as
$$
D_{\mathcal{X}}f:=(X_1f,X_2f)^T,\quad\Delta_{\mathcal{X}}f:=\sum_{i=1}^{2}X_i^2f.
$$
While for any vector-valued function $g:\mathbb{T}^2 \rightarrow \mathbb{R}^2$, the corresponding divergence is defined as
$$
\operatorname{div}_{\mathcal{X}}g=X_1g+X_2g.
$$
For the sake of distinction, we use the superscript $y$ in differential operators to denote the operators acting on the $y$ variable.

Heuristically, equation \eqref{ME} is a Hamilton-Jacobi-Bellman (HJB in short) equation in the space of measures, arising from the limit problem of a differential game with finitely many indistinguishable players, in which the dynamic of player $i$, with $1\leq i \leq N$, is driven by the Grushin type diffusion, which means the player may have a ``forbidden" direction on a vertical line (cf. \cite{23JRWX}), namely
\begin{equation*}
\begin{cases}
d X_{s}^i=\left(\begin{array}{l}\alpha^{i,1}\\ \alpha^{i,2} X_{s}^{i,1} \end{array}\right) d s+\sqrt{2}\left(\begin{array}{ll}1 & 0 \\ 0 & X_{s}^{i,1}\end{array}\right) d B_{s}^{i}, \\
X_{t}^i=(x^{i,1},x^{i,2})^T, \text{ in } \mathbb{T}^2,
\end{cases}
\end{equation*}
where $X_s^i=(X_{s}^{i,1},X_{s}^{i,2})^T$ represents the state of the $i$-th player, $\alpha^i=(\alpha^{i,1},\alpha^{i,2})^T$ is the control chosen from a certain set $A_i$, and $\{B_{s}^{i}=(B_s^{i,1},B_s^{i,2})^T\}_{i=1,\ldots,N}$ are independent two-dimensional standard Brownian motions. The player $i$ choose his own strategy in order to minimize the cost function
\begin{equation*}
J^{N,i}(t, \boldsymbol{x}, (\alpha^i)_{i=1,\ldots,N})=\mathbb{E}\left[\int_{t}^{T} \left(L\left(X_{s}^i, \alpha_{s}^i\right)+F^{N,i}\left(\boldsymbol{X}_{s}\right)\right) d s+G^{N,i}\left(\boldsymbol{X}_{T}\right)\right],
\end{equation*}
where the notation $\boldsymbol{x}$ indicates a vector of $\mathbb{T}^{2 \times N}$ defined by $\boldsymbol{x}=(x^1,\ldots,x^N)$, and the same is for $\boldsymbol{X}$. Also here, the Lagrangian $L\left(X_{s}^i, \alpha_{s}^i\right):=\frac{1}{2}|\alpha_{s}^i|^2 $, while $F^{N,i}$ and $G^{N,i}$ are respectively the running and the final cost function associated with the $i$-th player. Now, denoting $\left(v^{N,i}\right)_{i=1,\ldots,N}$ as the value functions of the players, we say that the controls $\left(\hat{\alpha}^i\right)_{i=1,\ldots,N}$ provide a Nash equilibrium if the following inequality holds for all controls $\left(\alpha^i\right)_{i=1,\ldots,N}$ and for all $i$,
\begin{equation*}
v^{N,i}(t,\boldsymbol{x}):=J^{N,i}\left(t,\boldsymbol{x},\left(\hat{\alpha}^i\right)_{i=1,\ldots,N}\right) \leq J^{N,i}\left(t,\boldsymbol{x},\alpha^i,\left(\hat{\alpha}^j\right)_{j \neq i}\right),
\end{equation*}
this means the $i$-th player takes his own optimal strategy, while the other players have taken the control provided by the Nash equilibrium. By using the It\^{o}'s formula and the dynamic programming principle, we can deduce the HJB equations that $\left(v^{N,i}\right)_{i=1,\ldots,N}$ satisfy, namely the Nash system as follows
\begin{equation}\label{Nash system}
\begin{cases}
-\partial_{t} v^{N,i}(t, \boldsymbol{x})-\sum\limits_{j=1}^{N} \Delta_{\mathcal{X}}^{x^j}v^{N,i}(t, \boldsymbol{x})+H^{i}\left(x^i,D_{\mathcal{X}}^{x^i} v^{N,i}(t,\boldsymbol{x})\right)\\
\quad-\sum\limits_{j \neq i}D_pH^j\left(x^j,D_{\mathcal{X}}^{x^j} v^{N,j}(t,\boldsymbol{x})\right) \cdot D_{\mathcal{X}}^{x^j} v^{N,i}(t,\boldsymbol{x})=F^{N,i}(\boldsymbol{x}),\\
\quad \text {in }[0,T] \times \mathbb{T}^{2 \times N}, i \in\{1, \ldots, N\},\\
v^{N,i}(T,\boldsymbol{x})=G^{N,i}(\boldsymbol{x}), \quad \text {in }\mathbb{T}^{2 \times N}.
\end{cases}
\end{equation}
Note that the Hamiltonian of the system is the Fenchel conjugate of the Lagrangian $L$, namely
\begin{equation*}
H^i(x,p):=\sup_{a \in A_i} \left(-a \cdot p-\frac{1}{2}|a|^2\right)=\frac{1}{2}|p|^2.
\end{equation*}
This leads to the optimal feedback strategies as
\begin{equation*}
\left(\hat{\alpha}^i(t,\boldsymbol{x})=-D_pH^i\left(x^i,D_{\mathcal{X}}^{x^i} v^{N,i}(t,\boldsymbol{x})\right)\right)_{i=1,\ldots,N}.
\end{equation*}
Due to the symmetry of the game, we can suppose that $F^{N,i}$, $G^{N,i}$ and $v^{N,i}$ take the form
\begin{equation*}
F^{N,i}(\boldsymbol{x}) \simeq F(x^i,m_{\boldsymbol{x}}^{N,i}),\quad G^{N,i}(\boldsymbol{x}) \simeq G(x^i,m_{\boldsymbol{x}}^{N,i}),
\end{equation*}
and
\begin{equation*}
v^{N,i}(t,\boldsymbol{x}) \simeq U(t,x^i,m_{\boldsymbol{x}}^{N,i}),
\end{equation*}
where $m_{\boldsymbol{x}}^{N,i}:=\frac{1}{N-1}\sum\limits_{j \neq i}\delta_{x^j}$ is the empirical measure of the players except player $i$, and $U$ maps from $[0,T]\times \mathbb{T}^2 \times \mathcal{P}(\mathbb{T}^2)$ to $\mathbb{R}$. Similar to the method in \cite[Proposition 6.1.1]{19CDLL} to compute the relevant derivatives, we can have
\begin{equation*}
D_{\mathcal{X}}^{x^j} v^{N,i}(t,\boldsymbol{x}) \simeq
\begin{cases}
D_{\mathcal{X}}U\left(t,x^i,m_{\boldsymbol{x}}^{N,i}\right), & \mbox{if } j=i, \\
\frac{1}{N-1}D_{\mathcal{X}}^{x^j}\frac{\delta U}{\delta m}\left(t,x^i,m_{\boldsymbol{x}}^{N,i},x^j\right), & \mbox{otherwise},
\end{cases}
\end{equation*}
and
\begin{equation*}
\Delta_{\mathcal{X}}^{x^j} v^{N,i}(t,\boldsymbol{x}) \simeq
\begin{cases}
\Delta_{\mathcal{X}}U\left(t,x^i,m_{\boldsymbol{x}}^{N,i}\right), & \mbox{if } j=i,\\
\frac{1}{N-1}\Delta_{\mathcal{X}}^{x^j}\frac{\delta U}{\delta m}\left(t,x^i,m_{\boldsymbol{x}}^{N,i},x^j\right)\\
\quad+\left(\frac{1}{N-1}\right)^2 \operatorname{Tr}\left[D_{\mathcal{X}}^{x^j,x^j}\frac{\delta^2 U}{\delta m^2}\right]\left(t,x^i,m_{\boldsymbol{x}}^{N,i},x^j,x^j\right), & \mbox{otherwise}.
\end{cases}
\end{equation*}
Substituting the above relations into the Nash system \eqref{Nash system} and then letting $N \rightarrow \infty$, we thus obtain the master equation of the form in \eqref{ME}.

The mean field games (MFG in short) theory is usually utilized to solve the problem of differential games with infinitely many small and indistinguishable players. This theory was first introduced in 2006 by Lasry and Lions \cite{06LL_I,06LL_II,07LL,08LLG}. In the same years, Huang, Caines and Malham\'{e} \cite{06HCM} also established similar definitions. The limit problem in MFG theory boils down to the study of a coupled system of PDEs, which consists of a backward HJB equation satisfied by the value function $u$ of individual players and a forward Kolmogorov-Fokker-Planck (KFP in short) equation satisfied by the distribution law $m$ of the population. It was proved by Lions in his lectures at Coll\`{e}ge de France \cite{Li} that there exists an equivalent characterization between the solutions to the MFG system and the solutions to the master equation. Thus the study of the Nash equilibria of a MFG can reduce to the analysis of one unique equation. It is worth mentioning that the form of the MFG system considered in this paper is as follows
\begin{equation}\label{MFG system}
\begin{cases}
-\partial_t u-\Delta_{\mathcal{X}}u+H(x, D_{\mathcal{X}} u)=F(x, m(t)), & \mbox{in } [0,T] \times \mathbb{T}^2, \\
\partial_t m-\Delta_{\mathcal{X}} m-\operatorname{div}_{\mathcal{X}}\left(m D_pH(x, D_{\mathcal{X}}u)\right)=0, & \mbox{in } [0,T] \times \mathbb{T}^2, \\
u(T,x)=G(x, m(T)),\quad m(0)=m_0, & \mbox{in } \mathbb{T}^2,
\end{cases}
\end{equation}
which is composed of two degenerate parabolic equations.

In the last decade, different papers have investigated the master equation and presented the most critical issues such as existence, uniqueness and regularity results. For instance, it was recharacterized by Bensoussan, Frehse and Yam in \cite{15BFY,17BFY} as a set of PDEs on $L^{2}$ spaces, while in \cite{14CD} it was interpreted as a decoupled field of infinite-dimensional forward-backward stochastic differential equations by Carmona and Delarue. Furthermore, Gangbo and \'{S}wi\c{e}ch \cite{15GS} proved a small time existence for the first-order master equation. And it is worth noting that Cardaliaguet et al. \cite{19CDLL} provided some very general and well-known good treatment results of the wellposedness of classical solutions to both first and second-order master equation, corresponding to the nonlocal MFG without and with common noise, respectively. Notably, we can refer to \cite{23JR} for new findings and clarifications concerning the results in \cite{19CDLL}. From a probabilistic point of view, Chassagneux et al. \cite{14CCD} proved for the first time the wellposedness of  the first-order master equation using a probabilistic approach. While Buckhdan et al. \cite{17BLPR} proved the existence of a classical solution in the case of no coupling and no common noise using probabilistic arguments. In recent years, Ricciardi \cite{21Ri} studied the wellposedness of the MFG master equation in a framework of Neumann boundary condition. For the first time, Gangbo et al. \cite{22GMMZ} provided the first global in time well-posedness result in the case of non-separable displacement monotone Hamiltonians and non-degenerate idiosyncratic noise, which is the second important breakthrough since the work of \cite{19CDLL}. In addition to classical solutions, recent researches on weak solutions are likewise important. Bertucci presented the notion of monotone solution of master equations in the case of finite or continuous state space in \cite{21Be_F,21Be_C}. Meanwhile, Cardaliaguet and Souganidis \cite{21CS} introduced a notion of weak solution to the master equation without idiosyncratic noise.

Compared to the case of uniform elliptic, the researches on the degenerate MFG problems have emerged more recently. For the studies of hypoelliptic MFG, Dragoni and Feleqi \cite{18DF} studied the ergodic MFG systems with H\"{o}rmander diffusion, which is a class of systems of degenerate elliptic PDEs satisfying H\"{o}rmander condition. While Feleqi et al. \cite{20FGT} considered hypoelliptic MFG systems with quadratic Hamiltonians and proved the existence and uniqueness of the solution using the technique of Hopf-Cole transform. Furthermore, Mimikos-Stamatopoulos \cite{21Mi} considered the hypoelliptic MFG system with local coupling, driven by a KFP diffusion. While Jiang et al. \cite{23JRWX} proved the global wellposedness of the hypoelliptic MFG systems with Grushin structure. As for the general degenerate MFG, Cardaliaguet et al. \cite{15CGPT} proved existence and uniqueness of a suitably defined weak solution to the degenerate parabolic MFG system with local coupling. And Ferreira et al. \cite{21FGT} extended the existence of weak solutions to a wide class of time-dependent degenerate MFG systems. Moreover, Cardaliaguet et al. \cite{22CSS} built a new notion of probabilistically weak solutions for the MFG systems with common noise and degenerate idiosyncratic noise. There are fewer researches on the degenerate MFG master equations. Unlike this paper, Bayraktar et al. \cite{19BCCD} studied the finite state MFG master equation with Wright-Fisher common noise, which is a degenerate parabolic second-order PDE set on the simplex. Bansil et al. \cite{23BMM} constructed global in time classical solutions to degenerate MFG master equations without idiosyncratic noise, and this work was compared to \cite{22GMMZ} for the extension under lower level regularity assumptions on the data, and the weaker version of the displacement monotonicity condition on the Hamiltonians. More recently, Bansil and M\'{e}sz\'{a}ros \cite{24BM} proposed novel monotonicity conditions applicable for MFG and established new global well-posedness results for the associated master equations in the case of potentially degenerate idiosyncratic noise.

\section{Notation, assumptions and main results}\label{Sec_2}
\noindent
Throughout this paper, suppose that $T > 0$ is a fixed time, and $\mathbb{T}^d:=\mathbb{R}^d / \mathbb{Z}^d$ denotes the d-dimensional torus, which is a bounded compact space with distance $d_{\mathbb{T}^d}$.

Given a family of vector fields $\mathcal{X}=\{X_1,\ldots,X_m\}$ defined on $\mathbb{T}^d$, which satisfies the H\"{o}rmander condition. (For more details about H\"{o}rmander vector fields we refer to \cite{06Mo}.) The dual vector fields are defined by $X_i^*:=-X_i-\operatorname{div}X_i$ where $\operatorname{div}X_i$ indicate the standard (Euclidean) divergence of the vector fields $X_i:\mathbb{T}^d\rightarrow\mathbb{R}^d$. For any $x,y \in \mathbb{T}^d$, the Carnot-Carath\'{e}odory distance induced by the family $\mathcal{X}=\{X_1,\ldots,X_m\}$ is defined as
$$
d_{cc}(x, y):=\inf \{l(\gamma) \mid \gamma:[0, T] \rightarrow \mathbb{T}^d \text{ is } \mathcal{X}\text{-subunit}, \gamma(0)=x, \gamma(T)=y\},
$$
where we call $\mathcal{X}$-subunit any absolutely continuous curve $\gamma$ such that
$$
\gamma^{\prime}(t)=\sum_{j=1}^m \lambda_j(t) X_j(\gamma(t)), \text{ a.e. }t \in (0,T),
$$
and $l(\gamma)$ is defined as the length valued as
$$
l(\gamma):=\int_{0}^{T}\sqrt{\sum_{j=1}^{m}\lambda_j^2(t)}dt.
$$
For $x \in \mathbb{T}^d$, we introduce the $d_{cc}$-ball as
$$
B_r(x)=\{y \in \mathbb{T}^d:d_{cc}(x,y)<r\}.
$$
The H\"{o}rmander condition implies that the distance $d_{cc}(x, y)$ is finite and continuous with respect to the original Euclidean topology induced on $\mathbb{T}^d$. We also know that there exists $C>0$ such that
\begin{equation}\label{d_cc VS Euclidean}
C^{-1}d_{\mathbb{T}^d}(x,y) \leq d_{cc}(x, y) \leq Cd_{\mathbb{T}^d}(x,y)^{\frac{1}{k}},
\end{equation}
where $k \in \mathbb{N}$ is the step, i.e. the maximum of the degrees of the iterated brackets occurring in the fulfillment of the H\"{o}rmander condition. In particular, as for the Grushin vector fields $\mathcal{X}=\{X_1,X_2\}=\{\partial_{x_1},x_1\partial_{x_2}\}$, we have $d=2$, $X_i^*=-X_i$ and the step $k=2$.

The set $\mathcal{P}\left(\mathbb{T}^{d}\right)$ of Borel probability measures on $\mathbb{T}^{d}$ is endowed with the Kantorovich-Rubinstein distance
$$
d_1\left(m, m'\right):=\sup_\phi \int_{\mathbb{T}^d} \phi(y) d\left(m-m'\right)(y)
$$
where the supremum is taken over all $d_{cc}$-Lipschitz continuous maps $\phi: \mathbb{T}^d \rightarrow \mathbb{R}$ with a Lipschitz constant bounded by $1$. It can be known that $d_1$ is well-defined and this distance metricizes the weak convergence of measures (cf. \cite[Section 5.1]{18CD}). Also we have the following equivalent definition:
$$
d_1\left(m, m'\right):=\inf_{\gamma \in \Pi\left(m, m'\right)} \left[\int_{\mathbb{T}^d \times \mathbb{T}^d} d_{cc}(x,y) d\gamma(x,y)\right],
$$
where $\Pi\left(m, m'\right)$ is the set of Borel joint probability measures on $\mathbb{T}^d \times \mathbb{T}^d$ such that the marginal probability measures are respectively $m$ and $m'$.

When the probability measure $m$ is absolutely continuous with respect to the Lebesgue measure, we use the same letter $m$ to denote its density. Namely, we write $m: \mathbb{T}^d \ni x \mapsto m(x) \in \mathbb{R}_{+}$. In addition, we often consider flows of time dependent measures of the form $(m(t))_{t \in[0, T]}$, with $m(t) \in \mathcal{P}\left(\mathbb{T}^d\right)$ for any $t \in[0, T]$. For each time $t \in[0, T]$, if $m(t)$ is absolutely continuous with respect to the Lebesgue measure on $\mathbb{T}^d$, we can identify $m(t)$ with its density and we sometimes denote by $m:[0, T] \times \mathbb{T}^d \ni(t, x) \mapsto m(t, x) \in \mathbb{R}_{+}$ the collection of the densities.

We introduce the definition of the first order derivative of a function with respect to the measure (cf. \cite[Definition 2.2.1]{19CDLL}).
\begin{Def}\label{derivative w.r.t. m}
Suppose $U$ maps from $\mathcal{P}\left(\mathbb{T}^d\right)$ into $\mathbb{R}$. We say that $U$ is $\mathcal{C}^1$ if there exists a continuous map $\frac{\delta U}{\delta m}: \mathcal{P}\left(\mathbb{T}^d\right) \times \mathbb{T}^d \rightarrow \mathbb{R}$ such that, for any $m, m^{\prime} \in \mathcal{P}\left(\mathbb{T}^d\right)$,
$$
\lim _{s \rightarrow 0^{+}} \frac{U\left((1-s) m+s m^{\prime}\right)-U(m)}{s}=\int_{\mathbb{T}^d} \frac{\delta U}{\delta m}(m, y) d\left(m^{\prime}-m\right)(y) .
$$
\end{Def}
Note that $\frac{\delta U}{\delta m}$ is defined up to an additive constant. To ensure uniqueness, we add the normalization convention
$$
\int_{\mathbb{T}^d} \frac{\delta U}{\delta m}(m, y) d m(y)=0 .
$$
The integral form is that, for any $m, m^{\prime} \in \mathcal{P}\left(\mathbb{T}^d\right)$,
\begin{equation*}
U \left(m^{\prime}\right)-U(m) =\int_0^1 \int_{\mathbb{T}^d} \frac{\delta U}{\delta m}\left((1-s) m+s m^{\prime}, y\right) d\left(m^{\prime}-m\right)(y) d s .
\end{equation*}

Next we introduce an important class of weighted H\"{o}lder spaces associated to the family of vector fields $\mathcal{X}=\{X_1,\ldots,X_m\}$ (cf. \cite{07BB,10BBLU}).

Let $X^J:=X_{j_1} \cdots X_{j_m}$, where $J$ is any multi-index $J=\left(j_1, \ldots, j_m\right) \in \mathbb{Z}_{+}^m$ with the length $|J|=j_1+\cdots+j_m$, thus $X^J$ is a linear differential operator of order $|J|$. For $n \in \mathbb{N}$ and $\alpha \in(0,1)$ we define the weighted H\"{o}lder spaces
$$
\begin{aligned}
& C_{\mathcal{X}}^{\alpha}\left(\mathbb{T}^d\right):=\left\{\phi \in L^{\infty}\left(\mathbb{T}^d\right): \sup _{\substack{x, y \in \mathbb{T}^d \\
x \neq y}} \frac{|\phi(x)-\phi(y)|}{d_{cc}(x, y)^\alpha}<\infty\right\}, \\
& C_{\mathcal{X}}^{n+ \alpha}\left(\mathbb{T}^d\right):=\left\{\phi \in L^{\infty}\left(\mathbb{T}^d\right): X^J \phi \in C_{\mathcal{X}}^{ \alpha}\left(\mathbb{T}^d\right), \text{ any }|J| \leq n\right\}.
\end{aligned}
$$
For any function $\phi \in C_{\mathcal{X}}^{ \alpha}\left(\mathbb{T}^d\right)$, the H\"{o}lder seminorm can be defined as
$$
[\phi]_{C_{\mathcal{X}}^{ \alpha}\left(\mathbb{T}^d\right)}:=\sup _{\substack{x, y \in \mathbb{T}^d \\ x \neq y}} \frac{|\phi(x)-\phi(y)|}{d_{cc}(x, y)^\alpha}.
$$
Furthermore, for any $\phi \in C_{\mathcal{X}}^{n+ \alpha}\left(\mathbb{T}^d\right)$, the H\"{o}lder norm is defined as
\begin{equation}\label{Holder norm}
\|\phi\|_{C_{\mathcal{X}}^{n+ \alpha}\left(\mathbb{T}^d\right)}:=\|\phi\|_{C_{\mathcal{X}}^n\left(\mathbb{T}^d\right)}+\sum_{0 \leq|J| \leq n}\left[X^J \phi\right]_{C_{\mathcal{X}}^{ \alpha}\left(\mathbb{T}^d\right)},
\end{equation}
where $\|\phi\|_{C_{\mathcal{X}}^n\left(\mathbb{T}^d\right)}:=\sum\limits_{0 \leq |J|\leq n}\|X^J\phi\|_{L^{\infty}\left(\mathbb{T}^d\right)}$.

Endowed with the above norm, $C_{\mathcal{X}}^{n+ \alpha}\left(\mathbb{T}^d\right)$ is a Banach space and it follows form \eqref{d_cc VS Euclidean} that
$$
C^{-1}\|\phi\|_{C^{ \frac{\alpha}{k}}\left(\mathbb{T}^d\right)} \leq\|\phi\|_{C_{\mathcal{X}}^{\alpha}\left(\mathbb{T}^d\right)} \leq C\|\phi\|_{C^{ \alpha}\left(\mathbb{T}^d\right)},
$$
where $\|\phi\|_{C^{ \alpha}\left(\mathbb{T}^d\right)}$ is the standard H\"{o}lder norm, and $C>0$ is a constant depending only on the dimension $d$ and the family of vector fields $\mathcal{X}=\left\{X_1, \ldots, X_m\right\}$.

We can also define the parabolic Carnot-Carath\'{e}odory distance
$$
d_p\left((t,x),(s,y)\right):=\sqrt{d_{cc}(x,y)^2+|t-s|},
$$
which is a well-defined distance on $\mathbb{R} \times \mathbb{T}^d$. Replacing distance $d_{cc}$ with $d_p$, we can define the corresponding parabolic weighted H\"{o}lder spaces on $[0,T] \times \mathbb{T}^d$ as
$$
\begin{aligned}
& C_{\mathcal{X}}^{\frac{\alpha}{2},\alpha}\left([0,T]\times\mathbb{T}^d\right)\\
:=&\left\{\phi \in L^{\infty}\left([0,T]\times\mathbb{T}^d\right): \sup _{\substack{(t,x), (s,y) \in [0,T]\times\mathbb{T}^d \\
(t,x) \neq (s,y)}} \frac{|\phi(t,x)-\phi(s,y)|}{d_p\left((t,x), (s,y)\right)^\alpha}<\infty\right\}, \\
& C_{\mathcal{X}}^{\frac{n+\alpha}{2},n+\alpha}\left([0,T]\times\mathbb{T}^d\right)\\
:=&\left\{\phi \in L^{\infty}\left([0,T]\times\mathbb{T}^d\right): \partial_t^i X^J \phi \in C_{\mathcal{X}}^{\frac{\alpha}{2},\alpha}\left([0,T]\times\mathbb{T}^d\right), \text{ any }|J|+2i \leq n\right\}.
\end{aligned}
$$
with the seminorm
$$
[\phi]_{C_{\mathcal{X}}^{\frac{\alpha}{2},\alpha}\left([0,T]\times\mathbb{T}^d\right)}:=\sup _{\substack{(t,x), (s,y) \in [0,T]\times\mathbb{T}^d \\
(t,x) \neq (s,y)}} \frac{|\phi(t,x)-\phi(s,y)|}{d_p\left((t,x), (s,y)\right)^\alpha},
$$
and the norm
\begin{equation}\label{para. Holder norm}
\!\|\phi\|_{C_{\mathcal{X}}^{\frac{n+\alpha}{2},n+\alpha}\left([0,T]\times\mathbb{T}^d\right)}\!:=\!\sum_{0 \leq|J|+2i \leq n}\left(\!\|\partial_t^i X^J\phi\|_{L^{\infty}\!\left([0,T]\times\mathbb{T}^d\right)} \!+\!\left[\partial_t^i X^J \phi\right]_{C_{\mathcal{X}}^{\frac{\alpha}{2},\alpha}\left([0,T]\times\mathbb{T}^d\right)}\!\right)\!.
\end{equation}
To simplify the notations, we can abbreviate the H\"{o}lder norms in \eqref{Holder norm} and \eqref{para. Holder norm} respectively as $\|\cdot\|_{n+\alpha}$ and $\|\cdot\|_{\frac{n+\alpha}{2},n+\alpha}$.

If a smooth map $\phi$ depends on two space variables, for example, $\phi=\phi(x, y)$, and $m, n \in \mathbb{N}$ are the order of derivation of $\phi$ with respect to $x$ and $y$ respectively, we define
$$
\|\phi\|_{(m, n)}:=\sum_{|J| \leqslant m,\left|J'\right| \leqslant n}\left\|X^{\left(J,J'\right)} \phi\right\|_{\infty}.
$$
Moreover, if the derivatives are $C_{\mathcal{X}}^{\alpha}$ continuous,
$$
\|\phi\|_{(m+\alpha, n+\alpha)}\! :=\! \|\phi\|_{(m, n)}\!+\! \sum_{|J|\leq m,\left|J'\right|\leq n}\! \sup _{(x, y) \neq\left(x^{\prime}, y^{\prime}\right)}\! \frac{\left|X^{\left(J,J'\right)} \phi(x, y)-X^{\left(J,J'\right)} \phi\left(x^{\prime}, y^{\prime}\right)\right|}{d_{cc}\left(x,x'\right)^\alpha +d_{cc}\left(y,y'\right)^\alpha}\!.
$$

Analogously, let $k$ be a non-negative integer and $1\leq p \leq \infty$, we denote by $W_{\mathcal{X}}^{k,p}(\mathbb{T}^d)$ the weighted Sobolev space associated to the family of vector fields $\mathcal{X}=\{X_1,\ldots,X_m\}$ such that
\begin{equation*}
W_{\mathcal{X}}^{k,p}\left(\mathbb{T}^d\right):=\left\{\phi \in L^{p}\left(\mathbb{T}^d\right): X^J \phi \in L^{p}\left(\mathbb{T}^d\right), \text{ any }|J| \leq k\right\}.
\end{equation*}
Endowed with the above norm, $W_{\mathcal{X}}^{k,p}\left(\mathbb{T}^d\right)$ is a Banach space.

The dual space of $C_{\mathcal{X}}^{n+\alpha}$ is denoted by $C_{\mathcal{X}}^{-(n+\alpha)}$ with norm
$$
\|\psi\|_{-(n+\alpha)}:=\sup _{\|\phi\|_{n+\alpha} \leqslant 1}\langle\psi, \phi\rangle_{C_{\mathcal{X}}^{-(n+\alpha)}, C_{\mathcal{X}}^{n+\alpha}}, \text{ for any } \psi \in C_{\mathcal{X}}^{-(n+\alpha)}.
$$
The same for the dual spaces of $C_{\mathcal{X}}^{\frac{n+\alpha}{2},n+\alpha}$ and $W_{\mathcal{X}}^{k,\infty}$, denoting respectively as $C_{\mathcal{X}}^{-\frac{n+\alpha}{2},-(n+\alpha)}$ and $W_{\mathcal{X}}^{-k,\infty}$.

We need the following hypotheses:
\begin{enumerate}[H1)]
\item $F: \mathbb{T}^2 \times \mathcal{P}(\mathbb{T}^2) \rightarrow \mathbb{R}$ satisfies, for some $\alpha \in (0,1)$ and $C_F>0$,
\begin{equation}\label{monotonicity_F}
\int_{\mathbb{T}^2}\left(F(x, m)-F\left(x, m^{\prime}\right)\right) d\left(m-m^{\prime}\right)(x) \geq 0,
\end{equation}
\begin{equation}\label{property_F}
\int_{\mathbb{T}^2}\int_{\mathbb{T}^2} \frac{\delta F}{\delta m}(x,m,y)\rho(x)\rho(y)dxdy \geq 0
\end{equation}
for any $\rho \in C_{\mathcal{X}}^{-(1+\alpha)}\left(\mathbb{T}^2\right)$ and any $m \in \mathcal{P}\left(\mathbb{T}^2\right)$. And we have
$$
\begin{gathered}
\sup _{m \in \mathcal{P}(\mathbb{T}^2)}\left(\|F(\cdot, m)\|_{1+\alpha}+\left\|\frac{\delta F}{\delta m}(\cdot, m, \cdot)\right\|_{(1+\alpha,2+\alpha)}\right)+\operatorname{Lip}\left(\frac{\delta F}{\delta m}\right) \leq C_F, \\
\operatorname{Lip}\left(\frac{\delta F}{\delta m}\right)\!:=\!\sup _{\substack{m_1,m_2 \in \mathcal{P}(\mathbb{T}^2)\\ m_1 \neq m_2}}\left(d_1\left(m_1, m_2\right)^{-1}\left\|\frac{\delta F}{\delta m}\!\left(\!\cdot, m_1, \cdot\right)-\frac{\delta F}{\delta m}\left(\cdot, m_2, \cdot\right)\right\|_{(1+\alpha, 2+\alpha)}\!\right)\!;
\end{gathered}
$$\label{hyp1}
\item $G: \mathbb{T}^2 \times \mathcal{P}(\mathbb{T}^2) \rightarrow \mathbb{R}$ satisfies similar estimates as $F$ with $1+\alpha$ replaced by $2+\alpha$, namely,
\begin{equation}\label{monotonicity_G}
\int_{\mathbb{T}^2}\left(G(x, m)-G\left(x, m^{\prime}\right)\right) d\left(m-m^{\prime}\right)(x) \geq 0,
\end{equation}
\begin{equation}\label{property_G}
\int_{\mathbb{T}^2}\int_{\mathbb{T}^2} \frac{\delta G}{\delta m}(x,m,y)\rho(x)\rho(y)dxdy \geq 0
\end{equation}
for any $\rho \in C_{\mathcal{X}}^{-(1+\alpha)}\left(\mathbb{T}^2\right)$ and any $m \in \mathcal{P}\left(\mathbb{T}^2\right)$.
$$
\sup _{m \in \mathcal{P}(\mathbb{T}^2)}\left(\|G(\cdot, m)\|_{2+\alpha}+\left\|\frac{\delta G}{\delta m}(\cdot, m, \cdot)\right\|_{(2+\alpha, 2+\alpha)}\right)+\operatorname{Lip}\left(\frac{\delta G}{\delta m}\right) \leq C_G,
$$
$$
\operatorname{Lip}\left(\frac{\delta G}{\delta m}\right)\!:=\!\sup _{\substack{m_1,m_2 \in \mathcal{P}(\mathbb{T}^2)\\ m_1 \neq m_2}}\!\left(\!d_1\left(m_1, m_2\right)^{-1}\left\|\frac{\delta G}{\delta m}\left(\cdot, m_1, \cdot\right)-\frac{\delta G}{\delta m}\left(\cdot, m_2, \cdot\right)\right\|_{(2+\alpha, 2+\alpha)}\!\right)\!.
$$\label{hyp2}
\end{enumerate}
\begin{Rem}
The hypotheses \eqref{property_F} and \eqref{property_G} are respectively stronger than the monotonicity conditions \eqref{monotonicity_F} and \eqref{monotonicity_G}. In fact, the latter cannot imply the former, as we can find a counter-example in \cite[Remark 2.23]{23JR}. We can note by \cite[Remark 2.3(i)]{22GMMZ} that \eqref{monotonicity_F} implies property \eqref{property_F} with $D_{x}\frac{\delta F}{\delta m}$ instead of $\frac{\delta F}{\delta m}$.
\end{Rem}

Before giving the main results, let us first state the concept of the solution to the master equation.
\begin{Def}\label{Def_ME sol.}
We say that a map $U:[0, T] \times \mathbb{T}^{2} \times \mathcal{P}\left(\mathbb{T}^{2}\right) \rightarrow \mathbb{R}$ is a solution to the first-order master equation \eqref{ME} if
\begin{enumerate}
\item $U$ is continuous in all its arguments (for the $d_1$ distance on $\mathcal{P}\left(\mathbb{T}^{2}\right)$), and is of class $C_{\mathcal{X}}^{2}$ in $x$ and $C^1$ in $t$ (the corresponding derivatives are continuous in all the arguments);
\item $U$ is of class $C^1$ with respect to $m$, the first-order derivative
    $$
    [0, T] \times \mathbb{T}^{2} \times \mathcal{P}\left(\mathbb{T}^{2}\right) \times \mathbb{T}^{2} \ni (t, x, m, y) \mapsto \frac{\delta U}{\delta m}(t, x, m, y),
    $$
    is continuous in all the arguments. Also $\frac{\delta U}{\delta m}$ is $C_{\mathcal{X}}^2$ in $y$, with the derivatives being continuous in all the arguments;
\item U satisfies the master equation \eqref{ME}.
\end{enumerate}
\end{Def}
With the previous hypotheses we would like to investigate the existence and uniqueness of the solution to the master equation \eqref{ME}.
\begin{Thm}[wellposedness of the master equation]\label{Thm_ME wellposed regularity}
Suppose hypotheses H\ref{hyp1}) and H\ref{hyp2}) are satisfied. Then there exists a unique solution U of the master equation \eqref{ME} in the sense of Definition \ref{Def_ME sol.}.

Moreover, the derivative $\frac{\delta U}{\delta m}$ satisfies
\begin{equation*}
\sup_{(t,m) \in [0,T] \times \mathcal{P}(\mathbb{T}^2)}\left\|\frac{\delta U}{\delta m}\left(t, \cdot, m, \cdot\right)\right\|_{(2+\alpha,2+\alpha)} \leqslant C
\end{equation*}
and is Lipschitz continuous with respect to $m$, namely
\begin{equation*}
\sup_{t \in [0,T]}\sup_{m_1 \neq m_2}\left(d_{1}\left(m_1,m_2\right)\right)^{-1}\left\|\frac{\delta U}{\delta m}\left(t,\cdot,m_1,\cdot\right)-\frac{\delta U}{\delta m}\left(t, \cdot, m_2,\cdot\right)\right\|_{2+\alpha,2+\alpha} < \infty.
\end{equation*}
\end{Thm}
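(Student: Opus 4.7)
The plan is to follow the strategy of \cite{19CDLL} adapted to the degenerate Grushin setting, using the equivalent characterization between the master equation \eqref{ME} and the MFG system \eqref{MFG system}. For each initial pair $(t_0, m_0) \in [0,T] \times \mathcal{P}(\mathbb{T}^2)$, I would first solve \eqref{MFG system} on $[t_0, T] \times \mathbb{T}^2$ with $m(t_0) = m_0$, obtaining a pair $(u^{t_0, m_0}, m^{t_0, m_0})$; the wellposedness together with hypoelliptic Schauder regularity in the spaces $C_{\mathcal{X}}^{\frac{n+\alpha}{2}, n+\alpha}$ would be developed as a preliminary in the spirit of \cite{23JRWX}. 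I would then define
$$
U(t_0, x, m_0) := u^{t_0, m_0}(t_0, x),
$$
so that the $C_{\mathcal{X}}^2$ regularity in $x$ and the $C^1$ regularity in $t$ required by Definition \ref{Def_ME sol.} are inherited directly from the MFG solution.

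The central step is the differentiability of $U$ in $m_0$. Perturbing $m_0$ along $m_0 + s \mu$ with $\mu$ of zero total mass and linearizing \eqref{MFG system} produces a degenerate linear coupled forward-backward system: a backward linear hypoelliptic equation for $v$ with source $\int_{\mathbb{T}^2}\frac{\delta F}{\delta m}(\cdot, m(t), y)\, d\rho(t)(y)$ and terminal data built from $\frac{\delta G}{\delta m}$, coupled through $D_{\mathcal{X}} v$ to a forward degenerate KFP-type equation for a signed measure $\rho$ with drift $-D_p H(x, D_{\mathcal{X}} u)$. The identification
$$
\int_{\mathbb{T}^2} \frac{\delta U}{\delta m}(t_0, x, m_0, y)\, d\mu(y) = v(t_0, x)
$$
then delivers $\delta U/\delta m$ and its claimed $(2+\alpha, 2+\alpha)$ bound, provided one has Schauder-type estimates for $v$ in $C_{\mathcal{X}}^{\frac{2+\alpha}{2}, 2+\alpha}$ together with dual estimates for $\rho$ in $C_{\mathcal{X}}^{-\frac{2+\alpha}{2}, -(2+\alpha)}$. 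Differencing this construction in $m_0$ and running a Gronwall-in-$d_1$ argument on the linearized system yields the advertised $d_1$-Lipschitz dependence of $\frac{\delta U}{\delta m}$.

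The hardest part is precisely the regularity theory for this linear coupled degenerate system. Concretely, I need: (i) Schauder estimates in $C_{\mathcal{X}}^{\frac{2+\alpha}{2}, 2+\alpha}$ for backward Grushin-parabolic equations with H\"older drift $D_p H(x, D_{\mathcal{X}} u)$; (ii) dual estimates for the degenerate KFP-type equation in the negative weighted H\"older norms; and (iii) a fixed-point or duality argument that closes the forward-backward coupling globally on $[t_0, T]$, where the stronger monotonicity hypotheses \eqref{property_F} and \eqref{property_G} supply the coercivity needed to prevent blow-up in time. Because $\Delta_{\mathcal{X}}$ is only hypoelliptic, all of (i)--(iii) must be carried out in the Carnot-Carath\'eodory H\"older framework of Section \ref{Sec_2} rather than the classical Schauder theory used in \cite{19CDLL}, and this is where the bulk of the technical work lies.

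With the above estimates in hand, I would verify that $U$ solves \eqref{ME} by differentiating in $t$ the identity $U(t, x, m^{t_0, m_0}(t)) = u^{t_0, m_0}(t, x)$: the chain rule on $\mathcal{P}(\mathbb{T}^2)$ produces the two integral terms of \eqref{ME} from the KFP equation satisfied by $m^{t_0, m_0}$, while the $t$-derivative of $u^{t_0, m_0}$ supplies the HJB part. Uniqueness follows in the classical Lasry-Lions manner: given two solutions $U_1, U_2$ in the sense of Definition \ref{Def_ME sol.}, one evaluates each along the KFP flow of the other, forms the difference, and exploits the monotonicity conditions \eqref{monotonicity_F}--\eqref{monotonicity_G} to conclude $U_1 \equiv U_2$. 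Everything else reduces to this regularity package; the global-in-time hypoelliptic estimates for the linear coupled system are what I expect to occupy most of the work.
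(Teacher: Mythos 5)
Your strategy mirrors the paper's exactly: define $U(t_0,x,m_0) := u^{t_0,m_0}(t_0,x)$ from the MFG system, differentiate in $m$ via the linearized forward-backward system to identify $\frac{\delta U}{\delta m}$, verify \eqref{ME} by the chain rule along the KFP flow, and obtain uniqueness via Lasry--Lions monotonicity (the paper phrases this as reducing any other master-equation solution $V$ to an MFG pair $(\tilde u,\tilde m)$ and invoking MFG uniqueness, which is the same mechanism you describe). One small imprecision to flag: you ask for Schauder estimates for $v$ in $C_{\mathcal{X}}^{\frac{2+\alpha}{2},2+\alpha}$, but the source $\frac{\delta F}{\delta m}(\cdot,m(t))(\rho(t))$ is only $C^{\frac{\alpha}{2}}$ in time (at best), so the full parabolic Schauder estimate does not apply; the paper circumvents this by proving a weaker but sufficient bound $\sup_t\|z(t,\cdot)\|_{2+\alpha}\lesssim \|z_T\|_{2+\alpha}+\sup_t\|f(t,\cdot)\|_{1+\alpha}$ (Lemma \ref{Lem_LHJB wellposed regularity}) that requires only $f\in C([0,T];C_{\mathcal{X}}^{1+\alpha})$, which is indeed all that the spatial $(2+\alpha,2+\alpha)$ bound on $\frac{\delta U}{\delta m}$ needs.
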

The main idea of the proof stems from \cite{19CDLL}, which is quite classical. More precisely, the uniqueness is obtained by proving that with the solution of the master equation \eqref{ME} we can construct a solution of the MFG system \eqref{MFG system}, and then by uniqueness of the system. Conversely, we consider the unique solution $(u,m) \in C_{\mathcal{X}}^{1,2}\left([t_0,T]\times\mathbb{T}^2\right) \times C\left([t_0,T];\mathcal{P}(\mathbb{T}^2)\right)$ to the MFG system \eqref{MFG system} with initial condition $m(t_0)=m_0$ for any $\left(t_0,m_0\right)\in[0,T]\times\mathcal{P}\left(\mathbb{T}^2\right)$, then we define
\begin{equation*}
U\left(t_0, x, m_0\right):=u\left(t_0, x\right).
\end{equation*}
The existence can be obtained by proving that $U$ is a solution of the master equation \eqref{ME}. In order to do this, we need to prove some preliminary regularities of $U$ (Proposition \ref{Prop_MFG system wellposed regularity}), obtained by an insight into properties of the solution to the MFG system \eqref{MFG system}, which are uniformly in $(t_0,m_{0})$. In addition, we also need to prove the Lipschitz property, $C^1$ differentiability of $U$ and moreover the Lipschitz continuity of the derivative $\frac{\delta U}{\delta m}$ with respect to the measure, these can be seen respectively in Proposition \ref{Prop_Lip. ctn. of U}, Proposition \ref{Prop_U C^1 w.r.t. m} and Proposition \ref{Prop_derivative Lip. w.r.t. m}. In the following way, we give the statements and the idea of proof for each proposition.
\begin{Prop}[Space regularity and continuity of $U$]\label{Prop_MFG system wellposed regularity}
Assume that hypotheses H\ref{hyp1}) and H\ref{hyp2}) hold. Then, for any initial condition $\left(t_0, m_0\right) \in[0, T] \times \mathcal{P}\left(\mathbb{T}^2\right)$, the MFG system (\ref{MFG system}) has a unique solution $(u, m) \in C_{\mathcal{X}}^{1,2}\left([t_0,T]\times\mathbb{T}^2\right) \times C\left([t_0,T];\mathcal{P}(\mathbb{T}^2)\right)$ and this solution satisfies
$$
\sup _{t_1 \neq t_2} \frac{d_1\left(m\left(t_1\right), m\left(t_2\right)\right)}{\left|t_1-t_2\right|^{\frac{1}{2}}}+\left\|u\right\|_{1+\frac{\alpha}{2},2+\alpha} \leqslant C,
$$
where the constant $C$ does not depend on $\left(t_0, m_0\right)$. Moreover, if $m_0$ is absolutely continuous with a smooth positive density, then $m$ is of class $C_{\mathcal{X}}^{1+\frac{\alpha}{2},2+\alpha}\left([t_0,T]\times\mathbb{T}^2\right)$ with a smooth positive density.

Furthermore, the solution is stable: if $m_0^n \rightarrow m_0$ in $\mathcal{P}\left(\mathbb{T}^2\right)$, then the corresponding solutions of system \eqref{MFG system} converge in the sense that $\left(u^n,m^n\right) \rightarrow (u,m) \in C_{\mathcal{X}}^{1+\frac{\alpha}{2},2+\alpha}\left([t_0,T]\times\mathbb{T}^2\right) \times C\left([t_0,T];\mathcal{P}(\mathbb{T}^2)\right)$.

Thus we can get the space regularity of $U$, that is
$$
\sup _{t_0 \in[0, T]} \sup _{m_0 \in \mathcal{P}\left(\mathbb{T}^d\right)}\|U(t_0, \cdot, m_0)\|_{2+\alpha} \leqslant C.
$$
In addition to this, the continuity of $U$ with respect to the time (for a more rigorous argument see \cite[Lemma D.1]{23JR}) and the measure can be deduced from the stability of the solution to the MFG system \eqref{MFG system}.
\end{Prop}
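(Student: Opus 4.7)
The plan is to attack the MFG system \eqref{MFG system} via the classical Lasry--Lions fixed point scheme, adapted to the Grushin hypoelliptic setting as in the authors' earlier work \cite{23JRWX}. For a fixed flow $m(\cdot)\in C([t_0,T];\mathcal{P}(\mathbb{T}^2))$, first solve the backward quadratic HJB equation for $u$; since $H(x,p)=\tfrac{1}{2}|p|^2$ is quadratic and the data $F(\cdot,m(\cdot))$, $G(\cdot,m(T))$ lie in $C_{\mathcal{X}}^{1+\alpha}$, $C_{\mathcal{X}}^{2+\alpha}$ respectively, the Hopf--Cole transform $w = e^{-u/2}$ (as in \cite{20FGT,23JRWX}) linearizes the equation to a degenerate heat equation for which the Schauder theory on the Hörmander parabolic operator $\partial_t-\Delta_{\mathcal{X}}$ from \cite{07BB,10BBLU} yields a unique classical solution $u \in C_{\mathcal{X}}^{1+\alpha/2,2+\alpha}$. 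Then solve the forward KFP equation with drift $-D_{\mathcal{X}} u$ and initial datum $m_0$; this is a hypoelliptic linear equation admitting a unique weak solution $\tilde m \in C([t_0,T];\mathcal{P}(\mathbb{T}^2))$, via duality with the backward hypoelliptic equation and $X_i^*=-X_i$. Schauder's theorem on the map $m \mapsto \tilde m$ delivers existence; uniqueness is the classical duality argument, testing the difference of the two HJBs against $m_1-m_2$ and the KFP difference against $u_1-u_2$, and invoking the monotonicity \eqref{monotonicity_F}, \eqref{monotonicity_G}.

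For the uniform a priori estimates I proceed as follows. The comparison principle for the HJB gives $\|u\|_\infty \le C(C_F,C_G,T)$. A Bernstein-type estimate for $|D_{\mathcal{X}} u|^2$, performed on the Hopf--Cole linearization to avoid the quadratic nonlinearity, then produces a uniform $L^\infty$ bound on $D_{\mathcal{X}} u$. With this drift control the forward KFP yields the $\tfrac{1}{2}$-Hölder regularity of $m$ in the Kantorovich--Rubinstein distance: for any $d_{cc}$-Lipschitz $\phi$ with constant $\le 1$,
$$
\Bigl|\int_{\mathbb{T}^2}\phi\, d(m(t_1)-m(t_2))\Bigr| \le C\int_{t_1}^{t_2}\!\!\int_{\mathbb{T}^2}\bigl(|D_{\mathcal{X}}\phi|+|D_{\mathcal{X}}\phi||D_{\mathcal{X}} u|\bigr)dm\, ds \le C|t_1-t_2|^{1/2}
$$
by Cauchy--Schwarz. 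Feeding this time regularity back into $F(\cdot,m(\cdot))$, whose $C_{\mathcal{X}}^{\alpha/2,1+\alpha}$ norm is then controlled through $\operatorname{Lip}(\delta F/\delta m)$, and into $G(\cdot, m(T))$, the Hörmander--Schauder theory upgrades $u$ to the full $C_{\mathcal{X}}^{1+\alpha/2,2+\alpha}$ estimate. All constants depend only on $C_F,C_G,T$ and $\mathcal{X}$, hence are uniform in $(t_0,m_0)$.

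When $m_0$ has a smooth positive density, a bootstrap exploiting the $C_{\mathcal{X}}^{\alpha/2,1+\alpha}$ regularity of the drift in the KFP equation together with classical hypoelliptic propagation (cf.\ \cite{07BB}) yields $m \in C_{\mathcal{X}}^{1+\alpha/2,2+\alpha}$, with positivity following from the strong minimum principle for the hypoelliptic operator $\partial_t-\Delta_{\mathcal{X}}-\operatorname{div}_{\mathcal{X}}(\cdot\, D_{\mathcal{X}} u)$. Stability under $m_0^n\to m_0$ is then deduced by using the uniform bounds to extract a convergent subsequence of $(u^n,m^n)$ in $C_{\mathcal{X}}^{1+\alpha/2,2+\alpha}\times C([t_0,T];\mathcal{P}(\mathbb{T}^2))$ (Arzelà--Ascoli with a slightly weaker Hölder exponent), passing to the limit in the system, and invoking uniqueness to upgrade to convergence of the whole sequence.

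The space regularity of $U(t_0,\cdot,m_0)=u(t_0,\cdot)$ is then immediate from the uniform $C_{\mathcal{X}}^{2+\alpha}$ bound on $u$. Continuity of $U$ in $m_0$ comes directly from the stability statement evaluated at time $t_0$; continuity in $t_0$ is handled exactly as in \cite[Lemma D.1]{23JR} by comparing MFG solutions with initial pairs $(t_0,m_0)$ and $(t_0+h,m(t_0+h))$. \emph{The main obstacle I anticipate} is carrying through the Bernstein gradient estimate and the Schauder bounds cleanly in the Grushin setting: the commutator $[X_1,X_2]=\partial_{x_2}$ is not controlled by $X_1,X_2$ at zeroth order, so every integration by parts and every commutator in the Bernstein computation must be executed in the anisotropic weighted Hölder scale, and this is precisely where the quadratic structure of $H$ combined with the Hopf--Cole linearization becomes essential to convert the problem into one where standard hypoelliptic Schauder theory applies.
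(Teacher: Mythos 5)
Your overall strategy (Schauder fixed point, Hopf--Cole transform for the quadratic HJB, KFP solved by duality, uniqueness from Lasry--Lions monotonicity, stability from uniform bounds plus uniqueness) matches the paper's. The Bernstein step is superfluous once you have Hopf--Cole, since the linear Schauder estimate on $w=e^{-u/2}$ already delivers the full $C_{\mathcal{X}}^{1+\alpha/2,2+\alpha}$ bound on $u$ directly, as in the paper's inequality \eqref{HJB_Schauder estimate}; but that is a harmless detour, not an error.

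The genuine gap is in your derivation of the uniform $\tfrac12$-H\"older bound $d_1(m(t_1),m(t_2))\le C|t_1-t_2|^{1/2}$, which is the crucial estimate both for building the compact convex set on which the fixed point operates and for the uniform constant in the statement. Testing the KFP equation against a $d_{cc}$-Lipschitz $\phi$ produces, after integration by parts, a term $\int_{t_2}^{t_1}\!\int_{\mathbb{T}^2}\Delta_{\mathcal{X}}\phi\,dm\,ds$ that you have dropped from your displayed inequality; for a merely Lipschitz test function this term is not a bounded function and cannot be estimated by $\int|D_{\mathcal{X}}\phi|\,dm$. Moreover, even setting that aside, your right-hand side is of order $|t_1-t_2|$, not $|t_1-t_2|^{1/2}$: Cauchy--Schwarz applied to $\int_{t_1}^{t_2}\!\int|D_{\mathcal{X}}\phi|\,dm\,ds$ with $|D_{\mathcal{X}}\phi|\le1$ and $m$ a probability measure only improves the bound to $|t_1-t_2|$, never to the square root. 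The exponent $\tfrac12$ is genuinely a diffusive effect of the second-order term, and a PDE-side estimate against Lipschitz test functions cannot see it. The paper instead identifies $m(t)$ as the law of the solution to the Stratonovich SDE driven by $X_1,X_2$ with drift $-D_{\mathcal{X}}u\cdot X$, observes that the martingale part propagates $d_{cc}$-distance at speed $\sqrt{t}$ while the drift part contributes $O(|t_1-t_2|)$, and concludes
$d_1(m(t_1),m(t_2))\le \mathbb{E}\,d_{cc}(\xi_{t_1},\xi_{t_2})\le \|D_{\mathcal{X}}u\|_\infty|t_1-t_2|+2|t_1-t_2|^{1/2}$.
You need either this probabilistic representation or an equivalent heat-kernel/parametrix argument for the hypoelliptic KFP operator to recover the square root; the Lipschitz-duality computation alone will not close the fixed point set $E$.
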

To prove Proposition \ref{Prop_MFG system wellposed regularity}, we apply Schauder fixed point theorem and note that when $m_0$ is sufficiently smooth, the proof of the wellposedness of this degenerate MFG system can be found in \cite{23JRWX}. Therefore, the key is to prove the existence and uniqueness of the weak solution to the degenerate KFP equation in the MFG system \eqref{MFG system} when $m_0 \in \mathcal{P}(\mathbb{T}^2)$. In this paper, we prove a more general result in Lemma \ref{Lem_general KFP wellposed regularity}. That is, we study a more general case and consider the KFP equation with the following general form:
\begin{equation}\label{general KFP}
\begin{cases}
\partial_t\rho-\Delta_\mathcal{X}\rho-\operatorname{div}_{\mathcal{X}}(\rho b)=f, & \text{ in } [0,T] \times \mathbb{T}^2, \\
\rho(0)=\rho_{0}, & \text{ in } \mathbb{T}^2.
\end{cases}
\end{equation}
We refer to the idea of duality in \cite{21Ri} to state a suitable definition of the distributional solution.
\begin{Def}\label{Def_general KFP weak sol.}
Let $b \in C_\mathcal{X}^{\frac{\alpha}{2},\alpha}([0,T] \times \mathbb{T}^2)$, $f \in L^1\left([0,T];W_\mathcal{X}^{-1,\infty}(\mathbb{T}^2)\right)$ and $\rho_{0} \in C_\mathcal{X}^{-(1+\alpha)}(\mathbb{T}^2)$. A function $\rho \in C\left([0,T];C_\mathcal{X}^{-(1+\alpha)}(\mathbb{T}^2)\right)$ is said to be a weak solution to equation \eqref{general KFP} if, for all $\xi \in C\left([0,t];C_\mathcal{X}^{1+\alpha}(\mathbb{T}^2)\right)$, $\psi \in C_\mathcal{X}^{1+\alpha}(\mathbb{T}^2)$ and $\phi$ solution in $[0,t] \times \mathbb{T}^2$ of the linear equation as follows
\begin{equation}\label{dual KFP}
\begin{cases}
-\partial_t\phi-\Delta_\mathcal{X}\phi+b D_{\mathcal{X}}\phi=\xi, & \text{ in } [0,t) \times \mathbb{T}^2, \\
\phi(t)=\psi, & \text{ in } \mathbb{T}^2,
\end{cases}
\end{equation}
the following weak formulation holds true:
$$
\langle\rho(t),\psi\rangle+\int_{0}^{t} \langle \rho(s),\xi(s,\cdot) \rangle ds=\left\langle\rho_0,\phi(0,\cdot)\right\rangle+\int_{0}^{t}\langle f(s),\phi(s,\cdot)\rangle ds,
$$
where $\langle\cdot,\cdot\rangle$ denotes the duality between $\mathcal{C}_{\mathcal{X}}^{-(1+\alpha)}$ and $\mathcal{C}_{\mathcal{X}}^{1+\alpha}$ in the first to third case or between $W_{\mathcal{X}}^{-1,\infty}$ and $W_{\mathcal{X}}^{1,\infty}$ in the last case.
\end{Def}
The existence, uniqueness and regularities of the weak solution to the KFP equation are given as follows.
\begin{Lem}\label{Lem_general KFP wellposed regularity}
Let $b \in C_{\mathcal{X}}^{\frac{\alpha}{2},\alpha}([0,T] \times \mathbb{T}^2)$, $f \in L^1\left([0,T];W_{\mathcal{X}}^{-1,\infty}(\mathbb{T}^2)\right)$ and $\rho_0 \in C_{\mathcal{X}}^{-1}(\mathbb{T}^2)$. Then there exists a unique weak solution $\rho$ in the sense of Definition \ref{Def_general KFP weak sol.} to the KFP equation \eqref{general KFP}.

This solution satisfies
\begin{equation}\label{general KFP sol. regularity}
\sup_{t \in [0,T]}\left\|\rho(t)\right\|_{-(1+\alpha)} + \left\|\rho\right\|_{-\frac{\alpha}{2},-\alpha} \leq C\left(\left\|\rho_0\right\|_{-(1+\alpha)} + \left\|f\right\|_{L^1\left([0,T];W_{\mathcal{X}}^{-1,\infty}(\mathbb{T}^2)\right)}\right).
\end{equation}

Finally, the solution is stable: if $\rho_{0}^{k} \rightarrow \rho_{0}$ in $C_{\mathcal{X}}^{-(1+\alpha)}(\mathbb{T}^2)$, $\left\{b^{k}\right\}_{k}$ is uniformly bounded and $b^{k} \rightarrow b$ in $C_{\mathcal{X}}^{\frac{\alpha}{2},\alpha}([0,T] \times \mathbb{T}^2)$, $f^k \rightarrow f$ in $L^1\left([0,T];W_{\mathcal{X}}^{-1,\infty}(\mathbb{T}^2)\right)$, then, calling $\rho^k$ and $\rho$ the solution related, respectively, to $\left(\rho_{0}^{k},b^{k},f^{k}\right)$ and $\left(\rho_{0},b,f\right)$, we have $\rho^{k} \rightarrow \rho$ in $C\left([0,T];C_{\mathcal{X}}^{-(1+\alpha)}(\mathbb{T}^2)\right)$.
\end{Lem}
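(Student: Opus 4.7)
The plan is to construct the weak solution by a duality-approximation scheme: the backward dual equation \eqref{dual KFP} generates admissible test functions, and the forward KFP is solved for smooth data first, then extended by a priori bounds in negative-index spaces. First, I would record a Schauder-type estimate for \eqref{dual KFP}: given $b\in C_{\mathcal{X}}^{\frac{\alpha}{2},\alpha}$, for every $\psi\in C_{\mathcal{X}}^{1+\alpha}(\mathbb{T}^2)$ and $\xi\in C([0,t];C_{\mathcal{X}}^{1+\alpha}(\mathbb{T}^2))$ the backward linear problem admits a unique solution $\phi$, with
$$
\sup_{s\in[0,t]}\|\phi(s,\cdot)\|_{1+\alpha}+[\phi]_{\frac{\alpha}{2},\alpha}\;\leq\; C\bigl(\|\psi\|_{1+\alpha}+\|\xi\|_{C([0,t];C_{\mathcal{X}}^{\alpha})}\bigr),
$$
with $C$ depending only on $T$ and $\|b\|_{\frac{\alpha}{2},\alpha}$. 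This is the Hörmander/Grushin Schauder theory already invoked elsewhere in the paper (cf.\ \cite{10BBLU,23JRWX}) applied backwards in time; I would take it as a black box.

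Second, I would approximate the data: choose smooth mollifications $\rho_0^k\in C^\infty$, $b^k\in C^\infty$, $f^k\in C^\infty$ converging to $\rho_0$, $b$, $f$ in their respective spaces, and solve the classical forward KFP equation \eqref{general KFP} with data $(\rho_0^k,b^k,f^k)$; this yields a smooth $\rho^k$ for which the weak formulation of Definition \ref{Def_general KFP weak sol.} holds by straightforward integration by parts against the dual solution $\phi^k$ of \eqref{dual KFP}. Inserting the dual Schauder estimate into the weak formulation gives, for any admissible test pair $(\xi,\psi)$,
$$
\bigl|\langle\rho^k(t),\psi\rangle\bigr|+\Bigl|\int_0^t\langle\rho^k(s),\xi(s)\rangle ds\Bigr|\leq C\bigl(\|\rho_0^k\|_{-(1+\alpha)}+\|f^k\|_{L^1W_{\mathcal{X}}^{-1,\infty}}\bigr)\bigl(\|\psi\|_{1+\alpha}+\|\xi\|_{C C_{\mathcal{X}}^{\alpha}}\bigr),
$$
so that $\rho^k$ satisfies \eqref{general KFP sol. regularity} uniformly in $k$; varying $\xi$ produces the $C_{\mathcal{X}}^{-\frac{\alpha}{2},-\alpha}$ seminorm bound in time.

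Third, I would pass to the limit. Applying the same a priori estimate to the difference $\rho^k-\rho^j$, which solves \eqref{general KFP} with initial datum $\rho_0^k-\rho_0^j$, drift $b^k$, and source $f^k-f^j+\operatorname{div}_{\mathcal{X}}(\rho^j(b^k-b^j))$ (reinterpreted through duality on the smooth approximants), shows that $\{\rho^k\}$ is Cauchy in $C([0,T];C_{\mathcal{X}}^{-(1+\alpha)}(\mathbb{T}^2))$; the limit $\rho$ satisfies the weak formulation by linearity. Uniqueness is immediate: if $\rho_0=0$ and $f=0$, then taking $\xi=0$ and arbitrary $\psi\in C_{\mathcal{X}}^{1+\alpha}$ forces $\langle\rho(t),\psi\rangle=0$, hence $\rho\equiv 0$. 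The stability statement is proved by the identical difference-estimate argument applied to $\rho^k-\rho$.

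The main obstacle is the dual Schauder estimate at the terminal time: the test datum $\psi$ lives only in $C_{\mathcal{X}}^{1+\alpha}$, while the parabolic theory naturally delivers $C_{\mathcal{X}}^{2+\alpha}$ regularity away from $s=t$. I must show that $\|\phi(s,\cdot)\|_{1+\alpha}$ stays controlled by $\|\psi\|_{1+\alpha}$ \emph{uniformly up to} $s=t$, which rules out the use of interior Schauder bounds alone and requires a boundary-adapted estimate (e.g.\ via subtraction of the heat-type extension of $\psi$ or a reflection/freezing argument adapted to the Grushin operator). Once that bound is secured, the rest of the argument is a standard duality-and-mollification proof.
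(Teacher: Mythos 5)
Your proposal takes essentially the same duality-and-mollification route as the paper's proof, with identical uniqueness and stability arguments. The obstacle you flag at the end --- the first-order dual Schauder estimate $\sup_{s}\|\phi(s,\cdot)\|_{1+\alpha}\leq C\|\psi\|_{1+\alpha}$ holding uniformly up to the terminal time for $\psi$ only in $C_{\mathcal{X}}^{1+\alpha}$ --- is not available off the shelf, and is precisely what the paper proves as estimate \eqref{LHJB_USE_first order} in the corollary following Lemma~\ref{Lem_LHJB prior estimate}, via the Rothschild--Stein lifting and freezing technique developed in Section~\ref{Sec_3}; the paper also separates the two dual estimates (one with $\xi=0$ for the $\sup_t\|\rho(t)\|_{-(1+\alpha)}$ bound, one with $\psi=0$ and $\xi\in C_{\mathcal{X}}^{\frac{\alpha}{2},\alpha}$, cited from \cite{10BBLU}, for the parabolic seminorm), rather than bundling them as you do.
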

\begin{Prop}[Lipschitz continuity of $U$ w.r.t. the measure]\label{Prop_Lip. ctn. of U}
Assume that hypotheses H\ref{hyp1}) and H\ref{hyp2}) hold. Let $t_0 \in[0, T]$, $m_0^1$, $m_0^2 \in \mathcal{P}\left(\mathbb{T}^2\right)$, and $\left(u^1, m^1\right)$, $\left(u^2, m^2\right)$ be the solutions to the MFG system \eqref{MFG system} with initial condition $\left(t_0, m_0^1\right)$ and $\left(t_0, m_0^2\right)$ respectively. Then
$$
\sup _{t \in[0, T]}\left\{d_1\left(m^1(t), m^2(t)\right)+\left\|u^1(t, \cdot)-u^2(t, \cdot)\right\|_{2+\alpha}\right\} \leqslant C d_1\left(m_0^1, m_0^2\right),
$$
for a constant $C$ independent of $t_0, m_0^1$ and $m_0^2$. In particular,
$$
\left\|U\left(t_0, \cdot, m_0^1\right)-U\left(t_0, \cdot, m_0^2\right)\right\|_{2+\alpha} \leqslant C d_1\left(m_0^1, m_0^2\right).
$$
\end{Prop}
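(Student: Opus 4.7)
I would follow the duality/monotonicity argument of \cite[Chapter 4]{19CDLL}, adapted to the Grushin setting where $X_i^\ast = -X_i$ makes integration by parts against $\Delta_{\mathcal{X}}$ and $\operatorname{div}_{\mathcal{X}}$ transparent. Set $\tilde u := u^1-u^2$, $\tilde m := m^1-m^2$, and $V := \tfrac12(D_{\mathcal{X}} u^1 + D_{\mathcal{X}} u^2)$. Since $H(x,p)=\tfrac12|p|^2$, a direct expansion gives $H(\cdot,D_{\mathcal{X}} u^1)-H(\cdot,D_{\mathcal{X}} u^2) = V\cdot D_{\mathcal{X}}\tilde u$ and $D_pH(\cdot,D_{\mathcal{X}} u^1)m^1 - D_pH(\cdot,D_{\mathcal{X}} u^2)m^2 = D_{\mathcal{X}} u^1\,\tilde m + m^2 D_{\mathcal{X}}\tilde u$, so subtracting the two copies of \eqref{MFG system} yields the linearized coupled system
\begin{equation*}
\begin{cases}
-\partial_t \tilde u - \Delta_{\mathcal{X}} \tilde u + V\cdot D_{\mathcal{X}} \tilde u = \tilde F,\\
\partial_t \tilde m - \Delta_{\mathcal{X}}\tilde m - \operatorname{div}_{\mathcal{X}}\!\bigl(D_{\mathcal{X}} u^1\,\tilde m + m^2 D_{\mathcal{X}}\tilde u\bigr)=0,
\end{cases}
\end{equation*}
with $\tilde u(T)=\tilde G$ and $\tilde m(t_0)=m_0^1-m_0^2$, where $\tilde F(t,x) := \int_0^1\!\int \frac{\delta F}{\delta m}(x,m_s(t),y)\,d\tilde m(t)(y)\,ds$, $\tilde G(x) := \int_0^1\!\int \frac{\delta G}{\delta m}(x,m_s(T),y)\,d\tilde m(T)(y)\,ds$, and $m_s := (1-s)m^2+s\,m^1$.

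\textbf{Duality identity.} Computing $\tfrac{d}{dt}\!\int\!\tilde u\,\tilde m\,dx$ using $X_i^\ast=-X_i$, exploiting the crucial cancellation $V - D_{\mathcal{X}} u^1 = -\tfrac12 D_{\mathcal{X}}\tilde u$, and integrating from $t_0$ to $T$, I expect
\begin{equation*}
\int_{\mathbb{T}^2}\!\tilde u(t_0)\,d(m_0^1-m_0^2) = \int_{\mathbb{T}^2}\!\tilde G\,d\tilde m(T) + \int_{t_0}^T\!\!\int_{\mathbb{T}^2}\!\tilde F\,d\tilde m\,dt + \tfrac12\!\int_{t_0}^T\!\!\int_{\mathbb{T}^2}\!|D_{\mathcal{X}}\tilde u|^2(m^1+m^2)\,dx\,dt.
\end{equation*}
By the integral representation of the derivative, $\int\tilde F\,d\tilde m = \int[F(\cdot,m^1)-F(\cdot,m^2)]\,d\tilde m \geq 0$ thanks to \eqref{monotonicity_F}, and likewise $\int\tilde G\,d\tilde m(T)\geq 0$ by \eqref{monotonicity_G}. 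Combining with the uniform Lipschitz bound $\|\tilde u(t_0,\cdot)\|_{\mathrm{Lip}}\leq C$ from Proposition \ref{Prop_MFG system wellposed regularity} yields the key a priori energy estimate
\begin{equation*}
\int_{t_0}^T\!\!\int_{\mathbb{T}^2}\!|D_{\mathcal{X}}\tilde u|^2(m^1+m^2)\,dx\,dt \leq 2\!\int_{\mathbb{T}^2}\!\tilde u(t_0)\,d(m_0^1-m_0^2) \leq C\,d_1(m_0^1, m_0^2).
\end{equation*}

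\textbf{Upgrading to $C_{\mathcal{X}}^{2+\alpha}$, and the main obstacle.} To pass from the energy bound to the claimed $C^{2+\alpha}_{\mathcal{X}}$ Lipschitz estimate I would close a short-time fixed point between the two linear pieces. Lemma \ref{Lem_general KFP wellposed regularity} applied to $\tilde m$ with drift $b=-D_{\mathcal{X}} u^1\in C_{\mathcal{X}}^{\alpha/2,\alpha}$ and distributional source $f=\operatorname{div}_{\mathcal{X}}(m^2 D_{\mathcal{X}}\tilde u)\in L^1\!\bigl([t_0,T];W_{\mathcal{X}}^{-1,\infty}\bigr)$ yields $\sup_t\|\tilde m(t)\|_{-(1+\alpha)} \leq C\bigl(d_1(m_0^1,m_0^2)+\delta\|D_{\mathcal{X}}\tilde u\|_{L^\infty_{t,x}}\bigr)$ on any subinterval of length $\delta$; meanwhile, the degenerate parabolic Schauder estimate invoked in Proposition \ref{Prop_MFG system wellposed regularity} applied to the equation for $\tilde u$ gives $\|\tilde u\|_{1+\alpha/2,2+\alpha}\leq C\bigl(\|\tilde G\|_{2+\alpha}+\|\tilde F\|_{\alpha/2,\alpha}\bigr)\leq C\sup_t\|\tilde m(t)\|_{-(1+\alpha)}$, using the regularity of $\frac{\delta F}{\delta m}$, $\frac{\delta G}{\delta m}$ from H\ref{hyp1})--H\ref{hyp2}). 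For $\delta$ small these close into $\|\tilde u(t)\|_{2+\alpha}\leq C\,d_1(m_0^1,m_0^2)$ on $[T-\delta,T]$, and the energy bound propagates the estimate backward across a uniform partition of $[t_0,T]$ without loss of constant. The chief difficulty I foresee is the rigorous justification of the duality identity when the $m_0^i$ are merely probability measures, so the $m^i$ need not be smooth and the integrations by parts are not a priori licit; this requires an approximation step $m_0^{i,\varepsilon}\to m_0^i$ by smooth positive densities (for which the identity holds classically via the last part of Proposition \ref{Prop_MFG system wellposed regularity}) followed by a limit passage using the stability statement there. The final conclusion then follows from $\|\tilde m(t_0)\|_{-(1+\alpha)}\leq C\,d_1(m_0^1,m_0^2)$ (immediate from the definition of $d_1$) and the identification $U(t_0,\cdot,m_0^i)=u^i(t_0,\cdot)$.
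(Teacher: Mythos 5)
Your Step 1 (the Lasry--Lions duality identity and the resulting weighted $L^2$ energy estimate, established first for smooth positive densities and then by the stability/approximation argument) matches the paper's Step 1. The genuine gap is in how you upgrade the energy bound to the claimed $C^{2+\alpha}_{\mathcal{X}}$ estimate, and it is twofold.

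First, by bounding the energy directly as $\int_{t_0}^T\!\int |D_{\mathcal{X}}\tilde u|^2(m^1+m^2)\,dx\,dt \le C\,d_1(m_0^1,m_0^2)$ via the uniform a priori Lipschitz bound on each $u^i$, you throw away the fact that $\|D_{\mathcal{X}}\tilde u(t_0,\cdot)\|_\infty$ is itself small. The paper instead keeps this quantity explicit on the right-hand side, obtaining $\int_{t_0}^T\!\int|D_{\mathcal{X}}\tilde u|^2(m^1+m^2)\le C\,\|D_{\mathcal{X}}\tilde u(t_0,\cdot)\|_\infty\,d_1(m_0^1,m_0^2)$. When one then tests the $\tilde m$-equation against the uniformly-Lipschitz solution $\phi$ of the homogeneous linearized HJB (Lemma \ref{Lem_homo. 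LHJB Lipschitz}) and applies Cauchy--Schwarz in the $m^2$-weighted $L^2$ sense, one gets $\sup_t d_1(m^1(t),m^2(t)) \le C\bigl(\|D_{\mathcal{X}}\tilde u(t_0,\cdot)\|_\infty^{1/2}\,d_1^{1/2} + d_1\bigr)$. Feeding this into the Schauder estimate for $\tilde u$ and then applying Young's inequality absorbs $\varepsilon\|D_{\mathcal{X}}\tilde u(t_0,\cdot)\|_\infty \le \varepsilon\sup_t\|\tilde u(t,\cdot)\|_{2+\alpha}$ into the left-hand side and closes to $\sup_t\|\tilde u(t,\cdot)\|_{2+\alpha}\le Cd_1$. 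Your version of the energy bound, fed into the same machinery, would only produce a bound of order $d_1^{1/2}$ --- the square root, not the Lipschitz rate claimed.

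Second, the proposed short-time closure plus backward propagation does not work because of the forward--backward structure of the coupled system: $\tilde m$ evolves forward from $t_0$ with known initial data, while $\tilde u$ evolves backward from $T$ with known terminal data. On a subinterval $[T-\delta,T]$ you control the terminal datum for $\tilde u$ but not $\tilde m(T-\delta)$; on a subinterval $[t_0,t_0+\delta]$ you control $\tilde m(t_0)$ but not $\tilde u(t_0+\delta)$. There is no consistent direction in which the two estimates can be iterated subinterval by subinterval, and the appeal to ``the energy bound propagates the estimate backward without loss of constant'' is left unjustified --- the weighted $L^2$ energy bound with a weight $m^1+m^2$ that may vanish on sets of positive measure cannot by itself be converted into an $L^\infty$ or Hölder control on $D_{\mathcal{X}}\tilde u$. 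The correct closure mechanism is precisely the Cauchy--Schwarz/Young absorption described above, which is global in time and does not require subdividing the interval.

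A smaller issue: you invoke the Schauder estimate with $\|\tilde F\|_{\alpha/2,\alpha}$, but $\tilde m(t)$ is only continuous, not Hölder-$\alpha/2$, in time; the relevant hypothesis is $\tilde F\in C\bigl([0,T];C^{1+\alpha}_{\mathcal{X}}\bigr)$, which is exactly what Lemma \ref{Lem_LHJB wellposed regularity} is tailored to handle.
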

To prove Proposition \ref{Prop_Lip. ctn. of U}, we note that the estimates of $u^1-u^2$ and $d_1\left(m^1, m^2\right)$ need the results respectively in Lemma \ref{Lem_LHJB wellposed regularity} and Lemma \ref{Lem_homo. LHJB Lipschitz}, both of which rely on the study of the linearized HJB equation. Namely, we consider a linear degenerate backward parabolic equation as follows (see also in \eqref{dual KFP})
\begin{equation}\label{LHJB}
\begin{cases}
-\partial_t z-\Delta_{\mathcal{X}} z+V(t, x) \cdot D_{\mathcal{X}} z=f(t, x),& \text {in }[0, T) \times \mathbb{T}^2, \\
z(T, x)=z_T(x),& \text {in }\mathbb{T}^2.
\end{cases}
\end{equation}
We use the ``lifting and approximation" technique and the theory of singular integrals or fractional integrals to prove one of the most crucial lemmas in this paper.
\begin{Lem}\label{Lem_LHJB wellposed regularity}
Let $V \in C_{\mathcal{X}}^{\frac{\alpha}{2},1+\alpha}\left([0, T] \times \mathbb{T}^{2} \right)$ and $f \in C \left([0, T]; C_{\mathcal{X}}^{1+\alpha}\left(\mathbb{T}^2\right)\right)$. Then, for any $z_T \in C_{\mathcal{X}}^{2+\alpha}(\mathbb{T}^2)$, equation \eqref{LHJB} has a unique solution $z$ which belongs to $C_{\mathcal{X}}^{1,2} \left([0, T) \times \mathbb{T}^{2} \right) \bigcap C([0, T] \times \mathbb{T}^2)$ and satisfies
$$
\sup _{t \in[0, T]}\|z(t, \cdot)\|_{2+\alpha} \leqslant C\left\{\left\|z_T\right\|_{2+\alpha}+\sup _{t \in[0, T]}\|f(t, \cdot)\|_{1+\alpha}\right\},
$$
where $C$ depends on $\sup\limits _{t \in[0, T]}\|V(t, \cdot)\|_{1+\alpha}$ and $\alpha$ only.

Furthermore, for any constant $T' \in (0,T)$, $z$ satisfies
$$
\sup _{\substack{t \neq t'\\t,t' \in [0,T']}} \frac{\left\|z\left(t^{\prime}, \cdot\right)-z(t, \cdot)\right\|_{2+\alpha}}{\left|t^{\prime}-t\right|^\beta} \leqslant C\left\{\left\|z_T\right\|_{2+\alpha}+\sup _{t \in[0, T]}\|f(t, \cdot)\|_{1+\alpha}\right\},
$$
where $\beta \in (0,\frac{1}{2})$, and $C$ depends on $T'$, $\sup\limits_{t \in[0, T]}\|V(t, \cdot)\|_{1+\alpha}$ and $\alpha$ only.
\end{Lem}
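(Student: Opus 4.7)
I would first reverse time by setting $\tilde{z}(t,x):=z(T-t,x)$, recasting the backward problem as a forward degenerate parabolic Cauchy problem, and then proceed in two stages: (i) establish an a priori Schauder estimate in space, and (ii) use this estimate together with the method of continuity to derive existence, uniqueness, and the additional H\"older-in-time bound.

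For (i), the heart of the matter is a Schauder-type bound for the Kolmogorov-H\"ormander operator $\partial_{t}-\Delta_{\mathcal{X}}$ in the weighted H\"older scale attached to $\mathcal{X}$. I would adopt the Rothschild-Stein lifting-and-approximation scheme: the Grushin fields $\{\partial_{x_{1}},x_{1}\partial_{x_{2}}\}$ are of step two and lift to left-invariant free vector fields on the Heisenberg group, where the heat operator admits a translation-invariant fundamental solution homogeneous for the parabolic dilations, and the associated singular integrals obey Calder\'on-Zygmund estimates on homogeneous groups (as in \cite{07BB,10BBLU}). This produces the inhomogeneous bound
$$
\sup_{t}\|z(t,\cdot)\|_{2+\alpha}\leq C\bigl\{\|z_{T}\|_{2+\alpha}+\sup_{t}\|f(t,\cdot)\|_{1+\alpha}+\sup_{t}\|V\cdot D_{\mathcal{X}}z(t,\cdot)\|_{\alpha}\bigr\}.
$$
The drift contribution is absorbed by the interpolation $\|V\cdot D_{\mathcal{X}}z\|_{\alpha}\leq C\|V\|_{1+\alpha}\bigl(\varepsilon\|z\|_{2+\alpha}+C_{\varepsilon}\|z\|_{\infty}\bigr)$ with $\varepsilon$ chosen sufficiently small, the $L^{\infty}$ part being controlled by the data through the maximum principle.

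For (ii), I would apply the method of continuity to the family $L_{\lambda}z:=-\partial_{t}z-\Delta_{\mathcal{X}}z+\lambda V\cdot D_{\mathcal{X}}z$, $\lambda\in[0,1]$: at $\lambda=0$, the equation is solved by Duhamel's formula against the Grushin heat kernel; the uniform Schauder estimate from (i) is preserved along $\lambda\in[0,1]$, so the standard openness-closedness argument yields solvability at $\lambda=1$, while uniqueness follows directly from (i) applied with zero data. For the time-H\"older estimate on $[0,T']\subset[0,T)$, parabolic Schauder theory away from the terminal time upgrades the solution to $C_{\mathcal{X}}^{1+\alpha/2,2+\alpha}([0,T']\times\mathbb{T}^{2})$; combining the uniform $L^{\infty}([0,T];C_{\mathcal{X}}^{2+\alpha})$ bound with the Lipschitz-in-time control of $z$ in the $C_{\mathcal{X}}^{\alpha}$ norm (from $\partial_{t}z\in L^{\infty}([0,T'];C_{\mathcal{X}}^{\alpha})$ via the equation) by Lions-Peetre interpolation then yields the desired estimate, the threshold $\beta<1/2$ reflecting the parabolic scaling $\text{time}\sim\text{space}^{2}$ of $\partial_{t}-\Delta_{\mathcal{X}}$.

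The principal difficulty will be making the Schauder estimate rigorous in the Grushin setting on the torus: the Carnot-Carath\'eodory geometry is non-trivial (the fields degenerate along $\{x_{1}=0\}$), so transferring the Calder\'on-Zygmund bound from the lifted Heisenberg group back to $\mathbb{T}^{2}$ requires a careful approximation of $V$ by smooth coefficients together with a cutoff-covering argument, and the drift absorption has to be executed in the anisotropic H\"older scale $C_{\mathcal{X}}^{\alpha}$ rather than the Euclidean one.
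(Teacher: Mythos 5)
Your proposal correctly identifies the Rothschild--Stein lifting to the Heisenberg group as the engine of the Schauder estimate, and existence via a priori estimate plus limiting procedure (your method of continuity versus the paper's mollification-and-compactness) is a routine substitution. However, there is a genuine gap in your treatment of the drift term, and it sits at exactly the point the paper flags as the main novelty.

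You propose to move $V\cdot D_{\mathcal{X}}z$ to the right-hand side and absorb it via the interpolation $\|V\cdot D_{\mathcal{X}}z\|_{\alpha}\leq C\|V\|_{1+\alpha}\bigl(\varepsilon\|z\|_{2+\alpha}+C_{\varepsilon}\|z\|_{\infty}\bigr)$. This presupposes a Schauder estimate of the form $\sup_{t}\|z(t,\cdot)\|_{2+\alpha}\lesssim\|z_{T}\|_{2+\alpha}+\sup_{t}\|\text{RHS}(t,\cdot)\|_{\alpha}$ for the pure Grushin heat operator. But that estimate is false when the right-hand side is only continuous in time: with $f(t,x)=g(t)h(x)$, $h\in C^{\alpha}$, and $g$ merely continuous (e.g.\ $g$ with a logarithmic modulus of continuity), the Duhamel integral $\int_{t}^{T}g(s)e^{(s-t)\Delta}h\,ds$ fails to lie in $C^{2+\alpha}$, because the $\tau^{-1}$ blowup of $\|e^{\tau\Delta}\|_{C^{\alpha}\to C^{2+\alpha}}$ is not integrable and there is no temporal cancellation to invoke. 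The estimate that \emph{does} hold under $f\in C([0,T];C_{\mathcal{X}}^{1+\alpha})$ is precisely the one in the lemma, with $\sup_{t}\|f(t,\cdot)\|_{1+\alpha}$ on the right. If one then feeds the drift into this correct estimate, one needs $\sup_{t}\|V\cdot D_{\mathcal{X}}z(t,\cdot)\|_{1+\alpha}\lesssim\|V\|_{1+\alpha}\|z\|_{2+\alpha}$, which is \emph{not} lower order in $\|z\|_{2+\alpha}$ and cannot be absorbed by shrinking $\varepsilon$. Your interpolation device therefore does not close.

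The paper circumvents this by \emph{freezing} the drift: the coefficient $\widetilde{V}$ is frozen at a point $\xi_{0}$, yielding a sub-Laplacian with (time-dependent, space-constant) drift on the Heisenberg group that admits an explicit left-invariant fundamental solution $\Gamma_{0}$ (Lemma \ref{FS}). One solves the frozen equation exactly by Duhamel (Lemma \ref{Lem_LHJB Duhamel formula}), and the unfrozen remainder $(\widetilde{V}-\widetilde{V}^{0})\cdot D_{\widetilde{\mathcal{X}}}z$ is absorbed not by interpolation in the $z$-scale but by the H\"older smallness $\|\widetilde{V}-\widetilde{V}^{0}\|_{L^{\infty}(\widetilde{B}_{\delta})}\lesssim\delta^{\alpha}[\widetilde{V}]_{\alpha}$ on a small ball of radius $\delta$, combined with a cutoff-and-covering argument. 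This is the mechanism you would need, and it is genuinely different from what you wrote. Your related claim that "parabolic Schauder theory away from the terminal time upgrades the solution to $C_{\mathcal{X}}^{1+\alpha/2,2+\alpha}$" runs into the same obstruction: $f$ is not H\"older in time, so classical parabolic Schauder is not available; the paper instead obtains the $\beta<1/2$ time modulus directly from the $h^{1-\lambda/2}$ tail of the Duhamel integrals with kernels of type $\lambda\in(1,2)$.
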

In addition, when the data $f=0$, the space regularity result of Lemma \ref{Lem_LHJB wellposed regularity} can be generalized if the terminal condition is only a Lipschitz function.
\begin{Lem}\label{Lem_homo. LHJB Lipschitz}
Suppose $f(t,x)=0$, $V(t,x) \in C^{\frac{\alpha}{2},\alpha}_{\mathcal{X}}\left([0,T]\times\mathbb{T}^2\right)$, and $z_T$ is $d_{cc}$-Lipschitz with Lipschitz constant bounded by $1$. Then the unique solution $z$ of the equation \eqref{homo. LHJB} satisfies a $d_{cc}$-Lipschitz condition, uniformly in $t$, namely there exists $C>0$, independent of $t$, such that
$$
|z(t,x)-z(t,y)| \leq C d_{cc}(x,y).
$$
\end{Lem}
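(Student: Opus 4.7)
The plan is to combine a smoothing of the data with an a priori $d_{cc}$-Lipschitz estimate for classical solutions, obtained by a doubling-variables maximum principle argument, and then to pass to the limit.

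First, approximate the data: pick $V^n \in C_{\mathcal{X}}^{\frac{\alpha}{2},1+\alpha}$ with $V^n \to V$ in $C_{\mathcal{X}}^{\frac{\alpha}{2},\alpha}$ and $\sup_n \|V^n\|_{C_{\mathcal{X}}^{\frac{\alpha}{2},\alpha}} < \infty$, and pick $z_T^n \in C_{\mathcal{X}}^{2+\alpha}(\mathbb{T}^2)$ with $z_T^n \to z_T$ uniformly and with $d_{cc}$-Lipschitz constant at most $1 + 1/n$. A natural choice is $z_T^n := e^{\varepsilon_n \Delta_{\mathcal{X}}} z_T$ for small $\varepsilon_n > 0$; standard gradient estimates for the Grushin heat semigroup provide both smoothing and only a mild worsening of the $d_{cc}$-Lipschitz constant. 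By Lemma \ref{Lem_LHJB wellposed regularity} applied with source $f = 0$, each regularized problem admits a unique classical solution $z^n \in C_{\mathcal{X}}^{1,2}([0,T]\times\mathbb{T}^2)$.

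The central step is to prove $\operatorname{Lip}_{d_{cc}}(z^n(t,\cdot)) \leq C$ uniformly in $n$ and $t$. For this, use a doubling-variables argument on $\mathbb{T}^2 \times \mathbb{T}^2$: the auxiliary function $w^n(t,x,y) := z^n(t,x) - z^n(t,y)$ satisfies the decoupled linear equation
\begin{equation*}
-\partial_t w^n - \Delta_{\mathcal{X}}^x w^n - \Delta_{\mathcal{X}}^y w^n + V^n(t,x)\cdot D_{\mathcal{X}}^x w^n + V^n(t,y)\cdot D_{\mathcal{X}}^y w^n = 0,
\end{equation*}
with terminal data $w^n(T,x,y) = z_T^n(x) - z_T^n(y)$ satisfying $|w^n(T,\cdot,\cdot)| \leq (1+1/n)\,d_{cc}(x,y)$. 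Construct a smooth super-solution of the form $\Phi_\delta(t,x,y) := \lambda(t)\,\rho_\delta(x,y)$, where $\rho_\delta$ is a smooth approximation of $d_{cc}$ (for instance, the smooth solution of an elliptic regularization of the sub-Riemannian eikonal equation $-\delta(\Delta_{\mathcal{X}}^x + \Delta_{\mathcal{X}}^y)\rho_\delta + |D_{\mathcal{X}}^x\rho_\delta|^2 + |D_{\mathcal{X}}^y\rho_\delta|^2 = 2$, whose solutions satisfy $|D_{\mathcal{X}}^x\rho_\delta|, |D_{\mathcal{X}}^y\rho_\delta| \leq 1 + o_\delta(1)$), and $\lambda(t)$ is chosen to grow exponentially in $T-t$ at a rate determined by $\|V^n\|_\infty$ and a sub-Riemannian Laplacian-comparison bound on $\Delta_{\mathcal{X}}\rho_\delta$, so that $\Phi_\delta$ is a super-solution with $\Phi_\delta(T) \geq w^n(T)$. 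The parabolic comparison principle then gives $w^n \leq \Phi_\delta$; sending $\delta \to 0$ and symmetrizing in $(x,y)$ yields $|z^n(t,x) - z^n(t,y)| \leq \lambda(t)\,d_{cc}(x,y)$, with $\lambda(t)$ bounded uniformly in $n$ on $[0,T]$.

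Passage to the limit is standard: by linear parabolic stability applied to $z^n - z^m$ (treating the differences in coefficients and terminal data as small forcing), $z^n \to z$ uniformly on $[0,T]\times\mathbb{T}^2$, and the uniform $d_{cc}$-Lipschitz bound is inherited by $z$. The main obstacle is the construction and analysis of the smooth distance surrogate $\rho_\delta$: the Carnot–Carath\'eodory distance associated with the Grushin vector fields is not smooth across the singular line $\{x_1 = 0\}$, so verifying the super-solution property requires a sub-Riemannian Laplacian-comparison estimate tailored to the Grushin geometry together with a careful balancing of the parameters $\lambda$, $\delta$, and $n$.
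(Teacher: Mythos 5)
Your proposed doubling-of-variables comparison argument is a genuinely different route from the paper's, but as written it has a real gap: the smooth distance surrogate $\rho_\delta$ and its estimates are posited, not proved, and for the Grushin structure filling that gap is itself a substantial piece of work. Concretely, you need a smooth $\rho_\delta$ with $|D_\mathcal{X}^x\rho_\delta|,|D_\mathcal{X}^y\rho_\delta|\leq 1+o_\delta(1)$ together with one-sided control on $(\Delta_\mathcal{X}^x+\Delta_\mathcal{X}^y)\rho_\delta$ to make $\Phi_\delta=\lambda(t)\rho_\delta$ a super-solution with $\lambda$ depending only on $\|V^n\|_\infty$; but $d_{cc}$ is not smooth across $\{x_1=0\}$, there is no off-the-shelf sub-Riemannian Laplacian comparison for Grushin, and the viscous eikonal equation you write down requires its own existence, regularity, gradient, and Laplacian estimates, none of which is supplied. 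Even granting the gradient bound, the super-solution inequality must be verified at the interior maximum of $w^n-\Phi_\delta$; if that maximum drifts toward the diagonal, $\lambda'(t)\rho_\delta$ cannot absorb $\lambda(t)|V^n||D_\mathcal{X}\rho_\delta|$, so an additional argument excluding that configuration is needed. You flag this as the "main obstacle" in your last sentence, but it is precisely the load-bearing step, and it is not carried out. (Your choice of mollifier $e^{\varepsilon_n\Delta_\mathcal{X}}$ also assumes, without proof, that the Grushin heat semigroup nearly preserves $d_{cc}$-Lipschitz constants — another non-standard geometric fact.)

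The paper bypasses all of this geometry. It smooths $z_T$ in the usual Euclidean way, applies the already-established $C^1_\mathcal{X}$ a priori estimate \eqref{LHJB_C^1 estimate} (the corollary after Lemma \ref{Lem_LHJB prior estimate}, with $f=0$) to get $\sup_t\|D_\mathcal{X}z^n(t,\cdot)\|_\infty\leq C\|z_T^n\|_{C_\mathcal{X}^1}$, shows by differentiating along integral curves of $X_i$ that $\|D_\mathcal{X}z_T^n\|_\infty$ is controlled by the $d_{cc}$-Lipschitz seminorm of $z_T^n$, and concludes $|z^n(t,x)-z^n(t,y)|\leq\|D_\mathcal{X}z^n\|_\infty d_{cc}(x,y)$ via \cite[Proposition 4.2(ii)]{07BB} before passing to the limit. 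So the missing ingredient in your argument (a distance-comparison estimate for the Grushin $d_{cc}$) is replaced in the paper by a gradient estimate already in hand; without proving such a distance-comparison lemma separately, your proposal does not close.
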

\begin{Prop}[$C^1$ differentiability of $U$ w.r.t. the measure]\label{Prop_U C^1 w.r.t. m}
Assume that H\ref{hyp1}) and H\ref{hyp2}) hold. Fix $t_{0} \in [0, T]$ and $m_{0}, \hat{m}_{0} \in \mathcal{P}\left(\mathbb{T}^{2}\right)$, let $(u,m)$ and $(\hat{u}, \hat{m})$ be the solutions of the MFG system \eqref{MFG system} beginning from $\left(t_{0}, m_{0}\right)$ and $\left(t_{0}, \hat{m}_{0}\right)$ respectively and let $(z,\rho)$ be the solution to the system \eqref{linear MFG system} with initial condition $\left(t_{0}, \hat{m}_{0}-m_{0}\right)$. Then
\begin{align*}
\sup _{t \in\left[t_{0}, T\right]} & \left(\|\hat{u}(t, \cdot)-u(t, \cdot)-z(t,\cdot)\|_{2+\alpha}+\left\|\hat{m}(t, \cdot)-m(t, \cdot)\right. -\rho(t, \cdot) \|_{-(1+\alpha)}\right) \\
\leqslant & C d_{1}^{2}\left(m_{0}, \hat{m}_{0}\right).
\end{align*}
Finally, it is immediate for one to obtain the $C^1$ differentiability of $U$ with respect to $m$, namely
\begin{equation*}
\frac{\delta U}{\delta m}\left(t_0, x, m_0, y\right)=K\left(t_0, x, m_0, y\right),
\end{equation*}
satisfying
\begin{equation*}
\sup_{(t_0,m_0) \in [0,T] \times \mathcal{P}(\mathbb{T}^2)}\left\|\frac{\delta U}{\delta m}\left(t_0, \cdot, m_0, \cdot\right)\right\|_{(2+\alpha,2+\alpha)} \leqslant C.
\end{equation*}
Moreover,
\begin{align*}
& \left\|U\left(t_0, \cdot, \hat{m}_0\right)-U\left(t_0, \cdot, m_0\right)-\int_{\mathbb{T}^{2}} \frac{\delta U}{\delta m}\left(t_0, \cdot, m_0, y\right) d\left(\hat{m}_0-m_0\right)(y)\right\|_{2+\alpha}\\
\leqslant & C d_{1}^{2}\left(m_0, \hat{m}_0\right).
\end{align*}
\end{Prop}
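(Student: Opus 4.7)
\medskip
\noindent\textbf{Proof plan.} The plan is to introduce the second-order residuals
$\tilde{u}:=\hat{u}-u-z$ and $\tilde{\rho}:=\hat{m}-m-\rho$, write the coupled linear system that they satisfy, and bound its source terms by $C d_1^2(m_0,\hat{m}_0)$ using the $\mathrm{Lip}(m_0)$-estimate of Proposition~\ref{Prop_Lip. ctn. of U} together with hypotheses H\ref{hyp1}) and H\ref{hyp2}).

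\smallskip
\noindent First, set $w:=\hat{u}-u$ and $\mu:=\hat{m}-m$. Since $H(x,p)=\tfrac12|p|^2$, a direct Taylor expansion gives
$$\tfrac12|D_{\mathcal{X}}\hat{u}|^2-\tfrac12|D_{\mathcal{X}}u|^2=D_{\mathcal{X}}u\cdot D_{\mathcal{X}}w+\tfrac12|D_{\mathcal{X}}w|^2,\qquad \hat{m}D_{\mathcal{X}}\hat{u}-mD_{\mathcal{X}}u=\mu D_{\mathcal{X}}u+\hat{m}D_{\mathcal{X}}w.$$
Subtracting the linearized MFG system \eqref{linear MFG system} from the system for $(w,\mu)$, one finds that $(\tilde u,\tilde\rho)$ satisfies the same linearized coupled system as $(z,\rho)$, with zero initial datum for $\tilde\rho$, with the coupling source $\int\frac{\delta F}{\delta m}(x,m,y)\tilde\rho(y)\,dy$ and terminal coupling $\int\frac{\delta G}{\delta m}(x,m(T),y)\tilde\rho(T,y)\,dy$, plus extra remainder terms
$$R_1:=-\tfrac12|D_{\mathcal{X}}w|^2+\Big[F(x,\hat m)-F(x,m)-\int\tfrac{\delta F}{\delta m}(x,m,y)\rho(y)\,dy\Big],$$
$$R_2:=\operatorname{div}_{\mathcal{X}}(\mu D_{\mathcal{X}}w),\qquad R_T:=G(x,\hat m(T))-G(x,m(T))-\int\tfrac{\delta G}{\delta m}(x,m(T),y)\rho(T,y)\,dy.$$

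\smallskip
\noindent Next I would estimate these remainders as $O(d_1^2(m_0,\hat m_0))$. For the $F$-difference, write $m_s=(1-s)m+s\hat m$ and split
$$F(x,\hat m)-F(x,m)-\int\tfrac{\delta F}{\delta m}(x,m,y)\rho(y)\,dy=\int_0^1\!\!\int\bigl[\tfrac{\delta F}{\delta m}(x,m_s,y)-\tfrac{\delta F}{\delta m}(x,m,y)\bigr]d\mu(y)\,ds+\int\tfrac{\delta F}{\delta m}(x,m,y)\tilde\rho(y)\,dy.$$
The first piece has the $C_{\mathcal{X}}^{1+\alpha}$-norm bounded by $\mathrm{Lip}(\frac{\delta F}{\delta m})\,d_1(m,m_s)\,d_1(m,\hat m)\leq C d_1^2(m_0,\hat m_0)$ thanks to H\ref{hyp1}) and the stability $d_1(m(t),\hat m(t))\leq C d_1(m_0,\hat m_0)$ from Proposition~\ref{Prop_Lip. ctn. of U}; the second piece is absorbed in the coupling of the linearized system. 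The quadratic term $|D_{\mathcal{X}}w|^2$ is bounded in $C_{\mathcal{X}}^{1+\alpha}$ by $C\|w\|_{2+\alpha}^2\leq C d_1^2(m_0,\hat m_0)$, again by Proposition~\ref{Prop_Lip. ctn. of U}. The same argument applied to $G$ gives $\|R_T\|_{2+\alpha}\leq Cd_1^2$, and for $R_2$ duality with any test $\phi\in W_{\mathcal{X}}^{1,\infty}$ yields $\langle R_2,\phi\rangle=-\int\mu\,D_{\mathcal{X}}w\cdot D_{\mathcal{X}}\phi$, hence $\|R_2\|_{W_{\mathcal{X}}^{-1,\infty}}\leq \|\mu\|_{W_{\mathcal{X}}^{-1,\infty}}\|D_{\mathcal{X}}w\|_{L^\infty}\leq C d_1^2$.

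\smallskip
\noindent Finally I would feed these remainder bounds into the a priori estimate for the linearized coupled system (the same estimate that underlies Proposition~\ref{Prop_Lip. ctn. of U}, itself obtained by combining Lemma~\ref{Lem_LHJB wellposed regularity} on \eqref{LHJB} and Lemma~\ref{Lem_general KFP wellposed regularity} on \eqref{general KFP} in a duality/Gronwall loop exploiting the monotonicity conditions \eqref{property_F}, \eqref{property_G}) to conclude
$$\sup_{t\in[t_0,T]}\Big(\|\tilde u(t,\cdot)\|_{2+\alpha}+\|\tilde\rho(t,\cdot)\|_{-(1+\alpha)}\Big)\leq C d_1^2(m_0,\hat m_0).$$
The identification $\frac{\delta U}{\delta m}(t_0,x,m_0,y)=K(t_0,x,m_0,y)$ and the final estimate on $U(t_0,\cdot,\hat m_0)-U(t_0,\cdot,m_0)-\int\frac{\delta U}{\delta m}\,d(\hat m_0-m_0)$ then follow by evaluating at $t=t_0$ and recalling $U(t_0,x,m_0)=u(t_0,x)$. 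The main obstacle is the dual a priori estimate for the coupled linear system in the negative H\"older norm required for $\tilde\rho$; the degeneracy of $\mathcal{X}$ forces one to work with the Carnot--Carath\'eodory H\"older scales and to exploit the semi-positivity \eqref{property_F}--\eqref{property_G} to close the bootstrap, exactly as in the earlier linearized step.
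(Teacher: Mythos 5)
Your proposal follows essentially the same route as the paper's proof. You introduce the second-order residuals $\tilde u=\hat u-u-z$ and $\tilde\rho=\hat m-m-\rho$ (the paper's $\nu$ and $\mu$), identify the coupled linear system they satisfy as an instance of the general system \eqref{general linear MFG system} with data $b=R_1$, $c=\mu\,D_{\mathcal{X}}w$, $z_T=R_T$, estimate these sources by $Cd_1^2(m_0,\hat m_0)$ using H\ref{hyp1})--H\ref{hyp2}) together with Proposition~\ref{Prop_Lip. ctn. of U}, and then invoke the a~priori estimate for the coupled linear system. Your explicit Taylor expansion of $H=\tfrac12|p|^2$ reproduces exactly the paper's remainder $b$ (the paper keeps $D_pH$, $D^2_{pp}H$ general; for quadratic $H$ the two agree), and the $F$- and $G$-remainder bounds via $\operatorname{Lip}(\delta F/\delta m)$, $\operatorname{Lip}(\delta G/\delta m)$ and the stability of the MFG system are the same as in the paper. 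One should, however, cite explicitly the regularity lemma being invoked, namely Lemma~\ref{Lem_GL system regularity}, which is a standalone Leray--Schauder argument with an energy estimate exploiting \eqref{property_F}--\eqref{property_G}; the phrase ``duality/Gronwall loop underlying Proposition~\ref{Prop_Lip. ctn. of U}'' is not an accurate description of that step.

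There is one step that does not close as written. You claim $\|R_2\|_{W_{\mathcal{X}}^{-1,\infty}}\leq\|\mu\|_{W_{\mathcal{X}}^{-1,\infty}}\|D_{\mathcal{X}}w\|_{L^\infty}$ after pairing $R_2=\operatorname{div}_{\mathcal{X}}(\mu D_{\mathcal{X}}w)$ against $\phi\in W_{\mathcal{X}}^{1,\infty}$. To use $\|\mu\|_{W_{\mathcal{X}}^{-1,\infty}}$ you must pair $\mu$ against a function in $W_{\mathcal{X}}^{1,\infty}$, but the function that actually arises, $D_{\mathcal{X}}w\cdot D_{\mathcal{X}}\phi$, is only $L^\infty$ when $\phi\in W_{\mathcal{X}}^{1,\infty}$: its $\mathcal{X}$-gradient involves $D_{\mathcal{X}}^2\phi$, which is not controlled. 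The hypothesis of Lemma~\ref{Lem_GL system regularity} is phrased differently: it requires $\sup_t\|c(t)\|_{L^1}$, and via Remark~\ref{Rem_f=div(c)} this controls $\|\operatorname{div}_{\mathcal{X}}(c)\|_{L^1([0,T];W_{\mathcal{X}}^{-1,\infty})}$. The paper therefore estimates $\|c(t)\|_{L^1}=\|\mu(t)\,D_{\mathcal{X}}w(t,\cdot)\|_{L^1}$ directly and bounds it by $Cd_1^2(m_0,\hat m_0)$ using Proposition~\ref{Prop_Lip. ctn. of U}. You should replace your $W_{\mathcal{X}}^{-1,\infty}$ product estimate for $R_2$ with this $L^1$ bound on $c$.
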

\begin{Prop}[Lipschitz continuity of $\frac{\delta U}{\delta m}$ w.r.t. the measure]\label{Prop_derivative Lip. w.r.t. m}
Under the assumptions of H\ref{hyp1}) and H\ref{hyp2}), then
\begin{equation*}
\sup _{t \in[0, T]} \sup _{m_{1} \neq m_{2}}\left(d_{1}\left(m_{1}, m_{2}\right)\right)^{-1}\left\|\frac{\delta U}{\delta m}\left(t, \cdot, m_{1}, \cdot\right)-\frac{\delta U}{\delta m}\left(t, \cdot, m_{2}, \cdot\right)\right\|_{(2+\alpha,1+\alpha)} \leqslant C
\end{equation*}
where $C$ depends on $F$, $G$, $H$ and $T$.
\end{Prop}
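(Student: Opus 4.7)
The plan is to follow the classical strategy from \cite{19CDLL}, leveraging the identification $\frac{\delta U}{\delta m}(t_0,x,m_0,y)=K(t_0,x,m_0,y)$ supplied by Proposition \ref{Prop_U C^1 w.r.t. m}. For fixed $y\in\mathbb{T}^2$, the function $K(\cdot,\cdot,m_0,y)$ is obtained as $z(t_0,x)$, where $(z,\rho)$ solves the linearized MFG system \eqref{linear MFG system} around the reference $(u,m)$ starting at $(t_0,m_0)$, with initial measure $\rho(t_0)=\delta_y-m_0$. To establish the Lipschitz bound, I would fix $t_0\in[0,T]$, two measures $m_1,m_2\in\mathcal{P}(\mathbb{T}^2)$, and $y\in\mathbb{T}^2$, and compare the two linearized systems built around the two reference solutions.

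First, denote by $(u^i,m^i)$ the solutions of the MFG system \eqref{MFG system} starting at $(t_0,m_i)$, and by $(z^i,\rho^i)$ the solutions of the associated linearized MFG systems with $\rho^i(t_0)=\delta_y-m_i$. Proposition \ref{Prop_Lip. ctn. of U} provides
$$\sup_{t\in[t_0,T]}\Bigl(d_1(m^1(t),m^2(t))+\|u^1(t,\cdot)-u^2(t,\cdot)\|_{2+\alpha}\Bigr)\leq C\,d_1(m_1,m_2),$$
which will be the input for bounding the ``coefficient differences.''

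Second, I would write the system satisfied by $\delta z:=z^1-z^2$ and $\delta\rho:=\rho^1-\rho^2$. Subtracting the two linearized systems produces a linearized MFG system of the same structure but with additional source terms. The source in the backward HJB contains $\bigl(D_pH(x,D_{\mathcal{X}}u^1)-D_pH(x,D_{\mathcal{X}}u^2)\bigr)\cdot D_{\mathcal{X}}z^2$ together with $\frac{\delta F}{\delta m}(x,m^1)(\rho^1)-\frac{\delta F}{\delta m}(x,m^2)(\rho^2)$; the source in the forward KFP contains the analogous differences multiplied by $\rho^2$ and $D_{\mathcal{X}}z^2$; the terminal condition involves the $\frac{\delta G}{\delta m}$ differences; and $\delta\rho(t_0)=-(m_1-m_2)$, whose $C_{\mathcal{X}}^{-(1+\alpha)}$-norm is bounded by $C\,d_1(m_1,m_2)$. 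Combining H\ref{hyp1}), H\ref{hyp2}), the a priori estimates on $(u^i,m^i,z^i,\rho^i)$ from Proposition \ref{Prop_MFG system wellposed regularity} and Proposition \ref{Prop_U C^1 w.r.t. m}, and the Lipschitz bound above, each source and boundary term is controlled by $C\,d_1(m_1,m_2)$ in the appropriate norm.

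Third, I would apply the continuous-dependence estimates for the linearized coupled system: the backward equation for $\delta z$ is handled by Lemma \ref{Lem_LHJB wellposed regularity}, the forward KFP for $\delta\rho$ by Lemma \ref{Lem_general KFP wellposed regularity}, and the coupling is closed by the standard duality identity between the two equations combined with the monotonicity hypotheses \eqref{property_F} and \eqref{property_G}, which yield a sign on $\int\!\int\frac{\delta F}{\delta m}(\delta\rho)(\delta\rho)$ and on the analogous terminal quadratic form from $G$. This gives
$$\sup_{t\in[t_0,T]}\|\delta z(t,\cdot)\|_{2+\alpha}+\sup_{t\in[t_0,T]}\|\delta\rho(t)\|_{-(1+\alpha)}\leq C\,d_1(m_1,m_2).$$
Evaluating at $t=t_0$ translates into the $C_{\mathcal{X}}^{2+\alpha}$ bound in $x$ for $K(t_0,\cdot,m_1,y)-K(t_0,\cdot,m_2,y)$. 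To upgrade this to the $(2+\alpha,1+\alpha)$ norm, I would differentiate once in $y$: the $y$-dependence of the system enters only through the initial datum $\rho^i(t_0)=\delta_y-m_i$, so the same comparison scheme applied to the system for $D_{\mathcal{X}}^y K$ furnishes the H\"older control of order $1+\alpha$ in $y$ (with a $d_{cc}^{\alpha}$-seminorm argument replacing a full derivative when needed).

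The main obstacle is closing the coupled estimate for $(\delta z,\delta\rho)$: the forward equation for $\delta\rho$ depends on $\delta z$ through a term of the form $\operatorname{div}_{\mathcal{X}}(m^i D_{pp}^2H\,D_{\mathcal{X}}\delta z)$, while the backward equation for $\delta z$ depends on $\delta\rho$ through $\frac{\delta F}{\delta m}(\delta\rho)$. Decoupling them requires the duality identity combined with \eqref{property_F}-\eqref{property_G}, which has to be performed at the level of the dual weighted H\"older spaces since $\delta\rho$ is only a distribution in $C_{\mathcal{X}}^{-(1+\alpha)}$. The Grushin degeneracy makes the choice of admissible test functions delicate, and one must verify that the regularity produced by Lemmas \ref{Lem_LHJB wellposed regularity} and \ref{Lem_general KFP wellposed regularity} is exactly matched to the pairing between $C_{\mathcal{X}}^{2+\alpha}$ and $C_{\mathcal{X}}^{-(1+\alpha)}$ required by the duality argument.
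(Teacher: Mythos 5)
Your proposal follows essentially the same strategy as the paper's proof: subtract the two linearized MFG systems built around $(u^1,m^1)$ and $(u^2,m^2)$, rewrite the difference $(\delta z,\delta\rho)$ as a system of the same structure with extra source terms $b$, $c$, $z_T$, control those sources by $C\,d_1(m_1,m_2)$ via H\ref{hyp1}), H\ref{hyp2}) and Proposition \ref{Prop_Lip. ctn. of U}, close the coupled $(\delta z,\delta\rho)$ estimate through the duality pairing combined with the positivity hypotheses \eqref{property_F}--\eqref{property_G}, and recover the $y$-regularity by pushing derivatives onto the initial datum.

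Two remarks on where your setup diverges from the paper's. First, the paper runs both linearized systems from the \emph{same} datum $\rho^i(t_0)=\rho_0$, so $\delta\rho(t_0)=0$, and then obtains the $(2+\alpha,1+\alpha)$ norm by substituting $\rho_0=X_i^*\delta_y$ and $\rho_0=X_i^*\delta_y-X_i^*\delta_{y'}$ into the representation of $z(t_0,\cdot)$ from Lemma \ref{Lem_relation of z and rho_0}; your choice $\rho^i(t_0)=\delta_y-m_i$ produces an extra nonzero datum $\delta\rho(t_0)=m_2-m_1$ (which you correctly bound by $d_1(m_1,m_2)$), but it also entangles the normalization of $\frac{\delta U}{\delta m}$ with the estimate, whereas the paper's choice keeps the clean identification $K(t_0,\cdot,m_0,y)=z(t_0,\cdot;\delta_y)$. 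Second, the obstacle you flag at the end---closing the coupled estimate in the degenerate weighted-H\"older duality, with the Grushin structure making the choice of test functions delicate---is precisely the content of Lemma \ref{Lem_GL system regularity}, already proved in Section \ref{Sec_5}. The paper's proof of the proposition does not re-derive that closure in place; it invokes Lemma \ref{Lem_GL system regularity} directly with $V=D_pH(\cdot,D_{\mathcal X}u^1)$ and $\Gamma=D_{pp}^2H(\cdot,D_{\mathcal X}u^1)$, so the delicate step you isolate is real, but it has been pre-packaged as a standalone a priori estimate for the general linearized system \eqref{general linear MFG system}.
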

To prove Proposition \ref{Prop_U C^1 w.r.t. m} and Proposition \ref{Prop_derivative Lip. w.r.t. m}, we shall construct the $C^1$ derivative of $U$ with respect to the measure in Lemma \ref{Lem_relation of z and rho_0}. To this aim, we differentiate the MFG system \eqref{MFG system} to get the linearized MFG system, which is a coupled system of a degenerate linear backward equation and a KFP equation as follows
\begin{equation}\label{linear MFG system}
\begin{cases}
-\partial_t z-\Delta_{\mathcal{X}} z+D_pH(x,D_{\mathcal{X}}u) \cdot D_{\mathcal{X}} z=\frac{\delta F}{\delta m}(x,m(t))(\rho(t)),& \text {in }[t_0, T] \times \mathbb{T}^2, \\
\partial_t\rho-\Delta_\mathcal{X}\rho-\operatorname{div}_{\mathcal{X}}(\rho D_pH(x,D_{\mathcal{X}}u))\\
\quad -\operatorname{div}_{\mathcal{X}}(m D_{pp}^2H(x,D_{\mathcal{X}}u) D_{\mathcal{X}}z)=0,& \text {in }[t_0, T] \times \mathbb{T}^2, \\
z(T, x)=\frac{\delta G}{\delta m}(x,m(T))(\rho(T)),\quad \rho(t_0)=\rho_0,& \text {in }\mathbb{T}^2,
\end{cases}
\end{equation}
where $(u,m)$ is the solution to the MFG system \eqref{MFG system} with initial condition $m(t_0)=m_0$ for any fixed $(t_0,m_0) \in [0,T] \times \mathcal{P}(\mathbb{T}^2)$, and $\rho_0$ can be supposed in a suitable space.

We aim at proving that $U$ is of class $C^1$ with respect to $m$ satisfying
\begin{equation*}
z(t_0,x)=\int_{\mathbb{T}^2}\frac{\delta U}{\delta m}(t_0,x,m_0,y)d \rho_0(y)=:\frac{\delta U}{\delta m}(t_0,x,m_0)(\rho_0).
\end{equation*}
That is the following lemma.
\begin{Lem}\label{Lem_relation of z and rho_0}
Under the assumptions of H\ref{hyp1}) and H\ref{hyp2}), there exists, for any  $\left(t_{0}, m_{0}\right) \in [0,T]\times\mathcal{P}\left(\mathbb{T}^2\right)$, a $C_{\mathcal{X}}^{2+\alpha}(\mathbb{T}^2) \times C_{\mathcal{X}}^{2+\alpha}(\mathbb{T}^2)$ map $(x, y) \mapsto K\left(t_{0}, x, m_{0}, y\right)$ such that, for any $\rho_{0} \in C_{\mathcal{X}}^{-(1+\alpha)}(\mathbb{T}^2)$, the $z$ component of the solution of system \eqref{linear MFG system} is given by
\begin{equation}\label{linear MFG system_rep.formula}
z\left(t_{0}, x\right)=\int_{\mathbb{T}^2}K\left(t_{0}, x, m_{0}, y\right)d\rho_{0}(y)=\left\langle\rho_{0}, K\left(t_{0}, x, m_{0}, \cdot\right)\right\rangle_{2+\alpha}.
\end{equation}
Moreover
\begin{equation*}
\left\|K\left(t_{0}, \cdot, m_{0}, \cdot\right)\right\|_{(2+\alpha,2+\alpha)} \leqslant C,
\end{equation*}
also $K$ and its derivatives in $(x,y)$ are continuous on $[0,T] \times \mathbb{T}^{2} \times \mathcal{P}\left(\mathbb{T}^{2}\right) \times \mathbb{T}^{2}$.
\end{Lem}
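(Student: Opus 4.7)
The strategy is to \emph{define} $K$ by solving the linearized MFG system \eqref{linear MFG system} with Dirac initial data, and then to transfer regularity in $y$ from the couplings $\frac{\delta F}{\delta m}$ and $\frac{\delta G}{\delta m}$ via a duality argument. Throughout, $(u,m)$ denotes the solution of \eqref{MFG system} with initial condition $(t_0,m_0)$, which is fixed.

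First I establish well-posedness of \eqref{linear MFG system} for every $\rho_0 \in C_{\mathcal{X}}^{-(1+\alpha)}(\mathbb{T}^2)$, together with the a priori bound
\begin{equation*}
\sup_{t \in [t_0,T]}\bigl(\|z(t,\cdot)\|_{2+\alpha} + \|\rho(t)\|_{-(1+\alpha)}\bigr) \leq C\|\rho_0\|_{-(1+\alpha)},
\end{equation*}
with $C$ uniform in $(t_0,m_0)$. Existence follows from a Schauder-type fixed-point iteration in $C([t_0,T];C_{\mathcal{X}}^{-(1+\alpha)}(\mathbb{T}^2))$: given a candidate $\rho$, Lemma \ref{Lem_LHJB wellposed regularity} produces the corresponding $z$ with the $(2+\alpha)$-estimate in space, and then Lemma \ref{Lem_general KFP wellposed regularity} applied to the KFP equation with drift $D_pH(x,D_{\mathcal{X}}u)$ and source $\operatorname{div}_{\mathcal{X}}(mD_{pp}^2H(x,D_{\mathcal{X}}u) D_{\mathcal{X}}z)$ produces the updated $\rho$. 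Uniqueness is obtained by a classical duality argument: subtracting two solutions and pairing the $z$-equation with $\rho$ while integrating by parts, the non-negativity of the couplings \eqref{property_F} and \eqref{property_G} forces the difference to vanish.

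Since $\|\delta_y\|_{-(1+\alpha)} \leq C$ uniformly in $y$, I define $K(t_0,x,m_0,y) := z^{(y)}(t_0,x)$, where $(z^{(y)},\rho^{(y)})$ is the solution of \eqref{linear MFG system} with $\rho_0=\delta_y$. The $x$-regularity bound $\|K(t_0,\cdot,m_0,y)\|_{2+\alpha} \leq C$ is immediate from the estimate above. Lipschitz continuity in $y$ follows from linearity: $K(t_0,\cdot,m_0,y_1)-K(t_0,\cdot,m_0,y_2)$ solves the system with initial datum $\delta_{y_1}-\delta_{y_2}$, whose $C_{\mathcal{X}}^{-(1+\alpha)}$-norm is bounded by $d_{cc}(y_1,y_2)$. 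To upgrade this to $C_{\mathcal{X}}^{2+\alpha}$-regularity in $y$ I use a duality argument: pair $K$ against suitable test functions and rewrite the resulting pairing as the terminal-time value of a \emph{dual} forward--backward linear system driven by $y\mapsto \frac{\delta F}{\delta m}(\cdot,\cdot,y)$ and $y\mapsto \frac{\delta G}{\delta m}(\cdot,\cdot,y)$; by H\ref{hyp1})--H\ref{hyp2}) these couplings are $C_{\mathcal{X}}^{2+\alpha}$ in $y$, and Lemma \ref{Lem_LHJB wellposed regularity} together with Lemma \ref{Lem_general KFP wellposed regularity} transfers this regularity to $K$.

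The representation \eqref{linear MFG system_rep.formula} is first checked for $\rho_0 = \sum_i c_i\delta_{y_i}$ by linearity and then extended to every $\rho_0 \in C_{\mathcal{X}}^{-(1+\alpha)}$ via the a priori estimate of the first step and the weak-$*$ density of finitely supported measures. Continuity of $K$ and of its $(x,y)$-derivatives in $(t_0,m_0)$ is inherited from the stability statement of Proposition \ref{Prop_MFG system wellposed regularity} combined with the continuous dependence of the coefficients of \eqref{linear MFG system} on $(u,m)$. The principal obstacle is the full $C_{\mathcal{X}}^{2+\alpha}$ regularity of $K$ in $y$ with uniform constants: the Dirac initial condition is too singular to differentiate naively in $y$, so derivatives must be transported through the dual forward--backward system and carefully controlled against the anisotropic Hörmander structure of the Grushin operator on both the HJB and KFP sides simultaneously.
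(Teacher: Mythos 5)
Your setup is essentially the paper's: define $K(t_0,x,m_0,y)$ as the $z$-component of the solution to \eqref{linear MFG system} with $\rho_0=\delta_y$, get the $x$-regularity from the a priori estimate $\sup_t\|z(t,\cdot)\|_{2+\alpha}\leq C\|\rho_0\|_{-(1+\alpha)}$ (which the paper establishes in Lemma \ref{Lem_GL system regularity}), use linearity and density to extend to general $\rho_0$, and get continuity in $(t_0,m_0)$ from stability. The Lipschitz-in-$y$ observation via $\|\delta_{y_1}-\delta_{y_2}\|_{-(1+\alpha)}\leq Cd_{cc}(y_1,y_2)$ is also correct.

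The gap is in the step you yourself flag as ``the principal obstacle'': obtaining the full $C_{\mathcal{X}}^{2+\alpha}$-regularity of $K$ in $y$. Your proposed route --- ``pair $K$ against test functions and rewrite the pairing as the terminal value of a dual forward--backward system driven by $y\mapsto\frac{\delta F}{\delta m}(\cdot,\cdot,y)$ and $y\mapsto\frac{\delta G}{\delta m}(\cdot,\cdot,y)$'' --- is not substantiated and does not clearly reflect the structure of \eqref{linear MFG system}. In that system the variable $y$ enters only through the initial datum $\rho_0=\delta_y$; the source terms contain $\int\frac{\delta F}{\delta m}(x,m(t),y')\,d\rho(t)(y')$, with $y'$ integrated out and $\rho(t)$ already evolved away from $\delta_y$, so $y$ does \emph{not} appear as a fixed parameter multiplying $\frac{\delta F}{\delta m}(\cdot,\cdot,y)$ as your description suggests. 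The paper's mechanism is more direct: one takes difference quotients of $\delta_y$ along integral curves of $X_i$, proves the Cauchy property of $\Delta_t^y K$, and lands on the representation $X_iK(t_0,x,m_0,y)=z(t_0,x;X_i^*\delta_y)$ and $X_jX_iK(t_0,x,m_0,y)=z(t_0,x;X_i^*X_j^*\delta_y)$. Since $X_i^*X_j^*\delta_y\in C_{\mathcal{X}}^{-(2+\alpha)}$ and not in $C_{\mathcal{X}}^{-(1+\alpha)}$, closing the estimate requires a \emph{second} well-posedness bound for \eqref{linear MFG system}, namely
$$
\sup_{t\in[t_0,T]}\bigl(\|z(t,\cdot)\|_{2+\alpha}+\|\rho(t)\|_{-(2+\alpha)}\bigr)\leq C\|\rho_0\|_{-(2+\alpha)},
$$
which rests on the low-regularity KFP estimate in Corollary \ref{Cor_(L)KFP lower regularity} (initial data in $C_{\mathcal{X}}^{-(2+\alpha)}$ is propagated) together with the fact that, because $\frac{\delta F}{\delta m}$ and $\frac{\delta G}{\delta m}$ are $C_{\mathcal{X}}^{2+\alpha}$ in $y$, the HJB source $\frac{\delta F}{\delta m}(x,m(t))(\rho(t))$ remains $C_{\mathcal{X}}^{1+\alpha}$ in $x$ even when $\rho(t)$ is only a $C_{\mathcal{X}}^{-(2+\alpha)}$ distribution. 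You never derive or invoke this sharper estimate, and Lemma \ref{Lem_general KFP wellposed regularity} alone (which only handles $C_{\mathcal{X}}^{-(1+\alpha)}$ initial data) is not enough; this is precisely where your argument fails to close.
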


The contribution of this paper is mainly of three points. First, the master equation studied in this paper is a kind of degenerate PDE stated on the space of probability measures. It is related to a second-order MFG with the Grushin type diffusion, where the generic player may have a ``forbidden'' direction on a vertical line. Second, we prove the existence and uniqueness of the classical solution in the scale of weighted H\"{o}lder spaces for the master equation \eqref{ME}, which can describe the Nash equilibria in the MFG. As a byproduct, we rigorously illustrate the equivalent characterization of this master equation with respect to the MFG system \eqref{MFG system}. Third, due to the degenerate nature of the equations, new problems arise that need to be solved. In particular, in order to obtain the Lipschitz continuity of $U$, we study the linearized HJB equation \eqref{LHJB}, which is a linear degenerate parabolic equation satisfying the H\"{o}rmander condition. Owing to the lack of H\"{o}lder continuity of the non-homogeneous term with respect to the variable $t$, we obtain some Schauder estimates (see Lemma \ref{Lem_LHJB wellposed regularity}) in the scale of weighted H\"{o}lder spaces, which is completely new in the literature. As another application, by duality, we prove the existence and uniqueness of the weak solution to the degenerate KFP equation \eqref{general KFP} and obtain the regularities of the solution.

The rest of the paper is organized as follows. In Section \ref{Sec_3}, we devote to the linearized HJB equation \eqref{LHJB}, proving Lemma \ref{Lem_LHJB wellposed regularity} and Lemma \ref{Lem_homo. LHJB Lipschitz}. Before that, we first give some preliminaries and known results concerning the sub-Laplacian with drift in Heisenberg group and the theory of singular integrals or fractional integrals. Lastly, we obtain the existence, uniqueness and regularities of the weak solution to the KFP equation \eqref{general KFP} in Lemma \ref{Lem_general KFP wellposed regularity}. In Section \ref{Sec_4}, we give the proof of Proposition \ref{Prop_MFG system wellposed regularity} and then we prove the Lipschitz continuity of $U$ with respect to the measure in Proposition \ref{Prop_Lip. ctn. of U}. In Section \ref{Sec_5}, we can prove Lemma \ref{Lem_relation of z and rho_0} which regards the linearized MFG system \eqref{linear MFG system}, once we have proved the relevant results for linearized HJB equation \eqref{LHJB} and KFP equation \eqref{general KFP}. Next we prove the $C^1$ character of $U$ with respect to the measure in Proposition \ref{Prop_U C^1 w.r.t. m} and Proposition \ref{Prop_derivative Lip. w.r.t. m}. Finally, in Section \ref{Sec_6}, we complete the proof of Theorem \ref{Thm_ME wellposed regularity}.

\section{Technical results for two types of linear degenerate equations}\label{Sec_3}
\noindent
In this section, we will give some technical results for linear equation \eqref{LHJB} and KFP equation \eqref{general KFP}. These results play a key role in the proof of the subsequent propositions.

Let us start with some preliminaries and known results.
\subsection{Fundamental solution for sub-Laplacian with drift in Heisenberg group and Gaussian estimate}\label{preliminary1}

Let $\mathbb{H}^n=\mathbb{T}^n \times \mathbb{T}^n \times \mathbb{T}$ be the Heisenberg group of $2n+1$ real dimensions on the torus, with points written $\xi=(x,y,z)$. The group law is
$$(x,y,z) \circ (x',y',z')=(x+x',y+y',z+z'+2\sum_{j=1}^{n}(y_j {x_j}'-x_j {y_j}')),
$$
and it can be proved that the lebesgue measure in $\mathbb{T}^{2n+1}$ is the Haar measure of $\mathbb{H}^n$.

The vector fields
$$
Y_{1,j}=\partial_{x_j}+2 y_j \partial_{z},\quad Y_{2,j}=\partial_{y_j}-2 x_j \partial_{z}, \quad 1 \leq j \leq n,
$$
are homogeneous and left invariant on $\mathbb{H}^n$ and generate its Lie algebra with homogeneous dimension of $Q=2n+2$. The corresponding sub-Laplacian is
$$
\Delta_{\mathcal{Y}}=\sum_{j=1}^n (Y_{1,j}^2+Y_{2,j}^2),
$$
Further, $D_{\mathcal{Y}}=(Y_1,Y_2)=(Y_{1,1},\ldots,Y_{1,n},Y_{2,1},\ldots,Y_{2,n})$ is the horizontal gradient.

We define the sub-Laplacian operator with a drift $v:=(a,b,0) \in \mathbb{R}^n \times \mathbb{R}^n \times \mathbb{R}$ as follows
$$
\mathcal{L}_v:=\Delta_{\mathcal{Y}}-\sum_{j=1}^n (a_j Y_{1,j}+b_j Y_{2,j}).
$$
Suppose the nonzero vector $v$ is related to time but not space, that is to say $v=(a(t),b(t),0)$ for $t \in \mathbb{R}$, and then consider the corresponding parabolic operator $\mathcal{H}_v=-\partial_t-\mathcal{L}_v$, which is a space-left invariant, H\"{o}rmander's operator on $\mathbb{R} \times \mathbb{H}^n$. We can now state the main results used in this paper.

\begin{Lem}[Fundamental solution for $\mathcal{H}_v$]\label{FS}
  There exists a global fundamental solution $\Gamma_v(t,s,\xi,\eta)$ for $\mathcal{H}_v$ in $\mathbb{R} \times \mathbb{H}^n$, with the properties listed below.
  \begin{enumerate}
  \item $\Gamma_v \geq 0$ and vanishes for $t \geq s$, or else for $t < s$, it can be explicitly given by
      \begin{equation*}
        \Gamma_v(t,s,\xi,\eta)=\exp(-\frac{1}{4}\int_{t}^{s}|v(\tau)|^2 d \tau)\chi(t,(\eta^{-1} \circ \xi)^{-1})\Gamma(s-t,\eta^{-1} \circ \xi),
      \end{equation*}
      where $\chi$ is a homomorphism from $\mathbb{R} \times \mathbb{H}^n$ to the multiplicative group $\mathbb{R}_+$ (that is, $\chi(t,\xi \circ \eta)=\chi(t,\xi)\chi(t,\eta),\xi,\eta \in \mathbb{H}^n, t \in \mathbb{R}$) defined as
      $$
      \chi(t,\xi):=\exp[\frac{1}{2} \sum_{j=1}^{n}(a_j(t) x_j+b_j(t) y_j)],
      $$
      and $\Gamma$ is the fundamental solution for the operator $\mathcal{H}$ without drift, namely when $v \equiv 0$.
  \item $\Gamma_v$ is smooth with respect to the space variable $(\xi,\eta) \in \mathbb{H}^n \times \mathbb{H}^n$ for any $(t,s) \in \mathbb{R}\times\mathbb{R}$ such that $t \neq s$.
  \item For every $(s,\eta) \in \mathbb{R} \times \mathbb{H}^n$, $
      \Gamma_v(\cdot,s,\cdot,\eta)$ is locally integrable and
      $$
      \mathcal{H}_v \Gamma_v(\cdot,s,\cdot,\eta)=\delta_{(s,\eta)}
      $$
      (the Dirac measure supported at ${(s,\eta)}$).
  \item For every test function $u \in C_0^\infty(\mathbb{R} \times \mathbb{H}^n)$, we have
      $$
      \mathcal{H}_v(\int_{\mathbb{R} \times \mathbb{H}^n} \Gamma_v(\cdot,s,\cdot,\eta) u(s,\eta)ds d \eta)=\int_{\mathbb{R} \times \mathbb{H}^n} \Gamma_v(\cdot,s,\cdot,\eta) \mathcal{H}_v u(s,\eta)ds d \eta=u.
      $$
  \item $\Gamma_v^*(t,s,\xi,\eta)=\Gamma_v(s,t,\eta,\xi)$ is a fundamental solution for the adjoint operator $\mathcal{H}_v^*=\partial_t-\Delta_{\mathcal{Y}}-\sum_{j=1}^n (a_j Y_{1,j}+b_j Y_{2,j})$ and it satisfies the dual statements of (3) and (4).
  \item For every $t < s$, we have
      $$
      \int_{\mathbb{H}^n} \Gamma_v(t,s,\xi,\eta)d \eta=1.
      $$
  \item $\Gamma_v(t,s,\xi,\eta)$ is space-left invariant, which means it depends on $\xi,\eta$ only through $\eta^{-1} \circ \xi$. Hence, from now on we will always write $\Gamma_v(t,s,\xi,\eta)=\Gamma_v(t,s,\eta^{-1} \circ \xi)$.
  \end{enumerate}
\end{Lem}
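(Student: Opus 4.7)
The strategy is to construct $\Gamma_v$ by reducing the drifted parabolic operator $\mathcal{H}_v$ on $\mathbb{R}\times\mathbb{H}^n$ to the driftless one $\mathcal{H}=-\partial_t-\Delta_{\mathcal{Y}}$ via an explicit multiplicative (exponential) gauge, and then reading off the seven claimed properties from those of the classical sub-Laplacian heat kernel on the Heisenberg group.

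I would first invoke the well-established existence of the global fundamental solution $\Gamma(\tau,\zeta)$ for $\mathcal{H}$ on $\mathbb{R}\times\mathbb{H}^n$: it is smooth and strictly positive for $\tau>0$, vanishes for $\tau\le 0$, is space-left invariant, satisfies $\mathcal{H}\Gamma=\delta_{(0,0)}$ distributionally, and has unit mass $\int_{\mathbb{H}^n}\Gamma(\tau,\zeta)\,d\zeta=1$ for every $\tau>0$. These classical facts settle the $v\equiv 0$ instance of items (1)--(7) and provide the building block from which $\Gamma_v$ will be constructed.

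Next I would carry out the key gauge computation. Since $\chi(t,\cdot)$ is a character of the $(x,y)$-subgroup of $\mathbb{H}^n$ and is independent of the central coordinate $z$, one has $Y_{1,j}\chi=\tfrac12 a_j(t)\chi$ and $Y_{2,j}\chi=\tfrac12 b_j(t)\chi$. A direct Leibniz expansion then produces the intertwining identity
\begin{equation*}
\mathcal{H}_v\bigl(\chi(t,\cdot)\,w\bigr)\;=\;\chi(t,\cdot)\Bigl(\mathcal{H}w+\tfrac14|v(t)|^2\,w\Bigr)\;-\;(\partial_t\chi)\,w,
\end{equation*}
valid for any smooth $w$. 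Inserting the ansatz $w(t,\zeta):=\exp\!\bigl(-\tfrac14\!\int_t^s|v(\tau)|^2d\tau\bigr)\,\Gamma(s-t,\zeta)$, the exponential prefactor is engineered so that the zero-order shift $\tfrac14|v|^2$ generated by the gauge is exactly cancelled; combined with $\mathcal{H}\Gamma(s-t,\cdot)=0$ for $t<s$, this shows that the explicit formula in item (1) solves $\mathcal{H}_v\Gamma_v=0$ on $\{t<s\}$. Properties (1) and (2) are immediate from the formula. Property (3) then follows by combining the PDE identity just proved with the classical delta-concentration $\Gamma(s-t,\cdot)\to\delta_0$ as $t\uparrow s$, noting the normalisations $\chi(s,0)=1$ and $\exp(-\tfrac14\!\int_s^s|v|^2)=1$. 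Property (4) is a standard integration-against-test-functions consequence of (3); property (5) is obtained by applying the symmetric gauge argument to the formal adjoint $\mathcal{H}_v^*$; and property (7) is manifest because both factors $\chi(t,(\eta^{-1}\xi)^{-1})$ and $\Gamma(s-t,\eta^{-1}\xi)$ depend on $(\xi,\eta)$ only through $\zeta=\eta^{-1}\xi$.

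The main obstacle I anticipate is the mass-conservation identity (6), together with a careful handling of the $\partial_t\chi$ remainder when $v$ is genuinely time-dependent. After using left-invariance to reduce to $\xi=0$ and performing the Haar substitution $\eta\mapsto\eta^{-1}$, property (6) becomes the moment-generating-type identity
\begin{equation*}
\int_{\mathbb{H}^n}\chi(t,\eta)\,\Gamma(s-t,\eta)\,d\eta\;=\;\exp\!\Bigl(\tfrac14\!\int_t^s|v(\tau)|^2\,d\tau\Bigr),
\end{equation*}
i.e., the horizontal moment generating function of the driftless Heisenberg heat kernel evaluated at $v$; this is precisely what pins down the form of the prefactor. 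When $v$ is constant this is the classical Gaussian moment identity on $\mathbb{H}^n$; for time-dependent $v\in L^\infty$ one approximates by piecewise-constant drifts, for which the gauge identity and the mass identity hold exactly on each constancy interval, and then passes to the limit in the distributional formulation of (3)--(4). This is the genuinely delicate step, since only here does the time-dependence of the drift leave the bookkeeping of the gauge; once it is settled, the remaining properties reduce to a verification of signs and normalisations.
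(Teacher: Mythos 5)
Your strategy --- gauge-conjugate $\mathcal{H}_v$ to the driftless Heisenberg sub-Laplacian via the multiplicative character $\chi$, then read the kernel off the classical heat kernel $\Gamma$ --- is exactly the paper's: the paper cites the conjugation identity from Alexopoulos \cite[Lemma 1.3.1]{02Al} and the known closed form of $\Gamma$, and asserts that (2)--(7) then follow easily. Your Leibniz expansion of $\mathcal{H}_v(\chi w)$ and your choice of the prefactor $\exp(-\tfrac14\int_t^s|v|^2)$ to cancel the zero-order $\tfrac14|v|^2$ term are correct, and you rightly single out the surviving $(\partial_t\chi)w$ remainder as the one place where the time-dependence of $v$ genuinely enters.

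That remainder, however, is not something your sketch can close, and the piecewise-constant approximation does not rescue the \emph{displayed formula}. Test item (6) directly against the formula in (1): after the Haar substitution $\zeta=\eta^{-1}\circ\xi$ and using that the horizontal $(x,y)$-marginal of $\Gamma(\tau,\cdot)$ is the Euclidean Gaussian with variance $2\tau$, one finds
\begin{equation*}
\int_{\mathbb{H}^n}\Gamma_v(t,s,\xi,\eta)\,d\eta
 \;=\; \exp\!\Bigl(-\tfrac14\!\int_t^s|v(\tau)|^2\,d\tau \;+\; \tfrac{s-t}{4}\,|v(t)|^2\Bigr),
\end{equation*}
which equals $1$ for all $t<s$ only when $|v|$ is constant; likewise $\mathcal{H}_v\Gamma_v$ retains the $\partial_t\chi$ contribution and is not $\delta_{(s,\eta)}$ unless $\dot v\equiv 0$. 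So (1), (3) and (6) are mutually consistent only in the time-constant case. Composing constant-drift kernels by Chapman--Kolmogorov (your approximation scheme) does construct the true fundamental solution, but that limit carries a $\chi$-factor evaluated at the time-averaged drift $(s-t)^{-1}\int_t^s v$, not at the instantaneous $v(t)$; hence the limit disagrees with the formula in (1) whenever $v$ truly varies. The paper's proof elides the same point: Alexopoulos's identity is for constant drift, while the frozen coefficient $\widetilde V^0(t)$ in the application remains time-dependent after freezing in space. A repaired statement would either restrict to constant $v$, replace $v(t)$ inside $\chi$ by the time average $\bar v(t,s)$, or retain $\partial_t\chi$ as an explicit lower-order remainder to be absorbed later in the parametrix estimates.
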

\begin{proof}
  To prove (1), we use a technique to rephrase the problem about $\mathcal{H}_v$ to a problem about the one without drift. We have (cf. \cite[Lemma 1.3.1]{02Al})
  \begin{equation}\label{Lv}
   \mathcal{L}_v=\chi^{-1}(t,\xi)(\Delta_{\mathcal{Y}}-\frac{1}{4}|v(t)|^2)\chi(t,\xi),
  \end{equation}
  the multiplicative function $\chi$ can be written as $\chi(t,\xi)=\exp(\frac{1}{2}v \cdot \pi(\xi))$, where $\pi$ is the canonical projection $\pi:\mathbb{H}^n \rightarrow \mathbb{T}^{2n+1} \cong \mathbb{H}^n / Ker(\pi)$. It is well known that the fundamental solution of $\mathcal{H}$ has the form (cf. \cite{77Ga,76Hu} or \cite{03Lu})
  $$
  \Gamma(s-t,\eta^{-1} \circ \xi)=\frac{1}{2(4\pi(s-t))^{n+1}} \int_{\mathbb{R}}\exp(\frac{\lambda}{4(s-t)}(it-|\eta^{-1} \circ \xi|^2 \cosh \lambda)){(\frac{\lambda}{\sinh \lambda})}^n d \lambda,
  $$
  for $s-t > 0$, and vanishes for $s-t \leq 0$.
  Then it follows from \eqref{Lv} that (1) holds.

  Once (1) has been proved, the rest can be easily obtained and we omit the procedure here.
\end{proof}

We denote the Carnot-Carath\'{e}odory distance on $\mathbb{H}^n$ by $d(\cdot,\cdot)$, and write $d(\xi)=d(\xi,o)$ with $o$ defined as the origin of $\mathbb{H}^n$, where $\xi \in \mathbb{H}^n$. Moreover, it can be observed that
$d(\xi,\eta)=d(\eta^{-1} \circ \xi)$. Now we denote $\|\cdot\|$ as the homogeneous norm in $\mathbb{H}^n$, it is a well-known fact that for all $\xi \in \mathbb{H}^n$,
$$
d(\xi) \simeq \|\xi\|,
$$
see \cite[pp. 10]{22LS} or \cite[pp. 98-99]{09Li} for example.

To simplify the content, hereafter we only consider the case of $n=1$.
\begin{Lem}[Gaussian estimates of $\Gamma_v$]\label{GE}
  The following sharp upper estimates hold for every $k \in \mathbb{N}, u \in \mathbb{H}^1, s-t >0$:
  \begin{align*}
  |D_{\mathcal{Y}}^k (\Gamma_v(t,s,\cdot))(u)| \leq & C(v)\frac{1}{(s-t)^{\frac{k+Q}{2}}} \exp(-\frac{{\|u\|}^2}{C(s-t)}), \\
  |\partial_t\Gamma_v(t,s,u)| \leq & C(v)\frac{1}{(s-t)^{\frac{2+Q}{2}}} \exp(-\frac{{\|u\|}^2}{C(s-t)}),
  \end{align*}

  where $C(v)$ means a constant only depends on $v$.
\end{Lem}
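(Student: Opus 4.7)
The plan is to leverage the explicit Hopf--Cole-type representation established in Lemma \ref{FS}(1),
\begin{equation*}
\Gamma_v(t,s,u)=\exp\!\left(-\tfrac{1}{4}\int_t^s|v(\tau)|^2\,d\tau\right)\chi(t,u^{-1})\,\Gamma(s-t,u),
\end{equation*}
and to reduce everything to the classical Gaussian estimates for the drift-free sub-Laplacian heat kernel $\Gamma$ on $\mathbb{H}^1$ (see e.g.\ Jerison--S\'{a}nchez-Calle, or the Varopoulos--Saloff-Coste--Coulhon monograph): for every multi-index $J$ and every $\tau>0$,
\begin{equation*}
|D_{\mathcal{Y}}^{J}\Gamma(\tau,u)|\leq\frac{C}{\tau^{(|J|+Q)/2}}\exp\!\left(-\frac{\|u\|^2}{C\tau}\right),\qquad |\partial_\tau\Gamma(\tau,u)|\leq\frac{C}{\tau^{(2+Q)/2}}\exp\!\left(-\frac{\|u\|^2}{C\tau}\right).
\end{equation*}
Since $v\in L^{\infty}$, the first prefactor of $\Gamma_v$ is bounded by a constant depending only on $v$ and $T$. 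The character $\chi(t,u^{-1})=\exp\!\bigl(-\tfrac{1}{2}v(t)\cdot\pi(u)\bigr)$ has only linear growth in its exponent in the horizontal coordinates, so I would absorb it into the Gaussian via the elementary Young-type inequality
\begin{equation*}
\tfrac{1}{2}|v(t)\cdot\pi(u)|\leq\frac{\|u\|^2}{2C(s-t)}+\tfrac{1}{8}C(s-t)|v(t)|^2,
\end{equation*}
at the cost of a multiplicative constant depending only on $v$ and $T$. This already proves the case $k=0$.

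For $k\geq 1$, I would expand $D_{\mathcal{Y}}^{k}\bigl(\chi(t,\cdot^{-1})\Gamma(s-t,\cdot)\bigr)$ by the Leibniz rule. The crucial observation is that $\chi$ is independent of the center variable $z$, so its horizontal derivatives are elementary: $Y_{1,1}\chi=-\tfrac{1}{2}a(t)\chi$ and $Y_{2,1}\chi=-\tfrac{1}{2}b(t)\chi$, and iterating yields $|Y^{I}\chi(t,u^{-1})|\leq C_I|v(t)|^{|I|}|\chi(t,u^{-1})|$. Combined with the derivative estimates for $\Gamma$ and the same absorption argument, this produces
\begin{equation*}
|D_{\mathcal{Y}}^{k}\Gamma_v(t,s,u)|\leq\frac{C(v,k)}{(s-t)^{(k+Q)/2}}\exp\!\left(-\frac{\|u\|^2}{C(s-t)}\right).
\end{equation*}

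For the time derivative I would \emph{avoid} differentiating $\chi(t,u^{-1})$ in $t$ directly, which would require additional regularity of $v$, and instead exploit the equation $\mathcal{H}_v\Gamma_v=0$ off the diagonal: this yields
\begin{equation*}
\partial_t\Gamma_v=-\Delta_{\mathcal{Y}}\Gamma_v+a(t)Y_{1,1}\Gamma_v+b(t)Y_{2,1}\Gamma_v,
\end{equation*}
so the horizontal estimates just obtained for $k=1,2$ immediately deliver the desired bound, since on a bounded time interval the second-order term dominates and produces the asserted $(s-t)^{-(2+Q)/2}$ Gaussian decay. The main technical nuisance, in my view, lies in the bookkeeping of the absorption step: one must verify that the exponential decay constant $C$ in $\exp(-\|u\|^2/(C(s-t)))$ is inherited from the drift-free kernel and can be kept independent of both $k$ and $v$, while all the $v$- and $T$-dependence is confined to the multiplicative prefactor $C(v)$.
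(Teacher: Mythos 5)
Your argument follows the same route as the paper's (one-line) proof: reduce via the explicit formula of Lemma~\ref{FS}(1) to Gaussian estimates for the drift-free kernel $\Gamma$. The paper dismisses the reduction as "easy" (citing \cite{22LS} and noting $d(u)\simeq\|u\|$); you actually carry it out, and the details are sound. Two points are worth flagging. First, the absorption step is the genuine content that the paper elides: since $\chi(t,u^{-1})$ grows exponentially in the horizontal coordinates, one must trade a portion of the Gaussian decay against it, as you do with Young's inequality. This works, but note that the multiplicative cost $\exp(C(s-t)|v(t)|^2/8)$ is only uniformly controlled on a bounded time interval; for $s-t$ unbounded one would additionally have to invoke the decaying prefactor $\exp\bigl(-\tfrac14\int_t^s|v|^2\bigr)$ to cancel it. In the paper's application $s-t\le T$, so this is harmless, but it does mean the constant $C(v)$ also depends on $T$ (which the paper silently treats as fixed data). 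Second, your treatment of $\partial_t\Gamma_v$ via the equation $\mathcal{H}_v\Gamma_v=0$ off the diagonal is a nice improvement over differentiating the explicit formula: it sidesteps any regularity hypothesis on $t\mapsto v(t)$ and makes the exponent $(2+Q)/2$ transparent by dominating the first-order term by the second-order one on a bounded time interval. That said, your assertion that the resulting decay constant "can be kept independent of $v$" is slightly optimistic: after absorption the constant in the exponent is worse than the drift-free one (e.g.\ $2C$ rather than $C$), and while it can be kept independent of $k$ and $v$ once a fixed fraction is sacrificed, some degradation is unavoidable. This is a cosmetic point that does not affect the validity of the estimates as stated.
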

\begin{proof}
  From Lemma \ref{FS}(1), it's easy to get the Gaussian estimates of $\Gamma_v$ by the Gaussian estimates of $\Gamma$ (cf. \cite[pp. 10]{22LS}), as we note that $d(u) \simeq \|u\|$.
\end{proof}

\subsection{Singular integrals on spaces of homogeneous type and continuity on weighted H\"{o}lder spaces}\label{preliminary2}

Let $X$ be a set. A function $q: X \times X \rightarrow \mathbb{R}$ is called a quasidistance on $X$ if there exists a constant $c_q \geqslant 1$ such that for any $x, y, z \in X$:
$$
\begin{gathered}
  q(x, y) \geqslant 0 \quad \text { and } \quad q(x, y)=0 \Longleftrightarrow x=y ; \\
  q(x, y)=q(y, x) ; \\
  q(x, y) \leqslant c_q(q(x, z)+q(z, y)) .
\end{gathered}
$$
We will say that two quasidistances $q, q^{\prime}$ on $X$ are equivalent, and we will write $q \simeq q^{\prime}$, if there exist two positive constants $c_1, c_2$ such that $c_1 q^{\prime}(x, y) \leqslant q(x, y) \leqslant c_2 q^{\prime}(x, y)$ for any $x, y \in X$.

For $r>0$, let $B_r(x)=\{y \in X: q(x, y)<r\}$. These ``balls" satisfy the axioms of a complete system of neighborhoods in $X$, and therefore induce a (separated) topology. With respect to this topology, the balls $B_r(x)$ need not be open. And we will explicitly exclude this kind of pathology.
\begin{Def}
  Let $(X, q)$ be a set endowed with a quasidistance $q$ such that the $q$-balls are open with respect to the topology induced by $q$, and let $\mu$ be a positive Borel measure on $X$ satisfying the doubling condition: there exists a positive constant $c_\mu$ such that
  $$
  \mu\left(B_{2 r}(x)\right) \leqslant c_\mu \cdot \mu\left(B_r(x)\right) \text { for any } x \in X, r>0.
  $$
  Then $(X, q, \mu)$ is called a space of homogeneous type.
\end{Def}

To simplify notation, the measure $d \mu(x)$ will be denoted simply by $d x$, and $\mu(A)$ will be written $|A|$. We will also set
$$
B(x ; y):=B_{q(x, y)}(x).
$$
\begin{Def}[H\"{o}lder spaces]
  For any $\alpha>0, u: X \rightarrow \mathbb{R}$, let:
  $$
  \begin{gathered}
  |u|_{C^\alpha(X)}=\sup \left\{\frac{|u(x)-u(y)|}{q(x, y)^\alpha}: x, y \in X, x \neq y\right\}, \\
  \|u\|_{C^\alpha(X)}=|u|_{C^\alpha(X)}+\|u\|_{L^{\infty}(X)}, \\
  C^\alpha(X)=\left\{u: X \rightarrow \mathbb{R}:\|u\|_{C^\alpha(X)}<\infty\right\}.
  \end{gathered}
  $$
\end{Def}
\begin{Def}
  Let $(X,q,dx)$ be a space of homogeneous type. We will say that a measurable function $k(x, y): X \times X \rightarrow \mathbb{R}$ is a standard kernel on $X$ if $k$ satisfies the following properties:
  \begin{enumerate}
  \item (``growth condition")
  $$
  |k(x, y)| \leqslant \frac{c}{|B(x ; y)|} \text { for any } x, y \in X;
  $$
  \item (``mean value inequality")
  \begin{equation}\label{MVI}
    \left|k(x, y)-k\left(x_0, y\right)\right| \leqslant \frac{c}{\left|B\left(x_0 ; y\right)\right|}\left(\frac{d\left(x_0, x\right)}{d\left(x_0, y\right)}\right)^\beta
  \end{equation}
  for any $x_0, x, y \in X$, with $d\left(x_0, y\right) \geqslant M d\left(x_0, x\right), M>1, c, \beta>0$.
  \end{enumerate}
\end{Def}
The following lemma is the key to proving the subsequent main theorems, and its proof can be found in \cite[Lemma 2.8]{05BB}.
\begin{Lem}
Let $X$ be any space of homogeneous type. Then
\begin{enumerate}
\item $\int_{d(x, y)<r} \frac{d(x, y)^\beta}{|B(x ; y)|} d y \leqslant c r^\beta$ for any $\beta>0$;
\item $\quad \int_{d(x, y)>r} \frac{d(x, y)^{-\beta}}{|B(x ; y)|} d y \leqslant c r^{-\beta}$ for any $\beta>0$.
\end{enumerate}
\end{Lem}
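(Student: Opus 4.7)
The plan is to carry out a dyadic annular decomposition of the region of integration based on the distance to $x$, and then use the doubling property of $\mu$ to absorb the volume factor $|B(x;y)|^{-1}$ at each scale. The resulting bound for each annulus will be uniform up to a geometric factor in $k$, which sums precisely because $\beta > 0$.

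For part (1), I would write $\{y : d(x,y) < r\}$ as the disjoint union of the dyadic annuli $A_k := \{y : r\,2^{-(k+1)} \leq d(x,y) < r\,2^{-k}\}$, $k \geq 0$. On $A_k$ one has $d(x,y)^\beta \leq (r\,2^{-k})^\beta$ and $|B(x;y)| = |B_{d(x,y)}(x)| \geq |B_{r\,2^{-(k+1)}}(x)|$. Since $A_k \subset B_{r\,2^{-k}}(x)$ and $r\,2^{-k} = 2\cdot r\,2^{-(k+1)}$, the doubling condition gives $|A_k| \leq |B_{r\,2^{-k}}(x)| \leq c_\mu\,|B_{r\,2^{-(k+1)}}(x)|$. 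Hence
\[
\int_{A_k} \frac{d(x,y)^\beta}{|B(x;y)|}\, dy \;\leq\; c_\mu\, (r\,2^{-k})^\beta.
\]
Summing over $k \geq 0$ yields a convergent geometric series (this is where $\beta > 0$ is used) and produces the desired bound $c\,r^\beta$.

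Part (2) I would handle by the symmetric decomposition of the exterior, $A_k' := \{y : r\,2^k \leq d(x,y) < r\,2^{k+1}\}$, $k \geq 0$. On $A_k'$ one has $d(x,y)^{-\beta} \leq (r\,2^k)^{-\beta}$ and $|B(x;y)| \geq |B_{r\,2^k}(x)|$, while doubling gives $|A_k'| \leq |B_{r\,2^{k+1}}(x)| \leq c_\mu\,|B_{r\,2^k}(x)|$. This leads to
\[
\int_{A_k'} \frac{d(x,y)^{-\beta}}{|B(x;y)|}\, dy \;\leq\; c_\mu\, (r\,2^k)^{-\beta},
\]
and the sum over $k \geq 0$ converges (again using $\beta > 0$) to give $c\,r^{-\beta}$.

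I do not expect a genuine obstacle here: the argument is purely a scale-by-scale bookkeeping exercise, and the only care required is to apply the doubling condition to pass between radii differing by factor $2$, so that the volume factor from $|A_k|$ cancels the volume factor from $|B(x;y)|^{-1}$ up to the constant $c_\mu$. The role of the hypothesis $\beta > 0$ is essential and should be emphasized: without it the resulting geometric series degenerates at the scale $d(x,y) \simeq r$ and the estimate fails.
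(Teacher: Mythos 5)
Your proof is correct and uses the standard dyadic annular decomposition together with the doubling property, which is essentially the argument in the cited reference \cite[Lemma 2.8]{05BB} (the paper itself delegates the proof to that reference rather than reproducing it). The annulus-by-annulus bookkeeping is sound, the constants are tracked correctly, and you rightly emphasize that $\beta>0$ is what makes the geometric series converge.
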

\begin{Lem}[$C^{\alpha}$ continuity of singular integral operator, cf. \mbox{\cite[Theorem 2.7]{07BB}}]\label{HCSIO}
  Let $(X, q, d x)$ be a bounded space of homogeneous type, and let $k(x, y)$ be a standard kernel. Let
  \begin{equation}\label{K_varepsilon}
    K_{\varepsilon} f(x)=\int_{q^{\prime}(x, y)>\varepsilon} k(x, y) f(y) d y
  \end{equation}
  where $q'$ is any quasidistance on $X$, equivalent to $q$, and fixed once and for all. Assume that for every $f \in C^\alpha(X)$ and $x \in X$ the following limit exists:
  $$
  K f(x)=P V \int_X k(x, y) f(y) d y=\lim _{\varepsilon \rightarrow 0} K_{\varepsilon} f(x).
  $$
  Also, assume that (``cancellation properties"):
  \begin{equation}\label{IP}
    \left|\int_{q^{\prime}(x, y)>r} k(x, y) d y\right| \leqslant c_K
  \end{equation}
  for any $r>0$ (with $c_K$ independent of $r$) and
  \begin{equation}\label{CP}
    \lim _{\varepsilon \rightarrow 0} |\int_{q^{\prime}(x, y)>\varepsilon} k(x, y) d y-\int_{q^{\prime}(x_0, y)>\varepsilon} k\left(x_0, y\right) d y|\leqslant c_K q(x, x_0)^\gamma
  \end{equation}
  for some $\gamma \in(0,1]$, where $q^{\prime}$ is the same quasidistance appearing in \eqref{K_varepsilon}. Then the integral operator $K$ is continuous on $C^\alpha(X)$; more precisely:
  $$
  |K f|_{C^\alpha(X)} \leqslant c_K\|f\|_{C^\alpha(X)} \text { for every } \alpha \leqslant \gamma, \alpha<\beta,
  $$
where $\gamma$ is the number in \eqref{CP} and $\beta$ is the number in \eqref{MVI}. Moreover,
  \begin{equation}\label{L^inf E}
    \|K f\|_{\infty} \leqslant c_{K, R, \alpha}\|f\|_\alpha,\text { where } R=\operatorname{diam} X.
  \end{equation}
\end{Lem}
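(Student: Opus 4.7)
The plan is to establish the two bounds in Lemma \ref{HCSIO} by the classical scheme for Calder\'on--Zygmund operators on spaces of homogeneous type, relying on the trick of subtracting a well-chosen constant from $f$ inside every integral so that the cancellation conditions \eqref{IP} and \eqref{CP} absorb the resulting ``constant'' contributions while the H\"older continuity of $f$ controls the remaining pieces.

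For the $L^\infty$ estimate \eqref{L^inf E}, I would fix $x \in X$ and decompose
$$
K_{\varepsilon} f(x) = \int_{q'(x,y)>\varepsilon} k(x,y)(f(y)-f(x))\, dy + f(x) \int_{q'(x,y)>\varepsilon} k(x,y)\, dy.
$$
The second term is controlled by $c_K \|f\|_{\infty}$ uniformly in $\varepsilon$ via \eqref{IP}. The first term is bounded, using the growth condition on $k$ together with $|f(y)-f(x)| \leq |f|_{C^\alpha} q(x,y)^\alpha$, by a multiple of $|f|_{C^\alpha}$ times $\int_X q(x,y)^\alpha |B(x;y)|^{-1}\, dy$, which in turn is controlled by $c R^\alpha$ with $R = \operatorname{diam} X$ through the standard doubling estimate $\int_{q(x,y)<R} q(x,y)^\alpha |B(x;y)|^{-1}\, dy \leq c R^\alpha$. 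Passing to the limit $\varepsilon \to 0$ yields \eqref{L^inf E}.

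For the H\"older seminorm estimate, I would fix $x_0, x \in X$, set $r := q(x_0, x)$, and split the domain into the \emph{near} region $N := \{y : q(x_0, y) \leq M r\}$ and the \emph{far} region $F := X \setminus N$, where $M > 1$ is the constant in \eqref{MVI}. Adding and subtracting $f(x_0)$ in each integrand of $K_\varepsilon f(x) - K_\varepsilon f(x_0)$ produces two integrals of the form $\int k(\cdot,y)(f(y)-f(x_0))\, dy$ plus a single $f(x_0)$-weighted difference of truncated integrals. The latter is bounded in the limit $\varepsilon \to 0$ by $c_K \|f\|_\infty r^\gamma \leq c_K \|f\|_\infty r^\alpha (\operatorname{diam} X)^{\gamma-\alpha}$ by \eqref{CP} (using $\alpha \leq \gamma$). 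For the former: on $N$, each of the two integrals is estimated separately by combining the growth condition on $k$ with $|f(y) - f(x_0)| \leq |f|_{C^\alpha} q(x_0, y)^\alpha$, giving a bound of order $|f|_{C^\alpha} r^\alpha$ through the doubling estimate; on $F$, the two are combined into $\int_F [k(x,y)-k(x_0,y)](f(y)-f(x_0))\, dy$, and the mean value inequality \eqref{MVI} together with the H\"older bound on $f$ yields
$$
\Bigl|\int_F [k(x,y)-k(x_0,y)](f(y)-f(x_0))\, dy\Bigr| \leq c |f|_{C^\alpha} r^\beta \int_{q(x_0,y)>Mr} \frac{q(x_0,y)^{\alpha-\beta}}{|B(x_0;y)|}\, dy,
$$
which is $\leq c |f|_{C^\alpha} r^\alpha$ precisely because $\alpha < \beta$.

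The main obstacle is organizational rather than conceptual: one must justify that the principal-value limit $\varepsilon \to 0$ commutes with the domain decomposition, and one must handle the near-region integral involving $k(x, \cdot)$, where $B(x;y)$ and $B(x_0;y)$ are not \emph{a priori} comparable. The latter is resolved by further splitting $N$ into the part where $q(x,y) \leq M' r$, on which the growth bound can be applied directly after a doubling argument, and its complement in $N$, on which the quasi-triangle inequality forces $q(x,y) \simeq q(x_0,y)$, so that $|B(x;y)| \simeq |B(x_0;y)|$ by the doubling property. The interchange of limit and decomposition is then standard once uniform-in-$\varepsilon$ bounds have been established as above.
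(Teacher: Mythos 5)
The paper does not actually prove this lemma --- it is cited directly from Bramanti--Brandolini \cite[Theorem 2.7]{07BB} --- so there is no in-paper argument to compare against; your outline follows the standard Calder\'on--Zygmund scheme, which is the right framework, but there is a concrete gap in the near-region estimate. After subtracting the constant $f(x_0)$ inside both integrands, one of the two near-region integrals is
\begin{equation*}
\lim_{\varepsilon\to 0}\int_{\{q'(x,y)>\varepsilon\}\cap N} k(x,y)\bigl(f(y)-f(x_0)\bigr)\,dy,
\qquad N=\{y:q(x_0,y)\leq Mr\},\quad r=q(x_0,x).
\end{equation*}
You propose to bound this by the growth condition $|k(x,y)|\lesssim|B(x;y)|^{-1}$ together with $|f(y)-f(x_0)|\leq|f|_{C^\alpha}q(x_0,y)^\alpha$. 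But $q(x_0,y)^\alpha$ does not vanish as $y\to x$; it tends to $r^\alpha>0$, while $|B(x;y)|^{-1}$ is non-integrable at $y=x$, so the resulting majorant $\int_N q(x_0,y)^\alpha\,|B(x;y)|^{-1}\,dy$ diverges logarithmically. Your proposed repair --- further splitting $N$ near $x$ and using doubling to compare $|B(x;y)|$ with $|B(x_0;y)|$ --- does not touch this: the difficulty is not non-comparability of the two balls but that $q(x_0,\cdot)^\alpha$ fails to regularize the singularity of $|B(x;\cdot)|^{-1}$ at $y=x$.

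The standard repair is to subtract a different constant near $x$: in that near integral write $f(y)-f(x_0)=(f(y)-f(x))+(f(x)-f(x_0))$. The $f(y)-f(x)$ piece is then absolutely convergent and estimated as in your $L^\infty$ step, giving $\lesssim|f|_{C^\alpha}r^\alpha$ since $N\subset\{q(x,y)\lesssim r\}$ by the quasi-triangle inequality. The constant piece, $\lesssim|f|_{C^\alpha}r^\alpha$ in size, multiplies $\lim_\varepsilon\int_{\{q'(x,y)>\varepsilon\}\cap N}k(x,y)\,dy$, and bounding this by an absolute constant is the real work: rewrite it as $\lim_\varepsilon\int_{q'(x,y)>\varepsilon}k(x,y)\,dy - \int_F k(x,y)\,dy$, control the first term by~\eqref{IP}, and bound the second by adding and subtracting $k(x_0,y)$ so that~\eqref{MVI} handles the difference on $F$ and~\eqref{IP} (after a fixed dyadic-annulus correction accounting for the $q\leftrightarrow q'$ discrepancy) handles the $k(x_0,\cdot)$ tail. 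That this near-region constant term needs the cancellation hypotheses, not merely the growth bound and doubling, is exactly what your outline omits.
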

\begin{Rem}\label{Rem_Bdd.SIO}
  If we replace condition \eqref{IP} by
  $$
  \int_{X} |k(x,y)|d y \leq c_K,
  $$
  which means the integral operator is absolutely integrable, then we have the $L^\infty$ estimate as
  $$
  \|K f\|_{\infty} \leqslant c_{K}\|f\|_\infty
  $$
  instead of \eqref{L^inf E}.
\end{Rem}
\begin{Lem}[$C^{\alpha}$ continuity of fractional integral operator, cf. \mbox{\cite[Theorem 2.11]{07BB}}]\label{HCFIO}
  Let $(X, q, d x)$ be a bounded space of homogeneous type, and assume that $X$ does not contain atoms (that is, points of positive measure). Let $k_\delta(x, y)$ be a ``fractional integral kernel", that is,
  \begin{equation}\label{GC-F}
    |k_\delta(x, y)| \leqslant \frac{c q(x, y)^\delta}{|B(x ; y)|}
  \end{equation}
  for any $x, y \in X$, some $c, \delta>0$ (``growth condition");
  \begin{equation}\label{MVI-F}
    \left|k_\delta(x, y)-k_\delta\left(x_0, y\right)\right| \leqslant \frac{c q\left(x_0, y\right)^\delta}{\left|B\left(x_0 ; y\right)\right|}\left(\frac{q\left(x_0, x\right)}{q\left(x_0, y\right)}\right)^\beta
  \end{equation}
for any $x_0, x, y \in X$, with $q\left(x_0, y\right) \geqslant 2 q\left(x_0, x\right)$, some $c, \beta>0$ (``mean value inequality"). Then the integral operator
$$
I_\delta f(x)=\int_X k_\delta(x, y) f(y) d y
$$
is continuous on $C^\alpha(X)$ for any $\alpha<\min (\beta, \delta)$.
\end{Lem}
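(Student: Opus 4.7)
The plan is to bound both $\|I_\delta f\|_\infty$ and the Hölder seminorm $[I_\delta f]_{C^\alpha(X)}$ by $C\|f\|_\infty$, which suffices since $\|f\|_\infty\leq\|f\|_{C^\alpha(X)}$. Crucially, unlike the singular-integral setting of Lemma \ref{HCSIO}, no cancellation hypothesis is needed here because the extra factor $q(x,y)^\delta$ in \eqref{GC-F} already renders $k_\delta$ absolutely integrable. The $L^\infty$ bound follows immediately: by \eqref{GC-F} and the first integration identity of the preceding lemma (applied on the ball of radius $R=\operatorname{diam}X$),
\[
|I_\delta f(x)|\leq c\|f\|_\infty\int_X\frac{q(x,y)^\delta}{|B(x;y)|}\,dy\leq CR^\delta\|f\|_\infty.
\]

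For the seminorm, I would fix $x_0,x\in X$, set $r=q(x_0,x)$, and split $X=A\cup B$ with $A=\{y:q(x_0,y)<2r\}$ and $B=\{y:q(x_0,y)\geq 2r\}$. On the near region $A$ the mean-value condition \eqref{MVI-F} does not apply, so I bound $|k_\delta(x,y)-k_\delta(x_0,y)|\leq|k_\delta(x,y)|+|k_\delta(x_0,y)|$ and control each piece by \eqref{GC-F}. Using the quasi-triangle inequality to check that $y\in A$ also lies in a ball of radius $\leq 3c_q r$ around $x$, two applications of part (1) of the integration lemma produce the near-region contribution
\[
\bigl|I_\delta f(x)-I_\delta f(x_0)\bigr|_A \leq C\|f\|_\infty\, r^\delta.
\]

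On the far region $B$, the hypothesis $q(x_0,y)\geq 2q(x_0,x)$ of \eqref{MVI-F} holds, so
\[
\int_B\bigl|k_\delta(x,y)-k_\delta(x_0,y)\bigr||f(y)|\,dy\leq c\|f\|_\infty r^\beta\int_B\frac{q(x_0,y)^{\delta-\beta}}{|B(x_0;y)|}\,dy.
\]
The sign of $\delta-\beta$ dictates which integration identity to invoke: when $\delta-\beta<0$ I would use part (2) to obtain a bound $Cr^{\delta-\beta}$, producing a far-region contribution $\leq C\|f\|_\infty r^\delta$; when $\delta-\beta>0$, part (1) applied on the whole of $X$ gives $\leq CR^{\delta-\beta}$, producing $\leq C\|f\|_\infty r^\beta$ (the borderline $\delta=\beta$ creates a $\log(R/r)$ which is absorbed into any power $r^{\beta-\alpha}$ with $\alpha<\beta$). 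Combining near and far,
\[
|I_\delta f(x)-I_\delta f(x_0)|\leq C\|f\|_\infty\bigl(r^\delta+r^\beta\bigr),
\]
and since $r\leq R$ and $\alpha<\min(\beta,\delta)$, the interpolation $r^\delta+r^\beta\leq (R^{\delta-\alpha}+R^{\beta-\alpha})\,r^\alpha$ yields $[I_\delta f]_{C^\alpha(X)}\leq C\|f\|_\infty$.

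The only real technical point is the bookkeeping of the two sign regimes for $\delta-\beta$ in the far-region integral; apart from that, the argument is a routine near/far decomposition. The absence of a cancellation condition is precisely what distinguishes this proof from that of Lemma \ref{HCSIO}, where the growth $1/|B(x;y)|$ (without a compensating positive power) forces one to use the integral-vanishing hypotheses \eqref{IP}--\eqref{CP}.
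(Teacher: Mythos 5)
Your proof is correct. Note first that the paper itself does not prove this lemma: it is stated with a citation to \cite[Theorem 2.11]{07BB}, so there is no in-paper argument to compare against. Your near/far decomposition at $q(x_0,y)=2q(x_0,x)$ is the standard method for such kernels, and the treatment of the three sign regimes of $\delta-\beta$ in the far-region integral (using part (2) of the preceding integration lemma for $\delta<\beta$, part (1) over the whole bounded space for $\delta>\beta$, and absorbing the logarithm when $\delta=\beta$) is the right bookkeeping. The one thing worth flagging is your framing relative to the Remark that follows the lemma in the paper: that Remark points out that \eqref{GC-F} and \eqref{MVI-F}, on a bounded space, automatically force the cancellation hypotheses \eqref{IP}--\eqref{CP} of Lemma \ref{HCSIO}, so one could alternatively deduce HCFIO from HCSIO. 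But verifying \eqref{CP} for a fractional kernel requires exactly the near/far computation you carried out, so this reduction is the same estimate repackaged. Your direct route, and your observation that absolute integrability of $k_\delta$ is precisely what dispenses with the cancellation hypotheses, is the cleaner and more transparent account.
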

\begin{Rem}
  In Lemma \ref{HCFIO}, we do not need to satisfy \eqref{CP} and \eqref{IP}, that is because properties \eqref{GC-F} and \eqref{MVI-F} can imply these cancellation properties.
\end{Rem}
\subsection{Results for the linearized HJB equation}
Before proving Lemma \ref{Lem_LHJB wellposed regularity}, we introduce some symbolic notations and then prove the a-prior estimates of the solution (see Lemma \ref{Lem_LHJB prior estimate}).

Let us start from the H\"{o}rmander's operator
$$
H:=-\partial_t-\sum_{i=1}^2 X_i^{2}+\sum_{i=1}^2 V_i(t, x) X_i,
$$
where $\{X_1, X_2\}$ are Grushin's vector fields, recall that
$$
X_1=\partial_{x_{1}}, X_2=x_{1} \partial_{x_{2}} \text { in } \mathbb{T}^2.
$$
They generate a Lie algebra which has not the same structure at any point, since $X_1, X_2$ are independent if and only if $x_{1} \neq 0$. In the light of the Rothschild-Stein's ``lifting and approximation" technique introduced in \cite{76RS}, we lift the above vector fields to the new ones
$$
\widetilde{X}_1 = \partial_{x_{1}}, \widetilde{X}_2=X_2+\partial_{x_{3}}=x_{1} \partial_{x_{2}}+\partial_{x_{3}} \text { in } \mathbb{T}^3 .
$$
Note that $\widetilde{X}_1, \widetilde{X}_2,\left[\widetilde{X}_1, \widetilde{X}_2\right]$ are independent at any point of $\mathbb{T}^3$. Their Lie algebra is the same as that of the Heisenberg group $\mathbb{H}^1$ in $\mathbb{T}^3$ with homogeneous dimension of $Q=4$, and actually a smooth change of variables ``$x=x_{3}, y=x_{1}, z=4x_{2}-2x_{1}x_{3}$" in $\mathbb{T}^3$ can turn these vector fields into the ``canonical form" $Y_{1}=\partial_{x}+2 y \partial_{z}, Y_{2}=\partial_{y}-2 x \partial_{z}$ of $\mathbb{H}^1$ which are left invariant with respect to the translation assigned by the group law
$$
\left(x^{\prime}, y^{\prime}, z^{\prime}\right) \circ (x, y, z)=\left(x+x^{\prime}, y+y^{\prime}, z+z^{\prime}+2\left(x y^{\prime}-x^{\prime} y\right)\right).
$$
Moreover, for any smooth function $f:\mathbb{H}^1 \rightarrow \mathbb{R},$
\begin{equation}\label{approximation}
  \widetilde{X}_i (f\left(\Theta(\cdot,\eta)\right))(\xi)=Y_{i} f \left(\Theta(\xi,\eta)\right),i \in \{1,2\},
\end{equation}
where $\mathbb{T}^3 \ni \xi \mapsto \Theta(\xi,\eta)$ for any $\eta \in \mathbb{T}^3$ is a smooth diffeomorphism defined as
\begin{equation}\label{diffeomorphism}
  \Theta(\xi,\eta):=(\xi_{3}-\eta_{3}, \xi_{1}-\eta_{1}, 4(\xi_{2}-\eta_{2})-2(\xi_{3}-\eta_{3})(\xi_{1}+\eta_{1})).
\end{equation}
Moreover, we can deduce that
\begin{equation}\label{approximation_1}
  \widetilde{X}_i (f\left(\Theta(\xi,\cdot)\right))(\eta)=-(Y_{i} f) \left(-\Theta(\xi,\eta)\right)=-(Y_{i} f) \left(\Theta(\eta,\xi)\right),i \in \{1,2\}.
\end{equation}
As it was introduced by Rothschild and Stein in \cite{76RS},
$$
\widetilde{d}^{\prime}(\xi,\eta):=\|\Theta(\xi,\eta)\|
$$
is a quasidistance equivalent to $\widetilde{d}_{cc}$, where $\|\cdot\|$ is the homogeneous norm in $\mathbb{H}^1$.

The vector fields $\widetilde{X}_1, \widetilde{X}_2$ satisfy the required condition, moreover they project onto the original $X_1, X_2$, in the sense that for any function $\widetilde{f}(x_{1},x_{2},x_{3})=f(x_{1},x_{2})$, we have
$$
X_1 \widetilde{f}=\widetilde{X}_1 \widetilde{f} ; X_2 \widetilde{f}=\widetilde{X}_2 \widetilde{f}.
$$
And once we have proved a similar estimate for $\widetilde{X}_i \widetilde{X}_j \widetilde{f}$ in a high-dimensional space, this property should allow us to easily obtain the required a-priori estimate for $X_i X_j f$.

With a slight abuse of notation, we will add the hat ``$\sim$" to denote function on $\mathbb{T}^3$ independent of $x_{3}$, namely $\widetilde{f}(\xi)=f(x), \xi \in \mathbb{T}^3 \text { while } x \in \mathbb{T}^2$.

The corresponding lifted operator is
$$
\widetilde{H}:=-\partial_t-\sum_{i=1}^2 \widetilde{X}_i^{2}+\sum_{i=1}^2 \widetilde{V}_i(t, \xi) \widetilde{X}_i.
$$
Let us now freeze the space variable of $\widetilde{V}(t, \xi)$ at the point $\xi_0=(x_0,0) \in \mathbb{T}^3$, and then consider the frozen lifted differential operator
\begin{equation}\label{widetilde H_0}
  \widetilde{H}_0:=-\partial_t-\sum_{i=1}^2 \widetilde{X}_i^2+\sum_{i=1}^2 \widetilde{V}_i^0 \widetilde{X}_i,
\end{equation}
where $\widetilde{V}_i^{0}$ denotes $\widetilde{V}_i\left(t, \xi_0\right)$ for simplification. Since the vector fields $\widetilde{X}_i$ can be globally approximated by left invariant vector fields $Y_{i}$ defined on Heisenberg group $\mathbb{H}^1$, we will consider the corresponding approximating operator
\begin{equation}\label{mathcal H_0}
  \mathcal{H}_0:=-\partial_t-\sum_{i=1}^2 Y_i^2+\sum_{i=1}^2 \widetilde{V}_i^0 Y_i.
\end{equation}
$\mathcal{H}_0$ is a nonhomogeneous but space-left invariant H\"{o}rmander's operator on $\mathbb{R} \times \mathbb{H}^1$. So it admits a space-left invariant fundamental solution $\Gamma_0$ (that is, $\Gamma_0(t,s,\xi,\eta)=\Gamma_0(t,s,\eta^{-1} \circ \xi)$). Starting with $\Gamma_0$, we build a parametrix for $\widetilde{H}_0$. Let us consider kernel $\Gamma_0\left(t,s, \Theta(\xi,\eta)\right)$ and then compute
$$
\widetilde{H}_0\left[\Gamma_0\left(\cdot,s, \Theta(\cdot,\eta)\right)\right](t, \xi).
$$
Recall the approximation relation in \eqref{approximation}, hence
$$
\widetilde{H}_0\left[\Gamma_0\left(\cdot,s, \Theta(\cdot,\eta)\right)\right](t, \xi)=\mathcal{H}_0\left[\Gamma_0\left(\cdot,s,\cdot\right)\right](t,
\Theta(\xi,\eta))=\delta_{(s,0)}(t, \Theta(\xi,\eta)),
$$
where $\delta_{(s,0)}$ is the Dirac mass at the point $(s,0)$. More precisely, we already get the parametrix (fundamental solution actually) for $\widetilde{H}_0$ as $k(t,s,\xi,\eta):=\Gamma_0(t,s,\Theta_\eta(\xi))$, which is defined on the whole space.
\begin{Rem}
In view of the fact that (see Lemma \ref{FS}(6)), when $t<s$, $\int_{\mathbb{T}^3} \Gamma_0 (t,s,y^{-1} \circ x)d y=1$, so by change of variables we can compute that $\int_{\mathbb{T}^3} \Gamma_0 (t,s,\Theta(\xi,\eta))d \eta=\frac{1}{4}$ when $t<s$, for which the Jacobian determinant is equal to
$$
|J|=\begin{vmatrix}
0 & 0 & 1\\
1 & 0 & 0\\
2(\xi_3-\eta_3) & 4 & -2(\xi_1+\eta_1)\\
\end{vmatrix}=4.
$$
\end{Rem}
The link between the fundamental solution for differential operator $\widetilde H_0$ and the abstract theory of singular integrals or fractional integrals is contained in the following propositions.
\begin{Lem}\label{FS type 0}
  Suppose $t<s$, the kernel
  $$
  k(t,s,\xi,\eta)=\Gamma_0(t,s,\Theta(\xi,\eta))
  $$
  is a singular integral kernel satisfying the conditions in Lemma \ref{HCSIO} as follows:
    \begin{enumerate}
    \item (growth condition)
    \begin{equation}\label{k-GC}
      |k(t,s,\xi,\eta)| \leqslant \frac{c}{\widetilde{d}_{cc}(\xi, \eta)^Q} \leqslant \frac{c}{|\widetilde{B}(\xi;\eta)|};
    \end{equation}
    \item (mean value inequality)
    \begin{equation}\label{k-MVI}
      \begin{aligned}
      |k(t,s,\xi,\eta)-k(t,s,\xi_1,\eta)| & \leqslant c \frac{\widetilde{d}_{cc}(\xi_1,\xi)}{\widetilde{d}_{cc}(\xi_1, \eta)^{Q+1}}\\
      & \leqslant c \frac{\widetilde{d}_{cc}(\xi_1, \xi)}{|\widetilde{B}(\xi_1;\eta)|} \cdot(\frac{\widetilde{d}_{cc}(\xi_1,\xi)}{\widetilde{d}_{cc}(\xi_1,\eta)})
      \end{aligned}
    \end{equation}
    when $\widetilde{d}_{cc}(\xi_1,\eta) > 2 \widetilde{d}_{cc}(\xi_1,\xi)$;
    \item (cancellation properties)
    \begin{equation}\label{k-CP1}
      |\int_{r<\widetilde{d}_{cc}^{\prime}(\xi,\eta)<R} k(t,s,\xi,\eta)d \eta| \leqslant c
    \end{equation}
    with $c$ independent of $r, R$, and
    \begin{equation}\label{k-CP2}
      \begin{aligned}
      \lim _{\varepsilon \rightarrow 0} | & \int_{\widetilde{d}_{cc}^{\prime}(\xi,\eta)>\varepsilon} k(t,s,\xi,\eta)d \eta-\int_{\widetilde{d}_{cc}^{\prime}(\xi_1, \eta)>\varepsilon} k(t,s,\xi_1,\eta)d \eta|\\
      \leqslant & c \widetilde{d}_{cc}(\xi_1,\xi)^\gamma
      \end{aligned}
    \end{equation}
    for any constant $\gamma$.
  \end{enumerate}
  It is worth noting that constants c above depends only on the frozen drift coefficient $\widetilde{V}^0(t)$.
\end{Lem}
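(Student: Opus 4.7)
The plan is to establish the three properties \eqref{k-GC}, \eqref{k-MVI}, and \eqref{k-CP1}--\eqref{k-CP2} separately, using the Gaussian estimates of Lemma \ref{GE}, the approximation identity \eqref{approximation}, and the explicit total-integral identity from the preceding remark. The unifying device is the elementary inequality $e^{-x} \leq C_N x^{-N}$, which converts the Gaussian-type decay from Lemma \ref{GE} into the polynomial decay demanded by the singular integral framework, thereby absorbing the $s-t$ variable.

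For the growth bound \eqref{k-GC}, I start from the Gaussian estimate
\[
|\Gamma_0(t,s,u)| \leq \frac{C(\widetilde{V}^0)}{(s-t)^{Q/2}} \exp\!\left(-\frac{\|u\|^2}{C(s-t)}\right)
\]
and apply $e^{-x} \leq C_{Q/2} x^{-Q/2}$ with $x = \|u\|^2/(C(s-t))$. The $(s-t)^{-Q/2}$ factor cancels, leaving $|\Gamma_0(t,s,u)| \leq C \|u\|^{-Q}$. Setting $u = \Theta(\xi,\eta)$, the Rothschild--Stein equivalence $\widetilde{d}_{cc}(\xi,\eta) \simeq \|\Theta(\xi,\eta)\|$ and the homogeneity $|\widetilde{B}(\xi;\eta)| \simeq \widetilde{d}_{cc}(\xi,\eta)^{Q}$ yield \eqref{k-GC}. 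For the mean value inequality \eqref{k-MVI}, the approximation identity \eqref{approximation} gives $\widetilde{X}_i k(t,s,\cdot,\eta)(\xi) = (Y_i \Gamma_0)(t,s,\Theta(\xi,\eta))$, so the same trick applied with $N = (Q+1)/2$ to the Gaussian estimate on $Y_i\Gamma_0$ produces
\[
|\widetilde{X}_i k(t,s,\xi,\eta)| \leq \frac{C}{\widetilde{d}_{cc}(\xi,\eta)^{Q+1}}.
\]
I then join $\xi_1$ to $\xi$ by a sub-unit horizontal curve $\gamma$ of length comparable to $\widetilde{d}_{cc}(\xi_1,\xi)$ and integrate along $\gamma$; the hypothesis $\widetilde{d}_{cc}(\xi_1,\eta) > 2\widetilde{d}_{cc}(\xi_1,\xi)$ combined with the quasi-triangle inequality guarantees $\widetilde{d}_{cc}(\gamma(\tau),\eta) \gtrsim \widetilde{d}_{cc}(\xi_1,\eta)$ uniformly along the path, so the bound survives integration and produces \eqref{k-MVI}.

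The cancellation properties are essentially automatic. Since $\Gamma_0 \geq 0$ by Lemma \ref{FS}(1), the change-of-variables argument recorded in the preceding remark gives
\[
\int_{\mathbb{T}^3} k(t,s,\xi,\eta)\,d\eta \;=\; \frac{1}{4}
\]
independently of $\xi$. Hence the annular integral in \eqref{k-CP1} is sandwiched between $0$ and $1/4$ for every choice of $r,R,t,s,\xi$, giving the required uniform bound. For \eqref{k-CP2}, both integrals tend to $1/4$ as $\varepsilon \to 0$ by dominated convergence, their difference tends to $0$, and the estimate holds trivially for any exponent $\gamma \in (0,1]$. In both cases no delicate cancellation is needed; positivity plus the explicit total mass does the work.

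The main obstacle is the mean value inequality, and in particular verifying that the estimate on $\widetilde{X}_i k$ can be integrated along a sub-Riemannian horizontal path without losing the sharp power $Q+1$ in the denominator. This is where the quantitative hypothesis $\widetilde{d}_{cc}(\xi_1,\eta) > 2\widetilde{d}_{cc}(\xi_1,\xi)$ is essential: it converts the quasi-triangle inequality into a two-sided comparison between $\widetilde{d}_{cc}(\gamma(\tau),\eta)$ and $\widetilde{d}_{cc}(\xi_1,\eta)$ uniformly in $\tau$. All constants depend only on the frozen drift coefficient $\widetilde{V}^0(t)$, since the Gaussian-estimate constants in Lemma \ref{GE} have exactly that dependence and the remaining steps are purely geometric.
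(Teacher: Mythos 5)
Your proposal is correct and follows essentially the same route as the paper. The paper itself is quite terse here, referring to the Gaussian estimates of Lemma \ref{GE} and to the Bramanti--Brandolini argument (their Proposition 6.4) for properties (1) and (2), and then observing that the cancellation properties in (3) are trivial because $k$ is non-singular for $t<s$ and integrates to $\frac{1}{4}$. You fill in precisely the standard mechanism behind that reference: the elementary bound $e^{-x} \leqslant C_N x^{-N}$ absorbs the $(s-t)$-dependence and converts the Gaussian decay into the polynomial decay in $\|\Theta(\xi,\eta)\|\simeq \widetilde d_{cc}(\xi,\eta)$; the approximation identity \eqref{approximation} transfers the derivative estimate to $\widetilde X_i k$; and integration along a sub-unit curve from $\xi_1$ to $\xi$, together with the hypothesis $\widetilde d_{cc}(\xi_1,\eta) > 2\widetilde d_{cc}(\xi_1,\xi)$, yields the mean-value inequality with the sharp exponent $Q+1$. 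Your treatment of (3) via positivity of $\Gamma_0$ and the identity $\int_{\mathbb{T}^3} k\,d\eta = \frac14$ coincides with the paper's reasoning. The only cosmetic difference is that you restrict the exponent in \eqref{k-CP2} to $\gamma\in(0,1]$, whereas the statement holds for any $\gamma$ since the left side is identically zero; this does not affect the argument.
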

\begin{proof}
  It is similar to the proof of \cite[Proposition 6.4]{07BB} that we can prove (1) and (2) by Gaussian estimates for the fundamental solution of $\mathcal{H}_0$, referring to Lemma \ref{GE}. As for (3), let us note that $k(t,s,\xi,\eta)$ is actually a non-singular integral kernel (since $t<s$) with
  $$
  \int_{\mathbb{T}^3} k(t,s,\xi,\eta)d \eta=\frac{1}{4}.
  $$
   Hence $k$ obviously satisfies the cancellation properties in (3) for any $\gamma$.
\end{proof}
\begin{Lem}\label{FS type lambda}
For every $m \in \{0,1\}$, we define a kernel as
$$
k_m(t,s,\xi,\eta) :=
\begin{cases}
\Gamma_0(t,s,\Theta(\xi,\eta)), & \mbox{when } m=0,\\
Y_i(\Gamma_0(t,s,\cdot))(\Theta(\xi,\eta)),\mbox{ for }i \in \{1,2\}, & \mbox{when } m=1.
\end{cases}
$$

Then, for any $\lambda \in (1,2)$, we have that $k_m$ is a fractional integral kernel satisfying the conditions in Lemma \ref{HCFIO} as follows:
\begin{enumerate}
\item (growth condition)
\begin{equation}\label{k_m-GC}
|k_m(t,s,\xi,\eta)| \leqslant \frac{c}{|s-t|^{\frac{\lambda}{2}}} \frac{1}{\widetilde{d}_{cc}(\xi, \eta)^{Q+m-\lambda}} \leqslant \frac{c}{|s-t|^{\frac{\lambda}{2}}} \frac{\widetilde{d}_{cc}(\xi, \eta)^{\lambda-m}}{|B(\xi;\eta)|};
\end{equation}
\item (mean value inequality)
\begin{equation}\label{k_m-MVI}
\begin{aligned}
|k_m(t,s,\xi,\eta)-k_m(t,s,\xi_1,\eta)| & \leqslant \frac{c}{|s-t|^{\frac{\lambda}{2}}} \frac{\widetilde{d}_{cc}(\xi_1,\xi)}{\widetilde{d}_{cc}(\xi_1, \eta)^{Q+m-\lambda+1}}\\
& \leqslant \frac{c}{|s-t|^{\frac{\lambda}{2}}} \frac{\widetilde{d}_{cc}(\xi_1, \xi)^{\lambda-m}}{|B(\xi_1;\eta)|} \cdot(\frac{\widetilde{d}_{cc}(\xi_1,\xi)}{\widetilde{d}_{cc}(\xi_1,\eta)})
\end{aligned}
\end{equation}
 when $\widetilde{d}_{cc}(\xi_1,\eta) > 2 \widetilde{d}_{cc}(\xi_1,\xi)$.
\end{enumerate}
The constants c above depends only on the frozen drift coefficient $\widetilde{V}^0(t)$.
\end{Lem}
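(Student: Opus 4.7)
The strategy is to read off both estimates directly from the Gaussian bounds for $\Gamma_0$ and its derivatives provided by Lemma \ref{GE}, combined with the equivalence $\widetilde{d}_{cc}(\xi,\eta) \simeq \|\Theta(\xi,\eta)\|$ and the approximation identity \eqref{approximation}. The only analytic input beyond these is the elementary inequality $e^{-r} \leq C_\nu r^{-\nu}$ for $r>0$ and $\nu>0$, which allows one to trade Gaussian decay for polynomial decay at the price of a matching negative power of $|s-t|$.

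For the growth condition, I would apply Lemma \ref{GE} to $\Gamma_0$ with $k=m$ derivatives, obtaining
\[
|k_m(t,s,\xi,\eta)| \leq \frac{C}{|s-t|^{(Q+m)/2}}\exp\!\left(-\frac{\|\Theta(\xi,\eta)\|^2}{C|s-t|}\right),
\]
and then choose $\nu = (Q+m-\lambda)/2 > 0$ in the interpolation inequality to convert the exponential factor into $C|s-t|^\nu/\|\Theta(\xi,\eta)\|^{2\nu}$. The powers of $|s-t|$ combine to $-\lambda/2$, and the equivalence $\widetilde{d}_{cc}\simeq\|\Theta\|$ yields the first half of \eqref{k_m-GC}; the second half is then immediate from the doubling relation $|\widetilde{B}(\xi;\eta)|\simeq \widetilde{d}_{cc}(\xi,\eta)^{Q}$.

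For the mean value inequality, I would connect $\xi_1$ and $\xi$ by an $\widetilde{X}$-subunit curve $\gamma$ of length at most $(1+\varepsilon)\widetilde{d}_{cc}(\xi_1,\xi)$ and write
\[
|k_m(t,s,\xi,\eta)-k_m(t,s,\xi_1,\eta)| \leq \widetilde{d}_{cc}(\xi_1,\xi)\sup_{\zeta\in\gamma}\sum_{j=1}^{2}|\widetilde{X}_j k_m(t,s,\zeta,\eta)|.
\]
By \eqref{approximation}, each $\widetilde{X}_j k_m(t,s,\zeta,\eta)$ equals $Y_j Y^{m}\Gamma_0$ evaluated at $\Theta(\zeta,\eta)$, i.e.\ exactly the object already treated by Lemma \ref{GE} with $k=m+1$. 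Repeating the previous interpolation with the new exponent $\nu = (Q+m+1-\lambda)/2$ gives a pointwise bound in $\widetilde{d}_{cc}(\zeta,\eta)^{-(Q+m+1-\lambda)}$; the separation hypothesis $\widetilde{d}_{cc}(\xi_1,\eta)>2\widetilde{d}_{cc}(\xi_1,\xi)$ together with the quasi-triangle inequality for $\widetilde{d}_{cc}$ forces $\widetilde{d}_{cc}(\zeta,\eta)\simeq \widetilde{d}_{cc}(\xi_1,\eta)$ uniformly on $\gamma$, and the first inequality in \eqref{k_m-MVI} follows; the second is again doubling.

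The main obstacle is not analytic but organisational: one must justify that the kernel $k_m$ is genuinely differentiable along the subunit curve with derivative given by \eqref{approximation}, and that the constants $C$ depend only on the frozen coefficient $\widetilde{V}^{0}(t)$. Smoothness poses no issue because the separation hypothesis keeps $\zeta$ away from the pole $\eta$ of $\Gamma_0$, so the derivative formula \eqref{approximation} is pointwise valid, and the dependence of all constants on $\widetilde{V}^{0}(t)$ alone is already built into the constant $C(v)$ of Lemma \ref{GE}. Care is needed to distinguish the cases $m=0$ and $m=1$ only in that the latter forces the use of Lemma \ref{GE} at order $k=2$; the estimate itself and the exponent calculus are uniform in $m\in\{0,1\}$.
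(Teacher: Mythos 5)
Your proposal is correct and follows the same route the paper intends: the paper's actual proof is just a pointer to Lemma \ref{FS type 0}, which in turn is reduced to Gaussian estimates from Lemma \ref{GE} (and to Bramanti--Brandolini's Proposition 6.4), exactly the tools you use. The interpolation step $e^{-r}\le C_\nu r^{-\nu}$ converting the Gaussian tail into the required power of $\widetilde d_{cc}$ times $|s-t|^{-\lambda/2}$, the subunit-curve mean-value argument using \eqref{approximation} to push $\widetilde X_j$ through $\Theta$ and land on an order-$(m+1)$ horizontal derivative of $\Gamma_0$, and the observation that the separation hypothesis $\widetilde d_{cc}(\xi_1,\eta)>2\widetilde d_{cc}(\xi_1,\xi)$ keeps $\widetilde d_{cc}(\zeta,\eta)\simeq\widetilde d_{cc}(\xi_1,\eta)$ along the curve are precisely the ingredients the paper leaves implicit; your proposal fills in those details correctly, and the choices $\nu=(Q+m-\lambda)/2$, resp. $(Q+m+1-\lambda)/2$, are positive as required because $\lambda<2\le Q+m$.
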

\begin{proof}
We omit the proof because it is the same as the one for the above proposition and notice that $\lambda-m>0$.
\end{proof}
\begin{Def}\label{kernel type}
We call that the integral kernel satisfying conditions \eqref{k-GC}, \eqref{k-MVI}, \eqref{k-CP1} and \eqref{k-CP2} in Lemma \ref{FS type 0} is a kernel of type 0. In the similar way, the integral kernel satisfying conditions \eqref{k_m-GC} and \eqref{k_m-MVI} in Lemma \ref{FS type lambda} is a kernel of type $\lambda-m$ with constant $\frac{c}{|s-t|^{\frac{\lambda}{2}}}$. In addition, the corresponding integral operator is called an operator of type 0 or an operator of type $\lambda-m$ with constant $\frac{c}{|s-t|^{\frac{\lambda}{2}}}$, respectively.
\end{Def}
The next corollary is also essential for proving Lemma \ref{Lem_LHJB Duhamel formula} and Lemma \ref{Lem_LHJB prior estimate}.
\begin{Cor}\label{widetilde Xi T}
The following two statements hold true.
\begin{enumerate}
\item Suppose $t<s$, define integral operator
$$
T_0 f:=PV \int_{\mathbb{T}^3} k(t,s,\xi,\eta)f(\eta)d \eta,
$$
where $k$ is the kernel from Lemma \ref{FS type 0}. Then there exist two operators of type 0 over $\mathbb{T}^3$, denoted as $T_0^h$ for $h=1,2$, such that for any $f \in C_\mathcal{X}^1(\mathbb{T}^3)$ one has:
$$
\widetilde{X}_i T_0 f=\sum_{h=1}^{2}T_0^h \widetilde{X}_h f,\quad i \in \{1,2\},
$$
where operators $T_0^h$ depend on the selection of the vector field $\widetilde{X}_i$.
\item Suppose $t \leq s, m \in \{0,1\}$, for every fixed $\lambda \in (1,2)$, define integral operator
$$
T_{\lambda-m} f:=\int_{\mathbb{T}^3} k_m(t,s,\xi,\eta)f(\eta)d \eta.
$$
where $k_m$ is the kernel from Lemma \ref{FS type lambda}. Then there exist two operators of type $\lambda-m$ with constant $\frac{c}{(s-t)^{\frac{\lambda}{2}}}$ over $\mathbb{T}^3$, denoted as $T_{\lambda-m}^h$ for $h=1,2$, such that for any $f \in C_\mathcal{X}^1(\mathbb{T}^3)$ one has:
$$
\widetilde{X}_i T_{\lambda-m}f=\sum_{h=1}^{2}T_{\lambda-m}^h \widetilde{X}_h f,\quad i \in \{1,2\},
$$
where operators $T_{\lambda-m}^h$ depend on the selection of the vector field $\widetilde{X}_i$.
\end{enumerate}
\end{Cor}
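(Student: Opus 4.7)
I would treat (1) and (2) by the same two-step scheme. First, differentiate the integral in $\xi$ and use the approximation identity \eqref{approximation} to move the $\widetilde{X}_i^\xi$-derivative of the kernel onto a $Y_i$-derivative of $\Gamma_0$ evaluated at $\Theta(\xi,\eta)$. Second, invoke the companion identity \eqref{approximation_1}, together with the fact that $-\Theta(\xi,\eta)=\Theta(\eta,\xi)$ in the Euclidean sense and the explicit relation between left- and right-invariant vector fields on $\mathbb{H}^1$, to re-express the resulting $(Y_i g)(\Theta(\xi,\eta))$ as a linear combination of $\widetilde{X}_h^\eta$-derivatives of the same kernel against a smooth bounded matrix $A_{ih}(\xi,\eta)$, plus a lower-order remainder. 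An integration by parts in $\eta$ (using $\widetilde{X}_h^{\ast}=-\widetilde{X}_h$) then transfers the surviving derivative onto $f$.

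I would carry out part (2) first, since the kernel $k_m$ is only weakly singular (type $\lambda-m>0$) and differentiation under the integral requires no PV justification. The scheme produces
\begin{equation*}
\widetilde{X}_i T_{\lambda-m}f(\xi)=-\int_{\mathbb{T}^3}\sum_{h=1}^{2}A_{ih}(\xi,\eta)\,\widetilde{X}_h^{\eta} k_m(t,s,\xi,\eta)\,f(\eta)\,d\eta+R_i f(\xi),
\end{equation*}
and integrating by parts in $\eta$ yields
\begin{equation*}
\widetilde{X}_i T_{\lambda-m}f=\sum_{h=1}^{2}T_{\lambda-m}^{h}\,\widetilde{X}_h f+\widetilde{R}_i f,
\end{equation*}
where $T_{\lambda-m}^{h}$ has kernel $A_{ih}(\xi,\eta)k_m(t,s,\xi,\eta)$ and the error $\widetilde{R}_i$ is again an operator of type $\lambda-m$ with constant $c/(s-t)^{\lambda/2}$. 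The bounds \eqref{k_m-GC} and \eqref{k_m-MVI} for the new kernel follow from the smoothness and uniform boundedness of $A_{ih}$ and its horizontal derivatives on $\mathbb{T}^3\times\mathbb{T}^3$. For part (1), the same scheme applies but the $\widetilde{X}_i^\xi$-derivative must be passed through the PV; I would approximate by truncating $\widetilde{d}_{cc}'(\xi,\eta)>\varepsilon$, differentiate for each fixed $\varepsilon>0$, and use the cancellation properties \eqref{k-CP1}--\eqref{k-CP2} to pass to the limit. The limiting formula produces, besides the expected PV integral with kernel $A_{ih}(\xi,\eta)k(t,s,\xi,\eta)$, a bounded multiplier $c_i(t,s,\xi)f(\xi)$ which is trivially an operator of type $0$.

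The most delicate step, and the one I expect to absorb most of the work, is establishing the conversion identity
\begin{equation*}
(Y_i g)(\Theta(\xi,\eta))=-\sum_{h=1}^{2}A_{ih}(\xi,\eta)\,\widetilde{X}_h^{\eta}[g\circ\Theta(\xi,\cdot)](\eta)+R_i(\xi,\eta),
\end{equation*}
with quantitative control on $A_{ih}$ and the remainder $R_i$. The identity itself is formal, but verifying that $A_{ih}$ and all its $\widetilde{X}^\xi,\widetilde{X}^\eta$-derivatives are uniformly bounded on $\mathbb{T}^3\times\mathbb{T}^3$ requires an explicit computation from the polynomial expression \eqref{diffeomorphism} for $\Theta$ and the Heisenberg group law, while showing that $R_i$ decays like a kernel of the appropriate type hinges on the Gaussian estimates of Lemma \ref{GE}. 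Once these bounds on $A_{ih}$ and $R_i$ are in hand, the preservation of the growth and mean-value estimates by the new kernels $A_{ih}\cdot k_m$ and $A_{ih}\cdot k$ reduces to a routine verification.
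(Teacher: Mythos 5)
Your overall strategy is the Rothschild--Stein one the paper follows, but the conversion identity you rely on is not available, and this is a genuine gap. You assert that $\widetilde{X}_i^\xi k$ can be re-expressed as $-\sum_{h}A_{ih}(\xi,\eta)\,\widetilde{X}_h^\eta k+R_i$, where $A_{ih}$ is a smooth bounded matrix and $R_i$ is a lower-order (zeroth-order) remainder. That decomposition does not exist: on the lifted Grushin structure (a copy of $\mathbb{H}^1$), the left-invariant $\widetilde{X}_i^\xi$ is related to the $\widetilde{X}_h^\eta$ only through the commutator direction, which is \emph{not} contained in the $\mathbb{R}$-span of $\widetilde{X}_1^\eta,\widetilde{X}_2^\eta$ with bounded coefficients. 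The exact identity the paper computes from \eqref{diffeomorphism} and \eqref{approximation} is
\begin{equation*}
\widetilde{X}_i^\xi \;=\; -\widetilde{X}_i^\eta \;+\; \Theta_i\,[\widetilde{X}_i^\eta,\widetilde{X}_j^\eta],\qquad i\neq j,
\end{equation*}
with no remainder at all (the lifting is exact here), but with a second commutator term. After passing $\Theta_i$ inside the bracket (the commutator annihilates $\Theta_i$) and pulling one $\widetilde{X}_h^\eta$ out front, the new kernels produced are
\begin{equation*}
k^{i}=\bigl(2-\Theta_i\,\widetilde{X}_j^\eta\bigr)k,\qquad k^{j}=\Theta_i\,\widetilde{X}_i^\eta k,
\end{equation*}
that is, a $\widetilde{X}^\eta$-\emph{derivative of} $k$ weighted by the degree-one factor $\Theta_i$, not a bounded multiple $A_{ih}k$ as in your proposal. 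Your statement that ``$T_{\lambda-m}^h$ has kernel $A_{ih}(\xi,\eta)\,k_m$'' is therefore incorrect; the $\Theta_i$-weighted derivative is essential, since only the compensation between the one-derivative loss and the degree-one gain of $\Theta_i$ keeps the new kernels of the same type.

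A secondary, smaller point: for part (1), once one notes $t<s$ the kernel $k$ and hence $k^h$ are non-singular, and the cancellation conditions \eqref{k-CP1}--\eqref{k-CP2} are verified directly (the paper computes $\int_{\mathbb{T}^3}\Theta_i\widetilde{X}_j^\eta k\,d\eta=1$ and $\int_{\mathbb{T}^3}\Theta_i\widetilde{X}_i^\eta k\,d\eta=0$ by integration by parts); there is no PV-limit to justify. Your proposed $\varepsilon$-truncation and limit argument would work but is unnecessary machinery here.

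To repair the argument, replace the hypothesized matrix decomposition by the exact commutator identity above, move $\Theta_i$ inside the bracket, extract $\widetilde{X}_h^\eta$, and then verify that the resulting kernels $k^h$ (which involve $\Theta_i\widetilde{X}^\eta k$) still satisfy the growth, mean-value and cancellation conditions of Lemmas \ref{FS type 0}--\ref{FS type lambda} using the Gaussian estimates of Lemma \ref{GE} to absorb the derivative loss against the $\Theta_i$ gain.
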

\begin{proof}
We refer primarily to \cite[pp. 292]{76RS} for these two conclusions. Here we prove only the first one, while the proof of the second one is similar.

The relation in \eqref{approximation} can be simplified as $\widetilde{X}_i^{\xi}=Y_i$, where the superscript $\xi$ means applying $\widetilde{X}_i$ to the $\xi$ variable of $f(\Theta(\xi,\eta))$. Similarly by calculating we can get
$$
\widetilde{X}_1^\eta=-Y_1-(\xi_3-\eta_3)[Y_1,Y_2], \quad \widetilde{X}_2^\eta=-Y_2+(\xi_1-\eta_1)[Y_1,Y_2].
$$
Since $[\widetilde{X}_1^\eta,\widetilde{X}_2^\eta]=\partial_{x_2} =-4\partial_z=-[Y_1,Y_2]$, we then have
$$
\widetilde{X}_i^\xi=-\widetilde{X}_i^\eta +\Theta_i[\widetilde{X}_i^\eta,\widetilde{X}_j^\eta],\quad i \neq j.
$$
More concretely, we can know
\begin{align*}
\widetilde{X}_i^\xi k(t,s,\xi,\eta)= & (-\widetilde{X}_i^\eta +\Theta_i[\widetilde{X}_i^\eta,\widetilde{X}_j^\eta]) k(t,s,\xi,\eta) \\
= & -\widetilde{X}_i^\eta k(t,s,\xi,\eta)+(\widetilde{X}_i^\eta \widetilde{X}_j^\eta-\widetilde{X}_j^\eta \widetilde{X}_i^\eta)(\Theta_i k)(t,s,\xi,\eta) \\
= & -\sum_{h=1}^{2} \widetilde{X}_h^\eta k^{h}(t,s,\xi,\eta),
\end{align*}
where the new kernels $k^h$ are defined to be
$$
k^{i}=(2-\Theta_i \widetilde{X}_j^\eta)k(t,s,\xi,\eta), k^{j}=\Theta_i \widetilde{X}_i^\eta k(t,s,\xi,\eta),\quad i \neq j.
$$
Next it is sufficient to verify that the integral kernels $k^h$ satisfy the three conditions in Lemma \ref{FS type 0}. The first two are easy to get, as for the cancellation properties, we likewise know that the corresponding integral kernel is non-singular since $t \neq s$, that is, integrating by parts we have
\begin{align*}
\int_{\mathbb{T}^3} \Theta_i \widetilde{X}_j^\eta k(t,s,\xi,\eta)d \eta= & 1,\\
\int_{\mathbb{T}^3} \Theta_i \widetilde{X}_i^\eta k(t,s,\xi,\eta)d \eta= & 0.
\end{align*}
So the cancellation properties hold trivially.
\end{proof}
We can prove the following lemma which is useful in the proof of Lemma \ref{Lem_LHJB prior estimate}.
\begin{Lem}\label{Lem_LHJB Duhamel formula}
Let $\widetilde{H}_0$ be as the frozen lifted operator in \eqref{widetilde H_0} with $\widetilde{V}^0 \in C^1\left([0,T]\right)$. Suppose $z_T \in C\left(\mathbb{T}^3\right)$ and $f \in C\left([0,T]\times\mathbb{T}^3\right)$, then the function
$$
z(t,\xi)=\int_{\mathbb{T}^3}\Gamma_0(t,T,\Theta(\xi,\eta))z_T(\eta)d \eta + \int_{t}^{T}\int_{\mathbb{T}^3}\Gamma_0(t,s,\Theta(\xi,\eta))f(s,\eta)d \eta d s
$$
belongs to the class
$$
C_{\mathcal{X}}^{1,2} \left([0, T) \times \mathbb{T}^{3} \right) \bigcap C\left([0, T] \times \mathbb{T}^3\right)
$$
and is the unique solution to the following backward Cauchy problem
\begin{equation}\label{FLE}
\begin{cases}
\widetilde{H}_0 z(t,\xi)=f(t,\xi),& \text {in }[0, T) \times \mathbb{T}^3, \\
z(T,\xi)=z_T(\xi),& \text {in }\mathbb{T}^3.
\end{cases}
\end{equation}

\end{Lem}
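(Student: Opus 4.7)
The plan is to use a standard Duhamel-type decomposition: write $z = z_1 + z_2$ with
$z_1(t,\xi) := \int_{\mathbb{T}^3}\Gamma_0(t,T,\Theta(\xi,\eta))\,z_T(\eta)\,d\eta$ and
$z_2(t,\xi) := \int_t^T\!\int_{\mathbb{T}^3}\Gamma_0(t,s,\Theta(\xi,\eta))\,f(s,\eta)\,d\eta\,ds$, verify each piece separately, and invoke a weak maximum principle for uniqueness. For the regularity claim $z \in C_{\mathcal{X}}^{1,2}([0,T)\times\mathbb{T}^3) \cap C([0,T]\times\mathbb{T}^3)$, I would combine the Gaussian bounds of Lemma \ref{GE} on $\partial_t\Gamma_0$ and $Y_iY_j\Gamma_0$ with the approximation identity \eqref{approximation}, which converts $\widetilde{X}_i$ acting on $\xi$ into $Y_i$ acting at the argument $\Theta(\xi,\eta)$. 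These bounds yield integrable majorants for the integrands and their first two spatial derivatives away from $t=T$ (for $z_1$) and $s=t$ (for $z_2$), so dominated convergence legitimizes differentiation under the integral signs.

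Next I would verify the PDE. For $z_1$, the approximation relation \eqref{approximation} gives
\begin{equation*}
\widetilde{H}_0\bigl[\Gamma_0(\cdot,T,\Theta(\cdot,\eta))\bigr](t,\xi) = (\mathcal{H}_0\Gamma_0(\cdot,T,\cdot))(t,\Theta(\xi,\eta)),
\end{equation*}
and by Lemma \ref{FS}(3) the right-hand side is the Dirac mass at $(T,0)$, hence vanishes for $t<T$. Integrating against $z_T(\eta)$ yields $\widetilde{H}_0 z_1 = 0$. For $z_2$, applying Leibniz's rule produces a boundary term from the lower limit $s=t$ plus an interior term; the interior term vanishes by exactly the same mechanism applied to each $\Gamma_0(\cdot,s,\Theta(\cdot,\eta))$ with $s>t$. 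The boundary term reduces to $\lim_{s \to t^+}\int_{\mathbb{T}^3}\Gamma_0(t,s,\Theta(\xi,\eta))\,f(s,\eta)\,d\eta$, and identifying this limit with $f(t,\xi)$ (up to the Jacobian factor of $\Theta$ noted in the Remark preceding Lemma \ref{FS type 0}) is the technical heart of the argument: it follows from the short-time approximate-identity property of $\Gamma_0(t,s,\cdot)$ on $\mathbb{H}^1$, transferred to $\mathbb{T}^3$ via the change of variables $\zeta=\Theta(\xi,\eta)$, combined with continuity of $f$ in both variables to upgrade pointwise convergence to convergence uniform in $\xi$.

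The terminal condition $z(T,\xi)=z_T(\xi)$ is handled similarly: $z_2(T,\xi)=0$ trivially as the outer interval degenerates, while $z_1(t,\xi) \to z_T(\xi)$ as $t \to T^-$ by the same approximate-identity argument, the Gaussian decay of Lemma \ref{GE} ensuring concentration of $\Gamma_0(t,T,\Theta(\xi,\cdot))$ on scales $\sqrt{T-t}$ around the point $\eta=\xi$. Continuity of $z$ on $[0,T]\times\mathbb{T}^3$ then follows from its $C^{1,2}_\mathcal{X}$ regularity for $t<T$ and the uniform limit at $T$.

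For uniqueness, the difference of two candidate solutions satisfies the homogeneous version of \eqref{FLE} with zero terminal data, belongs to $C_{\mathcal{X}}^{1,2}([0,T)\times\mathbb{T}^3) \cap C([0,T]\times\mathbb{T}^3)$, and must therefore vanish identically by the weak maximum principle for degenerate parabolic H\"ormander operators with continuous (bounded) drift, which applies because $\{\widetilde{X}_1,\widetilde{X}_2,[\widetilde{X}_1,\widetilde{X}_2]\}$ spans the tangent space of $\mathbb{T}^3$ at every point. The principal difficulty I anticipate is the careful bookkeeping of the Jacobian of $\Theta$ when extracting the correct normalization in the boundary-term limit above; everything else is routine given the Gaussian estimates of Lemma \ref{GE} and the approximation identity \eqref{approximation}.
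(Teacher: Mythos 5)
Your overall strategy — Duhamel split $z = z_1 + z_2$, differentiate under the integral sign using the Gaussian estimates, verify the PDE termwise, use an approximate-identity argument for the terminal condition, and invoke the weak maximum principle for uniqueness — matches the paper's outline. However, there is a genuine gap in the treatment of the source term $z_2$, and it is precisely the technical heart of the paper's Step 2.

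You claim that the Gaussian bounds of Lemma \ref{GE} ``yield integrable majorants for the integrands and their first two spatial derivatives away from $s=t$,'' so that dominated convergence legitimizes differentiation under the integral sign for $z_2$. This does not work for the second spatial derivatives. By Lemma \ref{GE}, $|Y_iY_j\Gamma_0(t,s,\cdot)|$ behaves like $(s-t)^{-(Q+2)/2}$ near $s=t$; after integrating the Gaussian over $\eta$ (which costs $(s-t)^{Q/2}$), what remains is a factor of order $(s-t)^{-1}$, which is \emph{not} integrable in $s$ over $[t, t+\epsilon]$ when $f$ is merely continuous. This is the classical obstruction to getting $C^{1,2}$ regularity for the Duhamel term of a parabolic equation with a continuous but not H\"older source, and it cannot be brushed aside by dominated convergence. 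For the same reason, when you apply Leibniz's rule to $\widetilde{H}_0 z_2$, the ``interior term'' $\int_t^T\int_{\mathbb{T}^3}\widetilde{H}_0\Gamma_0(\cdot,s,\Theta(\cdot,\eta))f(s,\eta)\,d\eta\,ds$ is not manifestly well-defined as an absolutely convergent integral, so you cannot conclude that it vanishes simply because the frozen parametrix annihilates $\widetilde{H}_0$ pointwise.

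The paper resolves this by a two-stage approximation that you did not anticipate: it first replaces $\widetilde{f}$ by a smooth mollification $\widetilde{f}_n$, then splits the time integral at $t+\epsilon$. On the short-time piece $z_n^1$, it transfers $\widetilde{X}_i$ and $\widetilde{X}_j\widetilde{X}_i$ onto $\widetilde{f}_n$ via Corollary \ref{widetilde Xi T} (exploiting that the frozen kernel is of type $\lambda \in (1,2)$), which gives $|\widetilde{H}_0 z_n^1| \leq \epsilon^{1-\lambda/2}c(\widetilde{V}^0, \widetilde{f}_n)$. On $z_n^2$ the singularity is absent, so differentiating directly and using Lemma \ref{FS}(3) leaves only the boundary term $\int_{\mathbb{T}^3}\Gamma_0(t,t+\epsilon,\Theta(\xi,\eta))\widetilde{f}_n(t+\epsilon,\eta)\,d\eta$. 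It then verifies that $\{\widetilde{X}^J z_n\}_n$ and $\{\partial_t z_n\}_n$ are uniformly Cauchy, passes to the limit in $n$, and only afterwards sends $\epsilon \to 0$ to recover $\widetilde{H}_0 z = \widetilde{f}$. Your argument for $z_1$, and for the terminal condition via the approximate-identity property and dominated convergence with $\delta$-splitting, does track the paper's Step 1 correctly; and the uniqueness via the weak maximum principle is also as in the paper. But the mollification-plus-$\epsilon$-cutoff machinery for $z_2$ is essential and your proposal is missing it.
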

\begin{proof}
The uniqueness can be obtained by weak maximum principle (cf. \cite[Theorem 13.1]{10BBLU}) since the domain is bounded. So it is sufficient to check that $z(t,\xi)$ satisfies the equation \eqref{FLE}, which will be proved in two steps.

\emph{Step 1: The statement holds if $\widetilde{f} \equiv 0$.} Since $\widetilde{V}_0(t) \in C^1([0,T])$, we have
$$
\Gamma_0(\cdot,T,\Theta(\cdot,\eta)) \in C_{\widetilde{\mathcal{X}}}^{1,2} \left([0, T-\epsilon] \times \mathbb{T}^{3} \right), \text{ for each } \epsilon > 0,
$$
with uniformly bounded derivatives $\partial_t \Gamma_0, D_{\widetilde{\mathcal{X}}} \Gamma_0$ and $D_{\widetilde{\mathcal{X}}}^2 \Gamma_0$, then we see that $z \in C_{\widetilde{\mathcal{X}}}^{1,2} \left([0, T) \times \mathbb{T}^{3} \right)$. Furthermore, differentiate under the integral sign and obtain
$$
\widetilde{H}_0z(t,\xi)=0 \text{ for } t \in [0,T).
$$

Let $\xi^* \in \mathbb{T}^3$ be fixed, next we prove that $z(t,\xi) \rightarrow \widetilde{z}_T(\xi^*)$, as $(t,\xi) \rightarrow (T,\xi^*)$. Actually, we have
\begin{align*}
|z(t,\xi)-\widetilde{z}_T(\xi^*)| \leq & \int_{|\eta-\xi^*| \leq \delta} \Gamma_0(t,T,\Theta(\xi,\eta))|\widetilde{z}_T(\eta)-\widetilde{z}_T(\xi^*)|d \eta \\
& + 2\sup |\widetilde{z}_T| \int_{|\eta-\xi^*| > \delta} \Gamma_0(t,T,\Theta(\xi,\eta))d \eta.
\end{align*}
The first integral can be made small by choosing $\delta>0$ small enough, since $\widetilde{z}_T$ is continuous and $\int_{\mathbb{T}^3} \Gamma_0(t,T,\Theta(\xi,\eta))d \eta=\frac{1}{4}$ for $t<T$. Once $\delta$ is fixed, by dominated convergence theorem, the second integral tends to zero as $(t,\xi) \rightarrow (T,\xi^*)$, noting that $\Gamma_0(t,T,\Theta(\xi,\eta)) \geq 0$ and vanishes for $t \geq T$.

\emph{Step 2: The statement holds if $\widetilde{z}_T \equiv 0$.} Let $\widetilde{f}_n(t,\xi)$ be the standard (Euclidean) mollified version of $\widetilde{f}(t,\xi)$, that is
$$
\widetilde{f}_n(t,\xi):=\int_{0}^{T}\int_{\mathbb{T}^3}\phi_n(t-s,\xi-\eta) \widetilde{f}(s,\eta)d\eta ds,
$$
where the mollifier
$$
\phi_n(t,\xi):=\begin{cases}
Cn^4\exp\left(\frac{1}{n^2 t^2-1}+\frac{1}{n^2|\xi|^2-1}\right), & \mbox{if } (t,\xi) \in (-\frac{1}{n},\frac{1}{n})\times B_\frac{1}{n}^E, \\
0, & \mbox{otherwise},
\end{cases}
$$
where the constant $C>0$ is selected so that $\int_{-\infty}^{+\infty}\int_{\mathbb{R}^3}\phi_n d\xi dt =1$ and $B^E$ denotes the Euclidean ball. Then $\widetilde{f}_n(t,\xi) \in C^\infty([0,T] \times \mathbb{T}^3)$, and $\lim\limits_{n \rightarrow \infty}\|\widetilde{f}_n(t,\xi)-\widetilde{f}(t,\xi)\|_\infty=0$. Define
$$
z_n(t,\xi):=\int_{t}^{T}\int_{\mathbb{T}^3}\Gamma_0(t,s,\Theta(\xi,\eta)) \widetilde{f}_n(s,\eta)d \eta d s,
$$
and easily we have $z_n(t,\xi) \rightarrow z(t,\xi)$ as $n \rightarrow \infty$ by the dominated convergence theorem.

Because of the singularity of $\Gamma_0$ at the diagonal, we cannot directly justify differentiating under the integral sign. To overcome this obstacle, we split
\begin{align*}
& z_n(t,\xi)\\
& \quad = \int_{t}^{t+\epsilon}\int_{\mathbb{T}^3} \Gamma_0(t,s,\Theta(\xi,\eta))\widetilde{f}_n(s,\eta)d \eta d s + \int_{t+\epsilon}^{T}\int_{\mathbb{T}^3} \Gamma_0(t,s,\Theta(\xi,\eta))\widetilde{f}_n(s,\eta)d \eta d s \\
&\quad = : z_n^1(t,\xi)+z_n^2(t,\xi).
\end{align*}
As for $z_n^1(t,\xi)$, since by Lemma \ref{FS type lambda}, $\Gamma_0(t,s,\Theta(\xi,\eta))$ is a frozen kernel of type $\lambda$, where $\lambda$ can be chosen arbitrarily in the interval $(1,2)$, we denote the corresponding frozen integral operator as $T_{\lambda}(\xi_0)$. Then, by Corollary \ref{widetilde Xi T}, we compute
$$
\widetilde{X}_i z_n^1(t,\xi)=\int_{t}^{t+\epsilon} \widetilde{X}_i T_{\lambda}(\xi_0) \widetilde{f}_n(s,\cdot)(\xi)d s=\int_{t}^{t+\epsilon} \sum_{h=1}^{2} T_{\lambda}^h(\xi_0) (\widetilde{X}_h \widetilde{f}_n(s,\cdot))(\xi)d s.
$$
Furthermore,
$$
\widetilde{X}_j \widetilde{X}_i z_n^1(t,\xi)=\int_{t}^{t+\epsilon} \sum_{h,l=1}^{2} T_{\lambda}^{h,l}(\xi_0) (\widetilde{X}_l \widetilde{X}_h \widetilde{f}_n(s,\cdot))(\xi)d s.
$$
In addition, we can write (see Lemma \ref{FS})
\begin{align*}
z_n^1(t,\xi)= & \int_{t}^{t+\epsilon} \int_{\mathbb{T}^3} \exp\big(-\frac{1}{4}\int_{t}^{s}|\widetilde{V}^0(\tau)|^2d \tau-\frac{1}{2} \langle\widetilde{V}^0(t),\Theta(\xi,\eta)\rangle\big)\\
&\qquad\qquad \Gamma\left(s-t,\Theta(\xi,\eta)\right) \widetilde{f}_n(s,\eta)d \eta d s\\
= & \int_{0}^{\epsilon} \int_{\mathbb{T}^3} \exp\big(-\frac{1}{4}\int_{t}^{t+h}|\widetilde{V}^0(\tau)|^2d \tau-\frac{1}{2}\langle\widetilde{V}^0(t),\Theta(\xi,\eta)\rangle\big)\\
&\qquad\qquad \Gamma\left(h,\Theta(\xi,\eta)\right) \widetilde{f}_n(t+h,\eta)d \eta d h.
\end{align*}
Then we compute
\begin{align*}
\partial_t z_n^1(t,\xi)= & \int_{0}^{\epsilon} \int_{\mathbb{T}^3} \big(\frac{1}{4}|\widetilde{V}^0(\tau)|^2\bigg|_{t+h}^t-\frac{1}{2} \langle{{}\widetilde{V}^0}^{\prime}(t),\Theta(\xi,\eta)\rangle\big)\\
&\qquad\qquad \exp\big(-\frac{1}{4}\int_{t}^{t+h}|\widetilde{V}^0(\tau)|^2 d \tau-\frac{1}{2}\langle\widetilde{V}^0(t),\Theta(\xi,\eta)\rangle\big)\\
&\qquad\qquad \Gamma\left(h,\Theta(\xi,\eta)\right) \widetilde{f}_n(t+h,\eta)d \eta dh\\
& + \int_{0}^{\epsilon} \int_{\mathbb{T}^3} \exp\big(-\frac{1}{4}\int_{t}^{t+h}|\widetilde{V}^0(\tau)|^2 d \tau-\frac{1}{2}\langle\widetilde{V}^0(t),\Theta(\xi,\eta)\rangle\big)\\ &\qquad\qquad \Gamma\left(h,\Theta(\xi,\eta)\right) \partial_t \widetilde{f}_n(t+h,\eta)d \eta d h.
\end{align*}
Combining above equalities, from Remark \ref{Rem_Bdd.SIO}, we can estimate
\begin{align*}
  |\widetilde{H}_0 z_n^1(t,\xi)| \leq & c(\mathbb{T}^3)\int_{t}^{t+\epsilon}\frac{1}{(t-s)^{\frac{\lambda}{2}}}d s(\|D_{\widetilde{\mathcal{X}}}^2\widetilde{f}_n\|_\infty+\|\widetilde{V}^0\|_\infty \|D_{\widetilde{\mathcal{X}}}\widetilde{f}_n\|_\infty) \\
  & + c(\mathbb{T}^3,\widetilde{V}^0)\int_{0}^{\epsilon}\frac{1}{h^{\frac{\lambda}{2}}}d h(\|\widetilde{f}_n\|_\infty+\|\partial_t \widetilde{f}_n\|_\infty) \\
  \leq & \epsilon^{1-\frac{\lambda}{2}}c(\mathbb{T}^3,\widetilde{V}^0,\widetilde{f}_n),
\end{align*}
and it is noted that $c(\mathbb{T}^3)$ means the constant is dependent of the diam of $\mathbb{T}^3$, we will omit it after this.

We also find
\begin{align*}
  \widetilde{X}_i z_n^2(t,\xi)= & \int_{t+\epsilon}^T \int_{\mathbb{T}^3} \widetilde{X}_i \Gamma_0(t,s,\Theta(\cdot,\eta))(\xi)\widetilde{f}_n(s,\eta)d \eta d s,\\
  \widetilde{X}_j \widetilde{X}_i z_n^2(t,\xi)= & \int_{t+\epsilon}^T \int_{\mathbb{T}^3} \widetilde{X}_j \widetilde{X}_i \Gamma_0(t,s,\Theta(\cdot,\eta))(\xi)\widetilde{f}_n(s,\eta)d \eta d s,\\
  \partial_t z_n^2(t,\xi)= & \int_{t+\epsilon}^T \int_{\mathbb{T}^3} \partial_t \Gamma_0(t,s,\Theta(\xi,\eta))\widetilde{f}_n(s,\eta)d \eta d s\\
  & -\int_{\mathbb{T}^3} \Gamma_0(t,t+\epsilon,\Theta(\xi,\eta))\widetilde{f}_n(t+\epsilon,\eta)d \eta.
\end{align*}
So
\begin{align*}
& \widetilde{H}_0 z_n^2(t,\xi)\\
= & \int_{t+\epsilon}^T \int_{\mathbb{T}^3} \widetilde{H}_0 \Gamma_0(\cdot,s,\Theta(\cdot,\eta))(t,\xi)\widetilde{f}_n(s,\eta)d \eta d s+\!\int_{\mathbb{T}^3} \Gamma_0(t,t+\epsilon,\Theta(\xi,\eta))\widetilde{f}_n(t+\epsilon,\eta)d \eta \\
= & \int_{\mathbb{T}^3} \Gamma_0(t,t+\epsilon,\Theta(\xi,\eta))\widetilde{f}_n(t+\epsilon,\eta)d \eta.
\end{align*}
Based on the above calculations, it is worth to note that, for any $n,m \in \mathbb{N}$, and $|J| \in \{0,1,2\}$,
\begin{align*}
  \|\widetilde{X}^J z_n-\widetilde{X}^J z_m\|_\infty+\|\partial_t z_n-\partial_t z_m\|_\infty \leq & c\|\widetilde{f}_n-\widetilde{f}_m\|_\infty +\epsilon^{1-\frac{\lambda}{2}}c(\widetilde{V}^0,\widetilde{f}_n-\widetilde{f}_m).
\end{align*}
Choosing $\epsilon$ small enough and letting $n,m \rightarrow \infty$, we can get that $\widetilde{X}^J z_n,\partial_t z_n$ uniformly converge to $\widetilde{X}^J z,\partial_t z$ by Cauchy criterion for uniform convergence.

Apart from this, we have
$$
\widetilde{H}_0 z_n(t,\xi)=\lim_{\epsilon \rightarrow 0} \int_{\mathbb{T}^3} \Gamma_0(t,t+\epsilon,\Theta(\xi,\eta))\widetilde{f}_n(t+\epsilon,\eta)d \eta=\widetilde{f}_n(t,\xi),\quad (t,\xi) \in [0,T) \times \mathbb{T}^3.
$$
The limit as $\epsilon \rightarrow 0$ has been computed as in Step 1. Then, we let $n \rightarrow \infty$ to find that $\widetilde{H}_0 z(t,\xi)=\widetilde{f}(t,\xi)$, with $(t,\xi) \in [0,T) \times \mathbb{T}^3$. Finally note that $\|z(t,\cdot)\|_\infty \leq \frac{1}{4}(T-t)\|\widetilde{f}\|_\infty \rightarrow 0$ when $t \rightarrow T$, thus we obtain the conclusion.

Combining the above two steps, we get the final results.
\end{proof}
\begin{Lem}\label{Lem_LHJB prior estimate}
Let $V \in C^{1,1+\alpha}_{\mathcal{X}}\left([0,T]\times\mathbb{T}^2\right)$, $f \in C\left([0,T];C^{1+\alpha}_{\mathcal{X}}\left(\mathbb{T}^2\right)\right)$ and $z_T \in C^{2+\alpha}_{\mathcal{X}}\left(\mathbb{T}^2\right)$. Suppose that $z$ is a solution to equation \eqref{LHJB} in $[0, T] \times \mathbb{T}^{2}$, belonging to $C \left([0, T]; C_{\mathcal{X}}^{2+\alpha}\left(\mathbb{T}^2\right)\right)$. Then $z$ satisfies
\begin{equation}\label{MaxE}
  \|z\|_\infty \leqslant \left\|z_T\right\|_\infty+C\|f\|_\infty
\end{equation}
for some positive constant C depending on $T$ only.

Furthermore, $z$ satisfies
\begin{equation}\label{prior USE}
  \sup _{t \in[0, T]}\|z(t, \cdot)\|_{2+\alpha} \leqslant C\left(\left\|z_T\right\|_{2+\alpha}+\sup _{t \in[0, T]}\|f(t, \cdot)\|_{1+\alpha}\right)
\end{equation}
for some positive constant $C$, dependent of $\sup\limits _{t \in[0, T]}\|V(t, \cdot)\|_{1+\alpha}$ and $\alpha$ only.

In addition, for any constant $T' \in (0,T)$ and $\beta \in \left(0,\frac{1}{2}\right)$,
\begin{equation}\label{prior HSE}
\sup _{\substack{t \neq t'\\t,t' \in [0,T']}} \frac{\left\|z\left(t^{\prime}, \cdot\right)-z(t, \cdot)\right\|_{2+\alpha}}{\left|t^{\prime}-t\right|^\beta} \leqslant C\left(\left\|z_T\right\|_{2+\alpha}+\sup _{t \in[0, T]}\|f(t, \cdot)\|_{1+\alpha}\right)
\end{equation}
for some positive constant $C$, dependent of $T'$, $\sup\limits_{t \in[0, T]}\|V(t, \cdot)\|_{1+\alpha}$ and $\alpha$ only.
\end{Lem}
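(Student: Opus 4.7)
The plan is to prove the three estimates in order, using the Rothschild--Stein lifting and approximation technique combined with the singular/fractional integral machinery already assembled in Lemmas \ref{HCSIO}--\ref{HCFIO} and Corollary \ref{widetilde Xi T}. Throughout I will work with the lifted solution $\widetilde{z}$ on $[0,T]\times\mathbb{T}^3$ (independent of $x_3$), so that the lifted equation $\widetilde{H}\widetilde{z}=\widetilde{f}$ involves the left-invariant Heisenberg vector fields. The key device is to freeze the drift at a reference point $\xi_0=(x_0,0)$, writing
\begin{equation*}
\widetilde{H}_0 \widetilde{z}(t,\xi)=\widetilde{f}(t,\xi)+\bigl(\widetilde{V}^0(t)-\widetilde{V}(t,\xi)\bigr)\cdot D_{\widetilde{\mathcal{X}}}\widetilde{z}(t,\xi),
\end{equation*}
so that Lemma \ref{Lem_LHJB Duhamel formula} gives a Duhamel representation in terms of $\Gamma_0$.

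For \eqref{MaxE}, I would apply the weak maximum principle for degenerate H\"{o}rmander-type parabolic operators (\cite[Theorem 13.1]{10BBLU}) to the auxiliary function $w(t,\xi):=\widetilde{z}(t,\xi)-(T-t)\|f\|_\infty-\|z_T\|_\infty$, whose source term and terminal condition are non-positive; the reverse inequality is obtained symmetrically. The drift term $V\cdot D_{\mathcal{X}}z$ is first-order and therefore harmless for the comparison principle, so the only $T$-dependence comes from the Duhamel integration against $\|f\|_\infty$.

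For the space estimate \eqref{prior USE}, after the freezing reduction I would differentiate the Duhamel formula twice in the horizontal directions. By Corollary \ref{widetilde Xi T}, each $\widetilde{X}_i\widetilde{X}_j$ acting on $\Gamma_0(t,T,\Theta(\xi,\cdot))$ can be exchanged, modulo commutators, into operators of type $0$ acting on $\widetilde{X}_h\widetilde{X}_k z_T$, while the double derivative of the Duhamel convolution with $\widetilde{f}$ produces, after analogous commutation, operators of type $\lambda-m$ with constant $c(s-t)^{-\lambda/2}$ acting on $\widetilde{X}_h\widetilde{f}$ for $\lambda\in(1,2)$. Lemma \ref{HCSIO} then controls the $C_{\mathcal{X}}^\alpha$ seminorm of the terminal contribution in terms of $\|z_T\|_{2+\alpha}$, and Lemma \ref{HCFIO} combined with $\int_t^T (s-t)^{-\lambda/2}\,ds<\infty$ controls the contribution of $f$ in terms of $\sup_t \|f(t,\cdot)\|_{1+\alpha}$. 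The perturbation term $(\widetilde{V}^0-\widetilde{V})\cdot D_{\widetilde{\mathcal{X}}}\widetilde{z}$ is treated by a cut-off on a $d_{cc}$-ball of small radius $r$ around $\xi_0$: on such a ball the coefficient difference is $O(r^\alpha)\|V\|_{1+\alpha}$, which multiplied by $\|D_{\widetilde{\mathcal{X}}}^2\widetilde{z}\|_{C_{\widetilde{\mathcal{X}}}^\alpha}$ can be absorbed into the left-hand side after choosing $r$ small; the remainder is a lower-order term, controlled by interpolation and \eqref{MaxE}. A standard covering and freezing argument on a finite net of points then yields \eqref{prior USE}.

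For the time H\"{o}lder estimate \eqref{prior HSE}, on $[0,T']$ with $T'<T$ the kernel $\Gamma_0(t,T,\cdot)$ is smooth and the obstruction is only at the time boundary of the Duhamel integral. Applying $\widetilde{X}_i\widetilde{X}_j$ and taking the difference $z(t',\cdot)-z(t,\cdot)$, one piece is the difference of semigroups applied to fixed data, which by the Gaussian bounds of Lemma \ref{GE} gains a factor $|t'-t|^{1/2-\epsilon}$ for any $\epsilon>0$; the other piece is the short-time Duhamel integral on $[t,t']$, estimated in $L^\infty$ by Lemma \ref{HCFIO} and contributing $|t'-t|^{1-\lambda/2}$, which can be made $\geq|t'-t|^\beta$ for any $\beta<1/2$ by taking $\lambda$ close to $1$. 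I expect the main obstacle to be exactly this last point: because $f$ is only continuous (not H\"{o}lder) in time, one cannot reduce to a clean convolution and must extract the $|t'-t|^\beta$ gain purely from the integrability of the kernel in the time variable, while simultaneously keeping the spatial $C_{\mathcal{X}}^{2+\alpha}$ norm under control via the commutation identities. This is precisely where the restriction $\beta<1/2$ becomes sharp, matching the parabolic scaling $d_p^2\sim |t|$ inherent to the operator.
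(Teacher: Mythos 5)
Your proposal follows essentially the same route as the paper's proof on all three estimates: a barrier function plus the weak maximum principle of \cite[Theorem 13.1]{10BBLU} for \eqref{MaxE}; the lifting to $\mathbb{T}^3$, coefficient freezing at $\xi_0$, the commutation identity of Corollary \ref{widetilde Xi T}, and the type-$(\lambda-m)$ operators with constant $c(s-t)^{-\lambda/2}$ (to handle $f$ merely continuous in $t$) combined with small-ball absorption, interpolation and a finite covering of $\mathbb{T}^2$ for \eqref{prior USE}; and the split into a difference of kernels estimated via $\partial_t\Gamma_0$ Gaussian bounds on $[t+2h,T]$ plus short-time Duhamel contributions of size $h^{1-\lambda/2}$ for \eqref{prior HSE}. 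The one point you compress but the paper spends real effort on is the localization: the paper works with a space-time cutoff $\psi_{\varepsilon,\delta}$ whose error term $g_{\varepsilon,\delta}$ must be propagated through the Duhamel formula, and the resulting negative powers of $\varepsilon$ and $\delta$ are balanced by setting $\varepsilon^{1-\lambda/2}=\delta^{5}$ before the interpolation and covering steps can close the argument.
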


\begin{proof}
Estimate \eqref{MaxE} can be proved by weak maximum principle. Indeed, let
$$
v(t):=\|z_T\|_{L^\infty(\mathbb{T}^2)}+ e^t\|f\|_{L^\infty([0,T]\times\mathbb{T}^2)}, t \in [0,T],
$$
then
$$
H v(t,x)=e^t\|f\|_{L^\infty([0,T]\times\mathbb{T}^2)} \geq \|f\|_{L^\infty([0,T]\times\mathbb{T}^2)} \geq f(t,x)=H z(t,x) \text{ in } [0,T]\times\mathbb{T}^2,
$$
While $v \geq z_T$ on $\{T\} \times \mathbb{T}^d$, hence by the weak maximum principle in \cite[Theorem 13.1]{10BBLU},
\begin{align*}
  \|z\|_{L^\infty([0,T]\times\mathbb{T}^2)} \leq \|v\|_{L^\infty([0,T]\times\mathbb{T}^2)} \leq & \|z_T\|_{L^\infty(\mathbb{T}^2)}+ e^T\|f\|_{L^\infty([0,T]\times\mathbb{T}^2)} \\
  \leq & \|z_T\|_{L^\infty(\mathbb{T}^2)}+ C\|f\|_{L^\infty([0,T]\times\mathbb{T}^2)}.
\end{align*}

In order to prove estimates \eqref{prior USE} and \eqref{prior HSE}, we need to refer to the technique of ``lifting and approximation" mentioned before.

Writing $\widetilde{H}=\widetilde{H}-\widetilde{H}_0+\widetilde{H}_0=\sum\limits_{i=1}^2 \left(\widetilde{V}_i(s, \xi)-\widetilde{V}_i(s, \xi_0)\right) \widetilde{X}_i+\widetilde{H}_0$, and defining $z_{\varepsilon,\delta}(t,\xi):=\widetilde{z}(t, \xi)\psi_{\varepsilon,\delta}(t, \xi)$, where $\psi_{\varepsilon,\delta}$ is a cutoff function satisfying
$$
(-\frac{\varepsilon}{2}+t_0,\frac{\varepsilon}{2}+t_0) \times \widetilde{B}_{\frac{\delta}{2}}(\xi_0) \prec \psi_{\varepsilon,\delta} \prec (-\varepsilon+t_0,\varepsilon+t_0) \times \widetilde{B}_{\delta}(\xi_0),
$$
for any $t_0 \in [0,T]$ and any $\varepsilon, \delta > 0$. This means $0 \leq \psi_{\varepsilon,\delta} \leq 1$, $\psi_{\varepsilon,\delta} \equiv 1$ on $(-\frac{\varepsilon}{2}+t_0,\frac{\varepsilon}{2}+t_0) \times \widetilde{B}_{\frac{\delta}{2}}(\xi_0)$ and $\operatorname{sprt}\psi_{\varepsilon,\delta} \subseteq (-\varepsilon+t_0,\varepsilon+t_0) \times \widetilde{B}_{\delta}(\xi_0)$. Observing that $\psi_{\varepsilon,\delta}$ is the solution to the following backward Cauchy problem:
$$
\begin{cases}
\widetilde{H}_0 z_{\varepsilon,\delta}+\left(\widetilde{V}-\widetilde{V}^0\right) \cdot D_{\widetilde{\mathcal{X}}} z_{\varepsilon,\delta}=\widetilde{f} \psi_{\varepsilon,\delta}-g_{\varepsilon,\delta}, & \text {in }[0, T) \times \mathbb{T}^3, \\
z_{\varepsilon,\delta}(T, \xi)=\widetilde{z}_T \psi_{\varepsilon,\delta}(T,\xi), & \text {in } \mathbb{T}^3,
\end{cases}
$$
where
\begin{align*}
& g_{\varepsilon,\delta}(t, \xi)\\
:= & \widetilde{z}(t, \xi) \partial_t \psi_{\varepsilon,\delta}(t, \xi)+\sum_{i=1}^2 \widetilde{X}_i \widetilde{z}(t, \xi) \widetilde{X}_i \psi_{\varepsilon,\delta}(t, \xi) \\
& + \widetilde{z}(t, \xi) \Delta_{\widetilde{\mathcal{X}}} \psi_{\varepsilon,\delta}(t, \xi)-\widetilde{z}(t, \xi) \widetilde{V}(t, \xi) \cdot D_{\widetilde{\mathcal{X}}} \psi_{\varepsilon,\delta}(t, \xi), \text{ any }(t, \xi) \in (0, T) \times \mathbb{T}^3.
\end{align*}
According to Lemma \ref{Lem_LHJB Duhamel formula}, Lemma \ref{FS type 0} as well as Lemma \ref{FS type lambda}, and using notations $T_{\lambda}(\xi_0)$, where $\lambda \in (1,2)$ is fixed, and $T_0(\xi_0)$ to respectively denote the frozen integral operator of type ${\lambda}$ with constant $\frac{c}{|s-t|^{\frac{\lambda}{2}}}$ and the frozen integral operator of type 0, thus when $0 \leq t<T$, we have
\begin{align*}
z_{\varepsilon,\delta}(t, \xi)=& \int_{t}^{T} T_{\lambda}(\xi_0) \widetilde{f} \psi_{\varepsilon,\delta}(s,\xi)d s - \int_{t}^{T} T_{\lambda}(\xi_0)g_{\varepsilon,\delta}(s,\xi)d s \\
& + T_0(\xi_0) \widetilde{z}_T \psi_{\varepsilon,\delta}(T,\xi) + \int_{t}^{T} T_{\lambda}(\xi_0)\left(\widetilde{V}^0-\widetilde{V}\right) \cdot D_{\widetilde{\mathcal{X}}} z_{\varepsilon,\delta}(s,\xi)d s,
\end{align*}

Take derivatives of both sides of the above equality and apply Corollary \ref{widetilde Xi T}(1) to get
\begin{align*}
\widetilde{X}_i z_{\varepsilon,\delta}(t, \xi)=& \int_{t}^{T} T_{\lambda-1}(\xi_0) \widetilde{f} \psi_{\varepsilon,\delta}(s,\xi)d s - \int_{t}^{T} T_{\lambda-1}(\xi_0) g_{\varepsilon,\delta}(s,\xi)d s\\
& + \sum_{h=1}^{2} T_0^h(\xi_0) \widetilde{X}_h (\widetilde{z}_T \psi_{\varepsilon,\delta})(T,\xi)\\
& + \int_{t}^{T} T_{\lambda-1}(\xi_0) \left(\widetilde{V}^0-\widetilde{V}\right) \cdot D_{\widetilde{\mathcal{X}}} z_{\varepsilon,\delta}(s,\xi)d s.
\end{align*}
Furthermore, apply Corollary \ref{widetilde Xi T} (2) to have
\begin{align*}
& \widetilde{X}_j \widetilde{X}_i z_{\varepsilon,\delta}(t, \xi)\\
=& \int_{t}^{T} \sum_{h=1}^{2} T_{\lambda-1}^h(\xi_0) \widetilde{X}_h(\widetilde{f} \psi_{\varepsilon,\delta})(s,\xi)d s - \int_{t}^{T} \sum_{h=1}^{2} T_{\lambda-1}^h(\xi_0) \widetilde{X}_h g_{\varepsilon,\delta}(s,\xi)d s\\
& + \sum_{h,l=1}^{2} T_0^{h,l}(\xi_0) \widetilde{X}_l \widetilde{X}_h (\widetilde{z}_T \psi_{\varepsilon,\delta})(T,\xi)\\
& + \int_{t}^{T} \sum_{h=1}^{2} T_{\lambda-1}^h(\xi_0)\left(\left(\widetilde{V}^0-\widetilde{V}\right) \cdot \widetilde{X}_h D_{\widetilde{\mathcal{X}}} z_{\varepsilon,\delta}-\widetilde{X}_h \widetilde{V} \cdot D_{\widetilde{\mathcal{X}}} z_{\varepsilon,\delta}\right)(s,\xi)d s.
\end{align*}

Let us prove the estimate \eqref{prior USE}. Apply the H\"{o}lder continuity of singular or fractional integral operator (see Lemma \ref{HCSIO} and Lemma \ref{HCFIO}) to the space $(\widetilde{B}_{\delta}(\xi_0),\widetilde{d}_{cc},d\xi)$, then  for any $t_0 \in [0,T)$,
\begin{align*}
& \|z_{\varepsilon,\delta}(t_0, \cdot)\|_{C_{\widetilde{\mathcal{X}}}^{\alpha}(\widetilde{B}_{\delta}(\xi_0))}\\
\leq & \int_{t_0}^{T} \frac{C}{(s-t_0)^\frac{\lambda}{2}} \|\widetilde{f}\psi_{\varepsilon,\delta}(s,\cdot)\|_ {C_{\widetilde{\mathcal{X}}}^{\alpha}(\widetilde{B}_{\delta}(\xi_0))}d s + C\| \widetilde{z}_T \psi_{\varepsilon,\delta}(T,\cdot)\|_ {C_{\widetilde{\mathcal{X}}}^{\alpha}(\widetilde{B}_{\delta}(\xi_0))}\\
& + \int_{t_0}^{T} \frac{C}{(s-t_0)^\frac{\lambda}{2}} \|g_{\varepsilon,\delta}(s, \cdot)\|_{C_{\widetilde{\mathcal{X}}}^{\alpha}(\widetilde{B}_{\delta}(\xi_0))}d s\\
& + \int_{t_0}^{T} \frac{C}{(s-t_0)^\frac{\lambda}{2}} \|(\widetilde{V}^0-\widetilde{V}) \cdot D_{\widetilde{\mathcal{X}}} z_{\varepsilon,\delta}(s,\cdot)\|_{C_{\widetilde{\mathcal{X}}}^{\alpha}(\widetilde{B}_{\delta}(\xi_0))}d s\\
\leq & \varepsilon^{1-\frac{\lambda}{2}} C \sup _{s \in [t_0,T]} \|\widetilde{f}\psi_{\varepsilon,\delta}(s, \cdot)\|_{C_{\widetilde{\mathcal{X}}}^{\alpha}(\widetilde{B}_{\delta}(\xi_0))} + C\| \widetilde{z}_T \psi_{\varepsilon,\delta}(T,\cdot)\|_{C_{\widetilde{\mathcal{X}}}^{\alpha}(\widetilde{B}_{\delta}(\xi_0))}\\
& + \varepsilon^{1-\frac{\lambda}{2}} C \sup _{s \in [t_0,T]} \|g_{\varepsilon,\delta}(s, \cdot)\|_{C_{\widetilde{\mathcal{X}}}^{\alpha}(\widetilde{B}_{\delta}(\xi_0))}\\
& + \varepsilon^{1-\frac{\lambda}{2}} C \sup _{s \in [t_0,T]} \|(\widetilde{V}^0-\widetilde{V}) \cdot D_{\widetilde{\mathcal{X}}} z_{\varepsilon,\delta}(s, \cdot)\|_{C_{\widetilde{\mathcal{X}}}^{\alpha}(\widetilde{B}_{\delta}(\xi_0))},
\end{align*}
in the same way,
\begin{align*}
& \|\widetilde{X}_i z_{\varepsilon,\delta}(t_0, \cdot)\|_{C_{\widetilde{\mathcal{X}}}^{\alpha}(\widetilde{B}_{\delta}(\xi_0))}\\
\leq & \varepsilon^{1-\frac{\lambda}{2}} C \sup _{s \in [t_0,T]} \|\widetilde{f}\psi_{\varepsilon,\delta}(s, \cdot)\|_{C_{\widetilde{\mathcal{X}}}^{\alpha}(\widetilde{B}_{\delta}(\xi_0))}+C\| \widetilde{z}_T \psi_{\varepsilon,\delta}(T,\cdot)\|_ {C_{\widetilde{\mathcal{X}}}^{1+\alpha}(\widetilde{B}_{\delta}(\xi_0))}\\
& + \varepsilon^{1-\frac{\lambda}{2}} C \sup _{s \in [t_0,T]} \|g_{\varepsilon,\delta}(s, \cdot)\|_{C_{\widetilde{\mathcal{X}}}^{\alpha}(\widetilde{B}_{\delta}(\xi_0))}\\
& + \varepsilon^{1-\frac{\lambda}{2}} C \sup _{s \in [t_0,T]} \|(\widetilde{V}^0-\widetilde{V}) \cdot D_{\widetilde{\mathcal{X}}} z_{\varepsilon,\delta}(s, \cdot)\|_{C_{\widetilde{\mathcal{X}}}^{\alpha}(\widetilde{B}_{\delta}(\xi_0))},
\end{align*}
\begin{align*}
& \|\widetilde{X}_j \widetilde{X}_i z_{\varepsilon,\delta}(t_0,\cdot)\|_ {C_{\widetilde{\mathcal{X}}}^{\alpha}(\widetilde{B}_{\delta}(\xi_0))}\\
\leq & \varepsilon^{1-\frac{\lambda}{2}} C \sup _{s \in [t_0,T]} \|\widetilde{f} \psi_{\varepsilon,\delta}(s,\cdot)\|_ {C_{\widetilde{\mathcal{X}}}^{1+\alpha}(\widetilde{B}_{\delta}(\xi_0))} +C\| \widetilde{z}_T \psi_{\varepsilon,\delta}(T,\cdot) \|_{C_{\widetilde{\mathcal{X}}}^{2+\alpha}(\widetilde{B}_{\delta}(\xi_0))}\\
& + \varepsilon^{1-\frac{\lambda}{2}} C \sup _{s \in [t_0,T]} \|g_{\varepsilon,\delta}(s,\cdot)\|_ {C_{\widetilde{\mathcal{X}}}^{1+\alpha}(\widetilde{B}_{\delta}(\xi_0))}\\
& + \varepsilon^{1-\frac{\lambda}{2}} C \sup _{s \in [t_0,T]} \sum_{h=1}^{2} \left( \| (\widetilde{V}^0-\widetilde{V}) \cdot \widetilde{X}_h D_{\widetilde{\mathcal{X}}} z_{\varepsilon,\delta}(s, \cdot)\|_{C_{\widetilde{\mathcal{X}}}^{\alpha}(\widetilde{B}_{\delta}(\xi_0))}\right.\\
&\left. + \|\widetilde{X}_h \widetilde{V} \cdot D_{\widetilde{\mathcal{X}}} z_{\varepsilon,\delta}(s, \cdot)\|_{C_{\widetilde{\mathcal{X}}}^{\alpha}(\widetilde{B}_{\delta}(\xi_0))} \right),
\end{align*}
where $C$ is independent of $t_0,\varepsilon,\delta$, and also uniformly bounded with respect to $\xi_{0}$ since $\widetilde{V}(t,\xi) \in L^{\infty}([0,T] \times \mathbb{T}^3)$.

We exploit the fact that $D_{\widetilde{\mathcal{X}}} z_{\varepsilon,\delta}$, $\widetilde{X}_h D_{\widetilde{\mathcal{X}}} z_{\varepsilon,\delta}$ and $\widetilde{V}^0-\widetilde{V}$ vanish at some point of $\widetilde{B}_{\delta}(\xi_0)$, then from \cite[Proposition 4.2(i)]{07BB}, we easily deduce that
\begin{align*}
& \left[\left(\widetilde{V}^0-\widetilde{V}\right) \cdot D_{\widetilde{\mathcal{X}}} z_{\varepsilon,\delta}(s,\cdot)\right]_
{C_{\widetilde{\mathcal{X}}}^{\alpha}(\widetilde{B}_{\delta}(\xi_0))}\\
\leq & \delta^{\alpha} C \left[\widetilde{V}(s,\cdot)\right]_{C_{\widetilde{\mathcal{X}}}^{\alpha} (\widetilde{B}_{\delta}(\xi_0))} \left[D_{\widetilde{\mathcal{X}}} z_{\varepsilon,\delta}(s, \cdot)\right]_{C_{\widetilde{\mathcal{X}}}^{\alpha} (\widetilde{B}_{\delta}(\xi_0))},
\end{align*}
\begin{align*}
& \left[\left(\widetilde{V}^0-\widetilde{V}\right) \cdot \widetilde{X}_h D_{\widetilde{\mathcal{X}}} z_{\varepsilon,\delta}(s, \cdot)\right]_{C_{\widetilde{\mathcal{X}}}^{\alpha}(\widetilde{B}_{\delta}(\xi_0))}\\
\leq & \delta^{\alpha} C \left[\widetilde{V}(s,\cdot)\right]_{C_{\widetilde{\mathcal{X}}}^{\alpha} (\widetilde{B}_{\delta}(\xi_0))} \left[\widetilde{X}_h D_{\widetilde{\mathcal{X}}} z_{\varepsilon,\delta}(s,\cdot)\right]_{C_{\widetilde{\mathcal{X}}}^{\alpha} (\widetilde{B}_{\delta}(\xi_0))}.
\end{align*}
While obviously,
$$
\|\left(\widetilde{V}^0-\widetilde{V}\right) \cdot D_{\widetilde{\mathcal{X}}} z_{\varepsilon,\delta}(s,\cdot)\|_\infty \leq \delta^{\alpha} C \left[\widetilde{V}(s,\cdot)\right]_{C_{\widetilde{\mathcal{X}}}^{\alpha} (\widetilde{B}_{\delta}(\xi_0))} \|D_{\widetilde{\mathcal{X}}} z_{\varepsilon,\delta}(s,\cdot)\|_\infty,
$$
$$
\|\left(\widetilde{V}^0-\widetilde{V}\right) \cdot \widetilde{X}_h D_{\widetilde{\mathcal{X}}} z_{\varepsilon,\delta}(s, \cdot)\|_\infty \leq \delta^{\alpha} C \left[\widetilde{V}(s,\cdot)\right]_{C_{\widetilde{\mathcal{X}}}^{\alpha} (\widetilde{B}_{\delta}(\xi_0))} \|\widetilde{X}_h D_{\widetilde{\mathcal{X}}} z_{\varepsilon,\delta}(s, \cdot)\|_\infty.
$$
In addition to that, according to properties of the cutoff function (cf. \cite[Lemma 6.2]{07BB}), it is known that for $\delta$ small enough,
\begin{align*}
  \|\widetilde{f} \psi_{\varepsilon,\delta}(s,\cdot)\|_{C_{\widetilde{\mathcal{X}}}^{1+\alpha} (\widetilde{B}_{\delta}(\xi_0))} \leq & \delta^{-2} C \|\widetilde{f}(s,\cdot)\|_{C_{\widetilde{\mathcal{X}}}^{1+\alpha} (\widetilde{B}_{\delta}(\xi_0))},\\
  \|\widetilde{z}_T \psi_{\varepsilon,\delta}(T,\cdot)\|_{C_{\widetilde{\mathcal{X}}}^{2+\alpha} (\widetilde{B}_{\delta}(\xi_0))} \leq & \delta^{-3} C \|\widetilde{z}_T\|_{C_{\widetilde{\mathcal{X}}}^{2+\alpha} (\widetilde{B}_{\delta}(\xi_0))},\\
  \|\widetilde{g}_{\varepsilon,\delta}(s,\cdot)\|_ {C_{\widetilde{\mathcal{X}}}^{1+\alpha} (\widetilde{B}_{\delta}(\xi_0))} \leq & \left(\varepsilon^{-2} \delta^{-2}+\delta^{-4}\right) C \|\widetilde{z}(s,\cdot)\|_{C_{\widetilde{\mathcal{X}}}^{1+\alpha} (\widetilde{B}_{\delta}(\xi_0))}\\
  & + \delta^{-3} C \|\widetilde{z}(s,\cdot)\|_{C_{\widetilde{\mathcal{X}}}^{2+\alpha} (\widetilde{B}_{\delta}(\xi_0))}\\
  & + \delta^{-3} C \|\widetilde{V}(s,\cdot)\|_{C_{\widetilde{\mathcal{X}}}^{1+\alpha} (\widetilde{B}_{\delta}(\xi_0))} \|\widetilde{z}(s,\cdot)\|_{C_{\widetilde{\mathcal{X}}}^{1+\alpha} (\widetilde{B}_{\delta}(\xi_0))}.
\end{align*}
Then we can now go on to obtain that, for any $t \in [0,T)$,
\begin{align*}
& \|z_{\varepsilon,\delta}(t,\cdot)\|_ {C_{\widetilde{\mathcal{X}}}^{2+\alpha}(\widetilde{B}_{\delta}(\xi_0))}\\
\leq & \varepsilon^{1-\frac{\lambda}{2}} \delta^{-2} C \sup _{s \in [t,T]} \|\widetilde{f}(s,\cdot)\|_{C_{\widetilde{\mathcal{X}}}^{1+\alpha}(\widetilde{B}_{\delta}(\xi_0))}+ \delta^{-3} C\|\widetilde{z}_T\|_{C_{\widetilde{\mathcal{X}}}^{2+\alpha}(\widetilde{B}_{\delta}(\xi_0))}\\
& + \varepsilon^{1-\frac{\lambda}{2}} C \left( \varepsilon^{-2} \delta^{-2} \sup _{s \in [t,T]} \|\widetilde{z}(s,\cdot)\|_{C_{\widetilde{\mathcal{X}}}^{1+\alpha} (\widetilde{B}_{\delta}(\xi_0))}+ \delta^{-4}\sup _{s \in [t,T]}\|\widetilde{z}(s,\cdot)\|_{C_{\widetilde{\mathcal{X}}}^{2+\alpha} (\widetilde{B}_{\delta}(\xi_0))} \right.\\
&\left. + \delta^{-3} \sup _{s \in [t,T]} \|\widetilde{V}(s,\cdot)\|_{C_{\widetilde{\mathcal{X}}}^{1+\alpha} (\widetilde{B}_{\delta}(\xi_0))} \sup _{s \in [t,T]} \|\widetilde{z}(s,\cdot)\|_{C_{\widetilde{\mathcal{X}}}^{1+\alpha} (\widetilde{B}_{\delta}(\xi_0))} \right)\\
& + \varepsilon^{1-\frac{\lambda}{2}} \delta^{\alpha-3} C \sup _{s \in [t,T]} \|\widetilde{V}(s,\cdot)\|_ {C_{\widetilde{\mathcal{X}}}^{\alpha}(\widetilde{B}_{\delta}(\xi_0))} \sup_{s \in [t,T]} \|\widetilde{z}(s,\cdot)\|_ {C_{\widetilde{\mathcal{X}}}^{2+\alpha}(\widetilde{B}_{\delta}(\xi_0))}\\
& + \varepsilon^{1-\frac{\lambda}{2}} \delta^{-2} C \sup_{s \in [t,T]} \|\widetilde{V}(s, \cdot)\|_{C_{\widetilde{\mathcal{X}}}^{1+\alpha}(\widetilde{B}_{\delta}(\xi_0))} \sup_{s \in [t,T]} \| \widetilde{z}(s,\cdot)\|_{C_{\widetilde{\mathcal{X}}}^{1+\alpha}(\widetilde{B}_{\delta}(\xi_0))},
\end{align*}
where $C$ is independent of $t$, $\varepsilon$, and $\delta$. Recalling that $z_{\varepsilon,\delta} \equiv \widetilde{z}$ in $(-\frac{\varepsilon}{2}+t,\frac{\varepsilon}{2}+t) \times \widetilde{B}_{\frac{\delta}{2}}(\xi_0)$, set $\varepsilon^{1-\frac{\lambda}{2}}=\delta^{5}$, we have
\begin{equation}\label{key inequality}
\begin{aligned}
  \|\widetilde{z}(t,\cdot)\|_{C_{\widetilde{\mathcal{X}}}^{2+\alpha}(\widetilde{B}_{\frac{\delta}{2}}(\xi_0))} \leq & \delta^{3} C \sup _{s \in [t,T]} \|\widetilde{f}(s,\cdot)\|_{C_{\widetilde{\mathcal{X}}}^{1+\alpha}(\widetilde{B}_{\delta}(\xi_0))} +\delta^{-3}C\|\widetilde{z}_T\|_{C_{\widetilde{\mathcal{X}}}^{2+\alpha}(\widetilde{B}_{\delta}(\xi_0))}\\
  & + \delta^{-17} C \sup _{s \in [t,T]} \|\widetilde{z}(s,\cdot)\|_ {C_{\widetilde{\mathcal{X}}}^{1+\alpha}(\widetilde{B}_{\delta}(\xi_0))}+\widetilde{A}\\
  =: & I_1+I_2+I_3+\widetilde{A}.
\end{aligned}
\end{equation}
where
\begin{align*}
\widetilde{A} := & C \left( \delta \sup _{s \in [t,T]} \|\widetilde{z}(s,\cdot)\|_ {C_{\widetilde{\mathcal{X}}}^{2+\alpha}(\widetilde{B}_{\delta}(\xi_0))}\right.\\
& +\delta^{2} \sup _{s \in [t,T]} \|\widetilde{V}(s, \cdot)\|_{C_{\widetilde{\mathcal{X}}}^{1+\alpha}(\widetilde{B}_{\delta}(\xi_0))} \sup _{s \in [t,T]} \|\widetilde{z}(s,\cdot)\|_{C_{\widetilde{\mathcal{X}}}^{1+\alpha}(\widetilde{B}_{\delta}(\xi_0))}\\
& \left. + \delta^{\alpha+2} \sup _{s \in [t,T]} \|\widetilde{V}(s, \cdot)\|_{C_{\widetilde{\mathcal{X}}}^{\alpha}(\widetilde{B}_{\delta}(\xi_0))} \sup _{s \in [t,T]} \|\widetilde{z}(s,\cdot)\|_{C_{\widetilde{\mathcal{X}}}^{2+\alpha}(\widetilde{B}_{\delta}(\xi_0))}\right).
\end{align*}
Compare to the interpolation inequality in \cite[Theorem 7.4]{07BB}, we can prove that, for any $s \in [0,T]$, there exist positive constants $c$, $r$ and $\gamma$ such that for any $\sigma > 0$ small enough, the following inequality holds
$$
\|\widetilde{z}(s,\cdot)\|_ {C_{\widetilde{\mathcal{X}}}^{1+\alpha}(\widetilde{B}_{\delta}(\xi_0))} \leq \sigma \|\widetilde{z}(s,\cdot)\|_ {C_{\widetilde{\mathcal{X}}}^{2+\alpha}(\widetilde{B}_r(\xi_0))} + \frac{c}{\sigma^{\gamma}(r-\delta)^{2\gamma}}\|\widetilde{z}(s,\cdot)\|_ {L^{\infty}(\widetilde{B}_r(\xi_0))}.
$$
Combining with the equivalent relation between $z$ and $\widetilde{z}$ regarding the H\"{o}lder norm (cf. \cite[Proposition 8.3]{07BB}), we immediately get
\begin{align*}
  &\sup _{t \in [0,T]} \|z(t, \cdot)\|_{C_{\mathcal{X}}^{2+\alpha}(B_{\frac{\delta}{2}}(x_0))}\\
   \leq & C \sup _{t \in [0,T]} \|\widetilde{z}(t,\cdot)\|_ {C_{\widetilde{\mathcal{X}}}^{2+\alpha}(\widetilde{B}_{\frac{\delta}{2}}(\xi_0))}\\ \leq & C \left( \sup _{t \in [0,T]} \|\widetilde{f}(t,\cdot)\|_{C_{\widetilde{\mathcal{X}}}^{1+\alpha}(\widetilde{B}_{\delta}(\xi_0))} +\|\widetilde{z}_T\|_{C_{\widetilde{\mathcal{X}}}^{2+\alpha}(\widetilde{B}_{\delta}(\xi_0))}\right.\\
  & + \sigma \sup _{t \in [0,T]} \| \widetilde{z}(t,\cdot)\|_ {C_{\widetilde{\mathcal{X}}}^{2+\alpha}(\widetilde{B}_{r}(\xi_0))}\\
  &\left. + \frac{c}{\sigma^{\gamma}(r-\delta)^{2\gamma}} \|\widetilde{z}\|_{L^\infty([0,T] \times \widetilde{B}_{r}(\xi_0))} \right)+\widetilde{A}\\
  \leq & C \left( \sup _{t \in [0,T]} \|f(t,\cdot)\|_{C_{\mathcal{X}}^{1+\alpha}(B_{\delta}(x_0))} +\|z_T\|_{C_{\mathcal{X}}^{2+\alpha}(B_{\delta}(x_0))}\right.\\
  & + \sigma \sup _{t \in [0,T]} \|z(t,\cdot)\|_
  {C_{\mathcal{X}}^{2+\alpha}(B_{r}(x_0))}\\
  &\left. + \frac{c}{\sigma^{\gamma}(r-\delta)^{2\gamma}}
  \|z\|_{L^\infty([0,T] \times B_{r}(x_0))} \right)+A,
\end{align*}
where
\begin{align*}
A:= & C \left( \delta \sup _{t \in [0,T]} \|z(t,\cdot)\|_ {C_{\mathcal{X}}^{2+\alpha}(B_{\delta}(x_0))}\right.\\
& +\delta^{2} \sup _{t \in [0,T]} \|V(t, \cdot)\|_{C_{\mathcal{X}}^{1+\alpha}(B_{\delta}(x_0))} \sup _{t \in [0,T]} \|z(t,\cdot)\|_{C_{\mathcal{X}}^{1+\alpha}(B_{\delta}(x_0))}\\
& \left. + \delta^{\alpha+2} \sup _{t \in [0,T]} \|V(t,\cdot)\|_
{C_{\mathcal{X}}^{\alpha}(B_{\delta}(x_0))} \sup _{t \in [0,T]} \|z(t,\cdot)\|_{C_{\mathcal{X}}^{2+\alpha}(B_{\delta}(x_0))}\right).
\end{align*}
Since $\mathbb{T}^2$ is compact, there exists a finite covering $(B_{\frac{\delta}{4}}^E(x_i))_{1 \leq i \leq k}$ of $\mathbb{T}^2$, where $(x_i)_{1 \leq i \leq k} \subset \mathbb{T}^2$, note that $\bigcup\limits_{i=1}^k B_{\frac{\delta}{4}}(x_i) \supset \bigcup\limits_{i=1}^k B_{\frac{\delta}{4}}^E(x_i)$, and by \cite[Proposition 4.2(v)]{07BB}, we have
\begin{align*}
  \sup _{t \in [0,T]} \|z(t, \cdot)\|_{C_{\mathcal{X}}^{2+\alpha}(\mathbb{T}^2)} \leq & \sup _{t \in [0,T]} \|z(t,\cdot)\|_
  {C_{\mathcal{X}}^{2+\alpha}\left(\bigcup\limits_{i=1}^k B_{\frac{\delta}{4}}(x_i)\right)}\\
  \leq & C \sup _{t \in [0,T]} \sum_{i=1}^k \|z(t,\cdot)\|_
  {C_{\mathcal{X}}^{2+\alpha}\left(B_{\frac{\delta}{2}}(x_i)\right)}\\
  \leq & C \left( \sup _{t \in [0,T]} \|f(t,\cdot)\|_{C_{\mathcal{X}}^{1+\alpha}(\mathbb{T}^2)} +\|z_T\|_{C_{\mathcal{X}}^{2+\alpha}(\mathbb{T}^2)}\right.\\
  & + \sigma \sup _{t \in [0,T]} \|z(t,\cdot)\|_
  {C_{\mathcal{X}}^{2+\alpha}(\mathbb{T}^2)}\\
  &\left. + \frac{c}{\sigma^{\gamma}(r-\delta)^{2\gamma}}
  \|z\|_{L^\infty([0,T] \times \mathbb{T}^2)} \right)+A^*.
\end{align*}
where
\begin{align*}
A^*:= & C \left( \delta \sup _{t \in [0,T]} \|z(t,\cdot)\|_ {C_{\mathcal{X}}^{2+\alpha}(\mathbb{T}^2)}\right.\\
& +\delta^{2} \sup _{t \in [0,T]} \|V(t, \cdot)\|_{C_{\mathcal{X}}^{1+\alpha}(\mathbb{T}^2)} \sup _{t \in [0,T]} \|z(t,\cdot)\|_{C_{\mathcal{X}}^{1+\alpha}(\mathbb{T}^2)}\\
& \left. + \delta^{\alpha+2} \sup _{t \in [0,T]} \|V(t,\cdot)\|_
{C_{\mathcal{X}}^{\alpha}(\mathbb{T}^2)} \sup _{t \in [0,T]} \|z(t,\cdot)\|_{C_{\mathcal{X}}^{2+\alpha}(\mathbb{T}^2)}\right).
\end{align*}
We can choose $\sigma >0$, $\delta >0$ small enough to get
\begin{align*}
& \sup_{t \in [0,T]} \|z(t,\cdot)\|_{C_{\mathcal{X}}^{2+\alpha}(\mathbb{T}^2)}\\
\leq & C \left(\sup_{t \in [0,T]}\|f(t,\cdot)\|_{C_{\mathcal{X}}^{1+\alpha}(\mathbb{T}^2)}+ \|z_T\|_{C_{\mathcal{X}}^{2+\alpha}(\mathbb{T}^2)}+ \|z\|_{L^\infty([0,T] \times \mathbb{T}^2)}\right)
\end{align*}
with $C$ depends on $\sup\limits_{t \in [0,T]} \|V(t,\cdot)\|_{C_{\mathcal{X}}^{1+\alpha}(\mathbb{T}^2)}$. Then combining with the estimate of $\|z\|_{L^\infty([0,T] \times \mathbb{T}^2)}$, we finally get the estimate \eqref{prior USE}.

It remains to prove the time estimate \eqref{prior HSE}. For simplicity, here we only show that the conclusion holds when the coefficient $V$ is frozen, since the unfreezing procedure is similar as before.

Set $t'=t+h$, when $0 \leqslant t+2 h \leqslant T$, we have from Lemma \ref{Lem_LHJB Duhamel formula} that
\begin{align*}
& \widetilde{z}(t+h,\xi)-\widetilde{z}(t,\xi)\\
= & \int_{\mathbb{T}^3}\left(\Gamma_0(t+h,T,\Theta(\xi,\eta))-\Gamma_0(t,T,\Theta(\xi,\eta))\right) \widetilde{z}_T(\eta)d \eta \\
& + \int_{t+2h}^{T}\int_{\mathbb{T}^3}\left(\Gamma_0(t+h,s,\Theta(\xi,\eta))-\Gamma_0(t,s,\Theta(\xi,\eta))\right) \widetilde{f}(s,\eta)d \eta d s \\
& + \int_{t+h}^{t+2h}\int_{\mathbb{T}^3}\Gamma_0(t+h,s,\Theta(\xi,\eta)) \widetilde{f}(s,\eta)d \eta d s\\
& - \int_{t}^{t+2h}\int_{\mathbb{T}^3}\Gamma_0(t,s,\Theta(\xi,\eta)) \widetilde{f}(s,\eta)d \eta d s\\
= : & A_1+A_2+A_3+A_4.
\end{align*}
Since $\Gamma_0$ is $C^1$ with respect to the time $t$, for any $s \in[0, T]$, thus by Lagrange mean value theorem, there exists a $t^* \in (t,t+h)$ such that
$$
\Gamma_0\left(t+h,s,\Theta(\xi,\eta)\right)-\Gamma_0\left(t,s, \Theta(\xi,\eta)\right)= \partial_t\Gamma_0\left(t^*,s,\Theta(\xi,\eta)\right)h.
$$
Recall that
$$
|\partial_t\Gamma_0\left(t^*,s,\Theta(\xi,\eta)\right)| \leq \frac{c_1}{s-t^*} \frac{e^{-\frac{c_2\|\eta^{-1} \circ \xi\|}{s-t^*}}}{(s-t^*)^{\frac{Q}{2}}},
$$
and it can be checked that $\partial_t\Gamma_0\left(t^*,s,\Theta(\xi,\eta)\right)$ fulfills the three conditions for $t^*<s$ in Lemma \ref{FS type 0}, specifically that it is a frozen kernel of type 0 with constant $c$ replaced by $\frac{c}{s-t^*}$. Hence, from Lemma \ref{HCSIO}, we have for any $0\leq t+h \leq T' <T$ that
$$
\|A_1\|_{C_{\widetilde{\mathcal{X}}}^{2+\alpha}\left(\mathbb{T}^{3}\right)} \leq \frac{Ch}{T-t-h} \|\widetilde{z}_T\|_{C_{\widetilde{\mathcal{X}}}^{2+\alpha}\left(\mathbb{T}^{3}\right)} \leq C(T') \|\widetilde{z}_T\|_{C_{\widetilde{\mathcal{X}}}^{2+\alpha}\left(\mathbb{T}^{3}\right)}h^{\frac{1}{2}},
$$
\begin{align}\label{A_2}
  \|A_2\|_{C_{\widetilde{\mathcal{X}}}^{2+\alpha}\left(\mathbb{T}^{3}\right)} \leq & \int_{t+2h}^{T}\frac{Ch}{(s-t-h)^{\frac{3}{2}}}ds \sup_{t \in [0,T]}\|\widetilde{f}(t,\cdot)\|_{C_{\widetilde{\mathcal{X}}}^{1+\alpha}\left(\mathbb{T}^{3}\right)}\\
  \leq & C\sup_{t \in [0,T]}\|\widetilde{f}(t,\cdot)\|_{C_{\widetilde{\mathcal{X}}}^{1+\alpha}\left(\mathbb{T}^{3}\right)}h^{\frac{1}{2}}.\nonumber
\end{align}
Same as before, from Lemma \ref{HCFIO}, we have
\begin{align*}
&\|A_3+A_4\|_{C_{\widetilde{\mathcal{X}}}^{2+\alpha}\left(\mathbb{T}^{3}\right)}\\
\leq & \left(\int_{t+h}^{t+2h}\frac{C}{(s-t-h)^{\frac{\lambda}{2}}}ds+ \int_{t}^{t+2h}\frac{C}{(s-t)^{\frac{\lambda}{2}}}ds\right) \sup_{t \in [0,T]}\|\widetilde{f}(t,\cdot)\|_{C_{\widetilde{\mathcal{X}}}^{1+\alpha}\left(\mathbb{T}^{3}\right)}\\
\leq & C\sup_{t \in [0,T]}\|\widetilde{f}(t,\cdot)\|_{C_{\widetilde{\mathcal{X}}}^{1+\alpha}\left(\mathbb{T}^{3}\right)}h^{1-\frac{\lambda}{2}}, \text{ for any } \lambda \in (1,2).
\end{align*}
Combining with the above estimates, we can obtain that
\begin{align*}
& \sup_{h>0} \frac{\|\widetilde{z}(t+h,\xi)-\widetilde{z}(t,\xi)\|_{C_{\widetilde{\mathcal{X}}}^{2+\alpha}\left(\mathbb{T}^{3}\right)}}{h^\beta}\\
\leq & C(T') \left(\|\widetilde{z}_T\|_{C_{\widetilde{\mathcal{X}}}^{2+\alpha}\left(\mathbb{T}^{3}\right)}+ \sup_{t \in [0,T]}\|\widetilde{f}(t,\cdot)\|_{C_{\widetilde{\mathcal{X}}}^{1+\alpha}\left(\mathbb{T}^{3}\right)}\right)
\end{align*}
for any $\beta \in (0,\frac{1}{2})$, $t \leq T' < T$. Again by \cite[Proposition 8.3]{07BB}, we get the final result.

When $2 h>T-t$, there is no need to consider the integral from $t+2h$ to $T$ in the above formula \eqref{A_2}, and the result follows in the same way.
\end{proof}
\begin{Cor}
Let $V \in C^{1,1+\alpha}_{\mathcal{X}}\left([0,T]\times\mathbb{T}^2\right)$, $f \in C\left([0,T];C^{\alpha}_{\mathcal{X}}\left(\mathbb{T}^2\right)\right)$ and $z_T \in C^{2+\alpha}_{\mathcal{X}}\left(\mathbb{T}^2\right)$. Suppose that $z$ is a solution to equation \eqref{LHJB} in $[0, T] \times \mathbb{T}^{2}$. Then
\begin{enumerate}
\item $z$ satisfies
\begin{equation}\label{LHJB_C^1 estimate}
\sup _{t \in[0, T]}\|z(t, \cdot)\|_{1} \leqslant C\left(\left\|z_T\right\|_{1}+\|f\|_\infty\right)
\end{equation}
for some positive constant $C$, dependent of $\|V\|_\infty$ only.
\item $z$ satisfies
\begin{equation}\label{LHJB_USE_first order}
\sup _{t \in[0, T]}\|z(t, \cdot)\|_{1+\alpha} \leqslant C\left(\left\|z_T\right\|_{1+\alpha}+\sup _{t \in[0, T]}\|f(t, \cdot)\|_{\alpha}\right)
\end{equation}
for some positive constant $C$, dependent of $\sup\limits _{t \in[0, T]}\|V(t, \cdot)\|_{\alpha}$ and $\alpha$ only.
\item For any $\beta \in \left(0,\frac{1}{2}\right)$, $z$ satisfies
\begin{equation}\label{LHJB_HSE_lower order}
\sup _{\substack{t \neq t'\\t,t' \in [0,T]}} \frac{\left\|z\left(t^{\prime}, \cdot\right)-z(t, \cdot)\right\|_{\alpha}}{\left|t^{\prime}-t\right|^\beta} \leqslant C\left(\left\|z_T\right\|_{1+\alpha}+\sup _{t \in[0, T]}\|f(t, \cdot)\|_{\alpha}\right)
\end{equation}
for some positive constant $C$, dependent of $\sup\limits_{t \in[0, T]}\|V(t, \cdot)\|_{1+\alpha}$ and $\alpha$ only.
\end{enumerate}
\end{Cor}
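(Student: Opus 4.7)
The plan is to mirror the proof of Lemma \ref{Lem_LHJB prior estimate} essentially verbatim, but differentiating the Duhamel representation of Lemma \ref{Lem_LHJB Duhamel formula} one fewer time in $\xi$ throughout. In that earlier proof the second derivatives were handled by the representation $\widetilde{X}_j\widetilde{X}_iz_{\varepsilon,\delta}=\sum_{h,l}T^{h,l}_0(\xi_0)\widetilde{X}_l\widetilde{X}_h(\widetilde{z}_T\psi_{\varepsilon,\delta})+\sum_h T^h_{\lambda-1}(\xi_0)\widetilde{X}_h(\cdot)+\ldots$; here we stop one level earlier and work with $\widetilde{X}_iz_{\varepsilon,\delta}=\sum_h T^h_0(\xi_0)\widetilde{X}_h(\widetilde{z}_T\psi_{\varepsilon,\delta})+T_{\lambda-1}(\xi_0)(\widetilde{f}\psi_{\varepsilon,\delta})+\ldots$ (after applying Corollary \ref{widetilde Xi T}). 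Because the ``exponent budget'' on $\xi$-derivatives is lowered by one, every data norm in the corollary sits one derivative below its counterpart in Lemma \ref{Lem_LHJB prior estimate}, which matches the statement.

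For \eqref{LHJB_C^1 estimate}, the pointwise bound on $z$ is the weak maximum principle exactly as in \eqref{MaxE}. For $\|D_{\mathcal{X}}z\|_\infty$ I would apply Remark \ref{Rem_Bdd.SIO} to the type-$0$ operator acting on $\widetilde{X}_h\widetilde{z}_T$ and use $\int_t^T(s-t)^{-\lambda/2}ds\le C$ with $\lambda\in(1,2)$ for the type-$(\lambda-1)$ operator acting on $\widetilde{f}\psi_{\varepsilon,\delta}$; the commutator-type remainder $(\widetilde{V}^0-\widetilde{V})\cdot D_{\widetilde{\mathcal{X}}}z_{\varepsilon,\delta}$ contributes a factor $\delta^{\alpha}\|V\|_\alpha\|D_{\widetilde{\mathcal{X}}}z\|_\infty$ that is absorbed by choosing $\delta$ small, after which the unfreezing and finite-cover procedure of Lemma \ref{Lem_LHJB prior estimate} transfers the estimate from $B_\delta(\xi_0)$ to $\mathbb{T}^2$.

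For \eqref{LHJB_USE_first order} I would repeat the cutoff-and-lifting scheme, but now estimate $\|\widetilde{X}_iz_{\varepsilon,\delta}\|_{C^\alpha_{\widetilde{\mathcal{X}}}(\widetilde{B}_\delta(\xi_0))}$ via Lemma \ref{HCSIO} on the type-$0$ kernel and Lemma \ref{HCFIO} on the type-$(\lambda-1)$ kernels. The cutoff-norm bookkeeping is one derivative cheaper than before: $\|\widetilde{f}\psi_{\varepsilon,\delta}\|_{C^\alpha_{\widetilde{\mathcal{X}}}}\le C\delta^{-1}\|f\|_\alpha$, $\|\widetilde{z}_T\psi_{\varepsilon,\delta}\|_{C^{1+\alpha}_{\widetilde{\mathcal{X}}}}\le C\delta^{-2}\|z_T\|_{1+\alpha}$, and $\|g_{\varepsilon,\delta}\|_{C^\alpha_{\widetilde{\mathcal{X}}}}\le C(\varepsilon^{-2}\delta^{-1}+\delta^{-3})\|z\|_{C^\alpha_{\widetilde{\mathcal{X}}}}+C\delta^{-2}\|z\|_{C^{1+\alpha}_{\widetilde{\mathcal{X}}}}+C\delta^{-2}\|V\|_{C^\alpha_{\widetilde{\mathcal{X}}}}\|z\|_{C^\alpha_{\widetilde{\mathcal{X}}}}$. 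Choosing a scaling of the form $\varepsilon^{1-\lambda/2}=\delta^{N}$ for a suitable $N$ yields an inequality analogous to \eqref{key inequality} but at the $C^{1+\alpha}$/$L^\infty$ scale, whose lower-order terms are absorbed via the interpolation inequality \cite[Theorem 7.4]{07BB}, combined with the $L^\infty$ bound \eqref{MaxE}. The remainder containing $(\widetilde{V}^0-\widetilde{V})$ is handled, as in Lemma \ref{Lem_LHJB prior estimate}, by the vanishing of $\widetilde{V}^0-\widetilde{V}$ at some point of $\widetilde{B}_\delta(\xi_0)$ plus \cite[Proposition 4.2(i)]{07BB}, which gives a factor $\delta^\alpha$ that closes the estimate for $\delta$ small. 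Unfreezing and the finite-cover argument then deliver \eqref{LHJB_USE_first order}.

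For the time regularity \eqref{LHJB_HSE_lower order} I would decompose $\widetilde{z}(t+h,\xi)-\widetilde{z}(t,\xi)=A_1+A_2+A_3+A_4$ as in the proof of \eqref{prior HSE}, apply one $\widetilde{X}_i$ (and Corollary \ref{widetilde Xi T}), and estimate in $C^\alpha_{\widetilde{\mathcal{X}}}$. The Lagrange mean value theorem converts $\Gamma_0(t+h,s,\cdot)-\Gamma_0(t,s,\cdot)$ into $h\,\partial_t\Gamma_0(t^*,s,\cdot)$, and after one $\xi$-derivative this becomes a kernel of type $\lambda-1$ with constant $c(s-t^*)^{-1-\lambda/2}$; integrating over $s\in[t+2h,T]\cap[0,T']$ with $T'<T$ gives $O(h^{1/2})$ for $A_1,A_2$, while $A_3,A_4$ are $O(h^{1-\lambda/2})$ by Lemma \ref{HCFIO}. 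Letting $\lambda\in(1,2)$ run freely recovers any $\beta\in(0,1/2)$, and the unfreezing is identical to before.

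The main obstacle I expect, as already in Lemma \ref{Lem_LHJB prior estimate}, is balancing the scaling $\varepsilon^{1-\lambda/2}\sim\delta^{N}$ so that the net exponent of $\delta$ in front of the critical remainder term $\|(\widetilde{V}^0-\widetilde{V})\cdot D_{\widetilde{\mathcal{X}}}z_{\varepsilon,\delta}\|_{C^\alpha_{\widetilde{\mathcal{X}}}}$ is strictly positive; since we are now working at one fewer derivative, the powers of $\delta$ coming from the cutoff differ from \eqref{key inequality} and must be recomputed carefully, but the structure of the absorption argument is unchanged.
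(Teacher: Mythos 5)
Your plan for parts (1) and (2) matches the paper's approach: both run the freezing/cutoff/interpolation machinery of Lemma \ref{Lem_LHJB prior estimate} with one fewer spatial derivative, and the only real bookkeeping is the exponent of $\delta$ in the modified $I_3$ term, which is then absorbed by \eqref{MaxE} (for (1)) and by the already-proved \eqref{LHJB_C^1 estimate} (for (2)). That part of the proposal is sound.

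Part (3) has a genuine gap. You propose to reuse the $A_1+A_2+A_3+A_4$ decomposition from the proof of \eqref{prior HSE}. But the $A_1$ term there is the point evaluation at $s=T$, namely $\int_{\mathbb{T}^3}\bigl(\Gamma_0(t+h,T,\Theta(\xi,\eta))-\Gamma_0(t,T,\Theta(\xi,\eta))\bigr)\widetilde{z}_T(\eta)\,d\eta$, and the Lagrange mean value theorem turns it into a type-$0$ kernel with constant $c/(T-t^*)$, so its $C_{\widetilde{\mathcal{X}}}^{\alpha}$ norm is bounded by $Ch/(T-t-h)$. This blows up as $t+h\to T$, which is exactly why \eqref{prior HSE} is stated only on $[0,T']$ with $T'<T$ and carries a constant $C(T')$. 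The estimate \eqref{LHJB_HSE_lower order} you are asked to prove, however, is on all of $[0,T]$ with a $T'$-independent constant. Your proposal acknowledges the restriction $T'<T$ for $A_1, A_2$ but does not resolve it, so it cannot reach the stated conclusion. (Also, you write ``apply one $\widetilde{X}_i$,'' but the target norm in (3) is $\|\cdot\|_\alpha$, not $\|\cdot\|_{1+\alpha}$, so no spatial derivative is taken; this is a smaller inaccuracy.)

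The missing idea is to subtract the terminal data. The paper sets $\widetilde{v}:=\widetilde{z}-\widetilde{z}_T$, which satisfies a problem with zero terminal condition, so the problematic $A_1$-type term simply disappears. After an integration by parts that moves one $\widetilde{X}_i$ off $\Delta_{\widetilde{\mathcal{X}}}\widetilde{z}_T$ onto $\Gamma_0$, every remaining contribution (including the terminal-data one) enters through a volume integral over $s\in[t,T]$ against a kernel of type $\ge 1$, for which the $h^\beta$ modulus follows exactly as for $A_2,A_3,A_4$ with no $T'$ restriction. As a bonus, the integration by parts drops the requirement on $z_T$ from $C_{\mathcal{X}}^{2+\alpha}$ to $C_{\mathcal{X}}^{1+\alpha}$, matching the right-hand side of \eqref{LHJB_HSE_lower order}; a direct estimate of $A_1$ would instead want the full $\|z_T\|_{2+\alpha}$. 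Without this subtraction-and-integration-by-parts step, your argument proves a weaker statement than the corollary.
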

\begin{proof}
We can prove (1) and (2) by repeating the method for proving estimate \eqref{prior USE}. To this aim, we only need to notice that when proving (1), the term $I_3$ in the key inequality \eqref{key inequality} turns into $\delta^{-3}C\|\widetilde{z}\|_\infty$, which can be controlled by \eqref{MaxE}. As for (2), $I_3$ becomes $\delta^{-9} C \sup _{s \in [t,T]} \|\widetilde{z}(s,\cdot)\|_ {C_{\widetilde{\mathcal{X}}}^{\alpha}(\widetilde{B}_{\delta}(\xi_0))}$. This H\"{o}lder estimate can be removed by \eqref{LHJB_C^1 estimate}.

Now, let us prove (3). Set $\widetilde{v}:=\widetilde{z}-\widetilde{z}_T$, then $\widetilde{v}$ is the solution to the problem
\begin{equation*}
\begin{cases}
\widetilde{H}_0\widetilde{v}=\Delta_{\widetilde{\mathcal{X}}}\widetilde{z}_T +\left(\widetilde{V}^0-\widetilde{V}\right)\cdot D_{\widetilde{\mathcal{X}}}\widetilde{z}-\widetilde{V}^0\cdot D_{\widetilde{\mathcal{X}}}\widetilde{z}_T+\widetilde{f}, & \text {in }[0, T] \times \mathbb{T}^3, \\
\widetilde{v}(T,\xi)=0, & \text {in } \mathbb{T}^3.
\end{cases}
\end{equation*}
By Lemma \ref{Lem_LHJB Duhamel formula} and integrating by parts, we have
\begin{align*}
&\widetilde{v}(t,\xi) \\
= & \int_{t}^{T}\int_{\mathbb{T}^3}\Gamma_0(t,s,\Theta(\xi,\eta))\left(\Delta_{\widetilde{\mathcal{X}}}\widetilde{z}_T +\left(\widetilde{V}^0-\widetilde{V}\right)\cdot D_{\widetilde{\mathcal{X}}}\widetilde{z}-\widetilde{V}^0\cdot D_{\widetilde{\mathcal{X}}}\widetilde{z}_T+\widetilde{f}\right)(s,\eta)d \eta d s \\
= &\int_{t}^{T}\int_{\mathbb{T}^3}\sum_{i=1}^{2}\widetilde{X}_i\Gamma_0(t,s,\Theta(\xi,\cdot))(\eta)\left(-\widetilde{X}_i\widetilde{z}_T +\left(\widetilde{V}_i-\widetilde{V}_i^0\right)\widetilde{z}\right)(s,\eta)d \eta d s \\
& +\int_{t}^{T}\int_{\mathbb{T}^3}\Gamma_0(t,s,\Theta(\xi,\eta))\left(\widetilde{X}_i\widetilde{V}_i\cdot\widetilde{z}-\widetilde{V}^0\cdot D_{\widetilde{\mathcal{X}}}\widetilde{z}_T+\widetilde{f}\right)(s,\eta)d \eta d s.
\end{align*}
The remaining steps are similar to the proof of estimates \eqref{prior HSE} so we omit here. Finally, we can obtain that
\begin{align*}
& \sup_{h>0} \frac{\|v(t+h,x)-v(t,x)\|_{C_{\mathcal{X}}^{\alpha}\left(\mathbb{T}^{2}\right)}}{h^\beta}\\
\leq & C \left(\|z_T\|_{C_{\mathcal{X}}^{1+\alpha}\left(\mathbb{T}^{2}\right)}+ \sup_{t \in [0,T]}\|f(t,\cdot)\|_{C_{\mathcal{X}}^{\alpha}\left(\mathbb{T}^{2}\right)}+\sup_{t \in [0,T]}\|z(t,\cdot)\|_{C_{\mathcal{X}}^{\alpha}\left(\mathbb{T}^{2}\right)}\right)
\end{align*}
for any $\beta \in \left(0,\frac{1}{2}\right)$, $t\leq T$, where $C$ depends on $\sup\limits_{t \in [0,T]}\|V(t,\cdot)\|_{1+\alpha}$ and $\alpha$ only. Combining the above estimate with \eqref{LHJB_C^1 estimate}, we get the conclusion.
\end{proof}
We are now in a position to prove Lemma \ref{Lem_LHJB wellposed regularity}.
\begin{proof}[Proof of Lemma \ref{Lem_LHJB wellposed regularity}]
The existence of the solutions can found in \cite[Theorem 12.1]{10BBLU}, while the uniqueness can be assured by the weak maximum principle in \cite[Theorem 13.1]{10BBLU}. Let $V^n, f^n, z_T^n$ be the standard (Euclidean) mollified versions of $V, f$ and $z_T$. One can easily check that $V^n \rightarrow V$ in $C_{\mathcal{X}}^{1,1+\alpha}\left([0, T] \times \mathbb{T}^{2} \right)$, $f^n \rightarrow f$ in $C \left([0, T]; C_{\mathcal{X}}^{1+\alpha}\left(\mathbb{T}^2\right)\right)$, and $z_T^n \rightarrow z_T$ in $C_{\mathcal{X}}^{2+\alpha}\left(\mathbb{T}^2\right)$ as $n \rightarrow \infty$. Set
$$
H^n=-\partial_t-\Delta_{\mathcal{X}}+V^n(t, x)\cdot D_{\mathcal{X}}.
$$
Since $H^n$ is hypoelliptic for every $n \in \mathbb{N}$, the solution $z^n$ belongs to $C^{\infty}\left([0,T] \times \mathbb{T}^2\right)$, then we can apply our a-priori estimate \eqref{prior USE} in Lemma \ref{Lem_LHJB prior estimate}, writing
\begin{equation}\label{z_n_Schauder estimate}
\begin{aligned}
\sup _{t \in [0,T]} \left\|z^n(t, \cdot)\right\|_{2+\alpha}
\leq & C\left(\sup _{t \in [0,T]} \left\|f^n(t,\cdot)\right\|_{1+\alpha} +\left\|z_T^n\right\|_{2+\alpha}\right) \\
\leq & C\left(\sup _{t \in [0,T]} \left\|f(t,\cdot)\right\|_{1+\alpha} +\left\|z_T\right\|_{2+\alpha}\right),
\end{aligned}
\end{equation}
where C can be bounded independently of $n$.
Hence, by compactness lemma (cf. \cite[Lemma 14.3]{10BBLU}), for any $t \in [0,T]$, we can find a subsequence $n_k \rightarrow \infty$ and a function $v(t,\cdot) \in C_{\mathcal{X}}^{2+\alpha}\left(\mathbb{T}^2\right)$ such that
$X^J z^{n_k} \rightarrow X^J v$ uniformly in $\mathbb{T}^2$, for $|J|=0,1,2$. Moreover, since
$$
\partial_t z^{n_k}=-\Delta_{\mathcal{X}} z^{n_k}+ V^{n_k}(t,x)\cdot D_{\mathcal{X}} z^{n_k}-f^{n_k}(t,x),
$$
we combine \eqref{z_n_Schauder estimate} to obtain that, for any $k_1,k_2 \in \mathbb{N}$,
\begin{align*}
\left\|\partial_t z^{n_{k_1}}-\partial_t z^{n_{k_2}}\right\|_\infty \leq & \left\|\Delta_{\mathcal{X}}\left(z^{n_{k_1}}-z^{n_{k_2}}\right)\right\|_\infty
+\left\|\left(V^{n_{k_1}}-V^{n_{k_2}}\right)\cdot D_{\mathcal{X}}z^{n_{k_1}}\right\|_\infty \\
&+\left\|V^{n_{k_2}}\cdot\left(D_{\mathcal{X}}z^{n_{k_1}}-D_{\mathcal{X}}z^{n_{k_2}}\right)\right\|_\infty +\left\|f^{n_{k_1}}-f^{n_{k_2}}\right\|_\infty \\
\leq & C\bigg(\sup _{t \in [0,T]} \left\|\left(f^{n_{k_1}}-f^{n_{k_2}}\right)(t,\cdot)\right\|_{1+\alpha} +\left\|z_T^{n_{k_1}}-z_T^{n_{k_2}}\right\|_{2+\alpha} \\
& +\!\sup _{t \in [0,T]}\left\|\left(V^{n_{k_1}}-V^{n_{k_2}}\right)(t,\cdot)\right\|_{1+\alpha}\sup _{t \in [0,T]}\left\|D_{\mathcal{X}}z^{n_{k_1}}(t,\cdot)\right\|_{1+\alpha}\bigg).
\end{align*}
Let $k_1,k_2 \rightarrow \infty$, the right hand side of the above inequality tends to $0$. Thus we can find by Cauchy criterion for uniform convergence that $\partial_t z^{n_k}$ converges to $\partial_t v$ uniformly in $[0,T] \times \mathbb{T}^2$.

Consequently, $H^{n_k}z^{n_k}=f^{n_k} \rightarrow f$, while also $H^{n_k}z^{n_k} \rightarrow Hv$ and $z^{n_k}_T \rightarrow v_T$. Then we get that $v=z$ in $[0,T] \times \mathbb{T}^2$, by the uniqueness of the equation, noticing the fact that $z_T=v_T$. This will imply $z \in C([0,T];C_{\mathcal{X}}^{2+\alpha}(\mathbb{T}^2))$, and the desired regularity result. Beyond that, by the similar method, the time regularity can be obtained easily from estimate \eqref{prior HSE} in Lemma \ref{Lem_LHJB prior estimate}.
\end{proof}
Next we prove the result for the corresponding homogeneous equation, that is, we have the following problem
\begin{equation}\label{homo. LHJB}
\begin{cases}
-\partial_t z-\Delta_{\mathcal{X}} z+V(t, x) \cdot D_{\mathcal{X}} z=0,& \text {in }[0, T) \times \mathbb{T}^2, \\
z(T, x)=z_T(x),& \text {in }\mathbb{T}^2.
\end{cases}
\end{equation}
\begin{proof}[Proof of Lemma \ref{Lem_homo. LHJB Lipschitz}]
Since $d_{cc}(x,y) \leq C d_{\mathbb{T}^2}(x,y)^{\frac{1}{k}}$, we have that $z_T$ is continuous in $\mathbb{T}^2$. When $z_T$ is only $d_{cc}$-Lipschitz, we consider its standard (Euclidean) mollified version $z_T^n$. It can be easy to check that $z_T^n \rightarrow z_T$ as $n \rightarrow \infty$ in $C_{\mathcal{X}}^{0+1}(\mathbb{T}^2)$. For every $n \in \mathbb{N}$, there exists a unique solution $z^n \in C_{\mathcal{X}}^{1,2} \left((0, T) \times \mathbb{T}^{2} \right) \bigcap C\left([0, T] \times \mathbb{T}^2\right)$ to equation \eqref{homo. LHJB} with terminal condition $z_T^n$, and thanks to \cite[Theorem 14.4]{10BBLU}, $z^n \in C_{\mathcal{X}}^{1+\frac{\alpha}{2},2+\alpha}\left([\delta,T-\delta] \times \mathbb{T}^2\right)$ for any $\delta > 0$, $0<\alpha<1$. Since $C_{\mathcal{X}}^{1+\frac{\alpha}{2},2+\alpha}\left([\delta,T-\delta] \times \mathbb{T}^2\right)$ is compactly embedded into $C_{\mathcal{X}}^{1,2}\left([\delta,T-\delta] \times \mathbb{T}^2\right)$, this implies that (up to a subsequence) $z^n(t,x) \rightarrow z(t,x)$ uniformly in $C_{\mathcal{X}}^{1,2}\left([\delta,T-\delta] \times \mathbb{T}^2\right)$, as for the linearity and uniqueness of the equation. Let $\delta \rightarrow 0$, we then get $z^n(t,x) \rightarrow z(t,x)$ in $C_{\mathcal{X}}^{1,2}\left((0,T) \times \mathbb{T}^2\right)$. Moreover, $z^n(t,x) \rightarrow z(t,x)$ in $C\left([0,T] \times \mathbb{T}^2\right)$.

On the other hand, by \eqref{LHJB_C^1 estimate} we have,
$$
\sup_{t \in [0,T]}\left\|D_\mathcal{X}z^n(t,\cdot)\right\|_\infty \leq C\left\|z_T^n\right\|_{C_\mathcal{X}^1(\mathbb{T}^2)}.
$$

For any $x \in \mathbb{T}^2$, let $\gamma(t)$ be the absolutely continuous integral curve of the vector fields $\{X_1,X_2\}$ such that
$$
\begin{cases}
\gamma^{\prime}(t)=X_i(\gamma(t)), \\
\gamma(0)=x.
\end{cases}
$$
Then
$$
X_i z_T^n(x)=\left[\frac{d}{d t} z_T^n(\gamma(t))\right](0)=\lim_{t \rightarrow 0} \frac{z_T^n(\gamma(t))-z_T^n(\gamma(0))}{t},\quad i=1,2.
$$
Since $\gamma$ is subunit, we can write
$$
\left|z_T^n(\gamma(t))-z_T^n(\gamma(0))\right| \leqslant \left[z_T^n\right]_{C_\mathcal{X}^{0+1}(\mathbb{T}^2)} d_{cc}(\gamma(t), \gamma(0)) \leqslant \left\|z_T^n\right\|_{C_\mathcal{X}^{0+1}(\mathbb{T}^2)}t,
$$
thus
\begin{equation}\label{C^0+1 control C^1}
\left\|D_\mathcal{X}z_T^n\right\|_\infty \leq \left\|z_T^n\right\|_{C_\mathcal{X}^{0+1}(\mathbb{T}^2)}.
\end{equation}
Finally, we obtain from \cite[Proposition 4.2(ii)]{07BB} and \eqref{C^0+1 control C^1} that for any $t \in [0,T]$, $x,y \in \mathbb{T}^2$,
\begin{align*}
  \left|z^n(t,x)-z^n(t,y)\right| \leq & \left\|D_\mathcal{X}z^n\right\|_\infty d_{cc}(x,y) \leq C\left\|z_T^n\right\|_{C_\mathcal{X}^1(\mathbb{T}^2)} d_{cc}(x,y)\\
  \leq & C\left\|z_T^n\right\|_{C_\mathcal{X}^{0+1}(\mathbb{T}^2)} d_{cc}(x,y),
\end{align*}
Let $n \rightarrow \infty$, we have
$$
|z(t,x)-z(t,y)| \leq C\left\|z_T\right\|_{C_\mathcal{X}^{0+1}(\mathbb{T}^2)} d_{cc}(x,y)
$$
for any $x,y \in \mathbb{T}^2$. This completes the proof.
\end{proof}
We would like to give another generalized regularity result for the degenerate homogeneous equation with terminal condition having lower regularities, which is useful for the proof of the lemma in Section \ref{Sec_5}.
\begin{Lem}\label{Lem_homo. LHJB Schauder estimate}
Suppose $V(t,x) \in C^{\frac{\alpha}{2},1+\alpha}_{\mathcal{X}}\left([0,T]\times\mathbb{T}^2\right)$, and $z_T(x) \in C^{1+\alpha}_{\mathcal{X}}\left(\mathbb{T}^2\right)$. Then the unique solution $z$ of the equation \eqref{homo. LHJB} satisfies
$$
\left\|z\right\|_{\frac{\alpha}{2},1+\alpha} \leq C\left\|z_T\right\|_{1+\alpha},
$$
where $C$ depends on $\left\|V\right\|_{\frac{\alpha}{2},1+\alpha}$ and $\alpha$ only.
\end{Lem}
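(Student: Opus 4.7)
\emph{Plan.} The strategy is to mollify the drift $V$ so as to reduce to the setting of the Corollary to Lemma \ref{Lem_LHJB prior estimate}, which already provides the needed a-priori bounds for the homogeneous problem. The crucial observation is that the constants in \eqref{LHJB_USE_first order} and \eqref{LHJB_HSE_lower order} depend only on $\sup_{t}\|V(t,\cdot)\|_{1+\alpha}$ and $\alpha$, a quantity controlled by $\|V\|_{\frac{\alpha}{2},1+\alpha}$, while the extra smoothness of $V$ in time required by the Corollary is used only to guarantee a classical solution.

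First, regularize $V$ by a standard mollification in the $t$-variable alone, producing $V^n \in C^{1,1+\alpha}_{\mathcal{X}}([0,T] \times \mathbb{T}^2)$ with $V^n \to V$ in $C^{\frac{\alpha}{2},1+\alpha}_{\mathcal{X}}$ and a uniform bound $\sup_{n}\sup_{t}\|V^n(t,\cdot)\|_{1+\alpha} \leq C\sup_{t}\|V(t,\cdot)\|_{1+\alpha}$. Mollify $z_T$ likewise to obtain $z_T^n \in C^{\infty}(\mathbb{T}^2)$ with $z_T^n \to z_T$ in $C^{1+\alpha}_{\mathcal{X}}(\mathbb{T}^2)$. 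For each $n$, Lemma \ref{Lem_LHJB wellposed regularity} with source term $f \equiv 0$ delivers a unique classical solution $z^n$ of \eqref{homo. LHJB} with drift $V^n$ and terminal datum $z_T^n$. Applying \eqref{LHJB_USE_first order} with $f=0$ gives the uniform spatial bound $\sup_{t}\|z^n(t,\cdot)\|_{1+\alpha} \leq C\|z_T\|_{1+\alpha}$, and \eqref{LHJB_HSE_lower order} with $\beta = \alpha/2 \in (0,1/2)$ gives the uniform time H\"older bound $\sup_{t \neq t'}|t'-t|^{-\alpha/2}\|z^n(t',\cdot)-z^n(t,\cdot)\|_{\alpha} \leq C\|z_T\|_{1+\alpha}$, both with $C$ independent of $n$.

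By compact embedding of weighted H\"older spaces and uniqueness for the linear limit equation---exactly as in the Cauchy-subsequence argument at the end of the proof of Lemma \ref{Lem_LHJB wellposed regularity}---one extracts a subsequence $z^{n_k}$ converging to the unique solution $z$ of \eqref{homo. LHJB}, and the two uniform bounds pass to the limit. The step I expect to require the most care is packaging these into the combined norm $\|z\|_{\frac{\alpha}{2},1+\alpha}$: this should follow by interpolating between the uniform spatial $C^{1+\alpha}_{\mathcal{X}}$ bound and the $\alpha/2$-time H\"older in $C^{\alpha}_{\mathcal{X}}$ in the spirit of \cite[Theorem 7.4]{07BB}. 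Should the stated norm additionally require control of the time H\"older of $X_i z$, an alternative is to differentiate the frozen-coefficient Duhamel formula of Lemma \ref{Lem_LHJB Duhamel formula} and estimate the resulting singular and fractional integrals via Lemma \ref{HCSIO} and Lemma \ref{HCFIO}, mirroring the derivation of the time bound \eqref{prior HSE} in Lemma \ref{Lem_LHJB prior estimate}.
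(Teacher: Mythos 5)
Your primary route has a genuine gap, and your fallback alternative runs into two additional obstructions.

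The target norm $\left\|z\right\|_{\frac{\alpha}{2},1+\alpha}$ involves the \emph{parabolic} H\"older seminorm $[X_i z]_{C_{\mathcal{X}}^{\frac{\alpha}{2},\alpha}}$, and in particular requires $\|X_i z(t,\cdot)-X_i z(s,\cdot)\|_\infty \leq C|t-s|^{\alpha/2}$. The two ingredients you propose to combine, \eqref{LHJB_USE_first order} and \eqref{LHJB_HSE_lower order}, give (a) a uniform-in-$t$ spatial bound $\sup_t\|z(t,\cdot)\|_{1+\alpha}$ and (b) a time H\"older bound of $z$ in $C_{\mathcal{X}}^\alpha$ with exponent $\beta<\tfrac12$. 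Crucially, (b) controls only $z$ and $[z]_{C^\alpha}$ in time, \emph{not} $X_i z$. The standard interpolation (taking a difference quotient of length $h$ along an integral curve and optimizing in $h$) converts (a) and (b) into a time H\"older bound on $X_i z$ with exponent at best $\alpha\beta$; since $\beta<\tfrac12$, one gets $\alpha\beta<\tfrac{\alpha}{2}$, \emph{strictly} below the required exponent. Letting $\beta\uparrow\tfrac12$ shrinks the gap but never closes it, and the constants in \eqref{LHJB_HSE_lower order} degenerate there. Also note that \cite[Theorem 7.4]{07BB} is a purely spatial interpolation inequality, not a space--time one, so it does not supply the missing time regularity for $X_i z$.

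Your stated fallback --- differentiating the Duhamel formula and mirroring the derivation of \eqref{prior HSE} --- meets two additional problems. First, \eqref{prior HSE} requires $z_T\in C_{\mathcal{X}}^{2+\alpha}$, whereas here $z_T$ is only $C_{\mathcal{X}}^{1+\alpha}$: the contribution $A_1$ from the terminal datum is estimated there through $\|\widetilde z_T\|_{2+\alpha}$, and the same computation with $z_T\in C^{1+\alpha}$ loses a full spatial derivative. Second, the $A_1$ estimate produces the factor $h/(T-t-h)$, singular at the terminal time, which is why \eqref{prior HSE} is stated on $[0,T']$ with $T'<T$; but the present lemma must hold up to $t=T$, as it is used later with arbitrary terminal times in the duality arguments (see the proof of Lemma \ref{Lem_GL system regularity}).

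The paper closes these gaps by a different device. It sets $v:=z-z_T$ (cutoff to $w_\delta^n=(\widetilde z^n-\widetilde z_T^n)\psi_\delta$), so the terminal datum becomes zero and $z_T$ is moved into the right-hand side; after an integration by parts against the kernel (via \eqref{approximation_1}), only $D_{\widetilde{\mathcal X}}\widetilde z_T$ survives, which is why $C^{1+\alpha}$ regularity of $z_T$ suffices and no singularity at $t=T$ appears. It then applies the singular/fractional integral theory of Lemma \ref{HCSIO} and Lemma \ref{HCFIO} on the \emph{parabolic} homogeneous space $\big(\widetilde B_\delta^p(t_0,\xi_0),\widetilde d_p,dt\,d\xi\big)$, freezing the coefficient at a space--\emph{time} point $(t_0,\xi_0)$; this yields the full joint $C_{\mathcal X}^{\frac{\alpha}{2},\alpha}$ bound on $w_\delta^n$ and $\widetilde X_i w_\delta^n$ simultaneously, which is exactly the missing piece. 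If you want to salvage your mollify-and-pass-to-the-limit framework, you would need to first establish this parabolic-homogeneous-space Schauder estimate, after which the limit argument is routine.
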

\begin{proof}
We still use the technique of ``lifting and approximation". The idea to prove the Schauder estimate is similar to Lemma \ref{Lem_LHJB prior estimate}. While the difference is that we apply the abstract theory of singular or fractional integrals to the homogeneous space $\left(\widetilde{B}_\delta^p\left(t_0, \xi_0\right), \widetilde{d}_p, dt d \xi\right)$ instead of $\left(\widetilde{B}_\delta\left(\xi_0\right), \widetilde{d}_{cc}, d \xi\right)$, where $d_p$ is the parabolic Carnot-Carath\'{e}odory distance defined in Section \ref{Sec_2}, and $\widetilde{B}_\delta^p\left(t_0, \xi_0\right):=[t_0-\frac{\delta^2}{2},t_0+\frac{\delta^2}{2}]\times\widetilde{B}_\delta\left(\xi_0\right)$ denotes a $\widetilde{d}_p$-ball.

For any $n \in \mathbb{N}$, let $z_T^n$ be the standard (Euclidean) mollified version of $z_T$, and $z^n$ is the solution to the equation \eqref{homo. LHJB} related to $z_T^n$. We now freeze $\widetilde{V}(t, \xi)$ at some point $\left(t_0, \xi_0\right)=\left(t_0,\left(x_0, 0\right)\right) \in [0, T] \times \mathbb{T}^3$, denoting as $\widetilde{V}^{(0,0)}:=\widetilde{V}\left(t_0, \xi_0\right)$. Consider the frozen lifted differential operator
$$
\widetilde{H}_{(0,0)}:=-\partial_t-\Delta_{\widetilde{\mathcal{X}}}+\widetilde{V}^{(0,0)} \cdot D_{\widetilde{\mathcal{X}}},
$$
and for any $\delta>0$, we set $w_\delta^n(t, \xi):=\left(\widetilde{z}^n(t, \xi)-\widetilde{z}_T^n(\xi)\right)\psi_\delta(t, \xi)$, where $\psi_\delta(t, \xi)$ is a cutoff function such that
$$
\widetilde{B}_\frac{\delta}{2}^p\left(t_0, \xi_0\right)\prec\psi_\delta\prec\widetilde{B}_\delta^p\left(t_0, \xi_0\right).
$$
Observing that $w_\delta^n$ is the solution to the problem
\begin{equation*}
\begin{cases}
\widetilde{H}_{(0,0)}w_\delta^n +\left(\widetilde{V}-\widetilde{V}^{(0,0)}\right)\cdot D_{\widetilde{\mathcal{X}}}w_\delta^n=\Delta_{\widetilde{\mathcal{X}}}\widetilde{z}_T^n\psi_\delta\\ \quad-\left(D_{\widetilde{\mathcal{X}}}\widetilde{z}^n-D_{\widetilde{\mathcal{X}}}\widetilde{z}_T^n\right)\cdot D_{\widetilde{\mathcal{X}}}\psi_\delta-g_\delta^n, & \mbox{in } [0,T]\times\mathbb{T}^3, \\
w_\delta^n(T,\xi)=0, & \mbox{in } \mathbb{T}^3,
\end{cases}
\end{equation*}
where
$$
g_\delta^n(t,\xi):=\left(\widetilde{z}^n-\widetilde{z}_T^n\right) \partial_t \psi_\delta+\left(\widetilde{z}^n-\widetilde{z}_T^n\right) \Delta_{\widetilde{\mathcal{X}}}\psi_\delta -\left(\widetilde{z}^n-\widetilde{z}_T^n\right)\widetilde{V} \cdot D_{\widetilde{\mathcal{X}}}\psi_\delta-\widetilde{V}\cdot D_{\widetilde{\mathcal{X}}} \widetilde{z}_T^n \psi_\delta.
$$
We can get from Lemma \ref{Lem_LHJB Duhamel formula} that
\begin{align*}
w_\delta^n= & -\int_{-\infty}^{T}\int_{\mathbb{T}^3}\Gamma_{(0,0)}\left(t-s,\Theta(\xi,\eta)\right) \left(\widetilde{V}(s,\eta)-\widetilde{V}^{(0,0)}\right)\cdot D_{\widetilde{\mathcal{X}}}w_\delta^n(s,\eta)d\eta ds \\
& +\int_{-\infty}^{T}\int_{\mathbb{T}^3}\Gamma_{(0,0)}\left(t-s,\Theta(\xi,\eta)\right) \Delta_{\widetilde{\mathcal{X}}}\widetilde{z}_T^n(\eta)\psi_\delta(s,\eta)d\eta ds \\
& -\int_{-\infty}^{T}\int_{\mathbb{T}^3}\Gamma_{(0,0)}\left(t-s,\Theta(\xi,\eta)\right) \left(D_{\widetilde{\mathcal{X}}}\widetilde{z}^n(s,\eta)-D_{\widetilde{\mathcal{X}}}\widetilde{z}_T^n(\eta)\right)\cdot D_{\widetilde{\mathcal{X}}}\psi_\delta(s,\eta)d\eta ds \\
& -\int_{-\infty}^{T}\int_{\mathbb{T}^3}\Gamma_{(0,0)}\left(t-s,\Theta(\xi,\eta)\right) g_\delta^n(s,\eta)d\eta ds,
\end{align*}
where $\Gamma_{(0,0)}$ vanishes for $s \leq t$ and is the fundamental solution for the corresponding approximating operator $\mathcal{H}_{(0,0)}$, a left invariant H\"{o}rmander's operator on $\mathbb{R} \times \mathbb{H}^1$. For $m \in \{0,1,2\}$, we define a kernel as
$$
k_m(t,s,\xi,\eta):=Y^J(\Gamma_{(0,0)}\left(t-s,\cdot\right))(\Theta(\xi,\eta))
$$
with $m$-order differential operator $Y^J:=Y_{j_1}Y_{j_2}\cdots Y_{j_m}$, for any multi-index $J=\left(j_1, j_2, \cdots, j_m\right), j_i \in\{1,2\}, i=0,1, \cdots,m$. Then we can find that $k_m$ is a kernel of type $2-m$ (cf. \cite[Proposition 6.3]{07BB}). Without abuse of notation, we denote a frozen integral operator of type $\lambda \geq 0$ as $T_{\lambda}(t_0,\xi_0)$.

Due to \eqref{approximation_1}, it follows by integrating by parts that
\begin{align*}
w_\delta^n= & -T_2(t_0,\xi_0)\left(\left(\widetilde{V}-\widetilde{V}^{(0,0)}\right)\cdot D_{\widetilde{\mathcal{X}}}w_\delta^n\right)(t,\xi)-T_1(t_0,\xi_0) \left(D_{\widetilde{\mathcal{X}}}\widetilde{z}_T^n\cdot D_{\widetilde{\mathcal{X}}}\psi_\delta\right)(t,\xi)\\
& +T_1(t_0,\xi_0)\left(\left(\widetilde{z}^n-\widetilde{z}_T^n\right)\cdot \Delta_{\widetilde{\mathcal{X}}}\psi_\delta\right)(t,\xi)-T_2(t_0,\xi_0)g_\delta^n(t,\xi).
\end{align*}
Then, we compute
\begin{align*}
\widetilde{X}_i w_\delta^n= & -T_1(t_0,\xi_0)\left(\left(\widetilde{V}-\widetilde{V}^{(0,0)}\right)\cdot D_{\widetilde{\mathcal{X}}}w_\delta^n\right)(t,\xi)-T_0(t_0,\xi_0) \left(D_{\widetilde{\mathcal{X}}}\widetilde{z}_T^n\cdot D_{\widetilde{\mathcal{X}}}\psi_\delta\right)(t,\xi)\\
& +T_0(t_0,\xi_0)\left(\left(\widetilde{z}^n-\widetilde{z}_T^n\right)\cdot \Delta_{\widetilde{\mathcal{X}}}\psi_\delta\right)(t,\xi)-T_1(t_0,\xi_0)g_\delta^n(t,\xi).
\end{align*}
We extend the domain of $\widetilde{z}^n$ as
\begin{equation*}
\widetilde{z}^n(t,\xi):=
\begin{cases}
\widetilde{z}^n(t,\xi), & \mbox{if } t \in [0,T], \\
\widetilde{z}^n(0,\xi), & \mbox{if } t \in (-\infty,0), \\
\widetilde{z}^n(T,\xi), & \mbox{if } t \in (T,+\infty),
\end{cases}
\end{equation*}
for any $\xi \in \mathbb{T}^3$. And the same is for $\widetilde{V}$.

Apply the H\"{o}lder continuity of singular or fractional integral operator to the homogeneous space $\left(\widetilde{B}_\delta^p\left(t_0,\xi_0\right), \widetilde{d}_p, d t d \xi\right)$, we can obtain
\begin{align*}
& \left\|w_\delta^n\right\|_ {C_{\widetilde{\mathcal{X}}}^{\frac{\alpha}{2},1+\alpha}\left(\widetilde{B}_\delta^p\left(t_0,\xi_0\right)\right)}\\ \leq & C\left(\left\|\left(\widetilde{V}-\widetilde{V}^{(0,0)}\right)\cdot D_{\widetilde{\mathcal{X}}}w_\delta^n\right\|_ {C_{\widetilde{\mathcal{X}}}^{\frac{\alpha}{2},\alpha}\left(\widetilde{B}_\delta^p\left(t_0,\xi_0\right)\right)}+\left\|D_{\widetilde{\mathcal{X}}}\widetilde{z}_T^n\cdot D_{\widetilde{\mathcal{X}}}\psi_\delta\right\|_ {C_{\widetilde{\mathcal{X}}}^{\frac{\alpha}{2},\alpha}\left(\widetilde{B}_\delta^p\left(t_0,\xi_0\right)\right)}\right.\\
& \left.+\left\|\left(\widetilde{z}^n-\widetilde{z}_T^n\right)\cdot \Delta_{\widetilde{\mathcal{X}}}\psi_\delta\right\|_ {C_{\widetilde{\mathcal{X}}}^{\frac{\alpha}{2},\alpha}\left(\widetilde{B}_\delta^p\left(t_0,\xi_0\right)\right)} +\left\|g_\delta^n\right\|_ {C_{\widetilde{\mathcal{X}}}^{\frac{\alpha}{2},\alpha}\left(\widetilde{B}_\delta^p\left(t_0,\xi_0\right)\right)}\right),
\end{align*}
where $C$ is independent of $\delta$, $n$ and $\left(t_0, \xi_0\right)$. Similar to the proof of Lemma \ref{Lem_LHJB prior estimate}, we use \cite[Proposition 4.2(i)]{07BB} and \cite[Lemma 6.2]{07BB} to further obtain
\begin{align*}
& \left\|w_\delta^n\right\|_ {C_{\widetilde{\mathcal{X}}}^{\frac{\alpha}{2},1+\alpha}\left(\widetilde{B}_\delta^p\left(t_0,\xi_0\right)\right)}\\ \leq & C\left(\delta^\alpha\left\|\widetilde{V}\right\|_ {C_{\widetilde{\mathcal{X}}}^{\frac{\alpha}{2},\alpha}\left(\widetilde{B}_\delta^p\left(t_0,\xi_0\right)\right)}\left\|D_{\widetilde{\mathcal{X}}}w_\delta^n\right\|_ {C_{\widetilde{\mathcal{X}}}^{\frac{\alpha}{2},\alpha}\left(\widetilde{B}_\delta^p\left(t_0,\xi_0\right)\right)}+\delta^{-2}\left\|D_{\widetilde{\mathcal{X}}}\widetilde{z}_T^n\right\|_ {C_{\widetilde{\mathcal{X}}}^{\alpha}\left(\widetilde{B}_\delta\left(\xi_0\right)\right)}\right.\\
& \left.+\delta^{-3}\left\|\widetilde{z}^n-\widetilde{z}_T^n\right\|_ {C_{\widetilde{\mathcal{X}}}^{\frac{\alpha}{2},\alpha}\left(\widetilde{B}_\delta^p\left(t_0,\xi_0\right)\right)} +\delta^{-2}\left\|\left(\widetilde{z}^n-\widetilde{z}_T^n\right)\widetilde{V}\right\|_ {C_{\widetilde{\mathcal{X}}}^{\frac{\alpha}{2},\alpha}\left(\widetilde{B}_\delta^p\left(t_0,\xi_0\right)\right)}\right.\\
& \left.+\delta^{-1}\left\|\widetilde{V}\cdot D_{\widetilde{\mathcal{X}}}\widetilde{z}_T^n\right\|_ {C_{\widetilde{\mathcal{X}}}^{\frac{\alpha}{2},\alpha}\left(\widetilde{B}_\delta^p\left(t_0,\xi_0\right)\right)}\right).
\end{align*}
Choosing $\delta$ small enough, and recalling that $w_\delta^n \equiv \widetilde{z}^n-\widetilde{z}_T^n$ in $\widetilde{B}_{\frac{\delta}{2}}^p\left(t_0, \xi_0\right)$, we have
\begin{equation}\label{first order Schauder estimate_local}
\begin{aligned}
& \left\|\widetilde{z}^n\right\|_ {C_{\widetilde{\mathcal{X}}}^{\frac{\alpha}{2},1+\alpha}\left(\widetilde{B}_{\frac{\delta}{2}}^p\left(t_0,\xi_0\right)\right)}\\ \leq & \left\|\widetilde{z}_T^n\right\|_ {C_{\widetilde{\mathcal{X}}}^{1+\alpha}\left(\widetilde{B}_{\frac{\delta}{2}}\left(\xi_0\right)\right)} +\left\|\widetilde{z}^n-\widetilde{z}_T^n\right\|_ {C_{\widetilde{\mathcal{X}}}^{\frac{\alpha}{2},1+\alpha}\left(\widetilde{B}_{\frac{\delta}{2}}^p\left(t_0,\xi_0\right)\right)}\\
\leq & C\left(\left\|\widetilde{z}_T^n\right\|_ {C_{\widetilde{\mathcal{X}}}^{1+\alpha}\left(\widetilde{B}_{\delta}\left(\xi_0\right)\right)} +\delta^{-3}\left\|\widetilde{z}^n\right\|_ {C_{\widetilde{\mathcal{X}}}^{\frac{\alpha}{2},\alpha}\left(\widetilde{B}_\delta^p\left(t_0,\xi_0\right)\right)}\right),
\end{aligned}
\end{equation}
where $C$ depends on $\left\|\widetilde{V}\right\|_ {C_{\widetilde{\mathcal{X}}}^{\frac{\alpha}{2},\alpha}\left(\widetilde{B}_\delta^p\left(t_0,\xi_0\right)\right)}$ and $\alpha$ only.

By \eqref{LHJB_HSE_lower order} we can get the estimate of $\left\|\widetilde{z}^n\right\|_ {C_{\widetilde{\mathcal{X}}}^{\frac{\alpha}{2},\alpha}\left(\widetilde{B}_{\delta}^p\left(t_0,\xi_0\right)\right)}$. Putting this into \eqref{first order Schauder estimate_local} and using the compactness of $[0,T]\times\mathbb{T}^3$, we obtain the result
\begin{equation}\label{first order Schauder estimate_lifted}
\left\|\widetilde{z}^n\right\|_ {C_{\widetilde{\mathcal{X}}}^{\frac{\alpha}{2},1+\alpha}\left([0,T]\times\mathbb{T}^3\right)} \leq C\left\|\widetilde{z}_T^n\right\|_ {C_{\widetilde{\mathcal{X}}}^{1+\alpha}\left(\mathbb{T}^3\right)},
\end{equation}
where $C$ depends on $\left\|\widetilde{V}\right\|_ {C_{\widetilde{\mathcal{X}}}^{\frac{\alpha}{2},1+\alpha}\left([0,T]\times\mathbb{T}^3\right)}$ and $\alpha$ only.

According to the above estimate and the linearity of equation \eqref{homo. LHJB}, we know that $\left\{\widetilde{z}^n\right\}_n$ is a Cauchy sequence in $C_{\widetilde{\mathcal{X}}}^{\frac{\alpha}{2},1+\alpha}\left([0,T]\times\mathbb{T}^3\right)$, thus it converges to some $\hat{z} \in C_{\widetilde{\mathcal{X}}}^{\frac{\alpha}{2},1+\alpha}\left([0,T]\times\mathbb{T}^3\right)$. Also, since for $t<T$,
\begin{align*}
& \hat{z}(t,\xi)=\lim_{n\rightarrow\infty}\widetilde{z}^n(t,\xi)\\
= & \lim_{n\rightarrow\infty}\int_{\mathbb{T}^3} \Gamma_{(0,0)}(t,T,\Theta(\xi,\eta))\widetilde{z}_T^n(\eta)d \eta \\
& - \lim_{n\rightarrow\infty}\int_{-\infty}^{T}\int_{\mathbb{T}^3}\Gamma_{(0,0)}(t,s,\Theta(\xi,\eta))\left(\widetilde{V}(s,\eta)-\widetilde{V}^{(0,0)}\right)\cdot D_{\widetilde{\mathcal{X}}}\widetilde{z}^n(s,\eta) d \eta d s\\
= & \int_{\mathbb{T}^3} \Gamma_{(0,0)}(t,T,\Theta(\xi,\eta))\widetilde{z}_T(\eta)d \eta \\
& - \int_{-\infty}^{T}\int_{\mathbb{T}^3}\Gamma_{(0,0)}(t,s,\Theta(\xi,\eta))\left(\widetilde{V}(s,\eta)-\widetilde{V}^{(0,0)}\right)\cdot D_{\widetilde{\mathcal{X}}}\hat{z}(s,\eta) d \eta d s,
\end{align*}
and $\lim\limits_{n\rightarrow\infty}\widetilde{z}^n(T,\xi)=\lim\limits_{n\rightarrow\infty}\widetilde{z}_T^n=\widetilde{z}_T$, then we get $\hat{z}=\widetilde{z}$ by the uniqueness of the equation.

Finally, letting $n\rightarrow\infty$ in estimate \eqref{first order Schauder estimate_lifted}, and likewise using the equivalent relation between $z$ and $\widetilde{z}$ (cf. \cite[Proposition 8.3]{07BB}), we finish the proof.
\end{proof}

\subsection{Results for the KFP equation}
We start by recalling the definition of the weak solution to the KFP equation \eqref{general KFP} in Definition \ref{Def_general KFP weak sol.}. Noting that the definition is well-posed. In fact, thanks to Lemma \ref{Lem_LHJB wellposed regularity}, we have $\phi(s,\cdot) \in C_{\mathcal{X}}^{1+\alpha}(\mathbb{T}^2)$ for any $s \in [0,T]$, thus $\left\langle\rho_0,\phi(0,\cdot)\right\rangle$ is well defined. In addition, we know that
$$
\|\phi(s,\cdot)\|_{W_{\mathcal{X}}^{1,\infty}\left(\mathbb{T}^2\right)} \leq C.
$$
Hence, the last integral is also well defined as $f \in L^1\left([0,T];W_\mathcal{X}^{-1,\infty}(\mathbb{T}^2)\right)$.
\begin{Rem}\label{Rem_f=div(c)}
What we are emphatically interested in is a special case of distribution f. Suppose there exists an integrable function $c:[0,T] \times \mathbb{T}^2 \rightarrow \mathbb{R}^2$ such that for any $\phi \in W_\mathcal{X}^{1, \infty}(\mathbb{T}^2)$,
$$
\langle f(t),\phi\rangle=\int_{\mathbb{T}^2} c(t,x) \cdot D_\mathcal{X}\phi(x)dx,
$$
this means that equation \eqref{general KFP} can be written in the following form
$$
\begin{cases}
\partial_t\rho-\Delta_\mathcal{X}\rho+\operatorname{div}_{\mathcal{X}}(\rho b)=\operatorname{div}_{\mathcal{X}}(c), & \text{ in } [0,T] \times \mathbb{T}^2, \\
\rho(0)=\rho_{0}, & \text{ in } \mathbb{T}^2.
\end{cases}
$$
Under this particular case, We can guarantee condition $f \in L^{1}\left([0,T];W_{\mathcal{X}}^{-1,\infty}(\mathbb{T}^2)\right)$ by simply requiring $c \in L^{1}\left([0,T] \times \mathbb{T}^2\right)$. Indeed, according to Jensen's inequality,
\begin{align*}
\left\|f\right\|_{L^{1}\left([0,T];W_{\mathcal{X}}^{-1,\infty}(\mathbb{T}^2)\right)} = & \int_{0}^{T}\sup_{\left\|\phi\right\|_{W_{\mathcal{X}}^{1,\infty}} \leq 1}\left(\int_{\mathbb{T}^2}c(t,x) \cdot D_{\mathcal{X}}\phi(x)dx\right)dt \\
\leq & C\int_{0}^{T}\int_{\mathbb{T}^2}\left|c(t,x)\right|dxdt=C\left\|c\right\|_{L^1\left([0,T] \times \mathbb{T}^2\right)}.
\end{align*}
\end{Rem}
We now prove the existence, uniqueness and regularities of the weak solution.
\begin{proof}[Proof of Lemma \ref{Lem_general KFP wellposed regularity}]
\emph{Step 1: Existence.} We begin by assuming that
$$
b \in C_{\mathcal{X}}^{\frac{\alpha}{2},1+\alpha}([0,T] \times \mathbb{T}^2),\quad f \in C_{\mathcal{X}}^{\frac{\alpha}{2},\alpha}([0,T] \times \mathbb{T}^2),\quad \rho_0 \in C_{\mathcal{X}}^{2+\alpha}(\mathbb{T}^2),
$$
and proving estimate \eqref{general KFP sol. regularity}.

In this case, splitting the divergence terms in \eqref{general KFP}, we can directly obtain that $\rho$ is a classical solution of this linear equation with smooth coefficients (cf. \cite[Theorem 12.1]{10BBLU}). Consider the unique solution $\phi$ of equation \eqref{dual KFP} with $\xi=0$ and $\psi \in C_{\mathcal{X}}^{1+\alpha}(\mathbb{T}^2)$. Multiplying the equation of $\rho$ for $\phi$ and integrating by parts in $[0,t] \times \mathbb{T}^2$ we get
\begin{equation}\label{weak formula}
\langle \rho(t),\psi \rangle = \langle \rho_0,\phi(0,\cdot) \rangle + \int_{0}^{t}\langle f(s),\phi(s,\cdot) \rangle ds.
\end{equation}
We can know from \eqref{LHJB_USE_first order} that
$$
\sup_{s \in [0,T]} \left\|\phi(s,\cdot)\right\|_{C_{\mathcal{X}}^{1+\alpha}(\mathbb{T}^2)} \leq C\left\|\psi\right\|_{C_{\mathcal{X}}^{1+\alpha}(\mathbb{T}^2)},
$$
where $C$ depends on $\sup\limits_{t \in [0,T]}\left\|b(t,\cdot)\right\|_\alpha$.

Thus the right hand side of \eqref{weak formula} has the estimate
\begin{align*}
& \langle \rho_0,\phi(0,\cdot) \rangle + \int_{0}^{t}\langle f(s),\phi(s,\cdot) \rangle ds\\
\leq & C\left\|\psi\right\|_{C_{\mathcal{X}}^{1+\alpha}(\mathbb{T}^2)} \left(\left\|\rho_0\right\|_{C_{\mathcal{X}}^{-(1+\alpha)}\left(\mathbb{T}^2\right)} +\int_{0}^{t}\left\|f(s)\right\|_{W_{\mathcal{X}}^{-1,\infty}\left(\mathbb{T}^2\right)}ds\right).
\end{align*}
Taking the sup when $\psi \in C_{\mathcal{X}}^{1+\alpha}(\mathbb{T}^2)$ and $\left\|\psi\right\|_{C_{\mathcal{X}}^{1+\alpha}(\mathbb{T}^2)} \leq 1$ for \eqref{weak formula}, we obtain
$$
\sup_{t \in [0,T]}\left\|\rho(t)\right\|_{C_{\mathcal{X}}^{-(1+\alpha)}(\mathbb{T}^2)} \leq C\left(\left\|\rho_0\right\|_{C_{\mathcal{X}}^{-(1+\alpha)}(\mathbb{T}^2)} + \left\|f\right\|_{L^1([0,T];W_{\mathcal{X}}^{-1,\infty}(\mathbb{T}^2))}\right).
$$
Next we have to prove the $C_{\mathcal{X}}^{-\frac{\alpha}{2},-\alpha}$ estimate. Consider the solution of equation \eqref{dual KFP} with $t=T$, $\psi=0$ and $\xi \in C_{\mathcal{X}}^{\frac{\alpha}{2},\alpha}([0,T] \times \mathbb{T}^2)$, we can have (cf. \cite[Theorem 14.1]{10BBLU})
\begin{equation}\label{phi_1+alpha}
\left\|\phi\right\|_{C_{\mathcal{X}}^{\frac{\alpha}{2},1+\alpha}([0,T]\times\mathbb{T}^2)} \leq C\left\|\xi\right\|_{C_{\mathcal{X}}^{\frac{\alpha}{2},\alpha}([0,T]\times\mathbb{T}^2)},
\end{equation}
where $C$ depends on $\left\|b(t,\cdot)\right\|_{\frac{\alpha}{2},\alpha}$.

Integrating for the equation of $\rho$ in $[0,T] \times \mathbb{T}^2$ one has
$$
\int_{0}^{T} \int_{\mathbb{T}^2} \rho\xi dxds=\left\langle\rho_{0},\phi(0, \cdot)\right\rangle+\int_{0}^{T}\langle f(s),\phi(s,\cdot)\rangle ds.
$$
Combining with \eqref{phi_1+alpha}, we obtain
$$
\int_{0}^{T}\int_{\mathbb{T}^2} \rho\xi dxds \leq C\left\|\xi\right\|_{C_{\mathcal{X}}^{\frac{\alpha}{2},\alpha}([0,T]\times\mathbb{T}^2)} \left(\left\|\rho_0\right\|_{C_{\mathcal{X}}^{-(1+\alpha)}(\mathbb{T}^2)} + \left\|f\right\|_{L^1([0,T];W_{\mathcal{X}}^{-1,\infty})}\right).
$$
Similarly, taking the sup for $\left\|\xi\right\|_{C_{\mathcal{X}}^{\frac{\alpha}{2},\alpha}([0,T]\times\mathbb{T}^2)} \leq 1$, we eventually get
$$
\left\|\rho\right\|_{C_{\mathcal{X}}^{-\frac{\alpha}{2},-\alpha}([0,T]\times\mathbb{T}^2)} \leq C\left(\left\|\rho_0\right\|_{C_{\mathcal{X}}^{-(1+\alpha)}(\mathbb{T}^2)} + \left\|f\right\|_{L^1([0,T];W_{\mathcal{X}}^{-1,\infty}(\mathbb{T}^2))}\right).
$$

In the general case, we consider the mollified versions $\rho_0^n$, $f^n$, $b^n$ converging to $\rho_0$, $f$, $b$ respectively in $C_{\mathcal{X}}^{-(1+\alpha)}(\mathbb{T}^2)$ (cf. \cite[Lemma 2.3]{23JR}), $L^1([0,T];W_{\mathcal{X}}^{-1,\infty}(\mathbb{T}^2))$ and $C_{\mathcal{X}}^{\frac{\alpha}{2},\alpha}([0,T] \times \mathbb{T}^2)$. And we call $\rho^n$ the related solution of equation \eqref{general KFP}. We can find that
\begin{gather*}
\left\|\rho_0^n\right\|_{C_{\mathcal{X}}^{-(1+\alpha)}(\mathbb{T}^2)} \leq C\left\|\rho_0\right\|_{C_{\mathcal{X}}^{-(1+\alpha)}(\mathbb{T}^2)},\\ \left\|f^n\right\|_{L^1\left([0,T];W_{\mathcal{X}}^{-1,\infty}(\mathbb{T}^2)\right)} \leq C\left\|f\right\|_{L^1\left([0,T];W_{\mathcal{X}}^{-1,\infty}(\mathbb{T}^2)\right)},\\
\left\|b^n\right\|_{C_{\mathcal{X}}^{\frac{\alpha}{2},\alpha}([0,T] \times \mathbb{T}^2)} \leq C\left\|b\right\|_{C_{\mathcal{X}}^{\frac{\alpha}{2},\alpha}([0,T] \times \mathbb{T}^2)},
\end{gather*}
where $C$ is independent of $n$. Now we apply \eqref{general KFP sol. regularity} in the regular case to have
\begin{equation}\label{approx. sol. regularity}
\begin{aligned}
& \sup_{t \in [0,T]}\left\|\rho^n(t)\right\|_{C_{\mathcal{X}}^{-(1+\alpha)}(\mathbb{T}^2)} + \left\|\rho^n\right\|_{C_{\mathcal{X}}^{-\frac{\alpha}{2},-\alpha}([0,T] \times \mathbb{T}^2)}\\
\leq & C\left(\left\|\rho_0\right\|_{C_{\mathcal{X}}^{-(1+\alpha)}(\mathbb{T}^2)}+ \left\|f\right\|_{L^1\left([0,T];W_{\mathcal{X}}^{-1,\infty}(\mathbb{T}^2)\right)}\right).
\end{aligned}
\end{equation}
Note that $b^n$ is bounded uniformly in $n$ and $C$ is therefore independent of $n$. Based on the linearity of the equation, the function $\rho^{n,m}:=\rho^n-\rho^m$ also satisfies equation \eqref{general KFP} with $b=b^n$, $f=f^n-f^m+\operatorname{div}_{\mathcal{X}}(\rho^m(b^n-b^m))$, $\rho_0=\rho_0^n-\rho_0^m$. Then we also have
\begin{equation}\label{Cauchy seq. regularity}
\begin{aligned}
& \sup_{t \in [0,T]}\left\|\rho^{n,m}(t)\right\|_{C_{\mathcal{X}}^{-(1+\alpha)}(\mathbb{T}^2)} + \left\|\rho^{n,m}\right\|_{C_{\mathcal{X}}^{-\frac{\alpha}{2},-\alpha}([0,T] \times \mathbb{T}^2)} \\
\leq & C\left(\left\|\rho_0^n-\rho_0^m\right\|_{C_{\mathcal{X}}^{-(1+\alpha)}(\mathbb{T}^2)} + \left\|f^n-f^m\right\|_{L^1\left([0,T];W_{\mathcal{X}}^{-1,\infty}(\mathbb{T}^2)\right)}\right.\\
& \left. + \left\|\operatorname{div}_{\mathcal{X}}(\rho^m(b^n-b^m))\right\|_{L^1\left([0,T];W_{\mathcal{X}}^{-1,\infty}(\mathbb{T}^2)\right)}\right).
\end{aligned}
\end{equation}
The last term can be further estimated as
\begin{align*}
\left\|\operatorname{div}_{\mathcal{X}}(\rho^m(b^n-b^m))\right\|_{L^1\left([0,T];W_{\mathcal{X}}^{-1,\infty}(\mathbb{T}^2)\right)} \leq & C\int_{0}^{T}\int_{\mathbb{T}^2}\left|\rho^m(b^n-b^m)\right|dxdt \\
\leq & C\left\|b^n-b^m\right\|_{C_{\mathcal{X}}^{\frac{\alpha}{2},\alpha}([0,T] \times \mathbb{T}^2)},
\end{align*}
as $\rho^m$ is uniformly bounded in $C_{\mathcal{X}}^{-\frac{\alpha}{2},-\alpha}([0,T] \times \mathbb{T}^2)$ by \eqref{approx. sol. regularity}. Hence, the right hand side of \eqref{Cauchy seq. regularity} tend to $0$ when $n,m \rightarrow \infty$, that is, $\rho^n$ is a Cauchy sequence. According to the completeness and Cauchy criterion for uniform convergence, there exists a $\rho \in C\left([0,T];C_\mathcal{X}^{-(1+\alpha)}(\mathbb{T}^2)\right) \bigcap C_{\mathcal{X}}^{-\frac{\alpha}{2},-\alpha}([0,T] \times \mathbb{T}^2)$ such that $\rho^n \rightarrow \rho$ in $C\left([0,T];C_\mathcal{X}^{-(1+\alpha)}(\mathbb{T}^2)\right)$ and in $C_{\mathcal{X}}^{-\frac{\alpha}{2},-\alpha}([0,T] \times \mathbb{T}^2)$. Moreover, $\rho$ satisfies \eqref{general KFP sol. regularity}.

We next prove that $\rho$ is a solution of equation \eqref{general KFP} in the sense of Definition \ref{Def_general KFP weak sol.}.

Let $\phi$ and $\phi^n$ be the solutions of equation \eqref{dual KFP} associated to $b$ and $b^n$ respectively. We have the weak formulation for $\rho^n$ as follows:
$$
\langle\rho^n(t),\psi\rangle+\int_{0}^{t} \langle \rho^n(s),\xi(s,\cdot)\rangle ds=\left\langle\rho_0^n,\phi^n(0,\cdot)\right\rangle+\int_{0}^{t}\langle f^n(s),\phi^n(s,\cdot)\rangle ds.
$$
We only need to show that $\phi^n$ converges to $\phi$, since the desired conclusion can be obtained by directly taking the limit for both sides of the equality as above. Actually, for any $n \in \mathbb{N}$, the function $\bar{\phi}^n:=\phi^n-\phi$ satisfies
\begin{equation*}
\begin{cases}
-\partial_t\bar{\phi}^n-\Delta_\mathcal{X}\bar{\phi}^n+b^n D_{\mathcal{X}}\bar{\phi}^n=-(b^n-b)D_{\mathcal{X}}\phi, \\
\bar{\phi}^n(t)=0.
\end{cases}
\end{equation*}
We have
\begin{align*}
\left\|\bar{\phi}^n\right\|_{C_{\mathcal{X}}^{\frac{\alpha}{2},1+\alpha}([0,T] \times \mathbb{T}^2)} \leq & C\left\|(b^n-b)D_{\mathcal{X}}\phi\right\|_{C_{\mathcal{X}}^{\frac{\alpha}{2},\alpha}([0,T] \times \mathbb{T}^2)} \\
\leq & C\left\|D_{\mathcal{X}}\phi\right\|_{C_{\mathcal{X}}^{\frac{\alpha}{2},\alpha}([0,T] \times \mathbb{T}^2)} \left\|b^n-b\right\|_{C_{\mathcal{X}}^{\frac{\alpha}{2},\alpha}([0,T] \times \mathbb{T}^2)} \rightarrow 0.
\end{align*}
Hence $\phi^n \rightarrow \phi$ in $C_{\mathcal{X}}^{\frac{\alpha}{2},1+\alpha}([0,T] \times \mathbb{T}^2)$, and so that $\rho$ is a weak solution of equation \eqref{general KFP}. This completes the proof of existence.

\emph{Step 2: Uniqueness.} Consider two weak solutions $\rho_1$ and $\rho_2$ of equation \eqref{general KFP}. Then the function $\rho:=\rho_1-\rho_2$ is a weak solution of
\begin{equation*}
\begin{cases}
\partial_t\rho-\Delta_\mathcal{X}\rho+\operatorname{div}_{\mathcal{X}}(\rho b)=0, \\
\rho(0)=0,
\end{cases}
\end{equation*}
The weak formulation implies, for any $\xi \in C\left([0,t];C_{\mathcal{X}^{1+\alpha}}(\mathbb{T}^2)\right)$ and $\psi \in C_{\mathcal{X}}^{1+\alpha}(\mathbb{T}^2)$,
$$
\langle\rho(t),\psi\rangle+\int_{0}^{t} \langle \rho(s),\xi(s,\cdot) \rangle ds=0,
$$
which leads to
$$
\sup_{t \in [0,T]}\left\|\rho(t)\right\|_{C_{\mathcal{X}}^{-(1+\alpha)}(\mathbb{T}^2)}=0.
$$
This completes the uniqueness part.

\emph{Step 3: Stability.} The stability can be easily derived from the estimates obtained previously. Since $f^k \rightarrow f$, $\rho_0^k \rightarrow \rho$ and $b^k \rightarrow b$, then the function $\bar{\rho}^k:=\rho^k-\rho$ satisfies equation \eqref{general KFP} with $b$, $\rho_0$ and $f$ replaced by $b^k$, $\rho_0^k-\rho_0$ and $f^k-f+\operatorname{div}_{\mathcal{X}}(\rho(b^k-b))$. Using \eqref{general KFP sol. regularity} again, we have
\begin{align*}
& \sup_{t \in [0,T]}\left\|\bar{\rho}^k(t)\right\|_{C_{\mathcal{X}}^{-(1+\alpha)}(\mathbb{T}^2)} + \left\|\bar{\rho}^k\right\|_{C_{\mathcal{X}}^{-\frac{\alpha}{2},-\alpha}([0,T] \times \mathbb{T}^2)} \\
\leq & C\left(\left\|\rho_0^k-\rho_0\right\|_{C_{\mathcal{X}}^{-(1+\alpha)}(\mathbb{T}^2)} + \left\|f^k-f\right\|_{L^1\left([0,T];W_{\mathcal{X}}^{-1,\infty}\right)}\right.\\
& \left. + \left\|\operatorname{div}_{\mathcal{X}}(\rho(b^k-b))\right\|_{L^1\left([0,T];W_{\mathcal{X}}^{-1,\infty}\right)}\right).
\end{align*}
Same as before, we also get $\rho^k \rightarrow \rho$ in $C\left([0,T];C_\mathcal{X}^{-(1+\alpha)}(\mathbb{T}^2)\right)$ when $k \rightarrow \infty$. Thus we complete the proof of the lemma.
\end{proof}
We next state a lower regularity estimate of the solution to equation \ref{general KFP} when $b$ is more regular. And it is useful for improving the regularity of $\frac{\delta U}{\delta m}$ with respect to $y$.
\begin{Cor}\label{Cor_(L)KFP lower regularity}
Let $b \in C_{\mathcal{X}}^{\frac{\alpha}{2},\alpha}([0,T] \times \mathbb{T}^2)$, $f \in L^1([0,T];W_{\mathcal{X}}^{-1,\infty}(\mathbb{T}^2))$ and $\rho_0 \in C_{\mathcal{X}}^{-1}(\mathbb{T}^2)$. Then the unique solution $\rho$ of equation \eqref{general KFP} satisfies
\begin{equation*}
\sup_{t \in [0,T]}\left\|\rho(t)\right\|_{-(2+\alpha)} \leq C\left(\left\|\rho_0\right\|_{-(2+\alpha)} + \left\|f\right\|_{L^1\left([0,T];W_{\mathcal{X}}^{-1,\infty}(\mathbb{T}^2)\right)}\right).
\end{equation*}
\end{Cor}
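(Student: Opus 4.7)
The plan is to reproduce the duality argument from Step 1 of the proof of Lemma \ref{Lem_general KFP wellposed regularity}, but with test functions taken in $C_{\mathcal{X}}^{2+\alpha}(\mathbb{T}^2)$ instead of $C_{\mathcal{X}}^{1+\alpha}(\mathbb{T}^2)$, so as to access the sharper dual norm $\|\cdot\|_{-(2+\alpha)}$ on the right-hand side.

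Fix $t\in[0,T]$ and $\psi \in C_{\mathcal{X}}^{2+\alpha}(\mathbb{T}^2)$ with $\|\psi\|_{2+\alpha}\leq 1$, and let $\phi$ be the unique solution on $[0,t]\times\mathbb{T}^2$ of the dual backward linearized HJB problem
\begin{equation*}
-\partial_s\phi - \Delta_{\mathcal{X}}\phi + b\cdot D_{\mathcal{X}}\phi = 0, \qquad \phi(t,\cdot) = \psi,
\end{equation*}
furnished by Lemma \ref{Lem_LHJB wellposed regularity}. Inserting $\xi\equiv 0$ together with this $\phi$ into the weak formulation of Definition \ref{Def_general KFP weak sol.} produces the identity
\begin{equation*}
\langle \rho(t), \psi \rangle = \langle \rho_0, \phi(0,\cdot) \rangle + \int_0^t \langle f(s), \phi(s,\cdot) \rangle\, ds.
\end{equation*}
The Schauder bound in Lemma \ref{Lem_LHJB wellposed regularity} with vanishing source gives $\sup_{s\in[0,t]}\|\phi(s,\cdot)\|_{2+\alpha} \leq C\|\psi\|_{2+\alpha}$, and the continuous embedding $C_{\mathcal{X}}^{2+\alpha}\hookrightarrow W_{\mathcal{X}}^{1,\infty}$ further gives $\sup_{s\in[0,t]}\|\phi(s,\cdot)\|_{W_{\mathcal{X}}^{1,\infty}}\leq C\|\psi\|_{2+\alpha}$. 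Pairing the initial-data term against $\|\rho_0\|_{-(2+\alpha)}$ and the integrand against $\|f(s)\|_{W_{\mathcal{X}}^{-1,\infty}}$, and then taking the supremum over $\psi$ with $\|\psi\|_{2+\alpha}\leq 1$, yields the claimed inequality.

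The main technical subtlety lies in the fact that Lemma \ref{Lem_LHJB wellposed regularity} as stated controls the $C^{2+\alpha}$-Schauder constant for $\phi$ by $\sup_{s}\|b(s,\cdot)\|_{1+\alpha}$, whereas the present hypothesis only gives $b \in C_{\mathcal{X}}^{\frac{\alpha}{2},\alpha}$. I would close this gap by the same mollification-and-stability procedure used in the existence step of Lemma \ref{Lem_general KFP wellposed regularity}: introduce smooth drifts $b^n \to b$ in $C_{\mathcal{X}}^{\frac{\alpha}{2},\alpha}$, derive the estimate for the associated solutions $\rho^n$ with a constant that is bounded uniformly in $n$, and then pass to the limit via the stability part of Lemma \ref{Lem_general KFP wellposed regularity}. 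Securing the uniformity in $n$ is the crux of the argument: one has to re-examine the frozen-coefficient and singular/fractional-integral machinery from Section \ref{Sec_3} and verify that, for the dual problem, the $C^{2+\alpha}$-Schauder constant can in fact be absorbed into $\|b\|_{\frac{\alpha}{2},\alpha}$ alone (via the standard small-$\delta$ absorption of the $(\widetilde V-\widetilde V^0)\cdot D_{\widetilde{\mathcal{X}}}z$ term and interpolation). Once this uniform bound is established, the duality identity together with the stability of the weak solution finishes the proof.
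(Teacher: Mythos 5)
Your duality argument is exactly the paper's: the published proof is a one-liner stating that one repeats Step 1 of the proof of Lemma \ref{Lem_general KFP wellposed regularity}, testing with $\psi\in C_{\mathcal{X}}^{2+\alpha}(\mathbb{T}^2)$ and $\xi=0$, and that is precisely what you set up. You also correctly put your finger on the one non-trivial point that the paper's terse proof glosses over: to pair $\rho_0$ against $\phi(0,\cdot)$ in the $\|\cdot\|_{-(2+\alpha)}$ duality you need $\|\phi(0,\cdot)\|_{2+\alpha}\le C$, and Lemma \ref{Lem_LHJB wellposed regularity} as stated delivers this only with a constant controlled by $\sup_t\|V(t,\cdot)\|_{1+\alpha}$, whereas the corollary assumes $b\in C_{\mathcal{X}}^{\frac{\alpha}{2},\alpha}$ only.

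Where your proposal goes wrong is the proposed repair. Re-examining the parametrix machinery of Lemma \ref{Lem_LHJB prior estimate} will \emph{not} let you absorb the $C^{2+\alpha}$-Schauder constant into $\|b\|_{\frac{\alpha}{2},\alpha}$: when a second derivative is taken in Corollary \ref{widetilde Xi T}(2), the derivative may fall on the unfrozen coefficient, producing the commutator term $\widetilde{X}_h\widetilde{V}\cdot D_{\widetilde{\mathcal{X}}} z_{\varepsilon,\delta}$. This term carries a full factor $\|\widetilde V\|_{1}$ that is not multiplied by any small power of $\delta$ or $\varepsilon$ (it shows up in $\widetilde A$ as $\delta^{2}\sup_s\|\widetilde V(s,\cdot)\|_{1+\alpha}\sup_s\|\widetilde z(s,\cdot)\|_{1+\alpha}$, so absorbing it by choosing $\delta$ small makes $\delta$, and hence the final constant, depend on $\|V\|_{1+\alpha}$). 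The small-$\delta$ absorption and interpolation eliminate only the $\|V\|_{\alpha}$-weighted terms, not this one. Mollifying $b$ and appealing to stability does not help because $\|b^n\|_{1+\alpha}$ blows up. The genuine way to close the gap is to exploit that $\xi\equiv 0$: set $v:=\phi-\psi$, which solves $-\partial_s v-\Delta_{\mathcal X}v+b\cdot D_{\mathcal X}v=\Delta_{\mathcal X}\psi-b\cdot D_{\mathcal X}\psi$ with $v(t,\cdot)=0$. Since $\psi$ is time-independent and $b\in C_{\mathcal X}^{\frac{\alpha}{2},\alpha}$, the right-hand side lies in $C_{\mathcal X}^{\frac{\alpha}{2},\alpha}([0,t]\times\mathbb{T}^2)$ with norm $\le C(1+\|b\|_{\frac{\alpha}{2},\alpha})\|\psi\|_{2+\alpha}$; the classical parabolic Schauder theory of \cite{10BBLU} for a $C_{\mathcal X}^{\frac{\alpha}{2},\alpha}$ source --- the same result invoked for \eqref{phi_1+alpha} --- then gives $\|v\|_{1+\frac{\alpha}{2},2+\alpha}\le C\|\psi\|_{2+\alpha}$ with $C$ depending only on $\|b\|_{\frac{\alpha}{2},\alpha}$, hence $\sup_s\|\phi(s,\cdot)\|_{2+\alpha}\le C\|\psi\|_{2+\alpha}$. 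The point is that Lemma \ref{Lem_LHJB wellposed regularity} is specifically calibrated for a source with no time-H\"older regularity (only $C_t C_x^{1+\alpha}$), and this is what forces the extra space regularity on the drift; when $\xi=0$ the effective source inherits time regularity from $b$ and the classical theory suffices. With this replacement your mollification-and-stability scheme goes through and the proof is complete.
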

\begin{proof}
The proof is similar to the previous one by duality, except taking $\phi$, the solution of equation \eqref{dual KFP}, with $\psi \in C_{\mathcal{X}}^{2+\alpha}(\mathbb{T}^2)$ and $\xi=0$, as a test function.
\end{proof}

\section{Preliminary regularities and Lipschitz continuity of U}\label{Sec_4}
\begin{proof}[Proof of Proposition \ref{Prop_MFG system wellposed regularity}]
We prove the existence by using Schauder fixed point theorem. Let
$$
E:=\left\{m \in C\left([t_0,T];\mathcal{P}\left(\mathbb{T}^2\right)\right)\text{ s.t. }d_1\left(m\left(t_1\right), m\left(t_2\right)\right) \leq C_E|t_1-t_2|^\frac{1}{2}\right\},
$$
where the constant $C_E$ to be determined does not depend on $m(t)$. Then it is easy to verify that E is a convex compact set.

The mapping $\Phi: E \rightarrow E$ is constructed as follows.

Fix $\mu \in E$, we consider the following HJB equation
\begin{equation}\label{SFPT_HJB}
\begin{cases}
-\partial_t u-\Delta_{\mathcal{X}}u+\frac{1}{2}\left|D_{\mathcal{X}} u\right|^2=F(x, \mu(t)), & \text{in } [t_0,T] \times \mathbb{T}^2, \\
u(T,x)=G(x, \mu(T)), & \text{in } \mathbb{T}^2.
\end{cases}
\end{equation}
From hypotheses H\ref{hyp1}) and H\ref{hyp2}), we know $F\left(\cdot,\mu(\cdot)\right) \in C_{\mathcal{X}}^{\frac{\alpha}{2},\alpha}\left([t_0,T]\times\mathbb{T}^2\right)$ and $G\left(\cdot,\mu(T)\right) \in C_{\mathcal{X}}^{2+\alpha}\left(\mathbb{T}^2\right)$ with the corresponding norms independent of $\mu$. Then applying the same method of proof from \cite[Theorem 1.1]{23JRWX} to the space $\mathbb{T}^2$, we can get that there exists a unique solution $u \in C_{\mathcal{X}}^{1+\frac{\alpha}{2},2+\alpha}\left([t_0,T]\times\mathbb{T}^2\right)$ to this equation.

We use the Hopf transform to turn equation \eqref{SFPT_HJB} equivalently into a linear form satisfied by $w:=\exp\left(-\frac{u}{2}\right)$, that is
\begin{equation*}
\begin{cases}
-\partial_t w-\Delta_{\mathcal{X}}w+\frac{1}{2}F(x, \mu(t))w=0, & \text{in } [t_0,T] \times \mathbb{T}^2, \\
w(T,x)=\exp\left(-\frac{G(x, \mu(T))}{2}\right), & \text{in } \mathbb{T}^2.
\end{cases}
\end{equation*}
Thanks to the weak maximum principle, we can know that $w>0$ (cf. \cite[pp. 26-27]{23JRWX}). Then we can get from \cite[Theorem 1.1]{07BB} and from \eqref{MaxE} in Lemma \ref{Lem_LHJB prior estimate} to be proved later that
\begin{equation}\label{HJB_Schauder estimate}
\begin{aligned}
\left\|u\right\|_{1+\frac{\alpha}{2},2+\alpha} \leq \left\|w\right\|_{1+\frac{\alpha}{2},2+\alpha} \leq& C \left(\left\|G\left(\cdot,\mu(T)\right)\right\|_{2+\alpha} +\left\|w\right\|_\infty\right) \\
\leq & C \left\|G\left(\cdot,\mu(T)\right)\right\|_{2+\alpha},
\end{aligned}
\end{equation}
where the constant $C$ only depends on $\left\|F\left(\cdot,\mu(\cdot)\right)\right\|_{\frac{\alpha}{2},\alpha}$, which does not depend on $\mu$, $t_0$ and $m_0$.

Let $m$ be the weak solution to the KFP equation
\begin{equation}\label{SFPT_KFP}
\begin{cases}
\partial_t m-\Delta_{\mathcal{X}} m-\operatorname{div}_{\mathcal{X}}\left(m D_{\mathcal{X}}u\right)=0, & \text{in } (t_0,T] \times \mathbb{T}^2, \\
m(t_0)=m_0, & \text{in } \mathbb{T}^2,
\end{cases}
\end{equation}
then we can define $\Phi(\mu)=m$. Since $m_0 \in \mathcal{P}(\mathbb{T}^2) \subset C_{\mathcal{X}}^{-(1+\alpha)}(\mathbb{T}^2)$, the existence and uniqueness of the distributional solution $m(t) \in C\left([t_0,T];C_{\mathcal{X}}^{-(1+\alpha)}(\mathbb{T}^2)\right)$ to this equation will be given in Lemma \ref{Lem_general KFP wellposed regularity}. Next we check that $m \in E$.

For any Borel set $A \subset \mathbb{T}^2$, there exists a smooth sequence $\{f_n\}_n \subset C_{\mathcal{X}}^{1}\left(\mathbb{T}^2\right)$ converges to $1_A$. Then by the control convergence theorem, we have
$$
\int_{\mathbb{T}^2}1_A dm(t)(x)=\lim_{n \rightarrow\infty} \int_{\mathbb{T}^2}f_n dm(t)(x) < \infty.
$$
Hence $m(t) \in C\left([t_0,T];\mathcal{P}(\mathbb{T}^2)\right)$. As for the H\"{o}lder continuity, let $\left(\Omega,\mathcal{F},\mathbb{P}\right)$ be a standard probability space, and $\xi_t$ be the solution to the stochastic differential equation, writing in the form of Stratonovich integral,
\begin{equation*}
\begin{cases}
d \xi_t=-\sum\limits_{i=1}^{2}X_i u(t,\xi_t)X_i(\xi_t)dt+\sum\limits_{i=1}^{2}\sqrt{2}X_i(\xi_t)\circ dB_t^i, & t \in (t_0,T], \\
\mathcal{L}\left(\xi_{t_0}\right)=m_0.
\end{cases}
\end{equation*}
Then the law of $\xi_t$ is $m(t)$ for any $t$, i.e. $\mathcal{L}\left(\xi(t)\right)=m(t)$ (cf. \cite[Lemma 4.1]{18Ry}). By the definition of the $d_{cc}$ distance and note that the diffusion term propagates with speed $\sqrt{t}$ in the direction of the vector fields $X_i$ (cf. \cite{19BCP}), we have
\begin{align*}
E\left[d_{cc}\left(\xi_{t_1},\xi_{t_2}\right)\right]\leq & E\left[\int_{t_1}^{t_2}\left|D_{\mathcal{X}}u(t,\xi_t)\right|dt +\left(\int_{t_1}^{t_2}4dt\right)^\frac{1}{2}\right] \\
\leq & \left\|D_{\mathcal{X}}u\right\|_\infty|t_1-t_2|+2\sqrt{|t_1-t_2|}.
\end{align*}
Since $d_1\left(m(t_1),m(t_2)\right) \leq E\left[d_{cc}\left(\xi_{t_1},\xi_{t_2}\right)\right]$, we get therefore
\begin{equation}\label{KFP_Horlder ctn.}
\sup_{t_1 \neq t_2}\frac{d_1(m(t_1),m(t_2))}{|t_1-t_2|^{\frac{1}{2}}} \leq \|D_{\mathcal{X}}u\|_\infty T+2=C_E.
\end{equation}

It remains for us to prove that $\Phi$ is continuous. Let any $\mu_n \rightarrow \mu$, then let $u_n$ and $m_n$ be the corresponding solutions to the equation \eqref{SFPT_HJB} and \eqref{SFPT_KFP}. For any $n,m \in \mathbb{N}$, $u_n-u_m$ satisfies the linear degenerate PDE as follows
\begin{equation*}
\begin{cases}
-\partial_t z-\Delta_{\mathcal{X}}z+\frac{1}{2}V(t,x)\cdot D_{\mathcal{X}}z=F(x, \mu_n(t))-F(x, \mu_m(t)), & \text{in } [t_0,T] \times \mathbb{T}^2, \\ z(T,x)=G(x, \mu_n(T))-G(x, \mu_m(T)), & \text{in } \mathbb{T}^2,
\end{cases}
\end{equation*}
where
$$
V(t,x):=D_{\mathcal{X}}\left(u_n+u_m\right)(t,x).
$$
The same reason we can obtain the Schauder estimate
\begin{align*}
& \left\|u_n-u_m\right\|_{1+\frac{\alpha}{2},2+\alpha} \\
& \quad \leq C\left(\left\|F\left(\cdot,\mu_n(\cdot)\right)-F\left(\cdot,\mu_m(\cdot)\right)\right\|_{\frac{\alpha}{2},\alpha} +\left\|G\left(\cdot,\mu_n(T)\right)-G\left(\cdot,\mu_m(T)\right)\right\|_{2+\alpha}\right),
\end{align*}
where $C$ only depends on $\left\|V(t,x)\right\|_{\frac{\alpha}{2},\alpha}$. Let $n,m \rightarrow \infty$, and due to the continuity of $F,G$ with respect to the measure and the fact that $\left\{u_n\right\}_n$ is uniformly bounded in $C_{\mathcal{X}}^{1+\frac{\alpha}{2},2+\alpha}\left([t_0,T]\times\mathbb{T}^2\right)$ by \eqref{HJB_Schauder estimate}, we obtain that $\left\{u_n\right\}_n$ is a Cauchy sequence. Thus we have $u_n \rightarrow u$ in $C_{\mathcal{X}}^{1+\frac{\alpha}{2},2+\alpha}\left([t_0,T]\times\mathbb{T}^2\right)$ since the uniqueness of the solution to equation \eqref{SFPT_HJB}.

The convergence of $\left\{m_n(t)\right\}_n$ can be obtained by the stability of the KFP equation (see Lemma \ref{Lem_general KFP wellposed regularity}), and also the limit $m \in E$ being the unique solution to equation \eqref{SFPT_KFP} related to $\mu(t)$. We would need to show that $m_n \rightarrow m$ in $C\left([t_0,T];\mathcal{P}(\mathbb{T}^2)\right)$. By the definition of $d_1$ distance, we have that for any $\varepsilon>0$, there exists a $d_{cc}$-Lipschitz function $\phi^\varepsilon$ with $Lip(\phi^\varepsilon)=1$ such that
\begin{equation*}
d_1\left(m_n(t),m(t)\right) \leq \int_{\mathbb{T}^2}\phi^{\varepsilon}(x)d\left(m_n(t)-m(t)\right)(x)-\varepsilon, \text{ for any } t \in [t_0,T].
\end{equation*}
We can take a smooth sequence $\left\{\phi_k^{\varepsilon}\right\}_k$ to approximate $\phi^{\varepsilon}$. Since $\left\|\phi_k^{\varepsilon}\right\|_{C_{\mathcal{X}}^{1+\alpha}(\mathbb{T}^2)}$ is bounded uniformly in $\varepsilon$, we have
\begin{align*}
d_1\left(m_n(t),m(t)\right) \leq & \lim_{k\rightarrow\infty}\int_{\mathbb{T}^2}\phi_k^{\varepsilon}(x)d\left(m_n(t)-m(t)\right)(x)-\varepsilon\\
\leq & C\left\|m_n(t)-m(t)\right\|_{C_{\mathcal{X}}^{-(1+\alpha)}(\mathbb{T}^2)}-\varepsilon, \text{ for any } t \in [t_0,T].
\end{align*}
Taking $n \rightarrow \infty$ and $\varepsilon \rightarrow 0$ leads to the desired conclusion. This concludes the proof of continuity.

Applying Schauder fixed point theorem, we obtain a solution
$$
(u,m) \in C_{\mathcal{X}}^{1,2}\left([t_0,T]\times\mathbb{T}^2\right) \times C\left([t_0,T];\mathcal{P}(\mathbb{T}^2)\right)
$$
to the MFG system \eqref{MFG system}. The uniqueness can be derived from the Lasry-Lions monotonicity argument (cf. \cite[Lemma 3.1.2]{19CDLL}), and the regularity results are given by \eqref{HJB_Schauder estimate} and \eqref{KFP_Horlder ctn.}.

Assume that $m_0$ is absolutely continuous with a smooth positive density, since $m$ is the unique solution to the linear degenerate equation
\begin{equation*}
\begin{cases}
\partial_t m-\Delta_{\mathcal{X}} m-D_{\mathcal{X}}u\cdot D_{\mathcal{X}}m-\operatorname{div}_{\mathcal{X}}\left(D_{\mathcal{X}}u\right)m=0, & \text{ in } (t_0,T] \times \mathbb{T}^2, \\
m(t_0)=m_0, & \text{ in } \mathbb{T}^2,
\end{cases}
\end{equation*}
with $C_{\mathcal{X}}^{\frac{\alpha}{2},\alpha}$ coefficients and $C_{\mathcal{X}}^{2+\alpha}$ initial condition, we have
$$
m \in C_{\mathcal{X}}^{1,2} \left((t_0, T] \times \mathbb{T}^{2} \right) \bigcap C([t_0, T] \times \mathbb{T}^2)
$$
by \cite[Theorem 12.1]{10BBLU}. Furthermore, by Schauder estimates (cf. \cite[Theorem 14.4]{10BBLU}) $m \in C_{\mathcal{X}}^{1+\frac{\alpha}{2},2+\alpha}\left([t_0, T] \times \mathbb{T}^{2} \right)$. Moreover, by using the weak maximum principle, we get that $m>0$ whenever $m_0>0$.

The stability of the solution can be obtained by the same method used to prove the continuity of $\Phi$. We have thus completed the proof.
\end{proof}
\begin{proof}[Proof of Proposition \ref{Prop_Lip. ctn. of U}]
We show the result for $t_0=0$ for the sake of simplicity.

\emph{Step 1: Monotonicity argument.} We take advantage of the Lasry-Lions monotonicity argument in \cite[Lemma 3.1.2]{19CDLL}, and since $F,G$ is monotone, $D_{\mathcal{X}}u^1$ and $D_{\mathcal{X}}u^2$ are uniformly bounded, thus
\begin{gather*}
\int_{0}^{T}\int_{\mathbb{T}^2} \left|D_{\mathcal{X}}u^1(t,y)-D_{\mathcal{X}}u^2(t,y)\right|^2\left(m^1(t,y)+m^2(t,y)\right)dydt\\ \leq C\int_{\mathbb{T}^2} \left(u^1(0,y)-u^2(0,y)\right)\left(m_0^1(y)-m_0^2(y)\right)dy.
\end{gather*}
Since
$$
\left|\left(u^1-u^2\right)(0,x)-\left(u^1-u^2\right)(0,y)\right| \leq \left\|D_{\mathcal{X}}\left(u^1-u^2\right)(0,\cdot)\right\|_\infty d_{cc}(x,y),
$$
based on \cite[Proposition 4.2(ii)]{07BB}. Hence by the definition of $d_1$ we eventually have
\begin{gather}\label{Mono. argu.}
\int_{0}^{T}\int_{\mathbb{T}^2} \left|D_{\mathcal{X}}u^1(t,y)-D_{\mathcal{X}}u^2(t,y)\right|^2\left(m^1(t,y)+m^2(t,y)\right)dydt\\ \leq C\left\|D_{\mathcal{X}}\left(u^1-u^2\right)(0,\cdot)\right\|_\infty d_1(m_0^1,m_0^2). \nonumber
\end{gather}

\emph{Step 2: An estimate on $m^1-m^2$.} Let $\bar{m}:=m^1-m^2$. Taking $\phi$ as a test function, the weak formulations of $m^1$ and $m^2$ are as follows
\begin{gather*}
\int_{\mathbb{T}^2}\phi(t,x)m^i(t,x)dx-\int_{\mathbb{T}^2}\phi(s,x)m^i(s,x)dx\\
=\int_{s}^{t}\int_{\mathbb{T}^2} \left(\partial_t\phi+\Delta_{\mathcal{X}}\phi-D_pH\left(x,D_{\mathcal{X}}u^i\right)\cdot D_{\mathcal{X}}\phi\right)m^i(s,x)dxds,\quad i=1,2.
\end{gather*}
Set $s=0$ and subtract the two formulations, we obtain
\begin{gather*}
\int_{\mathbb{T}^2}\phi(t,x)\bar{m}(t,x)dx+\int_{0}^{t}\int_{\mathbb{T}^2} \left(-\partial_t\phi-\Delta_{\mathcal{X}}\phi+D_pH\left(x,D_{\mathcal{X}}u^1\right)\cdot D_{\mathcal{X}}\phi\right)\bar{m}(s,x)dxds\\
+\int_{0}^{t}\int_{\mathbb{T}^2} \left(D_pH\left(x,D_{\mathcal{X}}u^1\right)-D_pH\left(x,D_{\mathcal{X}}u^2\right)\right)\cdot D_{\mathcal{X}}\phi m^2(s,x)dxds\\
=\int_{\mathbb{T}^2} \phi(0,x)\left(m_0^1-m_0^2\right)(x)dx.
\end{gather*}
We choose $\phi$ as the solution of equation \eqref{homo. LHJB} with $V=D_pH(x,D_{\mathcal{X}}u^1)$ and terminal condition $z_T=\psi$ at time $t$, which is $d_{cc}$-Lipschitz with Lipschitz constant bounded by $1$. It follows by Lemma \ref{Lem_homo. LHJB Lipschitz} that $\phi$ is $d_{cc}$-Lipschitz continuous with a constant bounded uniformly. Because of the Lipschitz continuity of $D_pH$ with respect to $p$, we can obtain
$$
\int_{\mathbb{T}^2} \psi(x) \bar{m}(t,x)dx \leq C \int_0^t \int_{\mathbb{T}^2}\left|D_{\mathcal{X}}u^1-D_{\mathcal{X}}u^2\right| m^2(s,x)dxds+C d_1\left(m_0^1, m_0^2\right).
$$

Let us now use Jensen's inequality and \eqref{Mono. argu.} to get
\begin{align*}
\int_{\mathbb{T}^2} \psi(x) \bar{m}(t,x)dx & \leq C\left(\int_0^t \int_{\mathbb{T}^2}\left|D_{\mathcal{X}}u^1-D_{\mathcal{X}}u^2\right|^2 m^2(s,x)dxds\right)^{\frac{1}{2}}+Cd_1 \left(m_0^1,m_0^2\right) \\
& \leq C\left(\left\|D_{\mathcal{X}}\left(u^1-u^2\right)(0,\cdot)\right\|_\infty^{\frac{1}{2}} d_1\left(m_0^1,m_0^2\right)^{\frac{1}{2}}+d_1\left(m_0^1,m_0^2\right)\right),
\end{align*}
taking the sup over the $\psi$ $d_{cc}$-Lipschitz with Lipschitz constant bounded by $1$, and $t \in[0,T]$, we finally get
\begin{equation}\label{m^1-m^2}
\begin{aligned}
& \sup_{t \in [0,T]} d_1\left(m^1(t), m^2(t)\right)\\
& \quad \leq C\left(\left\| D_{\mathcal{X}}\left(u^1-u^2\right)(0,\cdot)\right\|_\infty^{\frac{1}{2}} d_1\left(m_0^1,m_0^2\right)^{\frac{1}{2}}+d_1\left(m_0^1,m_0^2\right)\right).
\end{aligned}
\end{equation}

\emph{Step 3: An estimate on $u^1-u^2$.} We note that $\bar{u}:=u^1-u^2$ satisfies
$$
\begin{cases}
-\partial_t \bar{u}(t,x)-\Delta_{\mathcal{X}} \bar{u}(t,x)+V(t,x) \cdot D_{\mathcal{X}} \bar{u}(t,x)=R_1(t, x),& \text { in }(0,T) \times \mathbb{T}^2, \\
\bar{u}(T,x)=R_T(x),& \text { in }\mathbb{T}^2,
\end{cases}
$$
where, for $(t,x) \in [0,T] \times \mathbb{T}^2$,
$$
V(t,x)=\int_0^1 D_pH\left(x,sD_\mathcal{X} u^1(t,x)+(1-s) D_\mathcal{X} u^2(t,x)\right)ds,
$$
$$
R_1(t,x)=\int_0^1 \int_{\mathbb{T}^2} \frac{\delta F}{\delta m}\left(x,s m^1(t)+(1-s) m^2(t), y\right)\left(m^1(t,y)-m^2(t,y)\right)dyds
$$
and
$$
R_T(t,x)=\int_0^1 \int_{\mathbb{T}^2} \frac{\delta G}{\delta m}\left(x,s m^1(T)+(1-s)m^2(T),y\right)\left(m^1(T,y)-m^2(T,y)\right)dyds.
$$
By hypothesis H\ref{hyp1}) and inequality \eqref{m^1-m^2}, we estimate that, for any $t \in [0,T]$,
\begin{align*}
& \left\|R_1(t,\cdot)\right\|_{C_{\mathcal{X}}^{1+\alpha}(\mathbb{T}^2)}\\
\leq & \int_{0}^{1} \left\|D_\mathcal{X}^y\frac{\delta F}{\delta m}(\cdot,sm^1(t)+(1-s)m^2(t),\cdot)\right\|_{C_{\mathcal{X}}^{1+\alpha}(\mathbb{T}^2) \times L^\infty}ds d_1(m^1(t),m^2(t))\\
\leq & C\left[d_1\left(m_0^1,m_0^2\right)+\|D_\mathcal{X}\bar{u}(0,\cdot)\|_{\infty}^{\frac{1}{2}} d_1\left(m_0^1,m_0^2\right)^{\frac{1}{2}}\right].
\end{align*}
Similarly, using hypothesis H\ref{hyp2}) we have
$$
\left\|R_T\right\|_{C_{\mathcal{X}}^{2+\alpha}(\mathbb{T}^2)} \leqslant C\left[d_1\left(m_0^1,m_0^2\right)+\|D_\mathcal{X}\bar{u}(0,\cdot)\|_{\infty}^{\frac{1}{2}} d_1\left(m_0^1,m_0^2\right)^{\frac{1}{2}}\right].
$$
Apart from that, $V(t,x)$ is bounded in $C_{\mathcal{X}}^{1,1+\alpha}\left([0, T] \times \mathbb{T}^{2} \right)$ owing to the regularity of $u^1$ and $u^2$. Hence using Lemma \ref{Lem_LHJB wellposed regularity} and $\varepsilon$-Cauchy inequality, we have the following estimate
\begin{align*}
\sup_{t \in [0,T]}\|\bar{u}(t,\cdot)\|_{C_{\mathcal{X}}^{2+\alpha}(\mathbb{T}^2)} \leq & C\left(\left\|R_{T}\right\|_{C_{\mathcal{X}}^{2+\alpha}(\mathbb{T}^2)}+\sup_{t \in [0,T]}\left\|R_1(t,\cdot)\right\|_{C_{\mathcal{X}}^{1+\alpha}(\mathbb{T}^2)}\right)\\
\leq & C\left(d_1\left(m_0^1,m_0^2\right)+ \|D_\mathcal{X}\bar{u}(0,\cdot)\|_{\infty}^{\frac{1}{2}}d_{1}\left(m_0^1,m_0^2\right)^{\frac{1}{2}}\right)\\
\leq & C\left(d_1\left(m_0^1,m_0^2\right)+ \varepsilon\left\|D_\mathcal{X}\bar{u}(0,\cdot)\right\|_\infty+\frac{1}{\varepsilon} d_1\left(m_0^1, m_0^2\right)\right).
\end{align*}
Choose $\varepsilon$ small enough, we therefore observe that
$$
\sup_{t \in[0, T]}\|\bar{u}(t,\cdot)\|_{C_{\mathcal{X}}^{2+\alpha}(\mathbb{T}^2)} \leq Cd_1\left(m_0^1, m_0^2\right).
$$
Again, substitute the above inequality back into \eqref{m^1-m^2}, we can find
$$
\sup_{t \in[0,T]} d_1\left(m^1(t), m^2(t)\right) \leq C d_1\left(m_0^1,m_0^2\right).
$$
\end{proof}

\section{Linearized MFG system and differentiability of U}\label{Sec_5}
\noindent
In this section, let us start to work on obtaining some estimates for system \eqref{linear MFG system}. To do this, we consider a more general linearized system of the following form:
\begin{equation}\label{general linear MFG system}
\begin{cases}
-\partial_t z-\Delta_{\mathcal{X}} z+V(t, x) \cdot D_{\mathcal{X}} z=\frac{\delta F}{\delta m}(x,m(t))(\rho(t))+b(t,x),& \text { in }[t_0, T] \times \mathbb{T}^2, \\
\partial_t\rho-\Delta_\mathcal{X}\rho-\operatorname{div}_{\mathcal{X}}(\rho V)-\operatorname{div}_{\mathcal{X}}(m \Gamma D_{\mathcal{X}}z+c(t,x))=0,& \text { in }[t_0, T] \times \mathbb{T}^2, \\
z(T, x)=\frac{\delta G}{\delta m}(x,m(T))(\rho(T))+z_T(x),\quad \rho(t_0)=\rho
_0,& \text { in }\mathbb{T}^2,
\end{cases}
\end{equation}
where $V$ is a given vector field in $C_{\mathcal{X}}^{\frac{\alpha}{2},1+\alpha}\left([t_0, T] \times \mathbb{T}^{2} \right)$, $m \in C([t_0,T];\mathcal{P}(\mathbb{T}^2))$, $\rho_0 \in C_{\mathcal{X}}^{-1}\left(\mathbb{T}^2\right)$, $\Gamma:[t_0,T] \times \mathbb{T}^2 \rightarrow \mathbb{R}^{2 \times 2}$ is continuous which maps into the family of real symmetric matrices, and the maps $b:[t_0,T] \times \mathbb{T}^2 \rightarrow \mathbb{R}$, $c:[t_0,T] \times \mathbb{T}^2 \rightarrow \mathbb{R}^2$ as well as $z_T:\mathbb{T}^2 \rightarrow \mathbb{R}$ are given in corresponding suitable spaces. It is always assumed that there is a constant $\bar{C}>0$ such that
\begin{equation}\label{Assump_linear system}
\begin{array}{c}
d_1(m(t_1),m(t_2)) \leq \bar{C}|t_1-t_2|^{\frac{1}{2}},\quad t_1,t_2 \in [t_0,T]\\
\bar{C}^{-1}I_{d \times d} \leq \Gamma(t,x) \leq \bar{C} I_{d \times d},\quad (t,x) \in [t_0,T]\times\mathbb{T}^2.
\end{array}
\end{equation}

We provide the definition of solution for this system as follows
\begin{Def}\label{Def_linear system weak sol.}
A couple $(z,\rho)$ is said to be a solution to the system \eqref{general linear MFG system} if
\begin{enumerate}
\item $z \in C_{\mathcal{X}}^{1+\frac{\alpha}{2},2+\alpha}([t_0,T] \times \mathbb{T}^2)$ is a classical solution of the first linear equation;
\item $\rho \in C\left([t_0,T];C_\mathcal{X}^{-(1+\alpha)}(\mathbb{T}^2)\right) \bigcap C_{\mathcal{X}}^{-\frac{\alpha}{2},-\alpha}([t_0,T]\times\mathbb{T}^2)$ is a distributional solution of the KFP equation in the sense of Definition \ref{Def_general KFP weak sol.}.
\end{enumerate}
\end{Def}
We now proceed to state the existence, uniqueness and regularity results for the system \eqref{general linear MFG system}.
\begin{Lem}\label{Lem_GL system regularity}
Under assumptions H\ref{hyp1}) and H\ref{hyp2}), system \eqref{general linear MFG system} has a unique solution $(z,\rho)$ in the sense of Definition \ref{Def_linear system weak sol.}, with
\begin{equation}\label{linear system_z regularity}
\sup_{t \in \left[t_{0},T\right]}\|z(t,\cdot)\|_{2+\alpha} + \sup _{\substack{t \neq t'\\t,t' \in [t_0,T']}} \frac{\left\|z\left(t^{\prime}, \cdot\right)-z(t, \cdot)\right\|_{2+\alpha}}{\left|t^{\prime}-t\right|^\beta} \leqslant CM
\end{equation}
and
\begin{equation}\label{linear system_rho regularity}
\sup_{t \in \left[t_{0},T\right]}\|\rho(t)\|_{-(1+\alpha)} + \sup _{\substack{t \neq t'\\t,t' \in [t_0,T]}} \frac{\left\|\rho\left(t^{\prime}\right)-\rho(t)\right\|_{-(1+\alpha)}}{\left|t^{\prime}-t\right|^{\frac{\alpha}{2}}} \leqslant CM,
\end{equation}
where $\beta \in (0,\frac{1}{2})$, $T' \in (t_0,T)$, the constant $C$ depends on $T$, $\alpha$, $\sup\limits_{t \in\left[t_{0},T\right]}\|V(t,\cdot)\|_{1+\alpha}$, the constant $\bar{C}$ in \eqref{Assump_linear system}, $F$ and $G$. Apart from this, $M$ is given by
\begin{align*}
M:= & \left\|z_{T}\right\|_{2+\alpha}+\left\|\rho_{0}\right\|_{-(1+\alpha)}+\sup_{t \in \left[t_{0},T\right]}\left(\|b(t,\cdot)\|_{1+\alpha}+\|c(t)\|_{L^1}\right).
\end{align*}
\end{Lem}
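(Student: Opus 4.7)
The plan is to decouple the system and apply a Schauder fixed-point argument for existence, then use monotonicity from hypotheses H\ref{hyp1}) and H\ref{hyp2}) for uniqueness. Concretely, I define a map $\Phi:\rho\mapsto\tilde\rho$ as follows: given $\rho$ in a suitable convex compact subset of $C([t_0,T];C_\mathcal{X}^{-(1+\alpha)}(\mathbb{T}^2))$, first solve the backward linear equation for $z$ using Lemma \ref{Lem_LHJB wellposed regularity}, treating $\frac{\delta F}{\delta m}(x,m(t))(\rho(t))+b(t,x)$ as the source and $\frac{\delta G}{\delta m}(x,m(T))(\rho(T))+z_T(x)$ as the terminal condition. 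Hypothesis H\ref{hyp1}) yields $\frac{\delta F}{\delta m}(\cdot,m(t))(\rho(t))\in C_\mathcal{X}^{1+\alpha}(\mathbb{T}^2)$ with norm controlled by $C_F\|\rho(t)\|_{-(1+\alpha)}$, and similarly H\ref{hyp2}) handles the terminal data, so Lemma \ref{Lem_LHJB wellposed regularity} delivers the bound \eqref{linear system_z regularity} on $z$. Plugging this $z$ into the KFP equation casts it in the form \eqref{general KFP} with drift $-V$ and source $\operatorname{div}_\mathcal{X}(m\Gamma D_\mathcal{X}z+c)$; Remark \ref{Rem_f=div(c)} shows this source belongs to $L^1([t_0,T];W_\mathcal{X}^{-1,\infty})$ with norm $\lesssim \|D_\mathcal{X}z\|_\infty+\sup_t\|c(t)\|_{L^1}$, so Lemma \ref{Lem_general KFP wellposed regularity} yields $\tilde\rho$ with the bound \eqref{linear system_rho regularity} on the spatial norm.

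For the fixed-point set I take
\begin{equation*}
E:=\bigl\{\rho\in C([t_0,T];C_\mathcal{X}^{-(1+\alpha)}):\ \sup_t\|\rho(t)\|_{-(1+\alpha)}\le C_*M,\ [\rho]_{C^{\alpha/2}([t_0,T];C_\mathcal{X}^{-(1+\alpha)})}\le C_*M\bigr\},
\end{equation*}
with $C_*$ chosen so that $\Phi(E)\subset E$; compactness follows from Arzel\`a--Ascoli applied to $C([t_0,T];C_\mathcal{X}^{-(1+\alpha)})$ using the uniform H\"older-in-time modulus, while continuity of $\Phi$ is a direct consequence of the linearity together with the stability statements in Lemma \ref{Lem_LHJB wellposed regularity} and Lemma \ref{Lem_general KFP wellposed regularity}. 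Schauder's theorem then produces a fixed point, which is a solution.

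Uniqueness is obtained by the Lasry--Lions monotonicity argument. If $(z_1,\rho_1),(z_2,\rho_2)$ are two solutions, set $\bar z=z_1-z_2$, $\bar\rho=\rho_1-\rho_2$; they solve the homogeneous system (all external data cancel). Pairing the $z$-equation with $\bar\rho$ and the $\rho$-equation with $\bar z$, integrating by parts on $[t_0,T]\times\mathbb{T}^2$ (using $X_i^*=-X_i$ for Grushin), and adding, we arrive at
\begin{equation*}
\int_{\mathbb{T}^2}\!\tfrac{\delta G}{\delta m}(x,m(T))(\bar\rho(T))\bar\rho(T)\,dx+\!\int_{t_0}^T\!\!\int_{\mathbb{T}^2}\!\Bigl[\tfrac{\delta F}{\delta m}(x,m(t))(\bar\rho(t))\bar\rho(t)+m\Gamma D_\mathcal{X}\bar z\cdot D_\mathcal{X}\bar z\Bigr]dx\,dt=0.
\end{equation*}
Conditions \eqref{property_F}, \eqref{property_G} and the positivity of $m\Gamma$ from \eqref{Assump_linear system} force each term to vanish, yielding $\bar\rho\equiv 0$ on $\{m>0\}$ in the sense of the coupling, and then $\bar z\equiv 0$ from the backward equation, finally $\bar\rho\equiv 0$ everywhere from the KFP.

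The most delicate point will be obtaining the time H\"older estimate $\|\rho(t')-\rho(t)\|_{-(1+\alpha)}\le C M|t'-t|^{\alpha/2}$ in \eqref{linear system_rho regularity}, since Lemma \ref{Lem_general KFP wellposed regularity} only provides the parabolic norm $\|\rho\|_{C_\mathcal{X}^{-\alpha/2,-\alpha}}$. To close this, I will use duality: for $t<t'$ and $\psi\in C_\mathcal{X}^{1+\alpha}(\mathbb{T}^2)$, let $\phi$ solve the backward dual problem \eqref{dual KFP} on $[t,t']$ with $\xi\equiv 0$ and terminal $\psi$. Lemma \ref{Lem_homo. LHJB Schauder estimate} gives $\|\phi\|_{C_\mathcal{X}^{\alpha/2,1+\alpha}}\le C\|\psi\|_{1+\alpha}$, in particular $\|\phi(t,\cdot)-\psi\|_{1+\alpha}\le C\|\psi\|_{1+\alpha}|t'-t|^{\alpha/2}$. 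Writing the weak formulation on $[t,t']$ and splitting $\langle\rho(t'),\psi\rangle-\langle\rho(t),\psi\rangle=\langle\rho(t),\phi(t,\cdot)-\psi\rangle+\int_t^{t'}\!\langle\mathrm{source},\phi\rangle\,ds$, taking the supremum over $\|\psi\|_{1+\alpha}\le 1$, and invoking the already-established spatial bound on $\rho$ and the $C^1_tC_\mathcal{X}^{1+\alpha}$ bound on $z$, yields the required $\alpha/2$-H\"older estimate. The analogous argument with Lemma \ref{Lem_LHJB wellposed regularity}'s time-regularity for $z$ on $[t_0,T']$ provides the $\beta$-H\"older bound in \eqref{linear system_z regularity}.
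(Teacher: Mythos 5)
There is a genuine gap at the heart of your existence argument: the claim that you can choose $C_*$ so that $\Phi(E)\subset E$ does not hold for a general time horizon. Trace the constants. From $\rho$ with $\sup_t\|\rho(t)\|_{-(1+\alpha)}\le C_*M$, Lemma \ref{Lem_LHJB wellposed regularity} gives $\sup_t\|z(t,\cdot)\|_{2+\alpha}\le C_1(M+C_*M)$. Feeding this into the KFP equation, Lemma \ref{Lem_general KFP wellposed regularity} together with Remark \ref{Rem_f=div(c)} gives $\sup_t\|\tilde\rho(t)\|_{-(1+\alpha)}\le C_2\bigl(M+\|m\Gamma D_\mathcal{X}z\|_{L^1}\bigr)$, and $\|m\Gamma D_\mathcal{X}z\|_{L^1}\le \bar C\, T\,\sup_t\|z(t,\cdot)\|_{2+\alpha}\le \bar C\, T\, C_1(1+C_*)M$. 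The net effect is $\sup_t\|\tilde\rho\|\le C_3 M+C_4 T\, C_* M$, and no finite $C_*$ satisfies $C_3M+C_4TC_*M\le C_*M$ unless $C_4T<1$. So a plain Schauder self-mapping only works for small time horizons, whereas the statement must hold on arbitrary $[t_0,T]$. This is precisely why the paper uses the \emph{Leray--Schauder} theorem instead: one only needs an a priori bound on all solutions of $\rho=\sigma\Psi(\rho)$, uniformly in $\sigma\in[0,1]$, not invariance of a set.

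The missing step, and the real crux of the lemma, is that the monotonicity you invoke only for uniqueness is also what produces that a priori bound. Using $z$ as a test function in the KFP equation of the coupled family and invoking \eqref{property_F}, \eqref{property_G} and the positivity of $\Gamma$ in \eqref{Assump_linear system}, one obtains the energy inequality
$$\sigma\int_{t_0}^{T}\!\!\int_{\mathbb{T}^2}D_{\mathcal{X}}z\cdot(\Gamma D_{\mathcal{X}}z)\,m\,dx\,dt\le M\Bigl(\sup_t\|\rho(t)\|_{-(1+\alpha)}+\sigma\sup_t\|z(t,\cdot)\|_{2+\alpha}\Bigr),$$
and the Cauchy--Schwarz decomposition $\|m\Gamma D_\mathcal{X}z\|_{L^1}\le(\int\!\!\int m\,D_\mathcal{X}z\cdot\Gamma^2D_\mathcal{X}z)^{1/2}(\int\!\!\int m)^{1/2}$ (using $\int m=1$) converts this into a bound with a sublinear $M^{1/2}(\cdot)^{1/2}$ dependence, which closes under an $\varepsilon$-Cauchy inequality to give $\sup_t\|\rho(t)\|_{-(1+\alpha)}\le CM$ independently of $\sigma$. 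Without this energy argument your fixed-point step does not close. Your time-H\"older argument for $\rho$ via duality with Lemma \ref{Lem_homo. LHJB Schauder estimate} is on the right track and matches the paper, but it feeds on the spatial a priori bound you haven't actually secured. Your uniqueness argument is also looser than needed: vanishing of $\int\!\!\int m\Gamma|D_\mathcal{X}\bar z|^2$ only gives $D_\mathcal{X}\bar z=0$ where $m>0$, not $\bar\rho\equiv 0$; the clean route, once the a priori estimates are proved, is simply to apply them to $(\bar z,\bar\rho)$ with $M=0$.
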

\begin{proof}
Assume $t_{0}=0$ without loss of generality. We use the Leray-Schauder fixed point theorem to prove the existence, where the crucial point is to show the estimates \eqref{linear system_z regularity} and \eqref{linear system_rho regularity}.

\emph{Step 1: Definition of the map $\Psi$.} Set $X:=C\left([0,T];C_{\mathcal{X}}^{-(1+\alpha)}(\mathbb{T}^2)\right)$. For any $\rho \in X$, we define $\Psi(\rho)$ in the following way.

First, we call $z$ the solution to
\begin{equation}\label{linear system_z}
\begin{cases}
-\partial_{t} z-\Delta_{\mathcal{X}}z+V(t,x)\cdot D_{\mathcal{X}}z=\frac{\delta F}{\delta m}(x,m(t))(\rho(t))+b(t,x), & \text { in }[0,T] \times \mathbb{T}^{2}, \\
z(T)=\frac{\delta G}{\delta m}(x, m(T))(\rho(T))+z_{T}, & \text { in } \mathbb{T}^{2}.\end{cases}
\end{equation}
By Lemma \ref{Lem_LHJB wellposed regularity}, there exists a unique solution $z \in C_{\mathcal{X}}^{1,2} \left((0, T) \times \mathbb{T}^{2} \right) \bigcap C([0, T] \times \mathbb{T}^2)$ to the above equation and it satisfies
\begin{equation}\label{z regularity}
\begin{aligned}
& \sup _{t \in[0, T]}\|z(t, \cdot)\|_{2+\alpha} + \sup _{\substack{t \neq t'\\t,t' \in [0,T']}} \frac{\left\|z\left(t^{\prime}, \cdot\right)-z(t, \cdot)\right\|_{2+\alpha}}{\left|t^{\prime}-t\right|^\beta} \\
\leqslant & C\left(\left\|\frac{\delta G}{\delta m}(x, m(T))(\rho(T))\right\|_{2+\alpha}+\left\|z_T\right\|_{2+\alpha}\right.\\
& \left.+ \sup _{t \in[0, T]}\|\frac{\delta F}{\delta m}(\cdot,m(t))(\rho(t))\|_{1+\alpha}+\sup _{t \in[0,T]}\|b(t,\cdot)\|_{1+\alpha}\right) \\
\leqslant & C\left(\left\|z_T\right\|_{2+\alpha} + \sup _{t \in[0,T]}\|\rho(t)\|_{-(1+\alpha)} + \sup _{t \in[0,T]}\|b(t,\cdot)\|_{1+\alpha}\right) \\
\leqslant & C\left(M + \sup _{t \in[0,T]}\|\rho(t)\|_{-(1+\alpha)}\right),
\end{aligned}
\end{equation}
where the constant $C$ depends on $\sup\limits_{t \in \left[0,T\right]}\|V(t,\cdot)\|_{1+\alpha}$, $\alpha$ and the constants in H\ref{hyp1}) and H\ref{hyp2}).

Next we define $\Psi(\rho):=\tilde{\rho}$ as the distributional solution to the KFP equation
\begin{equation*}
\begin{cases}
\partial_t\tilde{\rho}-\Delta_\mathcal{X}\tilde{\rho}-\operatorname{div}_{\mathcal{X}}(\tilde{\rho} V)-\operatorname{div}_{\mathcal{X}}(m \Gamma D_{\mathcal{X}}z+c)=0, & \text{ in } [0,T] \times \mathbb{T}^2,\\
\tilde{\rho}(0)=\rho_{0}, & \text{ in } \mathbb{T}^2.
\end{cases}
\end{equation*}
From Lemma \ref{Lem_general KFP wellposed regularity} we know that $\tilde{\rho} \in X$. We shall prove that the map $\Psi$ is compact and continuous.

As for the compactness, let $\{\rho_n\}_n \subset X$ be a sequence with
$$
\sup\limits_{t \in [0,T]}\left\|\rho_n\right\|_{-(1+\alpha)} \leq C
$$
for a certain constant $C>0$. For each $n$, we consider the corresponding solutions $z_n$ and $\tilde{\rho}_n$. It follows from \eqref{z regularity} that$\left\|z_n\right\|_{C\left([0,T];C_{\mathcal{X}}^{2+\alpha}(\mathbb{T}^2)\right)}$ and $\left\|z_n\right\|_{C^{\beta}\left([0,T'];C_{\mathcal{X}}^{2+\alpha}(\mathbb{T}^2)\right)}$ are uniformly bounded. So we can use Ascoli-Arzel\`{a}'s theorem to obtain that there exists a function $z$ such that $z_n \rightarrow z$ up to a subsequence uniformly at least in $C\left([0,T'];C_{\mathcal{X}}^{1}(\mathbb{T}^2)\right)$. Furthermore, for any $t \in (T',T]$, there exists a subsequence $n_k(t)\rightarrow\infty$, such that $D_{\mathcal{X}}z_{n_k(t)}(t,x)$ converges to $D_{\mathcal{X}}z(t,x)$ uniformly in $x$. Then, combining with the $L^1$ boundedness of $m$ we can estimate
\begin{align*}
& \left\|m\Gamma\left(D_{\mathcal{X}}z_{n_k(t)}-D_{\mathcal{X}}z\right)\right\|_ {L^1\left([0,T]\times\mathbb{T}^2\right)}\\
= & \int_{0}^{T'}\int_{\mathbb{T}^2}\left|m\Gamma\left(D_{\mathcal{X}}z_{n_k(t)}-D_{\mathcal{X}}z\right)\right|dxdt +\int_{T'}^{T}\int_{\mathbb{T}^2}\left|m\Gamma\left(D_{\mathcal{X}}z_{n_k(t)}-D_{\mathcal{X}}z\right)\right|dxdt \\
\leq & C_1\left(\left\|D_{\mathcal{X}}z_{n_k(t)}-D_{\mathcal{X}}z\right\|_ {L^\infty\left([0,T]\times\mathbb{T}^2\right)}+\left|T-T'\right|\right),
\end{align*}
where $C_1$ is independent of $n$ and $T'$. Let $k \rightarrow \infty$ and $T'\rightarrow T$, we have
$$
m\Gamma D_{\mathcal{X}}z_{n_k(T)}+c \rightarrow m\Gamma D_{\mathcal{X}}z+c \text{ in } L^1([0,T] \times \mathbb{T}^2),
$$
which directly implies
$$
\operatorname{div}_{\mathcal{X}}\left(m\Gamma D_{\mathcal{X}}z_{n_k(T)}+c\right) \rightarrow \operatorname{div}_{\mathcal{X}}\left(m\Gamma D_{\mathcal{X}}z+c\right) \text{ in } L^1\left([0,T];W_{\mathcal{X}}^{-1,\infty}(\mathbb{T}^2)\right).
$$
Thus, it turns out that the stability result proved in Lemma \ref{Lem_general KFP wellposed regularity} shows that $\tilde{\rho}_{n_k(T)} \rightarrow \tilde{\rho}$ in $X$, where $\tilde{\rho}$ is the solution associated to $D_{\mathcal{X}}z$. This proves that $\Psi$ is compact on $X$.

The continuity of $\Psi$ can be obtained by using the same method as the compactness.

It remains to check that the other condition of Leray-Schauder theorem holds true. Here we fix $(\rho,\sigma) \in X \times [0,1]$ such that $\rho=\sigma \Psi(\rho)$ and let $z$ be the solution to the equation \eqref{linear system_z}. It can be implied that $(z,\rho)$ satisfies
\begin{equation}\label{linear system_LST}
\begin{cases}
-\partial_t z-\Delta_{\mathcal{X}} z+V \cdot D_{\mathcal{X}} z=\frac{\delta F}{\delta m}(x,m(t))(\rho(t))+b,& \text {in }[0, T] \times \mathbb{T}^2, \\
\partial_t\rho-\Delta_\mathcal{X}\rho-\operatorname{div}_{\mathcal{X}}(\rho V)-\sigma\operatorname{div}_{\mathcal{X}}(m \Gamma D_{\mathcal{X}}z+c)=0,& \text {in }[0, T] \times \mathbb{T}^2, \\
z(T, x)=\frac{\delta G}{\delta m}(x,m(T))(\rho(T))+z_T,\quad \rho(0)=\sigma\rho
_0,& \text {in }\mathbb{T}^2.
\end{cases}
\end{equation}
We need to show that $\rho$ satisfies \eqref{linear system_rho regularity}, and this will be proved in the next step.

\emph{Step 2: Estimate of $\rho$.} It can be noticed that
\begin{gather*}
\frac{\delta F}{\delta m}(x,m(t))(\rho(t))+b(t,x) \in C\left([0,T];C_{\mathcal{X}}^{1+\alpha}(\mathbb{T}^2)\right), \\
\frac{\delta G}{\delta m}(x,m(T))(\rho(T))+z_T(x) \in C_{\mathcal{X}}^{1+\alpha}(\mathbb{T}^2),
\end{gather*}
so we can use $z$ as a test function for the equation of $\rho$ to obtain the weak formulation:
\begin{align*}
& \int_{\mathbb{T}^2}\left(\rho(T,x)z(T,x)-\sigma\rho_{0}(x)z(0,x)\right)dx \\
= & -\sigma \int_{0}^{T} \int_{\mathbb{T}^2} c(t,x) \cdot D_{\mathcal{X}}z(t,x)dxdt \\
& -\int_{0}^{T}\int_{\mathbb{T}^2} \left(\frac{\delta F}{\delta m}(x,m(t))(\rho(t))+b(t,x)\right)\rho(t,x)dxdt \\
& -\sigma \int_{0}^{T} \int_{\mathbb{T}^2}D_{\mathcal{X}}z(t,x) \cdot \left(\Gamma(t,x)D_{\mathcal{X}}z(t,x)\right)m(t,x)dxdt.
\end{align*}
It can be estimated by the terminal condition of $z$ together with the properties of $F$ and $G$ in \eqref{property_F} and \eqref{property_G} that
\begin{equation}\label{energy estimate}
\begin{aligned}
& \sigma \int_{0}^{T} \int_{\mathbb{T}^2}D_{\mathcal{X}}z(t,x) \cdot \left(\Gamma(t,x)D_{\mathcal{X}}z(t,x)\right)m(t,x)dxdt \\
\leq & \sup_{t \in [0,T]}\left\|\rho(t)\right\|_{-(1+\alpha)} \left(\left\|z_T\right\|_{2+\alpha}+\sup_{t \in [0,T]}\left\|b(t,\cdot)\right\|_{1+\alpha}\right) \\
& + \sigma\sup_{t \in [0,T]}\left\|z(t,\cdot)\right\|_{2+\alpha} \left(\left\|\rho_0\right\|_{-(1+\alpha)}+\|c\|_{L^1}\right) \\
\leq & M\left(\sup_{t \in [0,T]}\left\|\rho(t)\right\|_{-(1+\alpha)} + \sigma\sup_{t \in [0,T]}\left\|z(t,\cdot)\right\|_{2+\alpha}\right).
\end{aligned}
\end{equation}
On the other hand, using Lemma \ref{Lem_general KFP wellposed regularity} and Remark \ref{Rem_f=div(c)} we have
\begin{equation}\label{rho space regularity 1}
\sup_{t \in [0,T]}\left\|\rho(t)\right\|_{-(1+\alpha)} \leq C\left(\sigma\left\|m\Gamma D_{\mathcal{X}}z\right\|_{L^1}+\|c\|_{L^1}+\left\|\rho_0\right\|_{-(1+\alpha)}\right).
\end{equation}
As for the first term on the right hand side of the above inequality, since $\Gamma(t,x)$ is a real symmetric matrix and satisfies \eqref{Assump_linear system}, we can use H\"{o}lder's inequality and \eqref{energy estimate} to get
\begin{align*}
\sigma\left\|m\Gamma D_{\mathcal{X}}z\right\|_{L^1}= & \sigma\int_{0}^{T}\int_{\mathbb{T}^2} m \left(D_{\mathcal{X}}z\cdot\left(\Gamma^2 D_{\mathcal{X}}z\right)\right)^{\frac{1}{2}}dxdt \\
\leq & \sigma\left(\int_{0}^{T}\int_{\mathbb{T}^2} m\left(D_{\mathcal{X}}z\cdot\left(\Gamma^2 D_{\mathcal{X}}z\right)\right)dxdt\right)^\frac{1}{2} \left(\int_{0}^{T}\int_{\mathbb{T}^2} mdxdt\right)^\frac{1}{2} \\
\leq & C M^\frac{1}{2} \left(\sup_{t \in [0,T]}\left\|\rho(t)\right\|_{-(1+\alpha)}^{\frac{1}{2}} + \sup_{t \in [0,T]}\left\|z(t,\cdot)\right\|_{2+\alpha}^{\frac{1}{2}}\right).
\end{align*}
Putting the above estimate into \eqref{rho space regularity 1} one has
\begin{equation*}
\sup_{t \in [0,T]}\left\|\rho(t)\right\|_{-(1+\alpha)} \leq C\left(M^\frac{1}{2} \left(\sup_{t \in [0,T]}\left\|\rho(t)\right\|_{-(1+\alpha)}^{\frac{1}{2}} + \sup_{t \in [0,T]}\left\|z(t,\cdot)\right\|_{2+\alpha}^{\frac{1}{2}}\right)+M\right).
\end{equation*}
Using the $\varepsilon$-Cauchy inequality with suitable coefficients we get
\begin{equation*}
\sup_{t \in [0,T]}\left\|\rho(t)\right\|_{-(1+\alpha)} \leq C\left(M^\frac{1}{2} \sup_{t \in [0,T]}\left\|z(t,\cdot)\right\|_{2+\alpha}^{\frac{1}{2}}+M\right).
\end{equation*}
Combining this with the estimate of $z$ in \eqref{z regularity} and using $\varepsilon$-Cauchy inequality again, we finally get a estimate for $\rho$:
\begin{equation}\label{rho space regularity 2}
\sup_{t \in [0,T]}\left\|\rho(t)\right\|_{-(1+\alpha)} \leq CM,
\end{equation}
thus yielding the estimates for $z$. At this point we have completed the proof of existence.

It remains to prove the time regularity of $\rho$, which can also be estimated by duality. Let $t' \in (0,T]$, $\psi \in C_{\mathcal{X}}^{1+\alpha}(\mathbb{T}^2)$ and $\phi$ be the solution to the backward equation
\begin{equation*}
\begin{cases}
-\partial_t\phi-\Delta_\mathcal{X}\phi+V(t,x)\cdot D_{\mathcal{X}}\phi=0, & \text{ in } [0,t') \times \mathbb{T}^2, \\
\phi(t')=\psi, & \text{ in } \mathbb{T}^2.
\end{cases}
\end{equation*}
Lemma \ref{Lem_homo. LHJB Schauder estimate} states that
\begin{equation}\label{test fun. regularity}
\left\|\phi\right\|_{C_{\mathcal{X}}^{\frac{\alpha}{2},1+\alpha}([0,t']\times\mathbb{T}^2)} \leqslant C\left\|\psi\right\|_{C_{\mathcal{X}}^{1+\alpha}(\mathbb{T}^2)},
\end{equation}
where $C$ depends on $\left\|V\right\|_{C_{\mathcal{X}}^{\frac{\alpha}{2},1+\alpha}\left([0,t']\times\mathbb{T}^2\right)}$. Choose $\phi$ as a test function for the equation of $\rho$ in \eqref{linear system_LST}, and use the H\"{o}lder estimate with respect to time in \eqref{test fun. regularity}, then we have, for any $t \in [0,t']$,
\begin{align*}
& \int_{\mathbb{T}^2}\psi(x)(\rho(t',x)-\rho(t,x))dx \\
= & \int_{\mathbb{T}^2}(\phi(t,x)-\phi(t',x))\rho(t,x)dx \\
& -\sigma \int_{t}^{t'}\int_{\mathbb{T}^{2}} D_{\mathcal{X}}\phi(s,x)\cdot\left(\Gamma(s,x)D_{\mathcal{X}}z(s,x)\right)m(s,x) dxds \\
& -\sigma \int_{t}^{t'}\int_{\mathbb{T}^{2}}c(s,x)\cdot D_{\mathcal{X}}\phi(s,x)dxds \\
\leqslant & C(t'-t)^{\frac{\alpha}{2}}\|\psi\|_{1+\alpha}\sup_{t \in [0,T]} \|\rho(t)\|_{-(1+\alpha)} \\
& +\bar{C}^{\frac{1}{2}}(t'-t)^{\frac{1}{2}}\left\|D_{\mathcal{X}}\phi\right\|_{\infty} \left(\int_{0}^{T}\int_{\mathbb{T}^{2}} D_{\mathcal{X}}z(s,x)\cdot\left(\Gamma(s,x)D_{\mathcal{X}}z(s,x)\right)m(s,x)dxds\right)^{\frac{1}{2}} \\
& +(t'-t)\sup_{t \in [0,T]}\|c(t)\|_{L^1}\sup_{t \in [0,T]}\|\phi(t,\cdot)\|_{1+\alpha},
\end{align*}
where the second term on the right hand side of the above inequality is obtained from the H\"{o}lder inequality.

Using \eqref{energy estimate}, \eqref{rho space regularity 2} together with \eqref{test fun. regularity}, we obtain
\begin{align*}
& \int_{\mathbb{T}^2}\psi(x)(\rho(t',x)-\rho(t,x))dx \\
\leq & C(t'-t)^{\frac{\alpha}{2}}\|\psi\|_{1+\alpha}\left(M^\frac{1}{2} \sup_{t \in [0,T]}\left\|z(t,\cdot)\right\|_{2+\alpha}^{\frac{1}{2}}+M\right).
\end{align*}
Dividing both sides by $(t'-t)^{\frac{\alpha}{2}}$, taking the supremum over $\psi$ and combining with the estimate of $z$ yields
$$
\sup _{\substack{t \neq t'\\t,t' \in [0,T]}} \frac{\left\|\rho\left(t^{\prime}\right)-\rho(t)\right\|_{-(1+\alpha)}}{\left|t^{\prime}-t\right|^\frac{\alpha}{2}} \leq C\left(M^\frac{1}{2} \sup_{t \in [0,T]}\left\|z(t,\cdot)\right\|_{2+\alpha}^{\frac{1}{2}}+M\right) \leq CM.
$$

\emph{Step 3: Uniqueness.} Let $\left(z_{1}, \rho_{1}\right)$ and $\left(z_{2}, \rho_{2}\right)$ be two solutions of system \eqref{general linear MFG system}. Therefore, the couple $(\bar{z},\bar{\rho}):=\left(z_{1}-z_{2}, \rho_{1}-\rho_{2}\right)$ satisfies the linear system as follows
\begin{equation*}
\begin{cases}
-\partial_t \bar{z}-\Delta_{\mathcal{X}}\bar{z}+V\cdot D_{\mathcal{X}}\bar{z}=\frac{\delta F}{\delta m}(x,m(t))(\bar{\rho}(t)), & \text{in } [0,T]\times\mathbb{T}^2,\\
\partial_t\bar{\rho}-\Delta_{\mathcal{X}}\bar{\rho}- \operatorname{div}_{\mathcal{X}}\left(\bar{\rho}V\right)-\operatorname{div}_{\mathcal{X}}\left(m\Gamma D_{\mathcal{X}}\bar{z}\right)=0, & \text{in } [0,T]\times\mathbb{T}^2, \\
\bar{z}(T,x)=\frac{\delta G}{\delta m}(x,m(T))(\bar{\rho}(T)),\quad \bar{\rho}\left(t_{0}\right)=0, & \text{in } \mathbb{T}^2.
\end{cases}
\end{equation*}
From the already proved estimates \eqref{linear system_z regularity} and \eqref{linear system_rho regularity}, it follows that
$$
\sup_{t \in \left[0,T\right]}\|\bar{z}(t,\cdot)\|_{2+\alpha} + \sup_{t \in \left[0,T\right]}\|\bar{\rho}(t)\|_{-(1+\alpha)} \leq 0,
$$
hence $\bar{z}=0$, $\bar{\rho}=0$. This concludes the lemma.
\end{proof}
Applying Lemma \ref{Lem_GL system regularity} to the linearized MFG system \eqref{linear MFG system} yields the following important lemma.
\begin{Lem}
Assume that H\ref{hyp1}) and H\ref{hyp2}) hold. If $m_{0} \in \mathcal{P}\left(\mathbb{T}^{2}\right)$ and $\rho_{0} \in C_{\mathcal{X}}^{-1}(\mathbb{T}^2)$, then there is a unique solution $(z,\rho)$ of system \eqref{linear MFG system} and this solution satisfies
\begin{equation}\label{linear MFG system_regularity}
\sup _{t \in \left[t_{0},T\right]}\left\{\|z(t,\cdot)\|_{2+\alpha}+\|\rho(t)\|_{-(1+\alpha)}\right\} \leqslant C\left\|\rho_{0}\right\|_{-(1+\alpha)},
\end{equation}
and
\begin{equation}\label{linear MFG system_lower regularity}
\sup _{t \in \left[t_{0},T\right]}\left\{\|z(t,\cdot)\|_{2+\alpha}+\|\rho(t)\|_{-(2+\alpha)}\right\} \leqslant C\left\|\rho_{0}\right\|_{-(2+\alpha)},
\end{equation}
where the constant $C$ depends on $T$, $H$, $F$ and $G$, but not on $\left(t_{0},m_{0}\right)$.
\end{Lem}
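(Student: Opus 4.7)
The plan is to recognize the linearized MFG system \eqref{linear MFG system} as the special instance of the general linear system \eqref{general linear MFG system} in which $V = D_pH(x,D_{\mathcal{X}}u) = D_{\mathcal{X}}u$, $\Gamma = D_{pp}^2 H \equiv I$, and the data $b$, $c$, $z_T$ all vanish. Proposition \ref{Prop_MFG system wellposed regularity} furnishes $u \in C_{\mathcal{X}}^{1+\alpha/2,2+\alpha}$ and the $\tfrac12$-Hölder control of $t\mapsto m(t)$ in $d_1$ required by \eqref{Assump_linear system}, uniformly in $(t_0,m_0)$; in particular $V \in C_{\mathcal{X}}^{\alpha/2,1+\alpha}$, and $\Gamma$ trivially satisfies the ellipticity bound. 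Lemma \ref{Lem_GL system regularity} therefore applies and delivers existence and uniqueness of $(z,\rho)$; since the data constant $M$ in that lemma collapses to $\|\rho_0\|_{-(1+\alpha)}$, estimate \eqref{linear MFG system_regularity} is immediate.

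The main work is the sharper bound \eqref{linear MFG system_lower regularity}. The key ingredient is the energy identity obtained by testing the $\rho$-equation against $z$ and integrating by parts; the cross terms cancel and, using the terminal data for $z$, one arrives at
\begin{equation*}
\int_{t_0}^{T}\!\!\int_{\mathbb{T}^2}\! D_{\mathcal{X}}z \cdot (\Gamma D_{\mathcal{X}}z)\, m \, dxdt + \Big\langle\tfrac{\delta G}{\delta m}(m(T))(\rho(T)),\rho(T)\Big\rangle + \int_{t_0}^{T}\Big\langle\tfrac{\delta F}{\delta m}(m(t))(\rho(t)),\rho(t)\Big\rangle dt = \langle \rho_0,z(t_0,\cdot)\rangle.
\end{equation*}
All three terms on the left are non-negative thanks to \eqref{property_F}–\eqref{property_G}, so $\|\sqrt m\,D_{\mathcal{X}}z\|_{L^2([t_0,T]\times\mathbb{T}^2)}^2 \leq \bar C\,\|\rho_0\|_{-(2+\alpha)}\sup_t\|z(t,\cdot)\|_{2+\alpha}$. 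Cauchy–Schwarz, together with the fact that $m(t,\cdot)$ is a probability measure at each $t$, converts this into the square-root gain
\begin{equation*}
\|m\Gamma D_{\mathcal{X}}z\|_{L^1([t_0,T]\times\mathbb{T}^2)} \leq C\bigl(\|\rho_0\|_{-(2+\alpha)}\sup_{t}\|z(t,\cdot)\|_{2+\alpha}\bigr)^{1/2}.
\end{equation*}

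I would then close the argument by running two one-sided estimates in parallel. Corollary \ref{Cor_(L)KFP lower regularity} applied to the $\rho$-equation, viewed with source $\operatorname{div}_{\mathcal{X}}(m\Gamma D_{\mathcal{X}}z)$ in the sense of Remark \ref{Rem_f=div(c)}, gives $\sup_t\|\rho(t)\|_{-(2+\alpha)} \leq C(\|\rho_0\|_{-(2+\alpha)} + \|m\Gamma D_{\mathcal{X}}z\|_{L^1})$. On the other side, hypotheses H\ref{hyp1})–H\ref{hyp2}) guarantee that $\frac{\delta F}{\delta m}(x,m,\cdot)$ and $\frac{\delta G}{\delta m}(x,m,\cdot)$ are bounded in $C_{\mathcal{X}}^{2+\alpha}$ in $y$ uniformly in the other variables, so the source and terminal data of the $z$-equation obey $\sup_t\|\tfrac{\delta F}{\delta m}(\cdot,m)(\rho)\|_{1+\alpha} + \|\tfrac{\delta G}{\delta m}(\cdot,m(T))(\rho(T))\|_{2+\alpha} \leq C\sup_t\|\rho(t)\|_{-(2+\alpha)}$, and Lemma \ref{Lem_LHJB wellposed regularity} then yields $\sup_t\|z(t,\cdot)\|_{2+\alpha} \leq C\sup_t\|\rho(t)\|_{-(2+\alpha)}$. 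Chaining these two bounds through the inequality of the previous paragraph produces the self-improving estimate
\begin{equation*}
\sup_t\|z(t,\cdot)\|_{2+\alpha} \leq C\bigl(\|\rho_0\|_{-(2+\alpha)} + \|\rho_0\|_{-(2+\alpha)}^{1/2}\sup_t\|z(t,\cdot)\|_{2+\alpha}^{1/2}\bigr),
\end{equation*}
and an application of the $\varepsilon$-Cauchy inequality absorbs the square-root term into the left-hand side, giving $\sup_t\|z(t,\cdot)\|_{2+\alpha} \leq C\|\rho_0\|_{-(2+\alpha)}$ and hence the desired bound on $\sup_t\|\rho(t)\|_{-(2+\alpha)}$.

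The main obstacle is precisely the circularity between the estimates on $z$ and on $\rho$: neither the Schauder-type bound of Lemma \ref{Lem_LHJB wellposed regularity} nor the dual KFP bound of Corollary \ref{Cor_(L)KFP lower regularity} is alone sharp enough, and they close only thanks to the square-root gain provided by the energy identity, itself a consequence of the monotonicity conditions \eqref{property_F}–\eqref{property_G}. A secondary, technical point is that rigorously using $z$ as a test function for the $\rho$-equation when $\rho_0 \in C_{\mathcal{X}}^{-(2+\alpha)}$ only requires first approximating $\rho_0$ by smooth densities, performing the above computation in the regular setting, and then passing to the limit via the stability assertion in Lemma \ref{Lem_general KFP wellposed regularity}.
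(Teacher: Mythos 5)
Your proof is correct and, in substance, is exactly what the paper leaves implicit: the paper's entire proof of this lemma reads as a citation of Lemma \ref{Lem_GL system regularity} and Corollary \ref{Cor_(L)KFP lower regularity} with $V=D_pH(\cdot,D_{\mathcal{X}}u)$, $\Gamma=D_{pp}^2H$, $z_T=b=c=0$, and the bootstrap you carry out for the $-(2+\alpha)$ estimate (energy identity from monotonicity, square-root gain on $\|m\Gamma D_{\mathcal{X}}z\|_{L^1}$, Corollary \ref{Cor_(L)KFP lower regularity} plus the Schauder bound from Lemma \ref{Lem_LHJB wellposed regularity}, then $\varepsilon$-Cauchy absorption) is precisely the rerun of the proof of Lemma \ref{Lem_GL system regularity} in the lower-order dual scale that the citation implicitly requires. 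Both estimates and the mollification step for $\rho_0$ check out; this matches the paper's route.
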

Take note that the map $\rho_{0} \rightarrow (z,\rho)$ is linear and continuous from $C_{\mathcal{X}}^{-(2+\alpha)}(\mathbb{T}^2)$ into $C\left(\left[t_{0},T\right];C_{\mathcal{X}}^{2+\alpha}(\mathbb{T}^2) \times C_{\mathcal{X}}^{-(1+\alpha)}(\mathbb{T}^2)\right)$.
\begin{proof}
This is a direct application of Lemma \ref{Lem_GL system regularity} and Corollary \ref{Cor_(L)KFP lower regularity}, with the coefficients $V(t,x)=D_pH(x,D_{\mathcal{X}}u(t,x))$, $\Gamma(t,x)=D_{pp}^2 H(x,D_{\mathcal{X}}u(t,x))$ and $z_{T}=b=c=0$. Note that in the light of Proposition \ref{Prop_MFG system wellposed regularity}, $V$ belongs to $C_{\mathcal{X}}^{\frac{\alpha}{2},1+\alpha}\left([0,T] \times \mathbb{T}^2\right)$.
\end{proof}
Using the above lemma we can then prove Lemma \ref{Lem_relation of z and rho_0}.
\begin{proof}[Proof of Lemma \ref{Lem_relation of z and rho_0}]
Let us denote $z(t,x;\rho_0)$ as the solution of the first component of system \eqref{linear MFG system} related to $\rho_0$. For any $y \in \mathbb{T}^2$, setting $\rho_0=\delta_y$, the Dirac measure mass at $y$, we can define
$$
K(t_0,x,m_0,y):=z(t_0,x;\delta_y).
$$
Thanks to the linearity of the system, we easily get the representation formula \eqref{linear MFG system_rep.formula} since the density of simple measures. Moreover, for any $y \in \mathbb{T}^2$, let $\gamma(t)$ be the absolutely continuous integral curve of the vector fields $\{X_1,X_2\}$ such that
$$
\begin{cases}
\gamma^{\prime}(t)=X_i(\gamma(t)), \\
\gamma(0)=y.
\end{cases}
$$
Then
\begin{equation*}
\frac{K(t_0,x,m_0,\gamma(t))-K(t_0,x,m_0,y)}{t}=z(t_0,x;\frac{\delta_{\gamma(t)}-\delta_y}{t}).
\end{equation*}
Denoting $\Delta_{t}^y f:=\frac{f(\gamma(t))-f(y)}{t}$ for any map $f$, we need to prove the limit exists when $t \rightarrow 0$, for this we consider the Cauchy sequence. Hence, estimate \eqref{linear MFG system_regularity} and the Lagrange mean value theorem implies that, for any $t_1,t_2 >0$,
\begin{align*}
& \left\|\Delta_{t_1}^y K(t_0,\cdot,m_0,y)-\Delta_{t_2}^y K(t_0,\cdot,m_0,y)\right\|_{2+\alpha}\\
\leq & C\left\|\Delta_{t_1}^y \delta_y-\Delta_{t_2}^y \delta_y\right\|_{-(1+\alpha)} \\
= & C\sup_{\|\psi\|_{1+\alpha}\leq 1}\left(\Delta_{t_1}^y \psi-\Delta_{t_2}^y \psi\right) \\
= & C\sup_{\|\psi\|_{1+\alpha}\leq 1}\left(\frac{d\psi\left(\gamma(t)\right)}{dt}\bigg|_{t=\lambda_1 t_1}-\frac{d\psi\left(\gamma(t)\right)}{dt}\bigg|_{t=\lambda_2 t_2}\right) \\
= & C\sup_{\|\psi\|_{1+\alpha}\leq 1}\left(X_i\psi\left(\gamma(\lambda_1t_1)\right)-X_i\psi\left(\gamma(\lambda_2t_2)\right)\right)\\
\leq & Cd_{cc}\left(\gamma(\lambda_1t_1),\gamma(\lambda_2t_2)\right)^{\alpha} \leq C\left|\lambda_1t_1-\lambda_2t_2\right|^{\alpha},
\end{align*}
where $\lambda_j \in (0,1), j=1,2$. Letting $t_1,t_2 \rightarrow 0$, we get that $\Delta_{t}^y K$ is a Cauchy sequence and the limit exists, that is
$$
X_i K(t_0,x,m_0,y)=z(t_0,x;X_i^* \delta_y).
$$
Replacing $K$ with $X_i K$ and repeating the above steps, by using the estimate \eqref{linear MFG system_lower regularity} we then obtain that
$$
X_j X_i K(t_0,x,m_0,y)=z(t_0,x;X_i^*X_j^* \delta_y).
$$
Consider any $y,y' \in \mathbb{T}^2$, estimate \eqref{linear MFG system_lower regularity} combined with the linearity of system \eqref{linear MFG system} implies that
\begin{align*}
\left\|X_j X_i K(t_0,\cdot,m_0,y)-X_j X_i K(t_0,\cdot,m_0,y')\right\|_{2+\alpha} \leq
 & C\left\|X_i^* X_j^* \delta_y-X_i^* X_j^* \delta_{y'}\right\|_{-(2+\alpha)} \\
\leq & C\left\|\delta_y-\delta_{y'}\right\|_{-\alpha} \\
\leq & Cd_{cc}(y,y')^{\alpha}.
\end{align*}
Consequently we can get
\begin{equation*}
\left\|K\left(t_{0}, \cdot, m_{0}, \cdot\right)\right\|_{(2+\alpha,2+\alpha)} \leqslant C,
\end{equation*}
where $C$ does not depend on $(t_0,m_0)$. And it follows from both the stability of the MFG system \eqref{MFG system} and the linearized system \eqref{linear MFG system}, that $K$ and its derivatives in $(x,y)$ are continuous with respect to $(t_0,m_0)$.
\end{proof}
Let us now prove that $K$ is in fact the derivative of $U$ with respect to $m$.

\begin{proof}[Proof of Proposition \ref{Prop_U C^1 w.r.t. m}]
The method of proof is similar to \cite[Proposition 3.4.3]{19CDLL}, so we only explain the key parts. Set $\nu:=\hat{u}-u-z$ and $\mu:=\hat{m}-m-\rho$, then the pair $(\nu,\mu)$ satisfies the following linear system:
\begin{equation*}
\begin{cases}
-\partial_t \nu-\Delta_{\mathcal{X}} \nu+D_pH(x,D_{\mathcal{X}}u) \cdot D_{\mathcal{X}} \nu=\frac{\delta F}{\delta m}(x,m(t))(\mu(t))+b(t,x), \\
\partial_t\mu-\Delta_\mathcal{X}\mu-\operatorname{div}_{\mathcal{X}}\left(\mu D_pH(x,D_{\mathcal{X}}u)\right)\\
\quad -\operatorname{div}_{\mathcal{X}}\left(m D_{pp}^2H(x,D_{\mathcal{X}}u) D_{\mathcal{X}}\nu\right)-\operatorname{div}_{\mathcal{X}}(c)=0, \\
\nu(T, x)=\frac{\delta G}{\delta m}(x,m(T))(\mu(T))+z_T(x),\quad \mu(t_0)=0,
\end{cases}
\end{equation*}
where
\begin{align*}
b(t, x)= & -\int_{0}^{1}\left(D_{p} H\left(x, s D_{\mathcal{X}} \hat{u}+(1-s) D_{\mathcal{X}} u\right)-D_{p} H\left(x, D_{\mathcal{X}} u\right)\right) \cdot D_{\mathcal{X}}(\hat{u}-u) d s \\
& + \int_{0}^{1} \int_{\mathbb{T}^{2}}\left(\frac{\delta F}{\delta m}(x, s \hat{m}(t)+(1-s) m(t), y)\right. \\
& \left.-\frac{\delta F}{\delta m}(x, m(t), y)\right) d(\hat{m}(t)-m(t))(y) ds,
\end{align*}
\begin{align*}
c(t)= & (\hat{m}-m)(t) D_{pp}^{2} H\left(x, D_{\mathcal{X}} u(t, x)\right)\left(D_{\mathcal{X}}\hat{u}-D_{\mathcal{X}} u\right)(t, x) \\
& + \hat{m} \int_{0}^{1}\left(D_{pp}^{2} H\left(x, s D_{\mathcal{X}} \hat{u}(t, x)+(1-s) D_{\mathcal{X}} u(t, x)\right)\right. \\
& \left.-D_{pp}^{2} H\left(x, D_{\mathcal{X}} u(t, x)\right)\right)\left(D_{\mathcal{X}} \hat{u}-D_{\mathcal{X}} u\right)(t, x) d s
\end{align*}
and
\begin{align*}
z_{T}(x)= & \int_{0}^{1} \int_{\mathbb{T}^{2}}\left(\frac{\delta G}{\delta m}(x, s \hat{m}(T)+(1-s) m(T), y)\right. \\
& \left.-\frac{\delta G}{\delta m}(x, m(T), y)\right) d(\hat{m}(T)-m(T))(y) ds.
\end{align*}
We apply Lemma \ref{Lem_GL system regularity} to obtain an estimate of the solution $(\nu,\mu)$, and then estimate the coefficients based on the hypotheses H\ref{hyp1})-H\ref{hyp2}). We obtain the estimates for $b$ and $z_T$ using methods nearly analogous to \cite{19CDLL}, while $c$ needs to be estimated as follows
\begin{equation*}
\left\|c(t)\right\|_{L^1} \leq C\left(\left\|\left(u-\hat{u}\right)(t,\cdot)\right\|_{C_{\mathcal{X}}^2} d_1\left(m(t),\hat{m}(t)\right) +\left\|\left(u-\hat{u}\right)(t,\cdot)\right\|_{C_{\mathcal{X}}^1}^2\right).
\end{equation*}
By Proposition \ref{Prop_Lip. ctn. of U} we get
\begin{equation*}
\sup_{t \in [t_0,T]} \left\|c(t)\right\|_{L^1} \leq C d_1^2\left(m_0,\hat{m}_0\right).
\end{equation*}
Thus we obtain the final conclusion.
\end{proof}
Last but not least, the Lipschitz continuity of $\frac{\delta F}{\delta m}$ and $\frac{\delta G}{\delta m}$ with respect to $m$ can imply the Lipschitz continuity of $\frac{\delta U}{\delta m}$ with respect to $m$.
\begin{proof}[Proof of Proposition \ref{Prop_derivative Lip. w.r.t. m}]
For any $y \in \mathbb{T}^2$, set $\rho_0=\delta_y$, let us set $(z,\rho):=(z_1-z_2,\rho_1-\rho_2)$, where $(z_1,\rho_1)$ and $(z_2,\rho_2)$ are the solutions of the linear MFG system \eqref{linear MFG system} related to $(u_1,m_1)$ and $(u_2,m_2)$, respectively. The next steps are similar to the proof of Proposition \ref{Prop_U C^1 w.r.t. m}, and we may refer to \cite[Proposition 3.6.1]{19CDLL} for details.

For $i=1,2$, let $(z^i,\rho^i)$ be the solution of the linear MFG system \eqref{linear MFG system} related to $(u^i,m^i)$ and $\rho^i(t_0)=\rho_0$ for any $\rho_0 \in C_{\mathcal{X}}^{-1}\left(\mathbb{T}^2\right)$. Set $(z, \rho):=\left(z^1-z^2,\rho^1-\rho^2\right)$. To simplify the notation, we denote $H_1^{\prime}(t, x)=D_p H\left(x, D_{\mathcal{X}}u^1(t, x)\right)$, $H_1^{\prime \prime}(t, x)=D_{p p}^2 H\left(x, D_{\mathcal{X}}u^1(t, x)\right)$, $F_1^{\prime}(x, \rho)=\int_{\mathbb{T}^2} \frac{\delta F}{\delta m}\left(x, m^1, y\right) \rho(y) d y$, etc... Then $(z, \rho)$ satisfies
\begin{equation*}
\begin{cases}
-\partial_t z-\Delta_{\mathcal{X}} z+H_1^{\prime} \cdot D_{\mathcal{X}}z =F_1^{\prime}(\cdot, \rho)+b,& \text{in }\left[t_0, T\right] \times \mathbb{T}^2, \\
\partial_t \rho-\Delta_{\mathcal{X}}\rho-\operatorname{div}_{\mathcal{X}}\left(\rho H_1^{\prime}\right)-\operatorname{div}_{\mathcal{X}}\left(m^1 H_1^{\prime \prime} D_{\mathcal{X}}z\right)-\operatorname{div}_{\mathcal{X}}(c)=0,& \text {in }\left[t_0, T\right] \times \mathbb{T}^2, \\
z(T)=G_1^{\prime}(\rho(T))+z_T, \rho\left(t_0\right)=0,& \text {in } \mathbb{T}^2,
\end{cases}
\end{equation*}
where
\begin{align*}
& b(t, x):=F_1^{\prime}\left(x, \rho^2(t)\right)-F_2^{\prime}\left(x, \rho^2(t)\right)-\left[\left(H_1^{\prime}-H_2^{\prime}\right) \cdot D_{\mathcal{X}}z^2\right](t, x), \\
& c(t, x):=\rho^2(t, x)\left(H_1^{\prime}-H_2^{\prime}\right)(t, x)+\left[\left(m^1 H_1^{\prime \prime}-m^2 H_2^{\prime \prime}\right) D_{\mathcal{X}}z^2\right](t, x), \\
& z_T(x):=G_1^{\prime}\left(\rho^2(T)\right)-G_2^{\prime}\left(\rho^2(T)\right).
\end{align*}
By applying Lemma \ref{Lem_GL system regularity} with $V=H_1^{\prime}$ and $\Gamma=H_1^{\prime \prime}$, we have
\begin{equation*}
\sup_{t \in \left[t_{0},T\right]}\|z(t,\cdot)\|_{2+\alpha} \leq C\left(\left\|z_{T}\right\|_{2+\alpha}+\sup_{t \in \left[t_{0},T\right]}\left(\|b(t,\cdot)\|_{1+\alpha}+\|c(t)\|_{L^1}\right)\right).
\end{equation*}
Thanks to hypotheses H\ref{hyp1})-H\ref{hyp2}), let us estimate the various terms in the right-hand side as follows
\begin{align*}
& \left\|z_T\right\|_{2+\alpha} \\
\leq &\left\|\int_{\mathbb{T}^2}\left(\frac{\delta G}{\delta m}\left(0, \cdot, m^1(T), y\right)-\frac{\delta G}{\delta m}\left(0, \cdot, m^2(T), y\right)\right) \rho^2(T, y) d y\right\|_{2+\alpha} \\
\leq &\left\|\frac{\delta G}{\delta m}\left(0, \cdot, m^1(T), \cdot\right)-\frac{\delta G}{\delta m}\left(0, \cdot, m^2(T), \cdot\right)\right\|_{(2+\alpha, 2+\alpha)}\left\|\rho^2(T)\right\|_{-(2+\alpha)} \\
\leq & C d_1\left(m_0^1, m_0^2\right)\left\|\rho_0\right\|_{-(2+\alpha)},
\end{align*}
where the last inequality is obtained from Proposition \ref{Prop_Lip. ctn. of U} and estimate \eqref{linear MFG system_lower regularity}. By a similar argument, we have
\begin{align*}
\|b(t, \cdot)\|_{1+\alpha} \leq & \left\|F_1^{\prime}\left(\cdot, \rho^2(t)\right)-F_2^{\prime}\left(\cdot, \rho^2(t)\right)\right\|_{1+\alpha} \\
& +\left\|\left(H_1^{\prime}-H_2^{\prime}\right)(t, \cdot) D_{\mathcal{X}}z^2(t, \cdot)\right\|_{1+\alpha},
\end{align*}
where the first term can be estimated as
$$
\left\|F_1^{\prime}\left(\cdot, \rho^2(t)\right)-F_2^{\prime}\left(\cdot, \rho^2(t)\right)\right\|_{1+\alpha} \leq C d_1\left(m_0^1, m_0^2\right)\left\|\rho_0\right\|_{-(2+\alpha)},
$$
and the second one is bounded by
\begin{align*}
& \left\|\left(H_1^{\prime}-H_2^{\prime}\right)(t, \cdot) D_{\mathcal{X}}z^2(t, \cdot)\right\|_{1+\alpha} \\
= & \left\|\left(D_p H\left(\cdot, D_{\mathcal{X}}u^1(t, \cdot)\right)-D_p H\left(\cdot, D_{\mathcal{X}}u^2(t, \cdot)\right)\right) D_{\mathcal{X}}z^2(t, \cdot)\right\|_{1+\alpha} \\
\leq & \left\|\left(u^1-u^2\right)(t, \cdot)\right\|_{2+\alpha}\left\|z^2(t, \cdot)\right\|_{2+\alpha} \\
\leq & C d_1\left(m_0^1, m_0^2\right)\left\|\rho_0\right\|_{-(2+\alpha)}.
\end{align*}
Moreover,
\begin{align*}
\|c(t)\|_{L^1} \leq & C\left\|\left(u^1-u^2\right)(t, \cdot)\right\|_{2+\alpha}\left\|\rho^2(t,\cdot)\right\|_{-(1+\alpha)} \\
& +C d_1\left(m^1(t), m^2(t)\right)\left\|z^2(t, \cdot)\right\|_{C_{\mathcal{X}}^2}+C\left\|\left(u^1-u^2\right)(t, \cdot)\right\|_{C_{\mathcal{X}}^1}\left\|z^2(t, \cdot)\right\|_{C_{\mathcal{X}}^1} \\
\leq & C d_1\left(m_0^1, m_0^2\right)\left\|\rho_0\right\|_{-(1+\alpha)},
\end{align*}
where the last inequality is based on the estimate \eqref{linear MFG system_regularity}.

Combining the above inequalities yields
\begin{equation*}
\sup _{t \in\left[t_0, T\right]}\|z(t, \cdot)\|_{2+\alpha} \leq C d_1\left(m_0^1, m_0^2\right)\left\|\rho_0\right\|_{-(1+\alpha)}.
\end{equation*}
Since
\begin{equation*}
z\left(t_0, x\right)=\int_{\mathbb{T}^2}\left(\frac{\delta U}{\delta m}\left(t_0, x, m_0^1, y\right)-\frac{\delta U}{\delta m}\left(t_0, x, m_0^2, y\right)\right) \rho_0(y) d y,
\end{equation*}
we can prove the final result by respectively choosing $\rho_0=X_i^*\delta_y$ and $\rho_0=X_i^*\delta_y-X_i^*\delta_{y'}$ for any $y,y' \in \mathbb{T}^2$ and $i=1,2$.
\end{proof}

\section{Solvability of the first-order Master Equation}\label{Sec_6}

We have by now obtained the desired properties of the solution to the degenerate MFG system and the $C^1$ differentiability of $U$ with respect to the measure. Those are essential for the proof of the main theorem.
\begin{proof}[Proof of Theorem \ref{Thm_ME wellposed regularity}.]
The proof follows closely from \cite{19CDLL}. We give the details only for sake of completeness.

\emph{Step 1: Existence.}
Assume $(u,m)$ is the solution to the degenerate MFG system \eqref{MFG system} with the initial condition $m(t_0)=m_0 \in \mathcal{P}\left(\mathbb{T}^2\right)$, and let $U(t_0,x,m_0):=u(t_0,x)$. We need to prove that $U$ is a solution to the master equation \eqref{ME}. Taking a smooth sequence $\{m_0^n\}_n$ to approximate $m_0$ and denoting the corresponding solutions to system \eqref{MFG system} as $\left(u^n,m^n\right)$, we compute
\begin{align*}
\partial_t U\left(t_0,x,m_0^n\right) = & \lim_{h \rightarrow 0}\frac{U\left(t_0+h,x,m_0^n\right)-U\left(t_0,x,m_0^n\right)}{h} \\
= & \lim_{h \rightarrow 0}\frac{U\left(t_0+h,x,m_0^n\right)-U\left(t_0+h,x,m^n\left(t_0+h\right)\right)}{h} \\
& +  \lim_{h \rightarrow 0}\frac{U\left(t_0+h,x,m^n\left(t_0+h\right)\right)-U\left(t_0,x,m_0^n\right)}{h}\\
=: & I+II.
\end{align*}
Set $m_s^n=(1-s)m^n\left(t_0\right)+sm^n\left(t_0+h\right)$. Since $U$ is $C^1$ differentiable with respect to the measure and $m^n$ is smooth satisfying the KFP equation in system \eqref{MFG system}, then we have by the continuity of $\frac{\delta U}{\delta m}$ and integrating by parts that
\begin{align*}
I = & \lim_{h \rightarrow 0} \int_0^1 \int_{\mathbb{T}^2} \frac{\delta U}{\delta m}\left(t_0+h, x, m_s^n, y\right)\left(\frac{m^n\left(t_0, y\right)-m^n\left(t_0+h, y\right)}{h}\right) d y d s \\
= & \!\int_{\mathbb{T}^2}\! \bigg[\frac{\delta U}{\delta m}\left(t_0, x, m_0^n, y\right)\left(\!-\!\Delta_{\mathcal{X}} m^n(t_0,y)\!-\!\operatorname{div}_{\mathcal{X}}\left(m^n(t_0,y)D_pH(y, D_{\mathcal{X}}u^n(t_0,y))\right)\right)\!\bigg]\!d y \\
= & \int_{\mathbb{T}^2} -\Delta_{\mathcal{X}}^y \frac{\delta U}{\delta m}\left(t_0, x, m_0^n, y\right) dm_0^n(y) \\
& +\int_{\mathbb{T}^2} D_{\mathcal{X}}^y \frac{\delta U}{\delta m}\left(t_0, x, m_0^n, y\right) \cdot D_p H(y, D_{\mathcal{X}}U(t_0,y,m_0^n)) dm_0^n(y).
\end{align*}
On the other hand,
\begin{align*}
II=& \partial_t u^n(t_0,x)=-\Delta_{\mathcal{X}}u^n(t_0,x)+H\left(x, D_{\mathcal{X}} u^n(t_0,x)\right)-F(x, m_0^n)\\
= & -\Delta_{\mathcal{X}}U(t_0,x,m_0^n)+H\left(x, D_{\mathcal{X}} U(t_0,x,m_0^n)\right)-F(x, m_0^n).
\end{align*}
Combining the above two equalities, we can then let $n \rightarrow \infty$ thanks to the continuity of both sides of the equality, thus obtaining that $\partial_t U\left(t_0,x,m_0\right)$ exists, namely for any $(t_0,x,m_0) \in [0,T] \times \mathbb{T}^2 \times \mathcal{P}(\mathbb{T}^2)$,
\begin{align*}
\partial_t U\left(t_0,x,m_0\right)= & -\Delta_{\mathcal{X}}U(t_0,x,m_0)+H\left(x, D_{\mathcal{X}} U(t_0,x,m_0)\right)-F(x, m_0) \\
& -\int_{\mathbb{T}^2} \Delta_{\mathcal{X}}^y \frac{\delta U}{\delta m}\left(t_0, x, m_0, y\right) dm_0(y) \\
& +\int_{\mathbb{T}^2} D_{\mathcal{X}}^y \frac{\delta U}{\delta m}\left(t_0, x, m_0, y\right) \cdot D_p H(y, D_{\mathcal{X}}U(t_0,y,m_0)) dm_0(y).
\end{align*}
This concludes the existence part.

\emph{Step 2: Uniqueness.} Let $V(t,x,m)$ be another solution to the master equation \eqref{ME} in the sense of Definition \ref{Def_ME sol.}. For any fixed $t_0$ and smooth $m_0$, suppose $\tilde{m}$ is the $C_{\mathcal{X}}^{1,2}$ solution to the KFP equation
\begin{equation*}
\begin{cases}
\partial_t \tilde{m}-\Delta_{\mathcal{X}} \tilde{m}-\operatorname{div}_{\mathcal{X}}\left(\tilde{m}D_pH\left(x,
D_{\mathcal{X}}V(t,x,\tilde{m}(t))\right)\right)=0, & \text{in } [t_0,T] \times \mathbb{T}^2, \\
\tilde{m}(t_0)=m_0, & \text{in } \mathbb{T}^2.
\end{cases}
\end{equation*}
Note that this solution is well defined since $D_{\mathcal{X}}V(t,x,\tilde{m}(t)) \in C_{\mathcal{X}}^{0,1}\left([t_0,T]\times\mathbb{T}^2\right)$ thanks to the Lipschitz continuity of $D_{\mathcal{X}}V$ with respect to the measure.

Let us define $\tilde{u}(t,x):=V(t,x,\tilde{m}(t))$. Due to the regularity properties of $V$, $\tilde{u}$ is at least of class $C_{\mathcal{X}}^{1,2}$ with
\begin{align*}
\partial_t \tilde{u}(t, x) = & \partial_t V(t, x, \tilde{m}(t))+\int_{\mathbb{T}^2}\frac{\delta V}{\delta m}(t, x, \tilde{m}(t),y) \partial_t\tilde{m}(t,y)dy \\
= & \partial_t V(t, x, \tilde{m}(t))+\int_{\mathbb{T}^2}\frac{\delta V}{\delta m}(t, x, \tilde{m}(t), y)\left(\Delta_{\mathcal{X}}\tilde{m}\right. \\
& +\operatorname{div}_{\mathcal{X}}\left(\tilde{m} D_pH\left(y,D_{\mathcal{X}}V(t, y, \tilde{m}(t))\right)\right)dy \\
= & \partial_t V(t, x, \tilde{m}(t))+\int_{\mathbb{T}^2} \Delta_{\mathcal{X}}^y\frac{\delta V}{\delta m}(t, x, \tilde{m}(t), y) d \tilde{m}(t)(y) \\
& -\int_{\mathbb{T}^2}D_{\mathcal{X}}^y \frac{\delta V}{\delta m}(t, x, \tilde{m}(t), y) \cdot D_p H\left(y, D_{\mathcal{X}} V(t, y, \tilde{m}(t))\right) d \tilde{m}(t)(y) \\
= & -\Delta_{\mathcal{X}}V(t, x, \tilde{m}(t))+H\left(x, D_{\mathcal{X}}V(t, x, \tilde{m}(t))\right)-F(x, \tilde{m}(t)),
\end{align*}
where the last equality is attained by the master equation. Thus we obtain that $\tilde{u}$ satisfies the HJB equation
\begin{equation*}
\partial_t \tilde{u}(t, x) = -\Delta_{\mathcal{X}}\tilde{u}(t, x)+H\left(x, D_{\mathcal{X}}\tilde{u}(t, x)\right)-F(x, \tilde{m}(t))
\end{equation*}
with terminal condition $\tilde{u}(T,x)=V(T,x,\tilde{m}(T))=G(x,\tilde{m}(T))$. Therefore $(\tilde{u},\tilde{m})$ is a solution to the MFG system \eqref{MFG system}. According to the uniqueness of the solution to the system, we have $(\tilde{u},\tilde{m})=(u,m)$ and in turn $V(t_0,x,m_0)=U(t_0,x,m_0)$.

In view of the stability of the solution to the MFG system we can generalize to the case whenever $m_0 \in \mathcal{P}(\mathbb{T}^2)$. Hence the uniqueness is proved.

The further regularity properties of the derivative $\frac{\delta U}{\delta m}$ is given by Proposition \ref{Prop_U C^1 w.r.t. m} and Proposition \ref{Prop_derivative Lip. w.r.t. m}.
\end{proof}
\noindent
\textbf{Acknowledgments}
\noindent
The authors thank sincerely to Professor Wilfrid Gangbo for his valuable suggestions. The authors are grateful to the referees for their careful reading and thoughtful comments.

\vspace{0.4cm}

\end{document}